\numberwithin{equation}{section}
\let\al=\alpha
\let\g=\gamma
\let\la=\lambda
\let\La=\Lambda
\let\Om=\Omega
\let\wt=\widetilde
\let\pa=\partial
\def\C{\mathbb C}
\def\R{\mathbb R}
\def\Z{\mathbb Z}
\def\ii{\text{i}}
\def\ee{\text{e}}
\def\LL{\mathscr L}
\def\dZ{\mathrm{d}Z}
\newcommand{\beq}{\begin{equation}}
	\newcommand{\eeq}{\end{equation}}
\newcommand{\ben}{\begin{eqnarray}}
	\newcommand{\een}{\end{eqnarray}}
\newcommand{\beno}{\begin{eqnarray*}}
	\newcommand{\eeno}{\end{eqnarray*}}
\renewcommand{\Re}{\operatorname{Re}}
\renewcommand{\Im}{\operatorname{Im}}
\renewcommand{\tilde}{\widetilde}
\renewcommand{\hat}{\widehat}
\newtheorem{assumption}{Assumption}
\newtheorem{theorem}{Theorem}[section]
\newtheorem{lemma}[theorem]{Lemma}
\newtheorem{proposition}[theorem]{Proposition}
\newtheorem{corollary}[theorem]{Corollary}
\theoremstyle{remark}
\newtheorem{remark}[theorem]{Remark}
\begin{document}

\title[Blow-up for the supercritical defocusing nonlinear wave equation]
{On blow-up for the supercritical defocusing nonlinear wave equation}

\author[F. Shao]{Feng Shao}
\address{School of Mathematical Sciences, Peking University, Beijing 100871,  China}
\email{2101110016@stu.pku.edu.cn}

\author[D. Wei]{Dongyi Wei}
\address{School of Mathematical Sciences, Peking University, Beijing 100871,  China}
\email{jnwdyi@pku.edu.cn}

\author[Z. Zhang]{Zhifei Zhang}
\address{School of Mathematical Sciences, Peking University, Beijing 100871, China}
\email{zfzhang@math.pku.edu.cn}

\date{\today}

\begin{abstract}
In this paper, we consider the defocusing nonlinear wave equation $-\partial_t^2u+\Delta u=|u|^{p-1}u$ in $\R\times \R^d$. Building on our companion work ({\it \small Self-similar imploding solutions of the relativistic Euler equations}), we prove that for $d=4, p\geq 29$ and $d\geq 5, p\geq 17$, there exists a smooth complex-valued solution that blows up in finite time.
\end{abstract}

\maketitle
\tableofcontents

\section{Introduction}

In this paper, we consider the defocusing nonlinear wave equation
\begin{align}
	\label{Eq.wave_eq}\Box u=|u|^{p-1}u,
\end{align}
where  $u:\R^{1+d}\to\C$ is the unknown field, $ \Box=\partial^{\alpha}\partial_{\alpha}=-\partial_t^2+\sum_{i=1}^d\partial_i^2$ is the d'Alembertian operator\footnote{Here we use the Einstein's summation convention.} on Minkowski spacetime $\R^{1+d}$ with the standard Minkowski metric
\[m_{00}=-1,\quad m_{ii}=1 \text{ for all }i\in\Z\cap[1,d],\quad m_{\mu\nu}=0\text{ if }\mu,\nu\in\Z\cap[0, d]\text{ with }\mu\neq \nu,\]
and we assume $p\in 2\Z_{+}+1$ for simplicity. 

Given smooth initial data $(u|_{t=0}, \pa_tu|_{t=0})$, there exists a local smooth solution on the maximal existence of interval $[0, T)$; $T<+\infty$ if and only if $\limsup_{t\uparrow T}\|u(t)\|_{L^\infty}=+\infty$, see \cite{Sogge, Luk}; moreover, there holds  the energy  conservation
\begin{equation}
	E[u(t)]:=\int_{\R^d}\frac12|\pa_tu|^2+\frac12|\nabla_xu|^2+\frac1{p+1}|u|^{p+1}\,\mathrm dx.
\end{equation}

The class of solutions to \eqref{Eq.wave_eq} is invariant under the scaling
\begin{equation}
	u(t,x)\mapsto u_\la(t,x):=\la^{\frac2{p-1}}u(\la t,\la x),\quad\la>0.
\end{equation}
This scaling symmetry preserves the {critical} norm invariant, i.e., 
\[\|u_\la(t, \cdot)\|_{\dot H_x^{s_c}}=\|u(\la t, \cdot)\|_{\dot H_x^{s_c}}\quad\text{where}\quad s_c:=\frac d2-\frac2{p-1}.\]
We can split the range of parameters $(d,p)$ into three cases accordingly:
\begin{itemize}
	\item \emph{Subcritical} case: $s_c<1$ $\Longleftrightarrow$ $d\leq 2$ or $p<1+4/(d-2)$ for $d\geq 3$.
	\item \emph{Critical} case: $s_c=1$ $\Longleftrightarrow$ $p=1+4/(d-2)$ and $d\geq 3$.
	\item \emph{Supercritical} case: $s_c>1$ $\Longleftrightarrow$ $p>1+4/(d-2)$ and $d\geq 3$.
\end{itemize}

For the subcritical case, the global well-posedness and propagation of regularity dated back to J\"{o}rgens \cite{Jorgens} for $d=3$; see also \cite{Ginibre-Velo1,Ginibre-Velo2} for the global well-posedness within the energy class $H^1\times L^2$ for all dimensions; the propagation of regularity holds at least for $d\leq 9$ \cite{Brenner-Wahl}. The critical case is much {more} difficult. The global regularity result was obtained firstly in \cite{Struwe1988} for $d=3$ and spherically symmetric initial data, and  then extended to $d\leq 9$ for general smooth data in \cite{Grillakis1990,Grillakis1992,Shatah-Struwe1993}, and all dimensions in \cite{Shatah-Struwe1994} {(in the energy class $H^1\times L^2$)}. For the long-time behavior of these global solutions, we refer to \cite{Yang} and references therein. 

For the supercritical case, it is known that the Cauchy problem is ill-posed in some low regularity spaces \cite{Christ-Colliander-Tao}, or even in the energy class \cite{Ibrahim-Majdoub-Masmoudi}, despite the global existence of weak solutions \cite{Strauss}, as well as the global well-posedness with scattering for small smooth data \cite{Lindblad-Sogge}. The global well-posedness for general smooth data  is a long-standing open problem \cite{Bourgain2000,Tao2007}. In the breakthrough work \cite{Merle-4-Invent},   Merle, Raph\"el, Rodnianski and Szeftel construct radial and asymptotically self-similar blow-up solutions for the energy supercritical defocusing nonlinear Schr\"odinger equations (NLS).  
The goal of this paper is to extend similar blow-up result for NLS to the defocusing supercritical wave equation.

Before stating our theorem, we recall Tao's blow-up result \cite{Tao2016} for the defocusing nonlinear wave system of the form $\Box u=(\nabla_{\R^m}F)(u)$, where $u:\R^{1+d}\to\R^m$ is vector-valued, and $F:\R^m\to \R$ is a smooth potential which is positive and homogeneous of order $p+1$ outside of the unit ball for some $p>1$ (letting $m=2$ and $F(u)=|u|^{p+1}/(p+1)$ we recover \eqref{Eq.wave_eq}). Tao \cite{Tao2016} proved that for any supercritical $(d,p)$, and sufficiently large positive integer $m$, there exists a defocusing $F: \R^m\to \R$ such that the system $\Box u=(\nabla_{\R^m}F)(u)$ has no global smooth solution for some smooth compactly supported initial data. A similar result for the defocusing Schr\"{o}dinger system was obtained in \cite{Tao2018}.

\subsection{Main results}

Roughly speaking, we prove that the defocusing supercritical nonlinear complex-valued wave equation for $d\ge 4$
admits finite time blow-up solutions arising from smooth initial data. The leading order term of blow-up solution is given by a self-similar blow-up solution of the relativistic compressible Euler equation, which is stated here as Assumption \ref{Assumption} (in Section \ref{Sec.Proof}). In our companion paper \cite{Shao-Wei-Zhang}, we have verified Assumption \ref{Assumption} for some $(d,p)$.

\begin{theorem}\label{Thm.main_thm}
	Let $d\in\Z\cap[4,+\infty)$ and $p\in 2\Z_{+}+1$ be such that\footnote{In particular, we have $k>\ell$, which is equivalent to $p>1+4/(d-2)$. So we are in the \emph{supercritical} case. Nevertheless, we can not cover the whole supercritical range using the method of current paper.} $k>\ell +\sqrt\ell$, where $k:=d-1$ and $\ell:=1+4/(p-1)$. Assume that there exists $\beta\in(1, k/(\ell+\sqrt\ell))$ such that Assumption \ref{Assumption} holds. Then there exist compactly supported smooth functions $u_0, u_1:\R^d\to \R^2(=\C)$ such that there is no global smooth solution $u: [0,+\infty)\times \R^d\to \R^2(=\C)$ to the defocusing nonlinear wave equation \eqref{Eq.wave_eq} with initial data $u(0)=u_0$, $\pa_t u(0)=u_1$.
\end{theorem}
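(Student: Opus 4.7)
The plan is to reduce the construction of a blow-up solution for \eqref{Eq.wave_eq} to a stability analysis around the relativistic Euler profile furnished by Assumption \ref{Assumption}. Writing the complex unknown in polar form $u=\rho\,e^{i\phi}$ and separating \eqref{Eq.wave_eq} into real and imaginary parts yields the coupled Madelung--type system
\begin{align*}
2\partial^{\alpha}\rho\,\partial_{\alpha}\phi+\rho\,\Box\phi &= 0,\\
-\partial^{\alpha}\phi\,\partial_{\alpha}\phi &= \rho^{p-1}-\frac{\Box\rho}{\rho},
\end{align*}
which, modulo the ``quantum pressure'' $\Box\rho/\rho$, is precisely the relativistic isentropic compressible Euler system for a fluid with (normalized) four-velocity $v_{\alpha}=-\partial_{\alpha}\phi\,\rho^{-(p-1)/2}$. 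Assumption \ref{Assumption} provides a smooth radial imploding self-similar solution $(\rho_{\star},\phi_{\star})$ of this Euler system with self-similarity exponent $\beta\in(1,k/(\ell+\sqrt{\ell}))$, and this profile is used as the leading-order ansatz for $u$ in a neighborhood of a prospective blow-up point $(T,0)$.

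I would then work in self-similar coordinates $s=-\log(T-t)$, $Z=x/(T-t)^{1/\beta}$, together with a renormalization of $(\rho,\phi)$ in which $(\rho_{\star},\phi_{\star})$ is stationary. Decomposing the exact solution as $(\rho,\phi)=(\rho_{\star},\phi_{\star})+(\delta\rho,\delta\phi)$ and inserting into the Madelung system produces a quasilinear hyperbolic evolution for the perturbation $(\delta\rho,\delta\phi)$, driven by the subleading forcing $\Box\rho/\rho$. The analytical heart of the proof is then to build a high-order weighted Sobolev energy $\mathcal{E}_{N}[\delta\rho,\delta\phi]$, with weights in $Z$ tuned so as to be coercive in the backward acoustic region and to absorb the decay of the profile outside, and to prove a dissipation inequality of the schematic form
\[
\partial_{s}\mathcal{E}_{N}\;\leq\;-c\,\mathcal{E}_{N}\;+\;C\,\mathcal{E}_{N}^{1+\delta}\;+\;\text{lower-order forcing},
\]
valid modulo a finite number of growing directions arising from symmetries (scaling, translation, phase) and a small number of genuine linear instabilities of the acoustic profile. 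These unstable directions are eliminated by a finite-codimension condition on the initial perturbation coupled with a soft Brouwer/topological degree argument, in the spirit of \cite{Merle-4-Invent}. Combined with a standard continuity bootstrap, this delivers global control in $s$, hence finite-time blow-up in $t$.

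The remaining step is to produce smooth, compactly supported Cauchy data from this picture. Since $(\rho_{\star},\phi_{\star})$ naturally lives on a backward conical region around $(T,0)$, one truncates it smoothly outside a sufficiently large ball and invokes the finite speed of propagation of \eqref{Eq.wave_eq} to guarantee that the modification does not reach the singularity before time $T$; this yields the desired smooth compactly supported $(u_{0},u_{1})$. The main obstacle lies in the energy estimate of the second step: near the backward acoustic cone the linearized Euler operator in self-similar variables degenerates, and closure of the estimate requires a repulsivity / sub-characteristic condition at this sonic surface. The hypothesis $\beta<k/(\ell+\sqrt{\ell})$ is precisely what encodes this repulsivity for the relativistic linearization, which is why the admissible range of $(d,p)$ is inherited from the companion paper. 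In addition, one must verify that the quantum correction $\Box\rho/\rho$, although formally of higher differential order, is genuinely subleading in the chosen weighted norm and does not activate new unstable modes; this is the principal extra difficulty compared to the nonlinear Schr\"odinger setting of \cite{Merle-4-Invent}.
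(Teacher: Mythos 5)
Your reduction to the Madelung/relativistic-Euler system and your final truncation via finite speed of propagation do match the paper's starting and ending points, but the core of your argument is a different route --- essentially the Merle--Rapha\"el--Rodnianski--Szeftel stability scheme --- and as written it has a genuine gap. The paper explicitly does \emph{not} perform a stability analysis around the profile: it introduces a front renormalization $w=b^{-1/(p-1)}\rho$, $\Phi=b^{-1/2}\phi$ so that the ``quantum pressure'' enters as $b\,\Box\rho$, expands $(\rho,\phi)=\sum_n(\rho_n,\phi_n)b^n$, solves each corrector by proving surjectivity of the linearized operator $\LL$ on the spaces $\mathscr X_\la$ of self-similar functions with logarithmic corrections (Proposition \ref{Prop.L_surjective}), resums the non-convergent series by a Borel-type truncation (Proposition \ref{Prop.Approximate}), and then produces \emph{one} exact solution near the approximate one by solving the equation for the relative error $h$ backward in time from a sequence of times approaching $T$ with zero data (Proposition \ref{Prop.Solve_wave}). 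No spectral analysis, no unstable modes, no topological argument is needed, and the paper stresses this as the main simplification over \cite{Merle-4-Invent}.

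The gap in your proposal is the dissipation inequality $\partial_{s}\mathcal{E}_{N}\leq-c\,\mathcal{E}_{N}+\dots$ ``modulo a finite number of growing directions''. Knowing that the linearization of the self-similar flow around $(\rho_{\star},\phi_{\star})$ has only finitely many unstable directions, with the remainder of the spectrum coercively damped in a weighted norm, is precisely the hardest part of \cite{Merle-4-Invent,Merle-4-Ann1,Merle-4-Ann2}; it does not follow from Assumption \ref{Assumption}, and the condition $\beta<k/(\ell+\sqrt{\ell})$ is only the necessary condition for the global profile $v(Z)$ to exist (the lemma quoted from \cite{Shao-Wei-Zhang}), not a repulsivity condition at the sonic point. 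Likewise, treating $\Box\rho/\rho$ as ``lower-order forcing'' in a single energy estimate is not justified: without the hierarchy of correctors $(\rho_n,\phi_n)$ this term is not small relative to the profile near $t=T$ in any fixed norm, which is exactly why the paper needs the $b$-expansion and the truncation \eqref{Eq.rho_*_phi_*} to drive the errors $E_*,J_*$ below every power of $T-t$ before the backward-in-time solvability argument can close. A minor but symptomatic point: the self-similar variable for the wave equation is $Z=|x|/(T-t)$, with $\beta$ entering only through the amplitude scaling in \eqref{Eq.phi_0_selfsimilar}; your $Z=x/(T-t)^{1/\beta}$ is the Schr\"odinger-type scaling and is not the one under which the profile is stationary.
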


\begin{corollary}\label{Cor.main_cor}
	If $d=4$, $p\in (2\Z+1)\cap[29,+\infty)$ or $d\geq5, p\in (2\Z+1)\cap[17,+\infty)$, then there exist compactly supported smooth functions $u_0, u_1:\R^d\to \R^2(=\C)$ such that there is no global smooth solution $u: [0,+\infty)\times \R^d\to \R^2(=\C)$ to the defocusing nonlinear wave equation \eqref{Eq.wave_eq} with initial data $u(0)=u_0$, $\pa_t u(0)=u_1$.
\end{corollary}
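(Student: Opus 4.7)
The plan is to deduce the corollary from Theorem \ref{Thm.main_thm} by verifying its two hypotheses for each $(d, p)$ in the claimed range: the algebraic inequality $k > \ell + \sqrt{\ell}$ with $k = d-1$ and $\ell = 1 + 4/(p-1)$, together with the existence of some $\beta \in (1, k/(\ell+\sqrt\ell))$ satisfying Assumption \ref{Assumption}. The first requirement is a routine numerical check; the second is precisely the output of the companion paper \cite{Shao-Wei-Zhang}.

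For the algebraic inequality, I would observe that $\ell$, and hence $\ell + \sqrt{\ell}$, is strictly decreasing in $p$, while $k$ is constant in $p$ and increasing in $d$. It therefore suffices to check the two extremal endpoints. For $d = 4$, $p = 29$ one has $\ell = 8/7$, so $\ell + \sqrt{\ell} = 8/7 + \sqrt{8/7} < 2.22 < 3 = k$; for $d = 5$, $p = 17$ one has $\ell = 5/4$, so $\ell + \sqrt{\ell} = 5/4 + \sqrt{5}/2 < 2.38 < 4 = k$. Enlarging $p$ further only decreases $\ell + \sqrt{\ell}$ and enlarging $d$ further only increases $k$, so the inequality, and with it the non-emptiness of the admissible interval $(1, k/(\ell + \sqrt\ell))$, persists throughout the stated ranges.

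For the second hypothesis, I would appeal to the construction in \cite{Shao-Wei-Zhang} of self-similar imploding solutions of the relativistic compressible Euler system, parametrised by the self-similar exponent $\beta$. The companion paper exhibits, for each $(d,p)$ in the ranges claimed in the corollary, a value of $\beta$ lying in $(1, k/(\ell+\sqrt\ell))$ for which the resulting profile satisfies all the conditions collected in Assumption \ref{Assumption}. Feeding the two verified hypotheses into Theorem \ref{Thm.main_thm} then produces the desired compactly supported smooth initial data $(u_0, u_1)$ whose forward evolution under \eqref{Eq.wave_eq} cannot be continued as a global smooth solution.

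The genuine obstacle in this deduction is conceptual rather than arithmetic: one must know that the interval $(1, k/(\ell + \sqrt\ell))$ in which the blow-up mechanism of Theorem \ref{Thm.main_thm} operates actually intersects the set of exponents $\beta$ for which the companion paper produces a sufficiently regular self-similar Euler profile. The concrete thresholds $p \ge 29$ for $d=4$ and $p \ge 17$ for $d \ge 5$ are dictated by the profile construction in \cite{Shao-Wei-Zhang}, not by the inequality $k > \ell + \sqrt\ell$, which is already comfortably satisfied at much smaller values of $p$.
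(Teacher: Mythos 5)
Your deduction is correct for $d=4$ and $d=5$, and there it matches the paper: one checks $\ell = 1+4/(p-1) \le 8/7 < \ell^*(3)$ when $d=4$, $p\ge 29$, and $\ell \le 5/4 < \ell_1(4)$ when $d=5$, $p\ge 17$, so that Lemma \ref{Lem.beta_neq_l+1} supplies the required $\beta\in(1,k/(\ell+\sqrt\ell))$ and Theorem \ref{Thm.main_thm} applies.

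However, there is a genuine gap for $d>5$. You assert that the companion paper \cite{Shao-Wei-Zhang} "exhibits, for each $(d,p)$ in the ranges claimed in the corollary, a value of $\beta$ ... for which the resulting profile satisfies all the conditions collected in Assumption \ref{Assumption}." It does not: Lemma \ref{Lem.beta_neq_l+1} verifies Assumption \ref{Assumption} only for $k=3$ and $k=4$, i.e.\ only for $d=4$ and $d=5$. For $d>5$ the existence of a globally defined smooth solution $v(Z)$ of the ODE \eqref{Eq.ODE_v(Z)} is simply not established, so the hypotheses of Theorem \ref{Thm.main_thm} cannot be invoked directly, and your argument stalls exactly where the real work begins. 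The paper closes this gap by a different mechanism: it takes the blow-up solution $\tilde u$ already constructed on $(T-c_1,T)\times\R^5$, regards $\tilde u(t,x_1,\dots,x_5)$ as a function on $\R^d$ constant in the extra variables $x_6,\dots,x_d$, multiplies the data at time $T-c_1/2$ by a cutoff $\eta(|x|)$ to make them compactly supported, and then uses finite speed of propagation: inside the backward light cone $\{|x|\le c_1/2-t\}$ the truncated evolution coincides with the $5$-dimensional solution, which still blows up at the rate \eqref{Eq.tilde_u_blow-up}, contradicting global smoothness. This is the same ODE-truncation idea used for the focusing equation \eqref{Eq.focusingNLW}, and it is indispensable here; without it the corollary is only proved for $d\in\{4,5\}$. (Your final paragraph's claim that the thresholds $p\ge 29$, $p\ge 17$ come from the profile construction rather than from $k>\ell+\sqrt\ell$ is correct, but it does not repair the missing case.)
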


Several remarks are in order.

\begin{enumerate}
	\renewcommand{\labelenumi}{\theenumi.}
	\item For the blow-up solution $u$ we construct in Theorem \ref{Thm.main_thm}, if $u$ blows up at time $T_*\in (0, +\infty)$, then according to our construction, we have the blow-up speed
	\begin{align*}
		\|u(t,\cdot)\|_{L^\infty}\gtrsim(T_*-t)^{-\frac{2\beta}{p-1}},\quad \|(u(t),\pa_tu(t))\|_{\dot{H}_x^{s_c}\times\dot{H}_x^{s_c-1}}\gtrsim (T_*-t)^{(1-\beta)\frac d2}.
	\end{align*}
	As $\beta>1$, our solution is unbounded in the critical space. This is compatible with the results in literatures which state that the solutions for the supercritical defocusing wave equation that are bounded in the critical space $\dot{H}_x^{s_c}\times\dot{H}_x^{s_c-1}$ must be global and scattering (at least for real-valued solutions and some supercritical $(d,p)$, see \cite{Bulut2012,Bulut2015,Duyckaerts-Yang2018,Killp-Visan2011Trans,Killp-Visan2011Pro}).
	
	\item Following the recent breakthrough work by Merle-Rapha\"{e}l-Rodnianski-Szeftel \cite{Merle-4-Invent, Merle-4-Ann1, Merle-4-Ann2}, the heart of proof of Theorem \ref{Thm.main_thm} is to study \eqref{Eq.wave_eq} in its hydrodynamical formulation, i.e., with respect to its phase and modulus variables. After introducing a front re-normalization \eqref{Eq.front}, we can view the second-order differential operator acting on the modulus as a perturbation. In this way, we reveal the underlying relativistic compressible Euler dynamics. The relativistic Euler dynamics provides us with a self-similar blow-up solution, which has been constructed in our companion paper \cite{Shao-Wei-Zhang} and which, in turn, acts as the leading order term of the blow-up solution of the defocusing supercritical wave equation \eqref{Eq.wave_eq}.
	
	\item Unlike \cite{Merle-4-Invent}, where the initial data for blow-up form a finite co-dimensional manifold in the class of radial smooth fast-decay functions, we only construct the blow-up solution  for one initial data $(u_0, u_1)$ in Theorem \ref{Thm.main_thm}. As a result, we do not need to analyze the stability of the linearized operator near the leading order profile constructed in \cite{Shao-Wei-Zhang}. This simplifies our proof greatly. We believe that the blow-up should hold for a large class of initial data, just as in \cite{Merle-4-Invent}. This is left  to the future work.
	
	\item To prove Corollary \ref{Cor.main_cor}, we just need to verify Assumption \ref{Assumption}, which is related to the existence of a smooth global solution to a specific ODE \eqref{Eq.ODE_v(Z)}. If $d=4, p\geq 29$ or $d=5, p\geq 17$, Assumption \ref{Assumption} is verified in our companion paper \cite{Shao-Wei-Zhang}. As a consequence, if one can find some other methods to verify Assumption \ref{Assumption} for smaller $p$, then one can also get the blow-up for that smaller $p$. The case $d>5$ follows from the result for $d=5$ and truncation, see Subsection \ref{Subsec.Proof_main}.
	
	\item We emphasize that if Assumption \ref{Assumption} is valid, then we must have $d>\beta(\ell+\sqrt\ell)+1$, where $\ell:=1+4/(p-1)>1$. Using $\beta>1$, we get $d>3$. As a result, the case of $d=3$ is not amenable to our analysis at present, and the existence of blow-up solutions for $d=3$ remains open. We point out that similar situation happens in \cite{Merle-4-Invent}, where the construction fails for $3$-D and $4$-D defocusing supercritical NLS.
	
\item In this work, we can only construct the blow-up for the complex-valued solution. The blow-up for the scalar defocusing supercritical wave equation remains open at this point. We guess that the same blow-up result should hold for the scalar nonlinear wave equation, at least for $(d,p)$ satisfying the same hypothesis as in Theorem \ref{Thm.main_thm}.
\end{enumerate}

The roadmap of the proof  of  Theorem \ref{Thm.main_thm} and Corollary \ref{Cor.main_cor} can be found in Section \ref{Sec.Proof}. The proof is based on Propositions \ref{Prop.L_surjective}, \ref{Prop.Approximate} and \ref{Prop.Solve_wave}. Our starting point is to introduce a front re-normalization \eqref{Eq.front}, relying on a constant $b>0$; taking the limit $b\to0$, 
{the defocusing wave equation becomes the relativistic compressible Euler equations}. 

We first write the desired solution to \eqref{Eq.wave_eq} in the form of a power series (see \eqref{Eq.rho_phi_expansion}) with respect to the constant $b>0$. The non-degeneracy of the leading order approximation allows us to solve all high-order approximations $(\rho_n, \phi_n)$, which is exactly the purpose of Proposition \ref{Prop.L_surjective}. The proof of Proposition  \ref{Prop.L_surjective} is rather technical and can be found in Section \ref{Sec.Surjective}. One of key ingredients used is the existence of smooth solutions to the second order ODEs having singular points with a parameter $\la$, see Appendix \ref{Appen.ODE}.

Since we do not have enough information on $(\rho_n, \phi_n)$, especially {the estimate uniform in $n$}, 
we may not have the convergence of the formal series \eqref{Eq.rho_phi_expansion}. To overcome this drawback, we truncate $(\rho_n, \phi_n)$ in the form of \eqref{Eq.rho_*_phi_*}, and in Proposition \ref{Prop.Approximate} we prove that the truncated solution is a good approximate solution to the defocusing wave equation. The proof of Proposition \ref{Prop.Approximate} can be found in Section \ref{Sec.Approximate}.

Finally, we  construct a solution to \eqref{Eq.wave_eq} near the truncated approximation solution. This is exactly what Proposition \ref{Prop.Solve_wave} says. The proof of Proposition \ref{Prop.Solve_wave} can be found in Section \ref{Sec.Sol_wave}, 
where we use the energy method to solve the wave equation in a time-backward direction, {and we need to use a technical
truncation} to avoid the singularity at blow-up time. {Such method of solving backward in time has been used in \cite{Krieger-Schlag-Tataru2008,Krieger-Schlag-Tataru2009,Perelman2014}}.
Let's emphasize that this part does not depend at all on our method of constructing the approximate solutions, and it includes the case $d=3$ and does not require Assumption \ref{Assumption} or the spherical symmetry of the approximate solutions either.

\subsection{Blow-up phenomenon for related models}

Let's review some important results on the blow-up for other related equations.

It is more common to observe the blow-up phenomenon for the focusing  nonlinear wave equation, i.e., 
\begin{equation}\label{Eq.focusingNLW}
	\Box u=-|u|^{p-1}u.
\end{equation}
In fact, the spatial independent function $u(t)=C_p(T-t)^{-2/(p-1)}$, where $C_p^{p-1}=2(p+1)/(p-1)^2$, gives a blow-up solution to \eqref{Eq.focusingNLW}. This ODE-type solution can be further truncated to a smooth compactly supported blow-up solution to \eqref{Eq.focusingNLW} by using the finite speed of propagation \cite{Alinhac,John,Levine}. {We will use similar ideas to prove Corollary \ref{Cor.main_cor} for the case $d>5$.} See also \cite{Donninger2017, Duyckaerts-Kenig-Merle2012, Duyckaerts-Kenig-Merle2013, Duyckaerts-Yang2018, Jendrej2017, Kenig2015, Kenig-Merle2008, Krieger-Schlag2014, Krieger-Schlag-Tataru2009, Martel-Merle2018, Merle-Zaag2003} for the construction and classification of blow-up (or global) solutions as well as recent breakthrough \cite{Duyckaerts-Jia-Kenig-Merle2017, Duyckaerts-Kenig-Merle2023,Jendrej-Lawrie2023} on the soliton resolution conjecture.

Other related models such as nonlinear Schr\"{o}dinger equation, see \cite{Kenig-Merle2006, Merle-Raphael2006, Merle-Raphael-Rodnianski2015, Merle-4-Invent,Merle-Raphael-Szeftel-2010, Merle-Raphael-Szeftel-2014, Perelman2001, Perelman2023}; see \cite{Duyckaerts-Jia-Kenig-Merle2018, Krieger-Miao2020, Krieger-Miao-Schlag2020, Krieger-Schlag-Tataru2008, Raphael-Rodnianski2021, Rodnianski-Sterbenz2010} for the wave map;  see \cite{Merle-Raphael-Rodnianski2013,Perelman2014} for the Shr\"{o}dinger maps; see \cite{Collot-Raphael-Szeftel2019, Collot-Raphael-Szeftel2020, Cortazar-3-2020, del-Pino-3-2020, Harada2020, Matano-Merle2004, Merle-3-2020, Merle-Zaag1997}  for semilinear heat equation and  \cite{Davila-del-Wei2020, Jendrej-Lawrie2023CVPDE, Kim-Merle2024, Raphael-Schweyer2013} for the harmonic heat flow.

\subsection{Notations and conventions}\label{Subsec.notation}
Unless stated otherwise, we adopt the following notations, abbreviations and conventions:
\begin{itemize}
	\item Constants: $\mathrm i=\sqrt{-1}$ is the imaginary unit, $\mathrm e$ is the base of the natural logarithm.
	\item For any $a\in\R$, we denote $\Z_{\geq a}:=\Z\cap[a, +\infty)$ and $\Z_{>a}:=\Z\cap(a, +\infty)$. Moreover, we denote $\Z_+:=\Z_{\geq 1}$. Similarly, $\R_{\geq0}:=\R\cap[0, +\infty)$.
	\item Greek indices run from $0$ to $d$, where $d\in\Z_{\geq 2}$ is the spatial dimension, Latin indices run from $1$ to $d$, and we use the Einstein's summation convention: repeated indices appearing once upstairs and once downstairs are summed over their range.
	\item $(t,x)=(t, x_1, \cdots, x_d)$ denotes coordinates in spacetime, $r=|x|=(\sum_{j=1}^dx_j^2)^{1/2}$. We write $\pa_{0}=-\pa^0=\pa_t=\frac{\partial}{\pa t}$, $\pa_{j}=\pa^j=\pa_{x_j}=\frac{\partial}{\pa x_j}$ for $j\in\Z\cap[1,d]$, $\Box=\pa^\al\pa_\al=-\pa_t^2+\sum_{j=1}^d\pa_j^2$ and $\Delta=\sum_{j=1}^d\pa_j^2$, then $\Box=-\pa_t^2+\Delta$.
	\item We denote $\ell:=1+4/(p-1)>1$, $k:=d-1\in\Z_{+}$ and $\g:=4\beta/(p-1)+2=\beta(\ell-1)+2$. 
	\item For a (vector-valued) differentiable function $f=f(t,x)$, we denote 
	\begin{align*}
		Df:=(\pa_tf, \pa_1f, \pa_2f,\cdots, \pa_df) &\quad\text{and}\quad D_xf:=(\pa_1f, \pa_2f,\cdots, \pa_df)=\nabla_x f,
	\end{align*}and $|Df|:=(|\pa_tf|^2+\sum_{j=1}^d|\pa_jf|^2)^{1/2}$, $|D_xf|:=(\sum_{j=1}^d|\pa_jf|^2)^{1/2}$.
	For all $j\in\Z_{+}$ we denote $D^jf:=DD^{j-1}f$, $D_x^jf:=D_xD_x^{j-1}f$, $D^0f=D_x^0f=f$, noting that $D^{j-1}f$ and $D_x^{j-1}f$ are again vector-valued functions; moreover, $D^{\leq 1}f:=(f, Df)$.
	\item For $(t,x)\in [0, T)\times\R^d$, we let $\tau:=-\ln(T-t)$ and  $Z:=|x|/(T-t)\in [0, +\infty)$.
	\item For $N\geq0$, $H_x^N$ denotes the inhomogeneous Sobolev space with the norm $\|\cdot\|_{H_x^N}$ with respect to the spatial variables and $\dot H_x^N$ denotes the homogeneous Sobolev space with the norm $\|\cdot\|_{\dot H_x^N}$. Moreover, we denote $L_x^2:=H_x^0$.
\item A function space is a linear vector space if it is closed under addition and multiplication by a constant. A function space is a ring (algebra) if it contains all the constant functions and is closed under addition and multiplication. Then a ring is also a linear vector space.
\end{itemize}

\section{A roadmap of the proof}\label{Sec.Proof}

We introduce the modulus-phase decomposition $u=w \mathrm{e}^{\mathrm{i}\Phi}$, with $w:\R^{1+d}\to\R_{>0}$ and $\Phi:\R^{1+d}\to\R$. Then
\begin{align*}
	\Box u=(\Box w+2\mathrm{i}\partial^{\alpha} w\partial_{\alpha}\Phi+\mathrm{i} w\Box\Phi- w\partial^{\alpha}\Phi\partial_{\alpha}\Phi)\mathrm{e}^{\mathrm{i}\Phi},
\end{align*}
and \eqref{Eq.wave_eq} becomes
\begin{align}
	\label{Eq.wave_eq_polar}\Box w= w^{p}+ w\partial^{\alpha}\Phi\partial_{\alpha}\Phi,\qquad 2\partial^{\alpha} w\partial_{\alpha}\Phi+ w\Box\Phi=0.
\end{align}
Let $b>0$ be a positive constant. We re-normalize according to
\begin{equation}\label{Eq.front}
	w(t,x)=b^{-\frac{1}{p-1}}\rho(t,x),\qquad \Phi(t,x)=b^{-\frac{1}{2}}\phi(t,x),
\end{equation}
then \eqref{Eq.wave_eq_polar} becomes
\begin{align}
	\label{Eq.wave_eq_renomalization}b\Box\rho=\rho^{p}+\rho\partial^{\alpha}\phi\partial_{\alpha}\phi,\qquad 2\partial^{\alpha}\rho\partial_{\alpha}\phi+\rho\Box\phi=0.
\end{align}

We seek solutions $(\rho, \phi)$ to \eqref{Eq.wave_eq_renomalization} in the form of
\begin{equation}\label{Eq.rho_phi_expansion}
	\rho(t,x)=\sum_{n=0}^{\infty}\rho_n(t,x) b^n,\qquad \phi=\sum_{n=0}^\infty \phi_n(t,x)b^n.
\end{equation}
Plugging \eqref{Eq.rho_phi_expansion} into \eqref{Eq.wave_eq_renomalization}, we obtain the following recurrence relation for 
$n\in\Z_{\geq 0}$:
\begin{equation}\label{Eq.recurrence_relation}
	\begin{aligned}
		\Box\rho_{n-1}&=\sum_{n_1+n_2+\cdots+n_p=n}\rho_{n_1}\rho_{n_2}\cdots\rho_{n_p}+
\sum_{n_1+n_2+n_3=n}\rho_{n_1}\pa^\alpha\phi_{n_2}\pa_\alpha\phi_{n_3},\\
		0&=2\sum_{n_1+n_2=n}\pa^\alpha\rho_{n_1}\pa_\alpha\phi_{n_2}+\sum_{n_1+n_2=n}\rho_{n_1}\Box\phi_{n_2},
	\end{aligned}
\end{equation}
where we have used the convention that $\rho_{-n'}=\phi_{-n'}=0$ for all $n'\in\Z_{+}$. Here \eqref{Eq.rho_phi_expansion} is only a formal expansion and we will use cutoff functions to construct approximate solutions.

\subsection{The leading order term of the blow-up solution}\label{Subsec.n=0}

Letting $n=0$ in \eqref{Eq.recurrence_relation}, we know that $(\rho_0,\phi_0)$ satisfies the system\footnote{System \eqref{Eq.wave_eq_n=0} is exactly the same as (2.5) and (2.6) in \cite{Shao-Wei-Zhang} as long as we let $\ell=1+4/(p-1)$ and $\varrho=\rho^{p+1}$.}
\begin{equation}\label{Eq.wave_eq_n=0}
	\rho_0^{p}+\rho_0\partial^{\alpha}\phi_0\partial_{\alpha}\phi_0=0,\qquad 2\partial^{\alpha}\rho_0\partial_{\alpha}\phi_0+\rho_0\Box\phi_0=0.
\end{equation}
For any $\beta>1$, the system \eqref{Eq.wave_eq_n=0} is invariant under the scaling
\[\phi_{0,\lambda}(t, x)=\la^{\beta-1}\phi_0(\lambda t, \lambda x),\qquad \rho_{0,\lambda}(t, x)=\la^{\frac{2\beta}{p-1}}\rho_0(\lambda t, \lambda x),\qquad\forall\ \lambda>0.\]
We seek radially symmetric self-similar blow-up solutions to \eqref{Eq.wave_eq_n=0} of the form
\begin{equation}\label{Eq.phi_0_selfsimilar}
	\phi_0(t, r)=(T-t)^{1-\beta}\hat \phi_0(Z),\quad \rho_0(t,x)=(T-t)^{-\frac{2\beta}{p-1}}\hat\rho_0(Z),\quad Z=\frac{r}{T-t}, \quad r=|x|,
\end{equation}
where $T>0$ is the blow-up time and $\beta>1$ is a constant.\footnote{\label{footnote.beta}Note that $\beta$ in this paper is not the same as $\beta$ in \cite{Shao-Wei-Zhang}. In fact, $\beta_{\text{in this paper}}=\beta_{\text{in \cite{Shao-Wei-Zhang}}}/(\ell+1)$. Hence, $\beta_{\text{in this paper}}>1$ is equivalent to $\beta_{\text{in \cite{Shao-Wei-Zhang}}}>\ell+1$, see Lemma A.7 in \cite{Shao-Wei-Zhang}.} Let $v=\pa_r\phi_0/\pa_t\phi_0$, then $v=v(Z)$ solves the ODE\footnote{ODE \eqref{Eq.ODE_v(Z)} is exactly the same as (2.17) in \cite{Shao-Wei-Zhang}, as long as we let $m=\beta\ell$.}
\begin{equation}\label{Eq.ODE_v(Z)}
	\begin{aligned}&\Delta_Z(Z,v){\mathrm dv}/{\mathrm dZ}=\Delta_v(Z,v),\quad \Delta_v(Z,v):=(1-v^2)[\beta\ell(1-v^2)Z-kv(1-Zv)], \\ &\Delta_Z(Z,v):=Z\left[(1-Zv)^2-\ell(v-Z)^2\right],\end{aligned}
\end{equation}
where $\ell:=1+4/(p-1)>1$ and $k:=d-1\in\Z_{\geq 1}$. See Subsection \ref{Subsec.v(Z)_derivation} for the derivation of \eqref{Eq.ODE_v(Z)}.

Recall the following fact from \cite{Shao-Wei-Zhang} (recalling footnote \ref{footnote.beta}).

\begin{lemma}[\cite{Shao-Wei-Zhang}, Lemma 2.1]
	If $v(Z):[0,1]\to(-1,1)$ is a $C^1$ solution to \eqref{Eq.ODE_v(Z)} with $v(0)=0$ and $\ell>1, \beta>0, k>0$, then $k>\beta(\ell+\sqrt{\ell})$.
\end{lemma}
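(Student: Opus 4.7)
The plan is to produce a \emph{sonic point} $Z_\ast\in(0,1)$ at which both $\Delta_Z$ and $\Delta_v$ vanish along the solution, and then to read off the inequality from the algebraic system obtained there. For existence: a leading-order expansion at the origin (matching leading orders in the ODE) gives $v(Z)=cZ+o(Z)$ with $c=\beta\ell/(k+1)>0$, hence $\Delta_Z(Z,v(Z))/Z\to 1$ as $Z\to 0^+$, so $\Delta_Z(Z,v(Z))>0$ just to the right of $Z=0$; meanwhile $\Delta_Z(1,v(1))=(1-v(1))^2(1-\ell)<0$ since $\ell>1$ and $v(1)\in(-1,1)$. By continuity there is a first zero $Z_\ast\in(0,1)$ of $Z\mapsto\Delta_Z(Z,v(Z))$, and since $v\in C^1$, letting $Z\to Z_\ast$ in the ODE $\Delta_Z(Z,v)v'=\Delta_v(Z,v)$ forces $\Delta_v(Z_\ast,v_\ast)=0$, where $v_\ast:=v(Z_\ast)\in(-1,1)$.

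The resulting system reads $(1-Z_\ast v_\ast)^2=\ell(v_\ast-Z_\ast)^2$ and $\beta\ell(1-v_\ast^2)Z_\ast=kv_\ast(1-Z_\ast v_\ast)$. The inequalities $|Z_\ast v_\ast|<1$ and $|v_\ast|<1$ combined with the second equation force $v_\ast>0$, and the first equation gives the sign dichotomy $1-Z_\ast v_\ast=\sqrt{\ell}\,|v_\ast-Z_\ast|$. In the sub-case $v_\ast<Z_\ast$, substituting $1-Z_\ast v_\ast=\sqrt{\ell}(Z_\ast-v_\ast)$ produces $Z_\ast=(1+\sqrt{\ell}v_\ast)/(\sqrt{\ell}+v_\ast)$; inserting this into the second equation and cancelling the common factor $(1-v_\ast^2)/(\sqrt{\ell}+v_\ast)$ reduces it to the linear relation $\beta\sqrt{\ell}+\beta\ell v_\ast=kv_\ast$, whence $v_\ast=\beta\sqrt{\ell}/(k-\beta\ell)$, and the constraint $0<v_\ast<1$ is precisely $k>\beta(\ell+\sqrt{\ell})$, the desired inequality.

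It remains to rule out the sub-case $v_\ast>Z_\ast$, in which the analogous manipulation yields $Z_\ast=k/[\sqrt{\ell}(\beta(\ell-1)-k)]$ and $v_\ast=(1+\sqrt{\ell}\,Z_\ast)/(\sqrt{\ell}+Z_\ast)\in[1/\sqrt{\ell},1)$, valid only for $k<\beta(\ell-\sqrt{\ell})$. For this I would view $(Z,v)$ as an autonomous planar flow $\dot Z=\Delta_Z$, $\dot v=\Delta_v$, whose integral curves parametrize the graphs of ODE solutions; the sonic point is then an equilibrium of this flow, and a $C^1$ integral curve threading an equilibrium requires the Jacobian at $(Z_\ast,v_\ast)$ to have real eigenvalues, i.e.
\[
(\partial_Z\Delta_Z-\partial_v\Delta_v)^2+4\,\partial_v\Delta_Z\cdot\partial_Z\Delta_v\geq 0.
\]
Using the Case~A identities $v_\ast-Z_\ast=(1-Z_\ast^2)/(\sqrt{\ell}+Z_\ast)$ and $1-v_\ast^2=(\ell-1)(1-Z_\ast^2)/(\sqrt{\ell}+Z_\ast)^2$, a direct computation gives $\partial_v\Delta_Z<0$ and $\partial_Z\Delta_v>0$ at the sonic point, and a careful algebraic reduction shows the above discriminant is manifestly negative throughout the admissible range $k<\beta(\ell-\sqrt{\ell})$; thus the equilibrium is a spiral focus, no $C^1$ trajectory can pass through it, and this sub-case is vacuous, leaving only $k>\beta(\ell+\sqrt{\ell})$.

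The main obstacle is this last discriminant computation in Case~A: the four partial derivatives of $\Delta_Z$ and $\Delta_v$ at the sonic point are non-trivial polynomials in $Z_\ast,v_\ast,\beta,\ell,k$, and one must exploit the sonic-point identities repeatedly to reduce $(\partial_Z\Delta_Z-\partial_v\Delta_v)^2+4\,\partial_v\Delta_Z\cdot\partial_Z\Delta_v$ to a manifestly negative expression under $k<\beta(\ell-\sqrt{\ell})$. The overall strategy, matching leading orders at $Z=0$ then algebraic analysis at the sonic point, is conceptually transparent, but the sign bookkeeping in this final step is where the detailed work concentrates.
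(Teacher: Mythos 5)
Your first two steps are sound: the existence of a first sonic point $Z_*\in(0,1)$ (from $\Delta_Z(Z,v(Z))/Z\to1$ at the origin and $\Delta_Z(1,v(1))=(1-v(1))^2(1-\ell)<0$), the deduction $\Delta_v(Z_*,v_*)=0$ from the $C^1$ hypothesis, the sign argument giving $v_*>0$, and the algebra in the sub-case $v_*<Z_*$ — which recovers exactly the point $(Z_1,v_1)$ of \eqref{Eq.Z_1_v_1} and yields $k>\beta(\ell+\sqrt\ell)$ from $0<v_*<1$ — are all correct. The gap is in the exclusion of the sub-case $v_*>Z_*$: the assertion that the discriminant $(\partial_Z\Delta_Z-\partial_v\Delta_v)^2+4\,\partial_v\Delta_Z\cdot\partial_Z\Delta_v$ is negative throughout the range $0<k<\beta(\ell-\sqrt\ell)$ is simply false. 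Take $\beta=2$, $\ell=4$, $k=3$, so that $k=3<4=\beta(\ell-\sqrt\ell)$; then $Z_*=1/2$, $v_*=4/5$, and at this point $\partial_Z\Delta_Z=0.72$, $\partial_v\Delta_Z=-1.5$, $\partial_Z\Delta_v=1.728$, $\partial_v\Delta_v=-2.52$, so the discriminant equals $(3.24)^2-4\cdot 1.5\cdot 1.728=0.1296>0$ and the eigenvalues are $-0.72$ and $-1.08$. The equilibrium is a stable node, not a focus; $C^1$ trajectories do pass through nodes with well-defined limiting slopes, so your mechanism for excluding Case A collapses here.

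This is not an isolated bad point. Using the sonic relations one finds (with $s=\sqrt\ell$) that $k=sZ_*\beta(\ell-1)/(1+sZ_*)$, and that, after dividing by an explicit positive factor, the discriminant reduces to $\frac{Z_*}{(1+sZ_*)^2}\bigl[2(1+\beta)(1+sZ_*)+\beta(\ell-1)\bigr]^2-8\beta(s+Z_*)$, which is quadratic in $\beta$ with positive leading coefficient against a term that is only linear in $\beta$; so for every fixed $Z_*\in(0,1)$ and $\ell>1$ the discriminant becomes positive once $\beta$ is large, i.e., on an open set of admissible $(\beta,\ell,k)$. Hence the spiral-focus argument cannot close Case A, and you need a genuinely different idea there — for instance a barrier/monotonicity argument preventing the solution curve, which starts at $(0,0)$ strictly below the branch $1-Zv=\sqrt\ell(v-Z)$ (that branch has $v\geq 1/\sqrt\ell$), from ever reaching it, or an analysis of the eigenvector directions at the node against the slope and range constraints on $v$. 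Since the present paper only cites \cite{Shao-Wei-Zhang} for this lemma and does not reproduce its proof, I cannot compare your route with the original, but as written your argument does not establish the statement.
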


As a consequence, it is natural to restrict the parameters $(k,\ell,\beta)$ in the following range:
\begin{equation}\label{Eq.parameter_range}
	\beta>1,\qquad \ell>1, \qquad k\in \Z\cap[3,+\infty),\qquad k>\beta(\ell+\sqrt{\ell}).
\end{equation}

\begin{assumption}\label{Assumption}
	There exists a smooth function $v=v(Z)\in(-1,1)$ defined on $Z\in[0,+\infty)$ solving the ODE \eqref{Eq.ODE_v(Z)} with $v(0)=0$ and $v\in C_{\operatorname{o}}^{\infty}([0,+\infty))$.
\end{assumption}
Here we define (with $\R_{\geq0}:=[0,+\infty) $)
\begin{align}\label{Ce}
	C_\text e^\infty(\R_{\geq0})
	:&=\left\{f\in C^\infty(\R_{\geq0}): \exists\ \tilde f\in C^\infty(\R_{\geq0})\ \text{s.t. }f(Z)=\tilde f(Z^2)\ \forall\ Z\in\R_{\geq0}\right\},\\
	\label{Co}C_\text o^\infty(\R_{\geq0})
	:&=\left\{f\in C^\infty(\R_{\geq0}): \exists\ \tilde f\in C^\infty(\R_{\geq0})\ \text{s.t. }f(Z)=Z\tilde f(Z^2)\ \forall\ Z\in\R_{\geq0}\right\}.
\end{align}
Then $C_\text e^\infty(\R_{\geq0}) $ is a ring and $C_\text o^\infty(\R_{\geq0}) $ is a linear vector space.

\begin{remark}\label{Rmk.v(Z)_properties}
	Under Assumption \ref{Assumption} and \eqref{Eq.parameter_range}, we can show that the solution $v(Z)$ satisfies 
	\begin{itemize}
	
	\item $v(Z)<Z$ and $Zv(Z)<1$ for all $Z\in(0,+\infty)$.
	
	\item $\Delta_Z(Z, v(Z))>0$ for $Z\in(0, Z_1)$ and $\Delta_Z(Z, v(Z))<0$ for $Z\in (Z_1, +\infty)$, where $Z_1=\frac{k}{\sqrt{\ell}(k-\beta(\ell-1))}>0$.
	
	 \item Let $\Delta_0(Z):=\Delta_Z(Z, v(Z))$ for $Z\in[0,+\infty)$, then $\Delta_0'(Z_1)\neq0$. 
	
	\end{itemize}
	
	See Subsection \ref{Subsec.v(Z)_properties} for the proof.
\end{remark}

In view of Assumption \ref{Assumption}, we can define that for $Z\in[0,+\infty)$ 
\begin{equation}\label{Eq.phi_0_rho_0}
	\begin{aligned}
		\hat\phi_0(Z):&=\frac1{\beta-1}\exp\left((\beta-1)\int_0^Z\frac{v(s)}{1-sv(s)}\,\mathrm ds\right),\\ \hat\rho_0(Z):&=\frac{(\beta-1)^{\frac{2}{p-1}}\hat\phi_0(Z)^{\frac{2}{p-1}}(1-v(Z)^2)^{\frac{1}{p-1}}}
		{(1-Zv(Z))^{\frac{2}{p-1}}}.
	\end{aligned}
\end{equation}
Then $\hat \phi_0(Z)>0$, $\hat\rho_0(0)=1$ and $\hat\rho_0(Z)>0$ for all $Z\in[0,+\infty)$.
As a consequence, $(\phi_0, \rho_0)$ defined by \eqref{Eq.phi_0_selfsimilar} solves \eqref{Eq.wave_eq_n=0} (see Lemma \ref{Lem.leading_order_eq}), and $\hat\phi_0,\ \hat\rho_0\in C_\text e^\infty([0,+\infty)) $ (see Lemma \ref{Lem.v_hat_rho}).
This is the leading order term of our blow-up solution $(\rho,\phi)$ to \eqref{Eq.wave_eq_renomalization}.


\subsection{Solving $(\rho_n, \phi_n)$ for $n\in\Z_{\geq 1}$} In Subsection \ref{Subsec.n=0}, under Assumption \ref{Assumption}, we construct the leading order blow-up solution $(\rho_0, \phi_0)$. In view of the expansion \eqref{Eq.rho_phi_expansion}, we construct $(\rho_n, \phi_n)$ for $n\in\Z_{\geq 1}$.
We rewrite the recurrence relation \eqref{Eq.recurrence_relation} for $n\in\Z_{\geq 1}$ as 
\begin{align}
	\notag&(p\rho_0^{p-1}+\pa^\alpha\phi_0\pa_\alpha\phi_0)\rho_n+2\rho_0\pa^\alpha\phi_0\pa_\alpha\phi_n\\
	\label{Fn}=&\Box\rho_{n-1}
	-\sum_{\substack{n_1+\cdots+n_p=n\\ n_1, \cdots, n_p\leq n-1}}\rho_{n_1}\cdots\rho_{n_p}-\sum_{\substack{n_1+n_2+n_3=n\\ n_1, n_2, n_3\leq n-1}}\rho_{n_1}\pa^\alpha\phi_{n_2}\pa_\alpha\phi_{n_3}=:F_n,\\
	\notag&\rho_0\Box\phi_n+2\pa^\alpha\rho_0\pa_\alpha\phi_n+2\pa^\alpha\phi_0\pa_\alpha\rho_n+\Box\phi_0\rho_n\\
	\label{Gn}=&-2\sum_{\substack{n_1+n_2=n\\ n_1, n_2\leq n-1}}\pa^\alpha\rho_{n_1}\pa_\alpha\phi_{n_2}-\sum_{\substack{n_1+n_2=n\\ n_1, n_2\leq n-1}}\rho_{n_1}\Box\phi_{n_2}=:G_n.
\end{align}
Using the equations for $(\rho_0, \phi_0)$ given by \eqref{Eq.wave_eq_n=0}, the above recurrence relation becomes
\begin{align}
	&(p-1)\rho_0^{p-1}\rho_n+2\rho_0\pa^\alpha\phi_0\pa_\alpha\phi_n=F_n,\label{Eq.recurrence_F_n}\\
	&\pa^\al(\rho_0^2\pa_\al\phi_n)+2\pa^\al(\rho_0\pa_\al\phi_0\rho_n)=\rho_0 G_n.\label{Eq.recurrence_G_n}
\end{align}
By \eqref{Eq.recurrence_F_n}, we have
\begin{equation}\label{Eq.rho_n}
	\rho_n=\frac{\rho_0^{1-p}F_n}{p-1}-\frac2{p-1}\rho_0^{2-p}\pa^\al\phi_0\pa_\al\phi_n.
\end{equation}
Substituting the above identity into \eqref{Eq.recurrence_G_n}, we obtain the following linear equation for $\phi_n$:
\begin{equation}\label{Eq.phi_n_eq}
	\pa^\al\left(\rho_0^2\pa_\al\phi_n-\frac4{p-1}\rho_0^{3-p}\pa_\al\phi_0\pa^{\tilde\alpha}\phi_0\pa_{\tilde\alpha}\phi_n\right)=\rho_0G_n-\frac2{p-1}\pa^\al\left(\rho_0^{2-p}\pa_\al\phi_0 F_n\right)=:H_n.
\end{equation}
We introduce the linearized operator
\begin{equation}\label{Eq.L}
	\mathscr{L}(\phi):=\pa^\al\left(\rho_0^2\pa_\al\phi-\frac4{p-1}\rho_0^{3-p}\pa_\al\phi_0\pa^{\tilde\alpha}\phi_0\pa_{\tilde\alpha}\phi\right),\qquad \phi=\phi(t,x)=\phi(t,r).
\end{equation}
Then our aim is to solve inductively $\LL(\phi_n)=H_n$ for each $n\geq 1$.

Indeed, we can show that $\LL$ is surjective in some well-chosen functional spaces and then we solve  $\LL(\phi_n)=H_n$ in these spaces. 
Letting $\tau=\ln\frac1{T-t}$, we define (here $C_\text e^\infty([0,+\infty)) $ is defined in \eqref{Ce})
\begin{align}
	\label{X0}\mathscr X_0:=&\left\{f(t,x)=\sum_{j=0}^n f_j(Z)\tau^j: n\in\Z_{\geq 0}, f_j\in C_\text e^\infty([0,+\infty))\ \ \forall\ j\in\Z\cap[0,n]\right\},\\
	\label{Xl}\mathscr X_\la:=&(T-t)^\la\mathscr X_0=\left\{f(t,x)=(T-t)^\la g(t,x)=\mathrm e^{-\la\tau}g(t,x):g\in\mathscr X_0\right\},\quad \forall\ \la\in\C.
\end{align}
Then $\mathscr X_0$ is a ring (using that $\{f(t,x)=f(Z)\tau^j:f_j\in C_\text e^\infty([0,+\infty)),\ j\in\Z_{\geq 0}\} $ is closed under multiplication) and $\mathscr X_\la$ is a linear vector space.

We have the following properties for the functional spaces $\mathscr X_\la$.

\begin{lemma}\label{Lem.X_la_properties}
	\begin{enumerate}[(i)]
		\item  Let $\la,\mu\in\C$, $f\in\mathscr X_\la, g\in\mathscr X_\mu$. Then $\partial_t f\in\mathscr X_{\la-1}$, $\Delta f\in\mathscr X_{\la-2}$, $\Box f\in\mathscr X_{\la-2}$, $fg\in\mathscr X_{\la+\mu}$, $\pa^\alpha f\pa_\alpha g\in \mathscr X_{\la+\mu-2}$, and  $\pa^\alpha(f\pa_\alpha g)\in\mathscr X_{\la+\mu-2}$.
		\item Let $\la,\mu\in\R$ and $j\in\Z_{\geq 0}$ be such that $\la>j+\mu$. If $f\in\mathscr X_\la$, then $(T-t)^{-\mu}D^jf\in L^\infty(\mathcal C)$, where $\mathcal C$ is the light cone $\mathcal C:=\left\{(t,x)\in[0, T)\times\R^d: |x|<2(T-t)\right\}$.
		\item Let $\la,\mu\in\R$ and $j\in\Z_{\geq 0}$ be such that $\la\geq j+\mu$. If $f(t,x)=(T-t)^\la\hat f(Z)$ for some $\hat f\in C_{\operatorname{e}}^\infty([0,+\infty))$, then $(T-t)^{-\mu}D^jf\in L^\infty(\mathcal C)$.
	\end{enumerate}
\end{lemma}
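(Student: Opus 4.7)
The plan is to prove (i) by direct chain-rule computation in the self-similar variables $\tau = -\ln(T-t)$ and $Z = |x|/(T-t)$, then to deduce the size estimates (ii) and (iii) from the fact that on the light cone $\cC$ one has $Z\in[0,2)$ and hence smooth functions of $Z$ are uniformly bounded.

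For (i), since $\mathscr X_\la = (T-t)^\la \mathscr X_0$ and $\mathscr X_0$ is a ring, I can reduce to computing derivatives of a generator $f_j(Z)\tau^j$ with $f_j(Z)=\tilde f_j(Z^2)\in C_{\mathrm e}^\infty$. Using $\partial_t Z = Z/(T-t)$, $\partial_t\tau = 1/(T-t)$, and the radial Laplacian $\Delta = \partial_r^2 + (k/r)\partial_r$, I obtain
\[
 (T-t)\partial_t\bigl(f_j(Z)\tau^j\bigr) = Z f_j'(Z)\tau^j + j f_j(Z)\tau^{j-1}, \quad (T-t)^2\Delta\bigl(f_j(Z)\tau^j\bigr) = \bigl(f_j''(Z) + \tfrac{k}{Z}f_j'(Z)\bigr)\tau^j.
\]
The key algebraic facts are that if $f_j = \tilde f_j(Z^2)\in C_{\mathrm e}^\infty$, then $Z f_j'(Z) = 2 Z^2 \tilde f_j'(Z^2)\in C_{\mathrm e}^\infty$ and $f_j''(Z)+(k/Z)f_j'(Z) = 2(k+1)\tilde f_j'(Z^2) + 4 Z^2 \tilde f_j''(Z^2)\in C_{\mathrm e}^\infty$, so $\partial_t\mathscr X_\la\subset\mathscr X_{\la-1}$ and $\Delta\mathscr X_\la,\,\Box\mathscr X_\la\subset\mathscr X_{\la-2}$. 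The product $fg\in\mathscr X_{\la+\mu}$ follows immediately from the ring property of $\mathscr X_0$.

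The subtle cases are $\pa^\alpha f\pa_\alpha g$ and $\pa^\alpha(f\pa_\alpha g)$, because a single spatial derivative $\nabla_x f = (T-t)^{\la-1}(x/|x|)\sum_j f_j'(Z)\tau^j$ does not itself lie in $\mathscr X_{\la-1}$: it carries the non-radial factor $x/|x|$, and the radial coefficient $f_j'$ lies in $C_{\mathrm o}^\infty$ rather than $C_{\mathrm e}^\infty$. The rescue is that in the pairing $\nabla f\cdot\nabla g$ the identity $(x/|x|)\cdot(x/|x|) = 1$ removes the angular factor and $C_{\mathrm o}^\infty\cdot C_{\mathrm o}^\infty\subset C_{\mathrm e}^\infty$ (since $Z\tilde f(Z^2)\cdot Z\tilde g(Z^2) = Z^2(\tilde f\tilde g)(Z^2)$), returning us to $\mathscr X_{\la+\mu-2}$. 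Together with the straightforward $\partial_t f\,\partial_t g\in\mathscr X_{\la+\mu-2}$ this yields $\pa^\alpha f\pa_\alpha g\in\mathscr X_{\la+\mu-2}$, and the Leibniz identity $\pa^\alpha(f\pa_\alpha g) = \pa^\alpha f\pa_\alpha g + f\Box g$ then finishes (i).

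For (ii) and (iii), on $\cC$ we have $Z\in[0,2)$, so every $f_j$ together with its derivatives is uniformly bounded in $Z$, and $\tau\geq -\ln T$. Iterating the formulas from (i), an element of $\mathscr X_\la$ satisfies $|D^j f|\leq C(T-t)^{\la-j}(1+|\tau|^N)$ for some $N = N(f,j)$; hence $(T-t)^{-\mu}|D^j f|\leq C(T-t)^{\la-j-\mu}(1+|\ln(T-t)|^N)$ is bounded on $\cC$ exactly because the strict inequality $\la > j+\mu$ absorbs the logarithm into any positive power of $T-t$. When $f = (T-t)^\la \hat f(Z)$ as in (iii), no $\tau$-factor is ever generated by the derivatives, so there is no logarithm to absorb, and the weaker non-strict inequality $\la\geq j+\mu$ suffices. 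I expect the only genuinely subtle point in the whole argument to be the non-closedness of $\mathscr X_0$ under a single spatial derivative and the compensating parity calculus $C_{\mathrm o}^\infty\cdot C_{\mathrm o}^\infty\subset C_{\mathrm e}^\infty$; the rest is essentially routine bookkeeping.
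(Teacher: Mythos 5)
Your proof is correct, and for $\partial_t f$, $\Delta f$, $\Box f$ and $fg$ it coincides with the paper's (same chain-rule computation in $(\tau,Z)$, same parity calculus $C_{\mathrm e}^\infty\mapsto C_{\mathrm o}^\infty\mapsto C_{\mathrm e}^\infty$). Where you genuinely diverge is the null-form term: you compute $\nabla_x f$ explicitly, note that the angular factor $x/|x|$ and the odd radial coefficient $f_j'\in C_{\mathrm o}^\infty$ each obstruct membership in $\mathscr X_{\la-1}$, and recover $\mathscr X_{\la+\mu-2}$ from the cancellations $(x/|x|)\cdot(x/|x|)=1$ and $C_{\mathrm o}^\infty\cdot C_{\mathrm o}^\infty\subset C_{\mathrm e}^\infty$. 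The paper never touches a single spatial derivative: it writes $\pa^\alpha f\pa_\alpha g=[\Box(fg)-(\Box f)g-f\Box g]/2$, so only $\Box$ and products — both already shown to preserve the spaces — appear; the same polarization trick is then reused verbatim for the spaces $\mathscr X_\la^*$ in Lemma \ref{Lem.X_la*}. Your route buys an explicit picture of where $\nabla_x f$ actually lives (which is genuinely useful intuition); the paper's buys brevity and reusability.

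One caution on (ii): ``iterating the formulas from (i)'' does not literally generate all mixed partials $\pa_t^{\alpha_0}\pa_{x_1}^{\alpha_1}\cdots\pa_{x_d}^{\alpha_d}f$, and naively iterating the polar form $\nabla_x f=(T-t)^{\la-1}(x/|x|)f_j'(Z)\tau^j$ produces factors such as $\pa_{x_l}(x_i/|x|)$ that are singular at $x=0$. The repair is exactly the parity fact you already recorded: $f_j'(Z)\,x_i/|x|=2\tilde f_j'(Z^2)\,x_i/(T-t)$ is a smooth function of $x/(T-t)$, hence all its $x$-derivatives are bounded on $\{Z<2\}$. The paper formalizes this by passing to $\tilde f(x)=\hat f(|x|)\in C^\infty(\R^d)$ (Lemma \ref{Lem.smooth_R^d}) and exhibiting each mixed partial as $(T-t)^{\la-j}f_\alpha(x/(T-t))$ with $f_\alpha$ smooth, then handling the $\tau$-polynomial by a separate Leibniz expansion. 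Your diagnosis of why (ii) needs $\la>j+\mu$ strictly (to absorb the logarithm) while (iii) only needs $\la\geq j+\mu$ (no logarithm is ever produced) is precisely the paper's.
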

The proof of Lemma \ref{Lem.X_la_properties} can be found in Subsection \ref{Subsec.Functional_spaces}.

\begin{proposition}\label{Prop.L_surjective}
	The linear operator $\LL:\mathscr X_{\la}\to\mathscr X_{\la-\g}$ is surjective for all $\la\in\C$, where $\g:=4\beta/(p-1)+2=\beta(\ell-1)+2$.
\end{proposition}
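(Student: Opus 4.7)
The plan is to reduce the surjectivity of $\mathscr L$ to a parameter-dependent second-order ODE in $Z$ and then invoke Appendix \ref{Appen.ODE}. Filter $\mathscr X_\la$ by $\tau$-degree, setting $\mathscr X_\la^{[n]}$ to consist of elements of the form $(T-t)^\la\sum_{j=0}^n g_j(Z)\tau^j$ with $g_j\in C_{\operatorname{e}}^\infty([0,+\infty))$. Since $\partial_t\tau=(T-t)^{-1}$ and $\partial_t Z=Z/(T-t)$, a direct computation shows that derivatives never raise $\tau$-degree, so $\mathscr L$ sends $\mathscr X_\la^{[n]}$ into $\mathscr X_{\la-\g}^{[n]}$, and moreover the $\tau^n$-coefficient of $\mathscr L\phi$ depends only on $g_n$ via a $\la$-dependent second-order ODE operator $L_\la$ in $Z$. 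By induction on $n$ this reduces the proposition to the statement that $L_\la:C_{\operatorname{e}}^\infty([0,+\infty))\to C_{\operatorname{e}}^\infty([0,+\infty))$ is surjective for every $\la\in\mathbb C$: given a right-hand side of $\tau$-degree $n$, first solve for $g_n$ from the top-$\tau^n$ equation, then subtract $\mathscr L[(T-t)^\la g_n\tau^n]$ and apply the inductive hypothesis to the residual, which has $\tau$-degree at most $n-1$.

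To identify $L_\la$, I insert the ansatz $\phi=(T-t)^\la\hat g(Z)$ into \eqref{Eq.L} and use \eqref{Eq.wave_eq_n=0} together with $v=\partial_r\phi_0/\partial_t\phi_0$ and the formula \eqref{Eq.phi_0_rho_0} for $\hat\rho_0$ to express all coefficients in terms of $v(Z)$ and $\hat\rho_0(Z)$. After cancelling the common factor $(T-t)^{\la-\g}$, the operator takes the form
\[
L_\la[\hat g](Z)=a_2(Z)\hat g''(Z)+a_1(Z;\la)\hat g'(Z)+a_0(Z;\la)\hat g(Z),
\]
where $a_2,a_1,a_0$ are smooth on $[0,+\infty)$ and $a_2(Z)$ is, up to a nowhere-vanishing smooth factor, proportional to $\Delta_0(Z)=\Delta_Z(Z,v(Z))$. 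By Remark \ref{Rmk.v(Z)_properties}, $\Delta_0$ has simple zeros at $Z=0$ and $Z=Z_1$ and is non-zero on $(0,Z_1)\cup(Z_1,+\infty)$; hence $L_\la$ is a smooth second-order linear ODE with regular singular points only at these two locations, and the problem is reduced to a global existence question for an ODE with two Fuchsian singularities and smooth parameter dependence.

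To solve $L_\la\hat g=\hat h$ for a given $\hat h\in C_{\operatorname{e}}^\infty$, I patch solutions across the two singular points. Near $Z=0$, the evenness built into $C_{\operatorname{e}}^\infty$ together with the radial structure of $L_\la$ determines a formal even power-series solution uniquely once $\hat g(0)$ is prescribed, and Appendix \ref{Appen.ODE} converts this into a genuine $C_{\operatorname{e}}^\infty$ solution on some $[0,\delta]$, which extends smoothly on $(0,Z_1)$ by standard ODE theory. At $Z=Z_1$, since $\Delta_0$ has a simple zero with $\Delta_0'(Z_1)\ne 0$, I invoke the parameter-dependent existence theorem of Appendix \ref{Appen.ODE}, which yields a particular solution smooth across $Z_1$ depending smoothly on $\la$, plus a homogeneous smooth solution available for matching. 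Tuning the free constant $\hat g(0)$ at the origin matches the two solutions at $Z_1$, and a second application of the appendix at $Z_1$ together with standard ODE theory extends the solution past $Z_1$ to yield $\hat g\in C_{\operatorname{e}}^\infty([0,+\infty))$. The main obstacle is precisely the uniformity in $\la$ at $Z=Z_1$: the Frobenius exponents there depend on $\la$, and for resonant values logarithmic branches must be excluded; this is exactly what the parameter-dependent regular-singular-point theorem of Appendix \ref{Appen.ODE} is designed to rule out, and once that tool is in hand the rest of the argument is routine bookkeeping.
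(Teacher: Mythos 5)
Your reduction to the $\tau$-graded pieces is sound as far as it goes: $\mathscr L$ is indeed upper-triangular with respect to $\tau$-degree, and the top-degree block is the ODE operator $\LL_\la$ of \eqref{Eq.LL_la}. But the statement you reduce everything to --- that $\LL_\la:C_{\operatorname{e}}^\infty([0,+\infty))\to C_{\operatorname{e}}^\infty([0,+\infty))$ is surjective for \emph{every} fixed $\la\in\C$ --- is precisely what fails, and the paper is built around this failure. There are two obstructions at fixed $\la$. First, at $Z=Z_1$ the relevant indicial quantity is $nA_0'(Z_1)+B_0(Z_1;\la)$, and for the countably many resonant $\la\in\La_*$ (see \eqref{Eq.La_*0}) one cannot in general produce a solution smooth across $Z_1$; Proposition \ref{Prop.Appen_1} does not ``rule out'' the logarithmic branch there --- it only solves the equation with right-hand side multiplied by the polynomial $\psi_1(\la)$, which vanishes exactly on $\La_*\cap B_R$, so at resonant $\la$ it gives no information about the original equation. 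Second, even away from $\La_*$, your matching step ``tune the free constant $\hat g(0)$ so the two local solutions agree at $Z_1$'' requires the Wronskian of the homogeneous solution $\Psi_1$ smooth at the origin and the homogeneous solution $\Psi_2$ smooth across $Z_1$ to be nonzero; when $\psi_2(\la)=W(Z_0;\la)=0$ these two solutions are proportional and the single free constant cannot meet the two matching conditions (value and derivative) at $Z_1$.

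The missing idea is that surjectivity onto $\mathscr X_{\la-\g}$ holds only because the preimage is allowed to have \emph{higher} $\tau$-degree than the datum. The paper solves $\LL_\la f(\cdot;\la)=\varphi(\la)g$ for all $\la$ in a disc, with $\varphi=\psi_1\psi_2$ holomorphic and not identically zero (Lemma \ref{Lem.L_laf=g}); the nontrivial input is that $\psi_2(\la_0^*)\neq0$ for the special value $\la_0^*=\g-k-2$, proved by a duality argument (Lemma \ref{Lem.Wronski_1}), which guarantees $\varphi\not\equiv0$. Then, at a zero $\la_*$ of $\varphi$ of order $m_*$, a preimage of $(T-t)^{\la_*-\g}g(Z)\tau^n/n!$ is recovered from the Cauchy integral $\frac{(-1)^n}{2\pi\ii}\oint_{|\la-\la_*|=\delta_*}F_*(t,x;\la)\,(\la-\la_*)^{-m_*-n-1}\,\mathrm d\la$, which is exactly what generates the $\tau^j$ corrections up to degree $m_*+n$. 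Your degree-preserving induction cannot produce these corrections, so the argument breaks down at every bad $\la$; moreover, even to run the paper's scheme you would still need to show that the obstruction function is not identically zero, and your proposal contains no substitute for Lemma \ref{Lem.Wronski_1}.
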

See Section \ref{Sec.Surjective} for the proof of Proposition \ref{Prop.L_surjective}.

Let
\begin{equation}\label{Eq.la_n_mu_n}
	\la_n:=(2n-1)(\beta-1),\qquad\mu_n:=2n(\beta-1)-\frac{2\beta}{p-1},\qquad\forall\ n\in\Z_{\geq 0}.
\end{equation}
Recall from \eqref{Eq.phi_0_selfsimilar} that
\[\phi_0(t,r)=(T-t)^{\la_0}\hat\phi_0(Z),\qquad \rho_0(t,r)=(T-t)^{\mu_0}\hat\rho_0(Z).\]
As $\hat\phi_0(Z), \hat\rho_0(Z)\in C_\text{e}^\infty([0,+\infty))$, by \eqref{X0}, \eqref{Xl} we have $\phi_0\in \mathscr X_{\la_0}$ and $\rho_0\in \mathscr X_{\mu_0}$.
Similarly, for $ a\in\R$ we have $\rho_0(t,r)^a=(T-t)^{a\mu_0}\hat\rho_0(Z)^a $ and $\hat\rho_0(Z)^a\in C_\text{e}^\infty([0,+\infty))$, then $\rho_0^a\in \mathscr X_{a\mu_0}$.
Moreover, $\hat\phi_0(Z), \hat\rho_0(Z)$ are real-valued, so are $\phi_0$, $\rho_0$.
\begin{lemma}\label{lem1}
	Assume that $n\in\Z_{\geq 1}$, $\phi_j\in \mathscr X_{\la_j}$, $\rho_j\in\mathscr X_{\mu_j}$ are are real-valued for $j\in\Z\cap[0, n-1]$. Let $F_n$, $G_n$ be defined in \eqref{Fn}, \eqref{Gn}. Then there exist real-valued $\phi_n\in \mathscr X_{\la_n}$ and $\rho_n\in \mathscr X_{\mu_n}$ such that \eqref{Eq.recurrence_F_n} and \eqref{Eq.recurrence_G_n} hold.
\end{lemma}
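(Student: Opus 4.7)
The strategy is to follow the algebraic reduction already worked out in the excerpt: once one knows the weights of $F_n, G_n, H_n$, the linear system \eqref{Eq.recurrence_F_n}--\eqref{Eq.recurrence_G_n} for $(\phi_n,\rho_n)$ is equivalent to solving the single scalar equation $\mathscr{L}(\phi_n)=H_n$ from \eqref{Eq.phi_n_eq} and then defining $\rho_n$ by the algebraic formula \eqref{Eq.rho_n}. So the proof reduces to three items: a weight count for $F_n, G_n, H_n$; an application of the surjectivity result Proposition \ref{Prop.L_surjective}; and a verification of reality.

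First I would track weights using only Lemma \ref{Lem.X_la_properties}(i) and the identities $\mu_n=\mu_0+2n(\beta-1)$, $\lambda_n=\lambda_0+2n(\beta-1)$, $\lambda_0=1-\beta$, $(p-1)\mu_0=-2\beta$, $(2-p)\mu_0=\mu_0+2\beta$, and $2\mu_0-2=-\gamma$. By the induction hypothesis, every monomial on the right-hand side of \eqref{Fn} lies in $\mathscr X_{\mu_n-2\beta}$: indeed, $\Box\rho_{n-1}\in\mathscr X_{\mu_{n-1}-2}=\mathscr X_{\mu_n-2\beta}$; the product $\rho_{n_1}\cdots\rho_{n_p}$ with $\sum n_i=n$ has weight $p\mu_0+2n(\beta-1)=\mu_n-2\beta$; and $\rho_{n_1}\pa^\alpha\phi_{n_2}\pa_\alpha\phi_{n_3}$ with $n_1+n_2+n_3=n$ has weight $\mu_0+2\lambda_0-2+2n(\beta-1)=\mu_0-2\beta+2n(\beta-1)=\mu_n-2\beta$. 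The analogous count for \eqref{Gn} yields $G_n\in\mathscr X_{\mu_0+\lambda_n-2}$. Consequently $\rho_0 G_n\in\mathscr X_{2\mu_0+\lambda_n-2}$, and $\pa^\alpha(\rho_0^{2-p}\pa_\alpha\phi_0\,F_n)\in\mathscr X_{(2-p)\mu_0+\lambda_0-1+\mu_n-2\beta}=\mathscr X_{2\mu_0+\lambda_n-2}$, so that $H_n\in\mathscr X_{\lambda_n-\gamma}$. All of these quantities are real-valued because $(\rho_j,\phi_j)$ are real for $j<n$ and the coefficients built from $(\rho_0,\phi_0)$ are real.

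By Proposition \ref{Prop.L_surjective}, there exists $\phi_n\in\mathscr X_{\lambda_n}$ with $\mathscr L(\phi_n)=H_n$. Since $\mathscr L$ has real coefficients and $H_n$ is real, replacing $\phi_n$ by its real part still gives a solution; the result remains in $\mathscr X_{\lambda_n}$ because $\lambda_n\in\R$ so this space is stable under complex conjugation. Now define $\rho_n$ by \eqref{Eq.rho_n}: using $(1-p)\mu_0=2\beta$ one finds $\rho_0^{1-p}F_n\in\mathscr X_{2\beta+\mu_n-2\beta}=\mathscr X_{\mu_n}$, and using $(2-p)\mu_0+\lambda_0+\lambda_n-2=\mu_n$ one finds $\rho_0^{2-p}\pa^\alpha\phi_0\pa_\alpha\phi_n\in\mathscr X_{\mu_n}$, so $\rho_n\in\mathscr X_{\mu_n}$ and is real. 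Equation \eqref{Eq.recurrence_F_n} holds by construction, since \eqref{Eq.rho_n} is its algebraic rearrangement; and substituting \eqref{Eq.rho_n} into the left-hand side of \eqref{Eq.recurrence_G_n} reproduces $\mathscr L(\phi_n)+\frac{2}{p-1}\pa^\alpha(\rho_0^{2-p}\pa_\alpha\phi_0\,F_n)=\rho_0 G_n$, which is exactly $\mathscr L(\phi_n)=H_n$. The only step that is not pure bookkeeping is the invocation of Proposition \ref{Prop.L_surjective}, which is the genuine workhorse; the rest is weight arithmetic and the algebraic pass between \eqref{Eq.recurrence_F_n}--\eqref{Eq.recurrence_G_n} and \eqref{Eq.rho_n}--\eqref{Eq.phi_n_eq}.
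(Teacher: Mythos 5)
Your proposal is correct and follows essentially the same route as the paper: count the weights of $F_n$, $G_n$, $H_n$ via Lemma \ref{Lem.X_la_properties}(i), invoke Proposition \ref{Prop.L_surjective} to solve $\mathscr L(\phi_n)=H_n$ (taking the real part if needed), and define $\rho_n$ by \eqref{Eq.rho_n}. The only blemish is a typo in the intermediate exponent for $\pa^\alpha(\rho_0^{2-p}\pa_\alpha\phi_0\,F_n)$, which should read $(2-p)\mu_0+\lambda_0-2+\mu_n-2\beta$ rather than $(2-p)\mu_0+\lambda_0-1+\mu_n-2\beta$; your stated conclusion $\mathscr X_{2\mu_0+\lambda_n-2}=\mathscr X_{\lambda_n-\gamma}$ is nonetheless the correct one.
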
\begin{proof}
	By Lemma \ref{Lem.X_la_properties} (i) and the definition of $F_n$, we have $F_n\in\mathscr X_{\mu_{n-1}-2}$, where we have used the fact that $\mu_{n_1}+\cdots+\mu_{n_p}=\mu_{n-1}-2$ if $n_1+\cdots+n_p=n$ and $\mu_{n_1}+\la_{n_2}+\la_{n_3}-2=\mu_{n-1}-2$ if $n_1+n_2+n_3=n$. Using Lemma \ref{Lem.X_la_properties} (i) and the definition of $G_n$, we have $G_n\in\mathscr X_{(2n-1)(\beta-1)-2\beta/(p-1)-2}$, where we have used the fact that $\mu_{n_1}+\la_{n_2}-2=(2n-1)(\beta-1)-2\beta/(p-1)-2$ if $n_1+n_2=n$. It follows from Lemma \ref{Lem.X_la_properties} (i) that $\rho_0G_n\in\mathscr X_{\mu_0+(2n-1)(\beta-1)-2\beta/(p-1)-2}=\mathscr X_{2n(\beta-1)-\beta\ell-1}$ (recall that $\rho_0\in \mathscr X_{\mu_0}$, $\ell=1+\frac{4}{p-1}$). Since $\rho_0^{2-p}\in\mathscr X_{(2-p)\mu_0}$, $F_n\in\mathscr X_{\mu_{n-1}-2}$, by Lemma \ref{Lem.X_la_properties} (i) we get $\rho_0^{2-p}F_n\in \mathscr X_{\mu_{n-1}-2+(2-p)\mu_0}$, then by $\phi_0\in \mathscr X_{\la_0}$ we have
	\[\pa^\alpha\left(\rho_0^{2-p}\pa_\alpha\phi_0 F_n\right)\in \mathscr X_{\mu_{n-1}-2+(2-p)\mu_0+\la_0-2}=\mathscr X_{2n(\beta-1)-\beta\ell-1}.\]
	Hence by the definition of $H_n$ in \eqref{Eq.phi_n_eq}, we have $H_n\in \mathscr X_{2n(\beta-1)-\beta\ell-1}=\mathscr X_{\la_n-\g}$ (recall that $\g=\beta(\ell-1)+2$). Moreover, $F_n, G_n, H_n$ are real-valued.
	
	By Proposition \ref{Prop.L_surjective}, there exists (real-valued) $\phi_n\in\mathscr X_{\la_n}$ such that $\LL(\phi_n)=H_n$ (otherwise take $\Re\phi_n$), then \eqref{Eq.phi_n_eq} holds. Let $\rho_n$ be defined by \eqref{Eq.rho_n}. Then $\rho_n$ is real-valued.  Moreover, using (i) of Lemma \ref{Lem.X_la_properties}, $\rho_0^{1-p}\in\mathscr X_{(1-p)\mu_0}$, $\rho_0^{2-p}\in\mathscr X_{(2-p)\mu_0}$, $F_n\in\mathscr X_{\mu_{n-1}-2}$,
	$\phi_0\in \mathscr X_{\la_0}$ and $\phi_n\in \mathscr X_{\la_n}$, we have $$\rho_0^{1-p}F_n\in\mathscr X_{\mu_0(1-p)+\mu_{n-1}-2}=\mathscr X_{\mu_n},\quad  \rho_0^{2-p}\pa^\alpha\phi_0\pa_\alpha\phi_n\in\mathscr X_{\mu_0(2-p)+\la_0+\la_n-2}=\mathscr X_{\mu_n},$$ hence $\rho_n\in \mathscr X_{\mu_n}$. Now \eqref{Eq.recurrence_F_n} follows from \eqref{Eq.rho_n}, and \eqref{Eq.recurrence_G_n} follows from \eqref{Eq.rho_n} and \eqref{Eq.phi_n_eq}.
\end{proof}

As $\phi_0\in \mathscr X_{\la_0}$, $\rho_0\in \mathscr X_{\mu_0}$ and  $\phi_0$, $\rho_0$ are real-valued,  by Lemma \ref{lem1} and the induction, we have the following result.
\begin{proposition}\label{Prop.phi_n_rho_n}
	Let $\phi_0$, $\rho_0$ be defined in \eqref{Eq.phi_0_selfsimilar}. For each $n\in\Z_{\geq 1}$, there exist real-valued $\phi_n\in \mathscr X_{\la_n}$ and $\rho_n\in \mathscr X_{\mu_n}$ such that \eqref{Eq.recurrence_F_n} and \eqref{Eq.recurrence_G_n} hold with $F_n$, $G_n$ defined in \eqref{Fn}, \eqref{Gn}.
	Hence, \eqref{Eq.recurrence_relation} holds for $n\in\Z_{\geq0}$.
\end{proposition}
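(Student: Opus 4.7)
The plan is to prove the proposition by strong induction on $n$, letting Lemma \ref{lem1} supply the entire inductive step. For the base case $n = 0$, I will simply read off from the self-similar form \eqref{Eq.phi_0_selfsimilar} that $\phi_0 \in \mathscr X_{\la_0}$ and $\rho_0 \in \mathscr X_{\mu_0}$, using the facts $\hat\phi_0, \hat\rho_0 \in C_{\operatorname{e}}^\infty([0,+\infty))$ (noted just after \eqref{Eq.phi_0_rho_0}) together with the definitions \eqref{X0}--\eqref{Xl}; both profiles are manifestly real-valued. With the convention $\rho_{-1} = \phi_{-1} = 0$, the $n = 0$ instance of \eqref{Eq.recurrence_relation} collapses to \eqref{Eq.wave_eq_n=0}, which holds by the construction in Subsection \ref{Subsec.n=0}.

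For the inductive step, I would fix $n \in \Z_{\geq 1}$ and assume that real-valued $\phi_j \in \mathscr X_{\la_j}$, $\rho_j \in \mathscr X_{\mu_j}$ have already been produced for every $j \in \Z \cap [0, n-1]$. A direct application of Lemma \ref{lem1} then yields real-valued $\phi_n \in \mathscr X_{\la_n}$ and $\rho_n \in \mathscr X_{\mu_n}$ satisfying \eqref{Eq.recurrence_F_n}--\eqref{Eq.recurrence_G_n}. To promote these to \eqref{Eq.recurrence_relation} at index $n$, I will reverse the algebraic reduction performed just above \eqref{Eq.recurrence_F_n}: using the first equation of \eqref{Eq.wave_eq_n=0} in the form $p\rho_0^{p-1} + \pa^\alpha\phi_0\pa_\alpha\phi_0 = (p-1)\rho_0^{p-1}$, equation \eqref{Eq.recurrence_F_n} is equivalent to \eqref{Fn}; and expanding the divergences in \eqref{Eq.recurrence_G_n}, dividing by $\rho_0>0$, and substituting the second equation of \eqref{Eq.wave_eq_n=0} (in the form $2\rho_0^{-1}\pa^\alpha\rho_0\pa_\alpha\phi_0=-\Box\phi_0$) converts \eqref{Eq.recurrence_G_n} back to \eqref{Gn}. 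This closes the induction and establishes \eqref{Eq.recurrence_relation} for every $n \in \Z_{\geq 0}$.

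I anticipate no substantive obstacle at this stage: all the analytic content---the surjectivity of $\LL$ on the scale $\{\mathscr X_\la\}$ and the multiplicative/differentiation behaviour of those spaces---has already been packaged into Proposition \ref{Prop.L_surjective} and Lemma \ref{Lem.X_la_properties} and consumed inside the proof of Lemma \ref{lem1}. The only piece of bookkeeping requiring care is the match of scaling indices, which has been engineered precisely so that $F_n \in \mathscr X_{\mu_{n-1}-2}$, $\rho_0 G_n \in \mathscr X_{2n(\beta-1)-\beta\ell-1}$, and the forcing term $H_n \in \mathscr X_{\la_n - \g}$, allowing Proposition \ref{Prop.L_surjective} to apply with loss exactly $\g = \beta(\ell-1)+2$; but this accounting has already been carried out inside Lemma \ref{lem1} and does not need to be repeated.
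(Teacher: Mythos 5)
Your proposal is correct and matches the paper's argument, which is exactly an induction on $n$ with Lemma \ref{lem1} as the inductive step, the base case read off from \eqref{Eq.phi_0_selfsimilar} and $\hat\phi_0,\hat\rho_0\in C_{\operatorname{e}}^\infty([0,+\infty))$, and the recovery of \eqref{Eq.recurrence_relation} by reversing the algebraic reduction via \eqref{Eq.wave_eq_n=0}. The paper compresses all of this into one sentence; you have merely spelled out the same steps.
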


Now we briefly explain the ideas in the proof of Proposition \ref{Prop.L_surjective}. In the proof of Lemma \ref{lem1}, we see that we only need to use the surjectivity of $\LL$ for $\mathscr X_{\la}$ to $\mathscr X_{\la-\g}$ for $\la\in\{\la_n:n\in\Z_{+}\}$. However, this is not easy to solve the equation $\LL f=g$ for $f\in \mathscr X_{\la}$ even in the simplest case $g=(T-t)^{\la-\g}\hat g(Z)\in \mathscr X_{\la-\g}$ for some $\hat g\in C_{\text{e}}^\infty([0, +\infty))$ (without the logarithm correction $\tau^j$ for $j\in\Z_{+}$), in which process we need to check a non-degenerate property 
{(nonzero of Wronski defined in \eqref{W1})} on the coefficients of $\LL_\la$ (defined in \eqref{Eq.LL_la}), and it is difficult to check that all $\la_n$ satisfy the non-degenerate property,
 even for one $\la_{n_0}$. To overcome this drawback, we 
solve the equation for all $\la\in\C$, not merely for those $\la\in\{\la_n:n\in\Z_{+}\}$. It turns out that the non-degenerate property holds all but countably many $\la\in\C$ (these $\la$ are ``bad" in some sense) and the solution depends analytically on $\la$. In this way, we can show that the solution $f=f(\cdot;\la)$ is a meromorphic function on $\la$. For those countably many $\la\in\C$ not satisfying the non-degenerate property, the analytic property of $f$ allows us to introduce a logarithm correction to solve the corresponding equation for ``bad" $\la$. See Section \ref{Sec.Surjective} and Appendix \ref{Appen.ODE} for details.

\subsection{The approximate solution}
Let $(\rho_n,\phi_n)\in\mathscr X_{\mu_n}\times\mathscr X_{\la_n}$ $(n\in\Z_{\geq 0})$ be given by Proposition \ref{Prop.phi_n_rho_n}. We fix a bump function $\eta\in C_c^\infty(\R;[0,1])$ such that $\eta|_{[0,1]}=1$ and $\eta|_{[2,+\infty)}=0$.
\begin{proposition}\label{Prop.Approximate}
	Let $T=b=1$. There exist $N_0\in\Z_+$, $c_0\in (0,T)$ and a sequence $\{T_n\}_{n\geq 0}$ such that $T_n=T$ for $0\leq n<N_0$, $0<T_n\leq T_{n-1}/4$ for all $n\geq N_0$, and for functions\footnote{\label{footnote.finite_sum}For fixed $(t,x)\in [0,T)\times\R^d$, the summations in \eqref{Eq.rho_*_phi_*} are both finite sums. Indeed, we have $T-t>0$, then $\lim_{n\to\infty}(T-t)/T_n=+\infty$, thus $(T-t)/T_n>2$ for all sufficiently large $n$ and hence $\eta\big((T-t)/T_n\big)=0$ for all sufficiently large $n$. As a consequence, we have $\rho_*, \phi_*\in C^\infty([0,T)\times\R^d)$.}
	\begin{align}
		\rho_*(t,x):&=\sum_{n=0}^\infty \eta\left(\frac{T-t}{T_n}\right)\rho_n(t,x)b^n,\quad \phi_*(t,x):=\sum_{n=0}^\infty \eta\left(\frac{T-t}{T_n}\right)\phi_n(t,x)b^n,\label{Eq.rho_*_phi_*}\\
		E_*:&=\rho_*^p+\rho_*\pa^\alpha\phi_*\pa_\alpha\phi_*-b\Box\rho_*,\quad J_*:=2\pa^\al\rho_*\pa_\alpha\phi_*+\rho_*\Box\phi_*,\label{EJ1}
	\end{align}
	defined on $(t,x)\in[0,T)\times\R^d$ we have
	\begin{align}
		&(T-t)^{\beta+j-1}D^j\phi_*\in L^\infty(\mathcal  C),\quad (T-t)^{\frac{2\beta}{p-1}+j}D^j\rho_*\in L^\infty(\mathcal  C),\qquad\forall\ j\in\Z_{\geq 0},\label{Eq.phi_*_rho_*_1}\\
		&(T-t)^{\beta}(\pa_t\phi_*-|D_x\phi_*|)\geq c_0,\quad (T-t)^{\frac{2\beta}{p-1}}\rho_*\geq c_0,\quad \forall\ (t,x)\in\mathcal C,\ T-t<c_0,\label{Eq.phi_*_rho_*_2}\\
		\label{EJ}&(T-t)^{-\la}(D^jE_*, D^jJ_*)\in L^\infty(\mathcal C),\qquad\forall\ \la>0,\ \forall\ j\in\Z_{\geq 0}.
	\end{align}
\end{proposition}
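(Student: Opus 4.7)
My strategy is to exploit the positive gain $\mu_n - \mu_0 = \la_n - \la_0 = 2n(\beta-1) > 0$ in the hierarchy: each correction term carries extra decay in $(T-t)$ over the leading profile, so truncating its support to $\{T-t \leq 2T_n\}$ with $T_n$ small makes the tail a small perturbation while keeping the recurrence cancellations intact. Applying Lemma~\ref{Lem.X_la_properties}(iii) to the leading term and Lemma~\ref{Lem.X_la_properties}(ii) to each higher $\rho_n \in \mathscr X_{\mu_n}$, $\phi_n \in \mathscr X_{\la_n}$ (with small $\epsilon \in (0, 2(\beta - 1))$ to absorb the $\tau^j$ logarithms in $\mathscr X_\la$) yields $|D^j\rho_n| \leq C_{n,j,\epsilon}(T-t)^{\mu_n - j - \epsilon}$ and $|D^j\phi_n| \leq C_{n,j,\epsilon}(T-t)^{\la_n - j - \epsilon}$ on $\mathcal C$. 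I would then set $T_n = T = 1$ for $n < N_0$ and choose $\{T_n\}_{n \geq N_0}$ recursively via a \emph{diagonal procedure}: $T_n \leq T_{n-1}/4$ with $\max_{j \leq n}C_{n,j,\epsilon}(2T_n)^{2n(\beta-1)-\epsilon} \leq 2^{-n}$, plus an analogous condition for the $\widetilde C_n$ appearing in the error bound below. Estimate \eqref{Eq.phi_*_rho_*_1} then follows from the product rule $D^j\phi_* = \sum_n \sum_{k=0}^j \binom{j}{k}(D^k\eta_n)(D^{j-k}\phi_n)$ (where $\eta_n := \eta((T-t)/T_n)$), the bound $|D^k\eta_n| \leq C_k(2/(T-t))^k$ on the support, and $\la_n + \beta - 1 = 2n(\beta-1)$; the tail sum is dominated by $\sum_{n\geq 1}C_{n,j,\epsilon}(2T_n)^{2n(\beta-1)-\epsilon}$, bounded via the diagonal choice. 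The argument for $\rho_*$ is analogous with $\mu_0$ in place of $1-\beta$.

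For the positivity \eqref{Eq.phi_*_rho_*_2}, differentiating \eqref{Eq.phi_0_rho_0} using the identity $\hat\phi_0'(Z) = (\beta-1)v(Z)\hat\phi_0(Z)/(1-Zv(Z))$ gives
\[\partial_t\phi_0 - |D_x\phi_0| = (T-t)^{-\beta}\,\frac{(\beta-1)\hat\phi_0(Z)(1 - |v(Z)|)}{1 - Zv(Z)},\]
which by Remark~\ref{Rmk.v(Z)_properties} ($|v| < 1$, $Zv < 1$) is a continuous strictly positive function of $Z \in [0, 2]$, hence bounded below by $2c_2(T-t)^{-\beta}$ on $\mathcal C$. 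Likewise $\hat\rho_0(0) = 1$ and $\hat\rho_0 > 0$ imply $\rho_0 \geq 2c_1(T-t)^{\mu_0}$. The remaining tails $\sum_{n\geq 1}\eta_n\rho_n$ and the corresponding quantity for $\phi_*$ are majorized by the leading-order scale times $\sum_{n\geq 1}(2T_n)^{2n(\beta-1)-\epsilon}$, which by further shrinking $T_n$ is made $\leq c_1, c_2$; taking $c_0 = \min(c_1, c_2)$ yields \eqref{Eq.phi_*_rho_*_2}.

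The crucial decay \eqref{EJ} follows from a regional decomposition together with the recurrence \eqref{Eq.recurrence_relation}. For $(t, x) \in \mathcal C$ with $T-t$ small, let $N = N(t)$ be the unique integer with $T_{N+1} < T-t \leq T_N$; then $\eta_n = 1$ for $n \leq N$ and $\eta_n = 0$ for $n \geq N+2$ (since $T_n \leq T_{N+1}/4$). In the sub-region $T-t > 2T_{N+1}$ where also $\eta_{N+1} = 0$, one has $\rho_* = \sum_{n=0}^N\rho_n$, $\phi_* = \sum_{n=0}^N\phi_n$ exactly; substituting into \eqref{EJ1} and invoking the recurrence for each $n \leq N$ cancels every convolution term of total index $\leq N+1$ except the \emph{excluded tuples} where some $n_i = N+1$. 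These excluded terms together equal $F_{N+1}$ (using $\partial^\alpha\phi_0\partial_\alpha\phi_0 = -\rho_0^{p-1}$ from \eqref{Eq.wave_eq_n=0}), leaving $E_* = -F_{N+1}$ plus convolution tails with $n_1 + \cdots \geq N+2$ and $n_i \leq N$, and analogously $J_* = -G_{N+1} + \text{tails}$. By the proof of Lemma~\ref{lem1}, $F_{N+1} \in \mathscr X_{\mu_N - 2}$ and $G_{N+1} \in \mathscr X_{\mu_0 + \la_N - 2}$, and the tails live in strictly smaller spaces; Lemma~\ref{Lem.X_la_properties}(ii) then gives $|D^jE_*| + |D^jJ_*| \leq \widetilde C_N(T-t)^{\mu_N - 2 - j - \epsilon}$. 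In the transitional sub-region $T-t \in (T_{N+1}, 2T_{N+1}]$, extra terms $(D^k\eta_{N+1})(D^{j-k}\rho_{N+1})$ arise from $\Box\rho_*, \Box\phi_*$; since $|D^k\eta_{N+1}| \leq C_k T_{N+1}^{-k}$ on a set where $T-t \asymp T_{N+1}$, these are of order $(T-t)^{\mu_{N+1} - 2 - j - \epsilon}$, of the same or smaller magnitude. Adding the condition $\widetilde C_n T_n^{n(\beta-1)} \leq 2^{-n}$ to the diagonal construction and noting $\mu_N \to \infty$ gives $(T-t)^{-\la}(D^jE_*, D^jJ_*) \in L^\infty(\mathcal C)$ for every $\la > 0$ and $j \in \Z_{\geq 0}$.

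\textbf{Main obstacle.} The principal technical difficulty is the simultaneous diagonal construction of $\{T_n\}$ meeting all three estimates uniformly for \emph{arbitrary} $\la$ and $j$, combined with the bookkeeping of the smooth cutoff transition where $\eta_{N+1}$ is neither $0$ nor $1$: the recurrence \eqref{Eq.recurrence_relation} supplies the algebraic cancellations in the no-transition regions, but controlling the $\eta'$-type errors and absorbing the growing constants $\widetilde C_N$ in the power tails requires delicate inductive bookkeeping.
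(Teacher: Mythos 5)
Your proposal is correct and follows essentially the same route as the paper: dyadic truncation with a diagonally chosen sequence $\{T_n\}$ whose constants are fixed before the $T_n$ are selected, identification of $E_*,J_*$ with the partial-sum errors $E_N,J_N$ (whose leading term is $-F_{N+1}$, resp.\ $-G_{N+1}$) on each region $T-t\in[T_{N+1},T_N]$, leading-order positivity plus tail smallness for \eqref{Eq.phi_*_rho_*_2}, and the divergence $\mu_N\to\infty$ to get arbitrary polynomial decay in \eqref{EJ}. The only difference is presentational: the paper packages the uniformity of the transitional-region estimates in the cutoff scale into the auxiliary spaces $\mathscr X_\la^*$ (treating $\eta((T-t)/s)$ as a function of an independent parameter $s$), whereas you argue directly with the pointwise bounds $|D^k\eta_{N+1}|\lesssim (T-t)^{-k}$ on the support — the same mechanism.
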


See Section \ref{Sec.Approximate} for the proof of Proposition \ref{Prop.Approximate}.

\subsection{Solving nonlinear wave equation}
\begin{proposition}\label{Prop.Solve_wave}
	Assume that $T=1$, $w_*\in C^\infty(\mathcal C),\Phi_*\in C^\infty(\mathcal C)$ satisfy
	\begin{align}
		&(T-t)^{\beta+j-1}D^j\Phi_*\in L^\infty(\mathcal  C),\quad (T-t)^{\frac{2\beta}{p-1}+j}D^jw_*\in L^\infty(\mathcal  C),\qquad\forall\ j\in\Z_{\geq 0},\label{Eq.Phi_*_w_*_1}\\
		&(T-t)^{\beta}(\pa_t\Phi_*-|D_x\Phi_*|)\geq c_0,\quad (T-t)^{\frac{2\beta}{p-1}}w_*\geq c_0,\quad \forall\ (t,x)\in\mathcal C,\ T-t<c_0,\label{Eq.Phi_*_w_*_2}
	\end{align}
	for some $c_0\in (0,T)=(0,1)$. Suppose that \eqref{EJ} holds for $E_*$, $J_*$ defined as\begin{align}\label{EJ2}
		E_*:&=w_*^p+w_*\pa^\alpha\Phi_*\pa_\alpha\Phi_*-\Box w_*,\quad J_*:=2\pa^\al w_*\pa_\alpha\Phi_*+w_*\Box\Phi_*,
	\end{align} Then there exist $c_1\in (0,c_0)$ and $u\in C^2((T-c_1,T)\times\R^d;\C)$ such that $u(t,\cdot),\partial_tu(t,\cdot)\in C_c^{\infty}(\R^d;\C)$ for $t\in(T-c_1,T),$
	$\Box u=|u|^{p-1}u$ for $t\in(T-c_1,T),$ $|x|\leq T-t$, and
	\begin{equation}\label{Eq.u_bound}
		C^{-1}(T-t)^{-\frac{2\beta}{p-1}}\leq |u(t,x)|\leq C(T-t)^{-\frac{2\beta}{p-1}},\qquad\forall\ t\in(T-c_1,T),\ |x|\leq T-t
	\end{equation}
	for some constant $C>0$.
\end{proposition}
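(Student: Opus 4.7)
The plan is to produce $u$ as the limit of a sequence of exact smooth solutions $u_n$ of $\Box u_n = |u_n|^{p-1}u_n$ obtained by solving the Cauchy problem backward in time from $t_n \uparrow T$. Setting $u_* := w_* e^{\mathrm i\Phi_*}$, a direct computation from \eqref{EJ2} gives
\begin{equation*}
\Box u_* - |u_*|^{p-1} u_* = (-E_* + \mathrm i J_*)e^{\mathrm i\Phi_*} =: S_*,
\end{equation*}
so by \eqref{EJ} the source $S_*$ decays, together with all its derivatives, faster than any polynomial in $(T-t)$ on $\mathcal C$. For each large $n$, pick $t_n \in (T-c_0,T)$ and choose smooth initial data $(u_n,\pa_t u_n)|_{t=t_n}$ agreeing with $(u_*,\pa_t u_*)|_{t_n}$ on the light cone $|x| \leq T-t_n$ and smoothly extended so as to be compactly supported in a fixed ball independent of $n$. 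Standard local theory gives $u_n$ on some backward interval $(T_n^-,t_n]$, and finite speed of propagation keeps $u_n(t,\cdot),\pa_t u_n(t,\cdot)$ compactly supported on a uniform ball.

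The crux is to show $T_n^- \leq T-c_1$ for some $c_1>0$ independent of $n$. Decomposing $u_n = u_* + v_n$ inside the light cone, $v_n$ satisfies
\begin{equation*}
\Box v_n = |u_* + v_n|^{p-1}(u_* + v_n) - |u_*|^{p-1} u_* + S_*, \qquad v_n|_{t_n} = \pa_t v_n|_{t_n} = 0 \text{ on } |x| \leq T-t_n.
\end{equation*}
The linearization coefficient $p|u_*|^{p-1}$ is singular like $(T-t)^{-2\beta}$. Introduce a high-regularity weighted energy
\begin{equation*}
\mathcal E_s(t) := \sum_{|\alpha| \leq s} \int_{|x| \leq T-t} (T-t)^{2 a_{|\alpha|}} \bigl(|\pa_t D^\alpha v_n|^2 + |D_x D^\alpha v_n|^2\bigr)\,\mathrm dx,
\end{equation*}
with weights $a_j$ chosen so that the contributions from the singular linearization and from the commutators $[D^\alpha, (T-t)^{a_{|\alpha|}}]$ and $[D^\alpha, |u_*|^{p-1}]$ are absorbed after integration by parts, exploiting the ``timelike'' condition \eqref{Eq.Phi_*_w_*_2} for hyperbolicity. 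A backward-in-time Gr\"onwall inequality then yields $\mathcal E_s(t)^{1/2} \leq C(T-t)^{-A(s)}\,\|S_*\|_{\mathcal H_s}$ for some finite $A(s)$; by \eqref{EJ} the norm $\|S_*\|_{\mathcal H_s}$ is smaller than any positive power of $(T-t)$, so the polynomial loss is harmless and the bootstrap $|v_n| \ll |u_*|$ closes on a common interval $(T-c_1, t_n]$ with $c_1$ independent of $n$. Sobolev compactness then extracts a subsequence $u_n \to u$ in $C^2_{\mathrm{loc}}((T-c_1,T)\times \R^d)$ satisfying $\Box u = |u|^{p-1}u$ on $\{|x| \leq T-t\}$, and \eqref{Eq.u_bound} follows from \eqref{Eq.Phi_*_w_*_2} together with the smallness of $v_n$; compact support of $u(t,\cdot),\pa_t u(t,\cdot)$ is preserved in the limit.

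The main obstacle is the calibration of the weighted energy in step two: the linearization coefficient is genuinely singular at $t=T$, and derivatives of $u_*$ entering higher-order commutators blow up polynomially in $(T-t)^{-1}$, so the weights $a_j$ must grow with $j$ at a rate precisely adapted both to the strength of the singularity and to the hyperbolic structure of $\Box$. The saving grace is that \eqref{EJ} makes $S_*$ smaller than any negative power of $(T-t)$, so any finite loss $A(s)$ in the Gr\"onwall is beaten. This is exactly the mechanism of the backward-in-time constructions of Krieger--Schlag--Tataru \cite{Krieger-Schlag-Tataru2008,Krieger-Schlag-Tataru2009} and Perelman \cite{Perelman2014} cited in the introduction. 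Note that the argument uses only the pointwise hypotheses \eqref{Eq.Phi_*_w_*_1}--\eqref{Eq.Phi_*_w_*_2} and the super-polynomial smallness \eqref{EJ}; the self-similar structure from Proposition \ref{Prop.Approximate} is never invoked, which is why the scheme applies equally to $d=3$ without relying on Assumption \ref{Assumption}.
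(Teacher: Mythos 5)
Your overall architecture — solve backward in time from a sequence of times tending to $T$, run a weighted energy estimate whose Gr\"onwall loss is only polynomial in $(T-t)^{-1}$ and is therefore beaten by the super-polynomial smallness \eqref{EJ}, close a bootstrap on a uniform interval, and pass to the limit by compactness — is exactly the paper's. The substantive divergence, and the gap, is your choice of the \emph{additive} ansatz $u_n=u_*+v_n$. Linearizing $|u|^{p-1}u$ about $u_*=w_*\mathrm e^{\ii\Phi_*}$ produces the potential
\begin{equation*}
\tfrac{p+1}{2}|u_*|^{p-1}v+\tfrac{p-1}{2}|u_*|^{p-3}u_*^2\,\bar v
=\tfrac{p+1}{2}w_*^{p-1}v+\tfrac{p-1}{2}w_*^{p-1}\mathrm e^{2\ii\Phi_*}\bar v,
\end{equation*}
and the $\bar v$-term carries the oscillatory factor $\mathrm e^{2\ii\Phi_*}$. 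Any energy that controls this term must contain it (it cannot be Gr\"onwalled away, since $\int^T(T-s)^{-2\beta}\,\mathrm ds$ diverges), but then differentiating the energy in time produces $\pa_t\bigl(\mathrm e^{2\ii\Phi_*}\bigr)=2\ii\,\pa_t\Phi_*\,\mathrm e^{2\ii\Phi_*}$, which by \eqref{Eq.Phi_*_w_*_1}--\eqref{Eq.Phi_*_w_*_2} is of size $(T-t)^{-\beta}$ \emph{relative to the potential itself}, not $(T-t)^{-1}$. The resulting Gr\"onwall factor is $\exp\bigl(C(T-t)^{1-\beta}\bigr)=\exp\bigl(C/(T-t)^{\beta-1}\bigr)$ with $\beta>1$, which grows faster than every polynomial and is \emph{not} beaten by \eqref{EJ} (which only guarantees decay faster than every polynomial, not stretched-exponential decay). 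Your weights $(T-t)^{2a_{|\alpha|}}$ depend on $t$ alone and cannot cancel an $x$-dependent oscillation, so the scheme as written does not close.

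This is precisely why the paper works with the multiplicative perturbation $u=(1+h)w_*\mathrm e^{\ii\Phi_*}$: conjugating by the phase removes the oscillation from the zeroth-order term, which becomes $-(p-1)w_*^{p-1}h_{\operatorname{r}}$ — real, sign-definite, and incorporable into the energy as $V_*|h_{\operatorname{r}}|^2$ with $|DV_*|/V_*\lesssim (T-t)^{-1}$ — at the cost of the first-order term $2\ii\,\pa^\alpha\Phi_*\pa_\alpha h$. That first-order term is then handled by contracting the energy–momentum tensor of $h$ with the timelike multiplier $X=D\Phi_*$ (Lemma \ref{Lem.Energy_estimate}); the deformation tensor of $X$ consists of second derivatives of $\Phi_*$, which are only a factor $(T-t)^{-1}$ worse than $X$ itself, so the Gr\"onwall loss is the fixed power $M_1$ and \eqref{EJ} wins. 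If you replace your additive decomposition by the multiplicative one (equivalently, conjugate $v$ by $u_*$ before setting up the energy), the rest of your outline — truncation of the source near $t=T$, zero data at $T-\varepsilon_n$, uniform bootstrap, Arzel\`a–Ascoli — matches the paper's proof, including your correct observation that the argument never uses Assumption \ref{Assumption} or the self-similar structure.
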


In fact, $u=(1+h) w_*\mathrm e^{\ii\Phi_*}$, $h=O((T-t)^{\lambda})$, $ \forall\ \lambda>0$. To prove Proposition \ref{Prop.Solve_wave}, it suffices to solve the equation for $h$ (see \eqref{Eq.h_eq0}) and prove that $h$ is  small. In view of the singularity of \eqref{Eq.h_eq0} at blow-up time $T$, we take a sequence $\varepsilon_n\downarrow0$ and then we solve \eqref{Eq.h_eq0} (with technical truncation) with zero initial data at $T-\varepsilon_n$ in a backward direction. We denote the solution for each $n\in\Z_{+}$ by $h_n$. Using energy estimates and a bootstrap argument, we can show that $h_n$ lives in an interval with a positive lower bound independent of $n\in\Z_{+}$. Taking the limit $n\to\infty$ we get a desired solution to \eqref{Eq.h_eq0} (in the light cone). See Section \ref{Sec.Sol_wave} for details.

\subsection{Proof of main results}\label{Subsec.Proof_main}

Let's begin with the proof of  Theorem \ref{Thm.main_thm}.

\begin{proof}[Proof of Theorem \ref{Thm.main_thm}]

Let $T=b=1$ and $(\rho_*, \phi_*)\in C^\infty(\mathcal C)$, $c_0\in (0,T)$ be given by Proposition \ref{Prop.Approximate}, and $w_*=\rho_*$, $\Phi_*=\phi_*$.
Then $w_*, \Phi_*\in C^\infty(\mathcal C)$, \eqref{Eq.Phi_*_w_*_1} is equivalent to \eqref{Eq.phi_*_rho_*_1}, \eqref{Eq.Phi_*_w_*_2} is equivalent to \eqref{Eq.phi_*_rho_*_2}. The definitions of $E_*$, $J_*$ in \eqref{EJ1} and \eqref{EJ2} are the same, and \eqref{EJ} also follows from Proposition \ref{Prop.Approximate}. It follows from Proposition \ref{Prop.Solve_wave} that there exist $c_1\in (0,c_0)\subset(0,1)$, $\tilde u\in C^2((T-c_1,T)\times\R^d;\C)$ such that $\tilde u(t,\cdot),\partial_t\tilde u(t,\cdot)\in C_c^{\infty}(\R^d;\C)$ for $t\in(T-c_1,T),$
$\Box\tilde u=|\tilde u|^{p-1}\tilde u$ for $t\in(T-c_1,T),$ $|x|\leq T-t$,  and
\begin{equation}\label{Eq.tilde_u_blow-up}
	C_1^{-1}(T-t)^{-\frac{2\beta}{p-1}}\leq |\tilde u(t,x)|\leq C_1(T-t)^{-\frac{2\beta}{p-1}},\qquad\forall\ t\in(T-c_1,T),\ |x|\leq T-t
\end{equation}
for some constant $ C_1>0$. Choose 
initial data $u_0, u_1: \R^d\to \C$ such that
\[u_0(x)=\tilde u(T-c_1/2, x), \quad u_1(x)=\pa_t\tilde u(T-c_1/2, x),\qquad\forall\ x\in\R^d.\]
Then $u_0,u_1\in C_c^{\infty}(\R^d;\C)$. Moreover, let $u_*(t,x)=\tilde u(t+T-c_1/2,x)$ for $t\in(-c_1/2,c_1/2)$, $x\in\R^d$ then $u_*(0,x)=u_0(x)$, $\partial_t u_*(0,x)=u_1(x)$ and
$\Box u_*=| u_*|^{p-1} u_*$ for $t\in[0,c_1/2),$ $|x|\leq c_1/2-t$.
Suppose for contradiction that Theorem \ref{Thm.main_thm} fails for this initial data $u_0, u_1$, then there exists a smooth function $u: [0,+\infty)\times\R^d\to\C$ such that $\Box u=|u|^{p-1}u$
and $u(0, x)=u_0(x), \pa_tu(0, x)=u_1(x)$ for all $x\in\R^d$. Finite speed of propagation 
shows that $u= u_*$ in the region $\{(t,x)\in[0, c_1/2)\times\R^d: |x|\leq c_1/2-t\}$. Hence by \eqref{Eq.tilde_u_blow-up} we have
\[|u(t,0)|=|u_*(t,0)|=|\tilde u(t+T-c_1/2,0)|\geq C_1^{-1}(c_1/2-t)^{-\frac{2\beta}{p-1}},\qquad\forall\ t\in[0, c_1/2).\]
On the other hand, since $u$ is smooth on $[0, +\infty)\times\R^d$, we have $|u(t,x)|\leq C$ for all $|x|\leq 2T$ and $t\in[0, c_1/2]$, where $C>0$ is a constant. This reaches  a contradiction.
\end{proof}

The following result  was proved in \cite{Shao-Wei-Zhang} Theorem 2.2 and Lemma A.7 ($ \beta>\ell+1$ in \cite{Shao-Wei-Zhang} is equivalent to $\beta>1$ in this paper, recalling footnote \ref{footnote.beta}).
\begin{lemma}\label{Lem.beta_neq_l+1}
	There exist $\ell^*(3)=\frac{76-4\sqrt{154}}{23}\in(\frac{8}{7},\frac{7}{6})$ and $\ell_1(4)\in(5/4,4/3) $ such that if
	\begin{equation}\label{Eq.A.15}
		k=4,\ 1<\ell<\ell_1(4)\ \ \text{ or }\ k=3,\ 1<\ell<\ell^*(3),
	\end{equation}
	Then  there exists $\beta\in(1, k/(\ell+\sqrt\ell))$ \footnote{Note that $1<\ell^*(3)<\ell_1(4)<3/2$, thus if \eqref{Eq.A.15} holds then $\ell +\sqrt\ell<2\ell<3\leq k$.}
	such that Assumption \ref{Assumption} holds for $d=k+1$.
\end{lemma}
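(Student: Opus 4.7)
The plan is to carry out a shooting argument for the ODE \eqref{Eq.ODE_v(Z)} with $\beta$ as the shooting parameter in the admissible range $\beta \in (1, k/(\ell+\sqrt{\ell}))$. For each such $\beta$, I would first produce a unique local smooth solution $v(Z;\beta) \in C_\text{o}^\infty$ emanating from $Z = 0$ with $v(0)=0$, continue it smoothly through the sonic point $Z_1 = k/(\sqrt{\ell}(k - \beta(\ell-1)))$ (where $\Delta_Z$ vanishes), and finally extend it globally to $Z = +\infty$ while keeping $|v| < 1$. Existence of $\beta$ in the claimed $(k,\ell)$ ranges then reduces to fitting these three local pieces together.

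First I would handle the regular singular point at $Z = 0$: since $\Delta_Z(0, v) = 0$ and $\Delta_v(0, 0) = 0$, substituting the ansatz $v(Z) = Z\, w(Z^2)$ turns \eqref{Eq.ODE_v(Z)} into a regular ODE for $w$ whose Taylor coefficients are determined inductively from the leading-order relation $w(0) = \beta\ell/(k+1) > 0$ (one checks $w(0)<1$ holds automatically on the admissible $\beta$-range, consistent with the $v(Z)<Z$ claim in Remark \ref{Rmk.v(Z)_properties}). This gives a unique analytic local branch in $C_\text{o}^\infty$ depending analytically on $\beta$. For the sonic point $Z_1$, smoothness forces the simultaneous vanishing $\Delta_v(Z_1, v(Z_1)) = 0$; solving the algebraic system $\Delta_Z = \Delta_v = 0$ pins down two admissible sonic heights $v_*^{\pm}(\beta)$, and a Briot--Bouquet type theorem of the form proved in Appendix \ref{Appen.ODE} produces two corresponding smooth local branches through $(Z_1, v_*^\pm)$, only one of which has the correct orientation to continue the trajectory coming from $Z = 0$.

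The shooting step is then to define $F(\beta) := v(Z_1(\beta); \beta) - v_*(\beta)$ and apply the intermediate value theorem on the admissible $\beta$-interval. I expect this to be the main obstacle: $F$ has no closed form, and verifying its sign change requires a priori control of the trajectory such as monotonicity on $[0, Z_1]$, invariant regions in the $(Z, v)$-plane, and comparison with explicit sub- and supersolutions obtained in the limits $\beta \downarrow 1$ and $\beta \uparrow k/(\ell+\sqrt{\ell})$. The specific thresholds $\ell^*(3) = (76 - 4\sqrt{154})/23$ and $\ell_1(4)$ would emerge precisely as the boundaries of the parameter region where these quantitative estimates still force a sign change of $F$; outside this region one expects the shooting trajectory to miss the sonic line on the wrong side. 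Once the sonic crossing is secured, extending the solution to $Z = +\infty$ while preserving $|v| < 1$ is the easier part, using the leading structure of $\Delta_Z$ and $\Delta_v$ at infinity to produce an invariant strip for $Z > Z_1$ in which $v(Z)$ approaches a finite limit $v_\infty \in (-1, 1)$ monotonically.
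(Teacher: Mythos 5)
The paper contains no proof of this lemma: it is imported wholesale from the companion paper \cite{Shao-Wei-Zhang} (Theorem 2.2 and Lemma A.7 there), so there is no internal argument to compare against. Judged on its own terms, your proposal is a reasonable high-level outline of the standard strategy for constructing smooth self-similar profiles (local analytic branch at $Z=0$ with $v'(0)=\beta\ell/(k+1)$, smooth crossing of the sonic point, global extension with $|v|<1$), but it is not a proof. The entire content of the lemma --- in particular the origin of the explicit thresholds $\ell^*(3)=(76-4\sqrt{154})/23$ and $\ell_1(4)$ --- lies exactly in the quantitative step you defer: showing that the shooting function $F(\beta)$ changes sign on the admissible interval, i.e.\ that for some $\beta\in(1,k/(\ell+\sqrt\ell))$ the branch from $Z=0$ reaches the sonic point with an admissible slope and then continues globally. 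You state that this requires barrier functions, invariant regions, and comparison arguments in the limits $\beta\downarrow 1$ and $\beta\uparrow k/(\ell+\sqrt\ell)$, but you construct none of them, and you give no mechanism by which the specific algebraic number $(76-4\sqrt{154})/23$ would arise. Asserting that the thresholds ``would emerge'' from estimates that have not been performed is a plan, not an argument; as written, none of the assertions of the lemma is established.

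There are also smaller inaccuracies worth flagging. The system $\Delta_Z=\Delta_v=0$ has a \emph{unique} solution $(Z_1,v_1)$ in $(0,+\infty)\times(-1,1)$ (see \eqref{Eq.Z_v_zero}), not two admissible sonic heights $v_*^{\pm}(\beta)$; the genuine dichotomy at the sonic point concerns the two roots of the quadratic equation for the slope $v'(Z_1)$ obtained from L'H\^opital's rule, not the value of $v$ there. Moreover, Appendix \ref{Appen.ODE} of this paper treats \emph{linear} second-order ODEs with a simple singular point and holomorphic parameter dependence; it is not a Briot--Bouquet-type theorem for the nonlinear autonomous system \eqref{Eq.ODE_v(Z)} and cannot be invoked as such for the sonic-point crossing. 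None of this invalidates the overall strategy, which is indeed the one carried out in \cite{Shao-Wei-Zhang}, but the proposal leaves the decisive quantitative work undone.
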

\begin{proof}[Proof of Corollary \ref{Cor.main_cor}.]
	Let $T=1$. If $d=4$, $k=3$, $p\geq 29$, $\ell=1+\frac{4}{p-1}$, then $1<\ell\leq 1+\frac{4}{29-1}=\frac{8}{7}<\ell^*(3)$. If $d=5$, $k=4$, $p\geq 17$, $\ell=1+\frac{4}{p-1}$, then $1<\ell\leq 1+\frac{4}{17-1}=\frac{5}{4}<\ell_1(4)$. Thus, if $d=4$, $p\geq 29$ or $d=5$, $p\geq 17$, then \eqref{Eq.A.15} holds for $k=d-1$, $\ell=1+\frac{4}{p-1}$ and the result follows from Theorem \ref{Thm.main_thm}.
	
	The remaining case is $d>5$, $p\geq 17$. Then Assumption \ref{Assumption} holds with $d$ replaced by $d'=5$. By the proof of Theorem \ref{Thm.main_thm}, there exists $c_1\in (0,1)$,
	$\tilde u\in C^2((T-c_1,T)\times\R^5;\C)$ such that $\tilde u(t,\cdot),\partial_t\tilde u(t,\cdot)\in C_c^{\infty}(\R^5;\C)$ for $t\in(T-c_1,T),$
	$\Box\tilde u=|\tilde u|^{p-1}\tilde u$ for $t\in(T-c_1,T),$ $|x|\leq T-t$,  and \eqref{Eq.tilde_u_blow-up} holds for some constant $ C_1>0$.
	Choose  initial data $u_0, u_1: \R^d\to \C$ such that
	\[u_0(x)=\eta(|x|)\tilde u(T-c_1/2, x_1,\cdots,x_5), \quad u_1(x)=\eta(|x|)\pa_t\tilde u(T-c_1/2, x_1,\cdots,x_5),\]
	for all $x=(x_1,\cdots,x_d)\in\R^d$.
	Then $u_0,u_1\in C_c^{\infty}(\R^d;\mathcal C)$. Moreover, let $u_*(t,x)=\eta(|x|)\tilde u(t+T-c_1/2,x_1,\cdots,x_5)$ for $t\in(-c_1/2,c_1/2)$, $x=(x_1,\cdots,x_d)\in\R^d$. Then $u_*(0,x)=u_0(x)$, $\partial_t u_*(0,x)=u_1(x)$ and
	$\Box u_*=| u_*|^{p-1} u_*$ for $t\in[0,c_1/2),$ $|x|\leq c_1/2-t$. Here we used that $\eta(|x|)=1 $ for $|x|\leq 1$ and that if $t\in[0,c_1/2),$ $|x|\leq c_1/2-t$ then $|x|\leq c_1/2<1$.
	Suppose for contradiction that Corollary \ref{Cor.main_cor} fails for this kind of initial data $u_0, u_1$, then there exists a smooth function $u: [0,+\infty)\times\R^d\to\C$ such that $\Box u=|u|^{p-1}u$
	and $u(0, x)=u_0(x), \pa_tu(0, x)=u_1(x)$ for all $x\in\R^d$, and we can get a contradiction as in the proof of Theorem \ref{Thm.main_thm}.
\end{proof}
\if0
It suffices to prove the claim. As $b>0$ is a fixed constant, we only need to check
\begin{align}
	\left\|(T-t)^{\beta+j-1}D^j\phi_*\right\|_{L^\infty(\mathcal  C)}&+\left\|(T-t)^{\frac{2\beta}{p-1}+j}D^j\rho_*\right\|_{L^\infty(\mathcal  C)}\leq C_j,\qquad\forall\ j\in\Z_{\geq 0},\\
	(T-t)^{2\beta}\pa^\alpha\phi_*\pa_\alpha\phi_*&\leq -c_0,\quad (T-t)^{\frac{2\beta}{p-1}}\rho_*\geq c_0,\quad \forall\ (t,x)\in\mathcal C,
\end{align}
where $C_j>0$ and $c_0>0$ are positive constants for all $j\in\Z_{\geq 0}$. Recalling that $T_0=T$ and $\eta|_{[0,1]}=1$, we write $\rho_*=\rho_0+\tilde \rho_*, \phi_*=\phi_0+\tilde \phi_*$, where (recalling footnote \ref{footnote.finite_sum})
\[\tilde\rho_*(t,x):=\sum_{n=1}^\infty \eta\left(\frac{T-t}{T_n}\right)\rho_n(t,x)b^n,\quad \tilde\phi_*(t,x):=\sum_{n=1}^\infty \eta\left(\frac{T-t}{T_n}\right)\phi_n(t,x)b^n,\quad \forall\ (t,x)\in\mathcal C.\]
By \eqref{Eq.phi_0_selfsimilar}, Lemma \ref{Lem.v_hat_rho} and (iii) of Lemma \ref{Lem.X_la_properties}, we have
\[(T-t)^{\beta+j-1}D^j\phi_0, (T-t)^{\frac{2\beta}{p-1}+j}D^j\rho_0\in L^\infty(\mathcal C),\quad \forall\ j\in\Z_{\geq 0}.\]
For any $j\in\Z_{\geq 0}$, recalling that $\eta\in C_c^\infty(\R)$ is a Schwartz function, we have
\[\left|\pa_t^j\left(\eta\left(\frac{T-t}{\tilde T}\right)\right)\right|=\frac1{\tilde T^j}\left|\eta^{(j)}\left(\frac{T-t}{\tilde T}\right)\right|\leq (T-t)^{-j}\left\|t^j\eta^{(j)}(t)\right\|_{L^\infty(\R)},\ \forall\ \tilde T>0.\]
Thus
\begin{equation}
	\left\|(T-t)^j D^j\left(\eta\left(\frac{T-t}{\tilde T}\right)\right)\right\|_{L^\infty(\mathcal C)}\leq C_j^{(\eta)}:=\left\|t^j\eta^{(j)}(t)\right\|_{L^\infty(\R)},\ \forall\ \tilde T>0, \forall\ j\in\Z_{\geq 0}.
\end{equation}
Hence for any $n\in\Z_{\geq 1}$, by Proposition \ref{Prop.phi_n_rho_n} and (ii) of Lemma \ref{Lem.X_la_properties}, we have
\[(T-t)^{\beta+j-1}D^j\rho_n,  (T-t)^{\frac{2\beta}{p-1}+j}D^j\rho_n\in L^\infty(\mathcal C),\quad \forall\ j\in\Z_{\geq 0},\]
where we have used $\la_n=(2n-1)(\beta-1)>1-\beta$ and $\mu_n=2n(\beta-1)-2\beta/(p-1)>-2\beta/(p-1)$.\fi

\section{The approximate solution}\label{Sec.Approximate}

In this section, we prove Proposition \ref{Prop.Approximate}, i.e., the construction of the approximate solution.

\subsection{Construction of the approximate solution}

Let $T=b=1$ and $(\rho_n,\phi_n)\in\mathscr X_{\mu_n}\times\mathscr X_{\la_n}$ $(n\in\Z_{\geq 0})$ be given by Proposition \ref{Prop.phi_n_rho_n}. For $N\in\Z_+$ and $(t,x)\in[0,T)\times\R^d$, let
\begin{align*}
	&\rho_{(N)}(t,x):=\sum_{n=0}^{N}\rho_n(t,x) b^n,\qquad \phi_{(N)}(t,x):=\sum_{n=0}^N \phi_n(t,x)b^n,\\
	&E_N:=\rho_{(N)}^{p}+\rho_{(N)}\partial^{\alpha}\phi_{(N)}\partial_{\alpha}\phi_{(N)}-b\Box\rho_{(N)},\\
	&J_N:= 2\partial^{\alpha}\rho_{(N)}\partial_{\alpha}\phi_{(N)}+\rho_{(N)}\Box\phi_{(N)}.
\end{align*}
It follows from  \eqref{Eq.recurrence_relation}  that 
\begin{align*}
	E_{N}(t,x)=\sum_{n=N+1}^{pN}E_{N,n}(t,x) b^n,\qquad J_{N}(t,x)=\sum_{n=N+1}^{2N} J_{N,n}(t,x)b^n
\end{align*}with\begin{align*}
	E_{N,n}:&=\sum_{\substack{n_1+\cdots+n_p=n\\ n_1, \cdots, n_p\leq N}}\rho_{n_1}\rho_{n_2}\cdots\rho_{n_p}+\sum_{\substack{n_1+n_2+n_3=n\\ n_1, n_2, n_3\leq N}}\rho_{n_1}\pa^\alpha\phi_{n_2}\pa_\alpha\phi_{n_3}-\Box\rho_{N}\mathbf{1}_{n=N+1},\\
	J_{N,n}:&=2\sum_{\substack{n_1+n_2=n\\ n_1, n_2\leq N}}\pa^\alpha\rho_{n_1}\pa_\alpha\phi_{n_2}+\sum_{\substack{n_1+n_2=n\\ n_1, n_2\leq N}}\rho_{n_1}\Box\phi_{n_2}.
\end{align*}
Then $E_{N,n}\in \mathscr{X}_{2n(\beta-1)-2p\beta/(p-1)}$, $J_{N,n}\in \mathscr{X}_{(2n-1)(\beta-1)-2\beta/(p-1)-2}=\mathscr{X}_{(2n+1)(\beta-1)-2p\beta/(p-1)}$. Here the proof  is  similar to Lemma \ref{lem1}.

Take $N_0\in \Z_+$ such that $2N_0(\beta-1)-2p\beta/(p-1)>3 $. We fix such $N_0$ (which is the same as the one in Proposition \ref{Prop.Approximate}) and a non-decreasing sequence $\{k_N\}_{N\in\Z_{\geq N_0}}\subset \Z_+$ such that
\begin{equation}\label{Eq.k_N}
	2N(\beta-1)-2p\beta/(p-1)>3k_N \ \forall\ N\in\Z\cap[N_0,+\infty), \text{ and }\lim_{N\to\infty}k_N=+\infty.
\end{equation}
Then by Lemma \ref{Lem.X_la_properties} (ii), we have $(T-t)^{-2k_N}(D^jE_{N},D^jJ_{N})\in L^{\infty}(\mathcal C) $ for
$0\leq j\leq k_N$, $N\geq N_0$. Or equivalently, for each $N\in\Z\cap[N_0,+\infty)$ there is a constant $A_N>0$ satisfying
\begin{align*}
	|D^jE_{N}(t,x)|+|D^jJ_{N}(t,x)|\leq A_N(T-t)^{2k_N},\quad \forall\ 0\leq j\leq k_N,\ (t,x)\in \mathcal C.
\end{align*}

{In fact for every fixed $n>d/2$, we can use $(\rho_{(N)},\phi_{(N)})$  as an approximate solution for  $N$ large enough (but fixed) 
to construct blow-up solutions of $H^n$ regularity. But to obtain a blow-up solution of $C^{\infty}$ initial data, we need to sum all the 
$(\rho_{n},\phi_{n}) $ with truncation 
as in \eqref{Eq.rho_*_phi_*}.}
Note that for $T-t\in [2T_{N+1},T_N] $, we have $E_{*}(t,x)=E_{N}(t,x) $ and $J_{*}(t,x)=J_{N}(t,x) $. The following result extends the above estimate to the case $T-t\in [T_{N+1},T_N]$ (with a possible different $A_N$).

\begin{lemma}\label{lem3}
	Let $T=b=1$. Then there exists a sequence $\{A_N\}_{N\in\Z_{\geq N_0-1}}$ such that for all $\{T_n\}_{n\in\Z_{\geq 0}}$ satisfying $T_n=T$ for $0\leq n<N_0$, $0<T_n\leq T_{n-1}/4$ for all $n\geq N_0$, if we define 
	$ \rho_*$, $ \phi_*$, $E_*$, $J_*$ by \eqref{Eq.rho_*_phi_*}, \eqref{EJ1}, then for $j\in\Z\cap[0,k_N]$ we have
	\begin{align}\label{T0}
		|D^jE_{*}(t,x)|+|D^jJ_{*}(t,x)|\leq A_N(T-t)^{2k_N},\quad \forall\ T-t\in [T_{N+1},T_N],\ (t,x)\in \mathcal C.
\end{align}\end{lemma}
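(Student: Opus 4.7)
The plan is to exploit the rapid decay of $\{T_n\}$ to reduce the infinite sums defining $\rho_*,\phi_*$ to finite expressions on each window $T-t\in[T_{N+1},T_N]$, and then to estimate $E_*,J_*$ by comparison with the truncated $E_N,J_N$ already controlled via the $\mathscr X_\la$ calculus. First I would identify the active cutoffs: writing $\eta_n:=\eta((T-t)/T_n)$, for $n\le N$ one has $T_n\ge T_N\ge T-t$ so $\eta_n=1$, while for $n\ge N+2$, iterating $T_n\le T_{n-1}/4$ gives $T_n\le T_{N+1}/4\le (T-t)/4$, so $\eta_n=0$. Hence on the window
\begin{align*}
	\rho_*=\rho_{(N)}+\eta_{N+1}\rho_{N+1},\qquad \phi_*=\phi_{(N)}+\eta_{N+1}\phi_{N+1}
\end{align*}
(with $b=1$). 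Writing $E_*=E_N+R_N^E$ and $J_*=J_N+R_N^J$, the main pieces $E_N\in\sum_{n=N+1}^{pN}\mathscr X_{2n(\beta-1)-2p\beta/(p-1)}$ and $J_N\in\sum_{n=N+1}^{2N}\mathscr X_{(2n+1)(\beta-1)-2p\beta/(p-1)}$ satisfy $|D^jE_N|+|D^jJ_N|\lesssim(T-t)^{2k_N}$ for $j\le k_N$ on the whole light cone, thanks to \eqref{Eq.k_N} and Lemma \ref{Lem.X_la_properties}(ii).

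For the residuals $R_N^E, R_N^J$, I would use that they are supported in $\{T-t\le 2T_{N+1}\}$, on which $T-t\sim T_{N+1}$. The crucial scale-invariant estimate $|D^k\eta_{N+1}|\le C_kT_{N+1}^{-k}\le C_k'(T-t)^{-k}$ converts every $T_{N+1}$-dependent factor into an intrinsic power of $T-t$. Combining this with Lemma \ref{Lem.X_la_properties}(ii) applied to $\rho_{N+1}\in\mathscr X_{\mu_{N+1}}$, $\phi_{N+1}\in\mathscr X_{\la_{N+1}}$ (with an arbitrarily small $\e>0$ absorbing the $\tau^j$ logarithmic corrections) and the analogous bounds for $\rho_{(N)}, \phi_{(N)}$, a Leibniz expansion of a representative summand such as $\rho_{(N)}^{p-1}\eta_{N+1}\rho_{N+1}$ yields total decay at least
\begin{align*}
	(p-1)\mu_0+\mu_{N+1}-j-\e \;=\; 2(N+1)(\beta-1)-\tfrac{2p\beta}{p-1}-j-\e \;>\; 3k_N+2(\beta-1)-j-\e \;>\; 2k_N
\end{align*}
for $j\le k_N$ and $\e$ small, by \eqref{Eq.k_N} and $\beta>1$. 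The contributions from the other cross terms in $\rho_*^p-\rho_{(N)}^p$ and from the quadratic form $\rho_*\pa^\al\phi_*\pa_\al\phi_*-\rho_{(N)}\pa^\al\phi_{(N)}\pa_\al\phi_{(N)}$ give the same power or better, and $\Box(\eta_{N+1}\rho_{N+1})$ contributes the even stronger bound $(T-t)^{\mu_{N+1}-2-j-\e}$.

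The main obstacle is the combinatorial bookkeeping of all such cross terms, each of which must be individually verified to have the required decay; this is routine but tedious. The essential uniformity asserted by the lemma, and the reason the sequence $\{T_n\}$ can later be chosen inductively in Proposition \ref{Prop.Approximate}, is that the resulting $A_N$ depends only on $N$, $k_N$, $p$, $\beta$, the $C^{k_N}$-norm of $\eta$, and the $\mathscr X_\la$-data of $(\rho_n,\phi_n)_{n\le N+1}$, but \emph{not} on the specific sequence $\{T_n\}_{n\ge N_0}$, because the bound $T_{N+1}^{-k}\lesssim (T-t)^{-k}$ on $\operatorname{supp}(D\eta_{N+1})$ eliminates any residual dependence on $T_{N+1}$.
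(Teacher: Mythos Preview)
Your proposal is correct and captures the essential mechanism: the scale-invariant bound $|D^k\eta_{N+1}|\lesssim (T-t)^{-k}$ on its support is precisely what makes the constants independent of the sequence $\{T_n\}$. The paper's proof is built on the same idea but packages it differently. Rather than splitting $E_*=E_N+R_N^E$ and estimating the residual by hand, the paper introduces an auxiliary parameter $s\in(0,1]$, defines the spaces
\[
\mathscr X_\la^*=\Big\{\textstyle\sum_j f_j(t,x)\,\eta_j\!\big(\tfrac{T-t}{s}\big):\ f_j\in\mathscr X_\la,\ \eta_j\in\mathscr Y_0\Big\},
\]
and proves an analogue of Lemma~\ref{Lem.X_la_properties} for $\mathscr X_\la^*$ with bounds uniform in $s$. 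Then $\rho_{N*}(t,x,s):=\rho_{(N)}+\eta\big(\tfrac{T-t}{s}\big)\rho_{N+1}$ lies in this framework, and the recurrence~\eqref{Eq.recurrence_relation} immediately forces the expansion $E_{N*}=\sum_{n=N+1}^{p(N+1)}E_{N,n}^*$ with $E_{N,n}^*\in\mathscr X^*_{2n(\beta-1)-2p\beta/(p-1)}$, giving the bound in one stroke. This bypasses the cross-term bookkeeping you flag as ``routine but tedious'': the algebraic cancellation for $n\le N$ is automatic, and there is no need to expand $(\rho_{(N)}+\eta_{N+1}\rho_{N+1})^p-\rho_{(N)}^p$ term by term. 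Your direct Leibniz approach is more elementary and avoids introducing new function spaces, at the cost of that bookkeeping; the paper's framework is cleaner but requires the overhead of Lemma~\ref{Lem.X_la*}.
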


\begin{lemma}\label{lem4}
	Let $T=b=1$. There exists a sequence $\{\widetilde{T}_n\}_{n\geq 0}$ {satisfying} $$\widetilde{T}_n=T \text{ for }0\leq n<N_0\quad\text{and}\quad  0<\widetilde{T}_n\leq \widetilde{T}_{n-1}/4 \text{ for }n\geq N_0,$$
	{such that }for every sequence $\{{T}_n\}_{n\geq 0}$ {with} ${T}_n=T$ for $0\leq n<N_0$ and $0<{T}_n\leq \min(\widetilde T_n, T_{n-1}/4)$ for  $n\geq N_0$, for $ \rho_*$, $ \phi_*$ defined in \eqref{Eq.rho_*_phi_*}, 
	we have
	\begin{align}\label{T1}
		(T-t)^{j}D^j(\phi_*-\phi_0)\in L^\infty(\mathcal  C),\ (T-t)^{\frac{2\beta}{p-1}+j-\beta+1}D^j(\rho_*-\rho_0)\in L^\infty(\mathcal  C),\ \forall\ j\in\Z_{\geq0}.\end{align}
\end{lemma}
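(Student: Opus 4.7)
The plan is to estimate each summand of the tails $\phi_*-\phi_0=\sum_{n\geq 1}\eta((T-t)/T_n)\phi_n$ and $\rho_*-\rho_0=\sum_{n\geq 1}\eta((T-t)/T_n)\rho_n$ (using $b=1$) separately on $\mathcal C$, and then to choose $\widetilde T_n$ so small that both series, together with all their derivatives, converge uniformly. The arithmetic that lets a \emph{single} choice of $\widetilde T_n$ handle both $\phi_*-\phi_0$ and $\rho_*-\rho_0$ is the balanced-exponent identity
\[\mu_n+\frac{2\beta}{p-1}-\beta+1=(2n-1)(\beta-1)=\la_n>0\quad\text{for all }n\geq 1,\]
so that after weighting by $(T-t)^j$ and $(T-t)^{\frac{2\beta}{p-1}+j-\beta+1}$ respectively, the two types of summand exhibit the same net decay $(T-t)^{\la_n-\epsilon}$.

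First I would combine parts (i) and (ii) of Lemma \ref{Lem.X_la_properties} to obtain, for every $\epsilon>0$ and every $j_2\in\Z_{\geq 0}$, the pointwise bounds $|D^{j_2}\phi_n|\leq C_{n,j_2,\epsilon}(T-t)^{\la_n-j_2-\epsilon}$ and $|D^{j_2}\rho_n|\leq C'_{n,j_2,\epsilon}(T-t)^{\mu_n-j_2-\epsilon}$ on $\mathcal C$. Since $\eta((T-t)/T_n)$ depends only on $t$, is supported in $\{T-t\leq 2T_n\}$, satisfies $|\pa_t^{j_1}[\eta((T-t)/T_n)]|\leq \|\eta^{(j_1)}\|_{L^\infty}T_n^{-j_1}$, and has its $j_1$-th derivative ($j_1\geq 1$) supported in $\{T_n\leq T-t\leq 2T_n\}$, the Leibniz rule together with $(T-t)^{j_1}T_n^{-j_1}\leq 2^{j_1}$ on that smaller set and $(T-t)^{\la_n-\epsilon}\leq (2T_n)^{\la_n-\epsilon}$ (valid once $0<\epsilon<\la_n$) yields on $\mathcal C$
\[\big|(T-t)^j D^j[\eta((T-t)/T_n)\phi_n]\big|+\big|(T-t)^{\frac{2\beta}{p-1}+j-\beta+1}D^j[\eta((T-t)/T_n)\rho_n]\big|\leq \widetilde C_{n,j,\epsilon}\,T_n^{\la_n-\epsilon}\]
for every $n\geq 1$ and every $j\in\Z_{\geq 0}$.

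Next I would pick $\widetilde T_n$ by induction: set $\epsilon_n:=\la_n/2$ and $C_n^{\ast}:=1+\max_{0\leq j\leq n}\widetilde C_{n,j,\epsilon_n}$, then choose $\widetilde T_n\in(0,\widetilde T_{n-1}/4]$ so small that $C_n^{\ast}\,\widetilde T_n^{\la_n/2}\leq 2^{-n}$. For any admissible sequence $T_n\leq\min(\widetilde T_n,T_{n-1}/4)$ and any fixed $j\in\Z_{\geq 0}$, the contributions from indices $n\leq j$ form a finite sum, each term bounded by the estimate above, while the contributions from $n>j$ are dominated by $C_n^{\ast}\,T_n^{\la_n/2}\leq 2^{-n}$, which is geometrically summable. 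Summing yields \eqref{T1}.

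The main obstacle is to arrange a single sequence $\widetilde T_n$ that works uniformly for every derivative order $j\in\Z_{\geq 0}$ simultaneously, since the constants $\widetilde C_{n,j,\epsilon}$ inevitably grow with $j$ for each fixed $n$. This is resolved by baking $j$-uniformity up to order $n$ into $C_n^{\ast}$, and then splitting any given $j$ into a finite ``low part'' $n\leq j$ (bounded term by term) and a geometrically decaying ``high tail'' $n>j$.
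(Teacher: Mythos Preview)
Your proposal is correct and follows essentially the same approach as the paper: bound each summand $\eta((T-t)/T_n)\phi_n$ and $\eta((T-t)/T_n)\rho_n$ by a constant (independent of $T_n$) times a positive, $n$-growing power of $T_n$, then choose $\widetilde T_n$ small enough and split each $j$-sum into a finite head and a geometrically summable tail. The only cosmetic differences are that the paper packages your Leibniz estimate into the auxiliary spaces $\mathscr X_\la^*$ (Lemma~\ref{Lem.X_la*}) and extracts the surplus exponent $(n-1)(\beta-1)$ rather than your $\la_n/2$; your balanced-exponent observation $\mu_n+\frac{2\beta}{p-1}-\beta+1=\la_n$ is a slightly cleaner way to treat both series at once.
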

\begin{lemma}\label{lem5}
	Let $T=1$. There exists $\widetilde{c}\in (0,T)$ such that 
	\begin{align}\label{T2}		&(T-t)^{\beta-1+j}D^j\phi_0\in L^\infty(\mathcal  C),\quad (T-t)^{\frac{2\beta}{p-1}+j}D^j\rho_0\in L^\infty(\mathcal  C),\quad \forall\ j\in\Z_{\geq0},\\
		\label{T3}&(T-t)^{\beta}(\pa_t\phi_0-|D_x\phi_0|)(t,x)\geq \widetilde{c},\quad (T-t)^{\frac{2\beta}{p-1}}\rho_0(t,x)\geq \widetilde{c},\quad \forall\ (t,x)\in\mathcal C.
\end{align}\end{lemma}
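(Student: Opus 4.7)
The plan is to read off both estimates directly from the explicit self-similar ansatz \eqref{Eq.phi_0_selfsimilar}--\eqref{Eq.phi_0_rho_0} together with the smoothness and positivity properties of $v, \hat\phi_0, \hat\rho_0$ guaranteed by Assumption \ref{Assumption} and Remark \ref{Rmk.v(Z)_properties}. Since $\phi_0(t,x)=(T-t)^{1-\beta}\hat\phi_0(Z)$ and $\rho_0(t,x)=(T-t)^{-2\beta/(p-1)}\hat\rho_0(Z)$, with $\hat\phi_0,\hat\rho_0\in C_\mathrm{e}^\infty([0,+\infty))$ (by Lemma \ref{Lem.v_hat_rho}), the bound \eqref{T2} is obtained from Lemma \ref{Lem.X_la_properties}(iii): for the first estimate apply it with $\la=1-\beta$ and $\mu=1-\beta-j$, which satisfies $\la\geq j+\mu$; for the second apply it with $\la=-2\beta/(p-1)$ and $\mu=-2\beta/(p-1)-j$.

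For the second inequality in \eqref{T3}, I will simply note that $(T-t)^{2\beta/(p-1)}\rho_0(t,x)=\hat\rho_0(Z)$, that on $\mathcal C$ we have $Z\in[0,2)$, and that $\hat\rho_0$ is continuous and strictly positive on the compact interval $[0,2]$ (from \eqref{Eq.phi_0_rho_0}, using $\hat\phi_0>0$, $|v|<1$, and $Zv<1$); hence $\hat\rho_0$ has a positive lower bound there.

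The first inequality in \eqref{T3} requires one short computation. Using $Z=r/(T-t)$ and the defining ODE $\hat\phi_0'(Z)=(\beta-1)v(Z)\hat\phi_0(Z)/(1-Zv(Z))$ coming from \eqref{Eq.phi_0_rho_0}, I compute
\begin{align*}
\pa_t\phi_0 &=(T-t)^{-\beta}\bigl[(\beta-1)\hat\phi_0(Z)+Z\hat\phi_0'(Z)\bigr]
=(T-t)^{-\beta}\frac{(\beta-1)\hat\phi_0(Z)}{1-Zv(Z)},\\
\pa_r\phi_0 &=(T-t)^{-\beta}\hat\phi_0'(Z)=(T-t)^{-\beta}\frac{(\beta-1)v(Z)\hat\phi_0(Z)}{1-Zv(Z)},
\end{align*}
so that
\[
(T-t)^{\beta}\bigl(\pa_t\phi_0-|D_x\phi_0|\bigr)=\frac{(\beta-1)\hat\phi_0(Z)\bigl(1-|v(Z)|\bigr)}{1-Zv(Z)}.
\]
By Assumption \ref{Assumption} and Remark \ref{Rmk.v(Z)_properties}, each of $\beta-1$, $\hat\phi_0(Z)$, $1-|v(Z)|$, and $1-Zv(Z)$ is continuous and strictly positive on $[0,2]$ (at $Z=0$ the quantity equals $1$), so the right-hand side is bounded below by a positive constant on $Z\in[0,2)$.

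Taking $\widetilde c$ to be the minimum of the two positive infima produced in the previous two steps, capped by $T/2$, gives the claim. The main potential obstacle is the computation of $\pa_t\phi_0-|D_x\phi_0|$ and the verification that the resulting expression is positive on $[0,2]$, but this reduces entirely to properties of $v$ and $\hat\phi_0$ that are already in hand; in particular, no further analysis of the ODE \eqref{Eq.ODE_v(Z)} is needed.
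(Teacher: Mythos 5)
Your proposal is correct and follows essentially the same route as the paper's proof: \eqref{T2} via Lemma \ref{Lem.X_la_properties}(iii) applied to the self-similar forms in \eqref{Eq.phi_0_selfsimilar}, and \eqref{T3} via the identity $(T-t)^{\beta}(\pa_t\phi_0-|D_x\phi_0|)=(\beta-1)\hat\phi_0(Z)(1-|v(Z)|)/(1-Zv(Z))$ together with continuity and strict positivity on $Z\in[0,2]$. No gaps.
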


Let's first prove Proposition \ref{Prop.Approximate} by admitting Lemma \ref{lem3}$\sim$Lemma \ref{lem5} for the moment.

\begin{proof}[Proof of Proposition \ref{Prop.Approximate}]
	{\bf Step 1.} Construction of the sequence $\{{T}_n\}_{n\geq 0}$. Let $A_N>0$ be given by Lemma \ref{lem3} and the sequence $\{\widetilde{T}_n\}_{n\geq 0}$ be given by Lemma \ref{lem4}.
	Let ${T}_n=T$ for $0\leq n<N_0$ and $T_n=\min(\widetilde{T}_n,A_n^{-1/k_n},T_{n-1}/4)$ for $n\geq N_0$. Then
	$0<{T}_n\leq {T}_{n-1}/4$, $T_n\leq \widetilde{T}_n$, $A_n{T}_n^{k_n}\leq 1$, for all $n\geq N_0$, and $\lim\limits_{N\to+\infty}T_N=0$. So, there hold \eqref{T0} for $j\in\Z\cap[0,k_N]$, \eqref{T1}, \eqref{T2}, and \eqref{T3} with $\widetilde{c}\in (0,T)$ given by Lemma \ref{lem5}.\smallskip
	
	{\bf Step 2.} Proof of \eqref{Eq.phi_*_rho_*_1}. As $ \beta>1$, $(T-t)^{\beta-1}\in L^\infty(\mathcal  C)$, we get by \eqref{T1}  that 
	\begin{align*}
		&(T-t)^{\beta-1+j}D^j(\phi_*-\phi_0)=(T-t)^{\beta-1}(T-t)^{j}D^j(\phi_*-\phi_0)\in L^\infty(\mathcal  C),\quad \forall\ j\in\Z_{\geq0},\\
		&(T-t)^{\frac{2\beta}{p-1}+j}D^j(\rho_*-\rho_0)=(T-t)^{\beta-1}(T-t)^{\frac{2\beta}{p-1}+j-\beta+1}D^j(\rho_*-\rho_0)\in L^\infty(\mathcal  C),\quad \forall\ j\in\Z_{\geq0},
	\end{align*}which along with with \eqref{T2} implies \eqref{Eq.phi_*_rho_*_1}.\smallskip
	
	{\bf Step 3.} Proof of \eqref{Eq.phi_*_rho_*_2}. By \eqref{T1}, we have (for some $ C_1>0$)\begin{align*}
		&(T-t)(|\partial_t(\phi_*-\phi_0)|+|D_x(\phi_*-\phi_0)|)+(T-t)^{\frac{2\beta}{p-1}-\beta+1}|\rho_*-\rho_0|\leq C_1\quad \text{in}\,\, \mathcal  C.
	\end{align*}Now we take $c_0\in(0,T)$ such that $c_0+c_0^{\beta-1}C_1\leq \widetilde{c}$, where the existence of such a $c_0$ is ensured by $\beta>1$ {and $ \widetilde{c}>0$.} Then for $(t,x)\in\mathcal C,$ $T-t<c_0$,  we get by \eqref{T3}  that {(as $T-t>0$)}
	\begin{align*}
		(T-t)^{\beta}(\pa_t\phi_*-|D_x\phi_*|)
		\geq& (T-t)^{\beta}(\pa_t\phi_0-|D_x\phi_0|)-(T-t)^{\beta}(|\partial_t(\phi_*-\phi_0)|+|D_x(\phi_*-\phi_0)|)\\
		\geq& \widetilde{c}-(T-t)^{\beta-1}C_1\geq \widetilde{c}-c_0^{\beta-1}C_1\geq c_0,	
		\end{align*}
	and 
	\begin{align*}
		(T-t)^{\frac{2\beta}{p-1}}\rho_*\geq (T-t)^{\frac{2\beta}{p-1}}&\rho_0-(T-t)^{\frac{2\beta}{p-1}}|\rho_*-\rho_0|
		\geq \widetilde{c}-(T-t)^{\beta-1}C_1\geq \widetilde{c}-c_0^{\beta-1}C_1\geq c_0.
	\end{align*}	
	
	{\bf Step 4.} Proof of \eqref{EJ}. {We fix $ \lambda>0$, $j\in\Z_{\geq0} $. As} $\lim\limits_{N\to+\infty}k_N=+\infty$, there exists $N_1\in\Z_{\geq N_0}$ such that $ k_N>\lambda+j$ for
	$N\in\Z_{\geq N_1}$. Then by \eqref{T0} and $A_n{T}_n^{k_n}\leq 1$,  we have
	\begin{align*}
		|D^jE_{*}(t,x)|+|D^jJ_{*}(t,x)|\leq A_N(T-t)^{2k_N}\leq A_N{T}_N^{k_N}(T-t)^{k_N}\leq (T-t)^{k_N}\leq (T-t)^{\lambda},
	\end{align*}for $T-t\in [T_{N+1},T_N]$, $(t,x)\in \mathcal C$, $N\in\Z_{\geq N_1}$. As $\lim_{N\to+\infty}T_N=0$, we have
	\begin{align}\label{EJN1}
		|D^jE_{*}(t,x)|+|D^jJ_{*}(t,x)|\leq  (T-t)^{\lambda},\quad \forall\ T-t\in (0,T_{N_1}],\ (t,x)\in \mathcal C.
	\end{align}As $\rho_*, \phi_*\in C^\infty([0, T)\times\R^d)$ (see footnote \ref{footnote.finite_sum}),  we have $E_*, J_*\in C^\infty([0,T)\times\R^d)$ by \eqref{EJ1}. Thus, there exists a constant $ C(j,T_{N_1})>0$ such that\begin{align*}
		|D^jE_{*}(t,x)|+|D^jJ_{*}(t,x)|\leq  C(j,T_{N_1}),\quad \forall\ T-t\in [T_{N_1},T],\ |x|\leq 2T.
	\end{align*}Then (recall that $\mathcal C=\left\{(t,x)\in[0, T)\times\R^d: |x|<2(T-t)\right\}$)\begin{align*}
		|D^jE_{*}(t,x)|+|D^jJ_{*}(t,x)|\leq  C(j,T_{N_1})T_{N_1}^{-\lambda}(T-t)^{\lambda},\quad \forall\ T-t\in [T_{N_1},T],\ (t,x)\in \mathcal C,
	\end{align*}which along with with \eqref{EJN1} implies \eqref{EJ}.
\end{proof}

\subsection{Proof of main lemmas}\label{Subsec.Proof_Lem3}

We define the following auxiliary spaces
\begin{align}
	&\mathscr Y_0:=\left\{f\in C^\infty([0,+\infty)): f'=0\ \text{in}\ [0,1]\cup[2,+\infty)\right\},\\
	&\mathscr X_{\la}^*:=\left\{f(t,x,s)=\sum_{j=0}^n f_j(t,x)\eta_j\left(\frac{T-t}{s}\right): n\in\Z_{\geq 0}, f_j\in \mathscr X_{\la}, \eta_j\in\mathscr Y_0,\ \forall\ j\right\}.
\end{align}Note that $ \eta\in \mathscr Y_0$, $ \mathscr Y_0$ is a ring, and $ \mathscr X_{\la}^*$ is a linear vector space.  

\begin{lemma}\label{Lem.X_la*}
	\begin{enumerate}[(i)]
		\item  Let $\la,\mu\in\C$, $f\in\mathscr X_\la^*, g\in\mathscr X_\mu^*$. Then $\Box f\in\mathscr X_{\la-2}^*$, $fg\in\mathscr X_{\la+\mu}^*$, $\pa^\alpha f\pa_\alpha g\in \mathscr X_{\la+\mu-2}^*$.
		\item Let $\la,\mu\in\R$ and $j\in\Z_{\geq 0}$ be such that $\la>j+\mu$. If $f\in\mathscr X_\la^*$, then $(T-t)^{-\mu}D^jf\in L^\infty(\mathcal C\times(0,1])$.
	\end{enumerate}
\end{lemma}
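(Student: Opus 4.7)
The space $\mathscr X_\la^*$ is obtained from $\mathscr X_\la$ by allowing extra factors of the form $\eta((T-t)/s)$ with $\eta\in\mathscr Y_0$, so my plan for (i) is to track these factors through each operation and reduce to the corresponding claim of Lemma~\ref{Lem.X_la_properties}(i) for $\mathscr X_\la$. The product $fg$ is immediate: with $f=\sum_j f_j\,\eta_j((T-t)/s)$ and $g=\sum_k g_k\,\tilde\eta_k((T-t)/s)$, one has $fg=\sum_{j,k}(f_jg_k)(\eta_j\tilde\eta_k)((T-t)/s)$, where $f_jg_k\in\mathscr X_{\la+\mu}$ and $\eta_j\tilde\eta_k\in\mathscr Y_0$ (both being rings).

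For $\Box f=-\pa_t^2 f+\Delta f$, the Laplacian commutes with the $x$-independent factor $\eta_j((T-t)/s)$, leaving $\Delta f_j\cdot\eta_j((T-t)/s)\in\mathscr X_{\la-2}^*$ by Lemma~\ref{Lem.X_la_properties}(i). The crux is $\pa_t$: the chain rule gives
\[
\pa_t\bigl[f_j(t,x)\,\eta_j((T-t)/s)\bigr]=(\pa_t f_j)\,\eta_j\bigl(\tfrac{T-t}{s}\bigr)-(T-t)^{-1}f_j\cdot\chi_j\bigl(\tfrac{T-t}{s}\bigr),\qquad \chi_j(u):=u\,\eta_j'(u),
\]
which absorbs the spurious $s^{-1}$ into $(T-t)^{-1}\in\mathscr X_{-1}$ at the price of replacing $\eta_j$ with $\chi_j$. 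Since $\eta_j'\equiv 0$ on $[0,1]\cup[2,\infty)$, the same is true of $\chi_j$, so $\chi_j\in\mathscr Y_0$. Hence $\pa_t f\in\mathscr X_{\la-1}^*$; iterating gives $\pa_t^2 f\in\mathscr X_{\la-2}^*$, and therefore $\Box f\in\mathscr X_{\la-2}^*$. For $\pa^\al f\,\pa_\al g$, I split as $-\pa_t f\,\pa_t g+\sum_i\pa_i f\,\pa_i g$: the first summand is a product already handled, and the second factors as $\sum_{j,k}(\sum_i\pa_i f_j\,\pa_i g_k)\,\eta_j\tilde\eta_k((T-t)/s)$, where the inner sum equals $\pa^\al f_j\,\pa_\al g_k+\pa_t f_j\,\pa_t g_k\in\mathscr X_{\la+\mu-2}$ by Lemma~\ref{Lem.X_la_properties}(i).

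For (ii), expand $f=\sum_l f_l(t,x)\,\eta_l((T-t)/s)$ and apply Leibniz to an arbitrary $D^\al$ of order $|\al|\le j$. Because $\eta_l$ is $x$-independent, only $t$-derivatives can hit the $\eta_l$-factor; if $\al_0$ of the $|\al|$ derivatives land on it, they contribute
\[
(-1)^{\al_0}s^{-\al_0}\eta_l^{(\al_0)}\bigl(\tfrac{T-t}{s}\bigr)=(T-t)^{-\al_0}\psi_{\al_0}\bigl(\tfrac{T-t}{s}\bigr),\qquad \psi_{\al_0}(u):=u^{\al_0}\eta_l^{(\al_0)}(u).
\]
For $\al_0\ge 1$, $\psi_{\al_0}$ is supported in $[1,2]$ and hence bounded on $[0,\infty)$; for $\al_0=0$, $\psi_0=\eta_l$ is bounded. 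The remaining derivatives, of total order at most $j-\al_0$, act on $f_l\in\mathscr X_\la$, and Lemma~\ref{Lem.X_la_properties}(ii) with parameters $(j-\al_0,\mu+\al_0)$ applies because $\la>j+\mu=(j-\al_0)+(\mu+\al_0)$. Summing the finitely many Leibniz terms gives $(T-t)^{-\mu}D^j f\in L^\infty(\mathcal C\times(0,1])$. The only genuine obstacle anywhere in the proof is the identity trading $s^{-1}\eta_j'((T-t)/s)$ for $(T-t)^{-1}$ times a member of $\mathscr Y_0$; once that is in hand, everything reduces to bookkeeping on top of Lemma~\ref{Lem.X_la_properties}.
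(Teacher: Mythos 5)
Your proof is correct and follows essentially the same route as the paper: reduce to a single term $f_1(t,x)\,\eta_1((T-t)/s)$, exploit that the cutoff is $x$-independent, and use the key identity $\pa_t[\eta((T-t)/s)]=-(T-t)^{-1}\chi((T-t)/s)$ with $\chi(u)=u\eta'(u)\in\mathscr Y_0$ to reduce everything to Lemma \ref{Lem.X_la_properties}. The only (cosmetic) divergences are that the paper obtains $\pa^\al f\pa_\al g$ from the polarization identity $\pa^\al f\pa_\al g=\bigl(\Box(fg)-(\Box f)g-f\Box g\bigr)/2$ rather than your direct time/space split, and proves (ii) by first treating pure $D_x$-derivatives and then commuting $\pa_t^{\al_0}$ through via part (i) instead of your one-shot Leibniz expansion; both variants are equally valid.
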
Here the operators $\Box $, $\pa_\alpha $ and $D$ are only acted on $(t,x)$ and not on $s$.

\begin{lemma}\label{lem7}
	\begin{enumerate}[(i)]
		\item  Let $\la,\mu\in\C$, $f\in\mathscr X_\la^*, g\in\mathscr X_\mu^*$. Then $\Delta f\in\mathscr X_{\la-2}^*$, $\partial_t f\in\mathscr X_{\la-1}^*$, $fg\in\mathscr X_{\la+\mu}^*$.
		\item Let $\la,\mu\in\R$ and $j\in\Z_{\geq 0}$ be such that $\la>j+\mu$. If $f\in\mathscr X_\la^*$, then $(T-t)^{-\mu}D_x^jf\in L^\infty(\mathcal C\times(0,1])$.
	\end{enumerate}
\end{lemma}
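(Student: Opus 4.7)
The key structural observation is that the cutoff $\eta_k((T-t)/s)$ occurring in elements of $\mathscr X_\la^*$ is independent of $x$, so it commutes with every spatial derivative. A generic $f\in\mathscr X_\la^*$ has the form
\[
f(t,x,s)=\sum_{k=0}^{n}f_k(t,x)\,\eta_k\!\left(\frac{T-t}{s}\right),\qquad f_k\in\mathscr X_\la,\ \eta_k\in\mathscr Y_0.
\]
For part (i), the product statement $fg\in\mathscr X_{\la+\mu}^*$ is immediate from $\mathscr X_\la\cdot\mathscr X_\mu\subset\mathscr X_{\la+\mu}$ (Lemma~\ref{Lem.X_la_properties}(i)) together with the fact that $\mathscr Y_0$ is a ring (a product of two functions each locally constant on $[0,1]$ and on $[2,+\infty)$ shares the same property). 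Likewise, since $\Delta$ commutes with $\eta_k((T-t)/s)$, one has $\Delta f=\sum_k(\Delta f_k)\,\eta_k((T-t)/s)$, and Lemma~\ref{Lem.X_la_properties}(i) gives $\Delta f_k\in\mathscr X_{\la-2}$, hence $\Delta f\in\mathscr X_{\la-2}^*$.

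\textbf{The $\partial_t$ case} is the sole technical step because the cutoff carries $t$-dependence. Applying Leibniz,
\[
\partial_t\!\left[f_k\,\eta_k\!\left(\tfrac{T-t}{s}\right)\right]=(\partial_tf_k)\,\eta_k\!\left(\tfrac{T-t}{s}\right)-\frac{1}{s}\,f_k\,\eta_k'\!\left(\tfrac{T-t}{s}\right).
\]
The first term already sits in $\mathscr X_{\la-1}^*$ since $\partial_tf_k\in\mathscr X_{\la-1}$. For the second term I absorb the stray $1/s$ by the identity
\[
\frac{1}{s}\,\eta_k'\!\left(\tfrac{T-t}{s}\right)=\frac{1}{T-t}\,\tilde\eta_k\!\left(\tfrac{T-t}{s}\right),\qquad \tilde\eta_k(y):=y\,\eta_k'(y),
\]
and verify that $\tilde\eta_k\in\mathscr Y_0$: because $\eta_k'\equiv 0$ on $[0,1]\cup[2,+\infty)$, we have $\tilde\eta_k\equiv 0$ on those intervals, so its derivative also vanishes there. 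Since $\mathscr X_\la=(T-t)^\la\mathscr X_0$ by definition, $(T-t)^{-1}f_k\in\mathscr X_{\la-1}$, so the second term likewise lies in $\mathscr X_{\la-1}^*$. Summing over $k$ gives $\partial_tf\in\mathscr X_{\la-1}^*$.

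\textbf{For part (ii),} because $D_x^j$ acts only in the $x$-variables,
\[
D_x^jf=\sum_{k=0}^{n}(D_x^jf_k)\,\eta_k\!\left(\frac{T-t}{s}\right).
\]
Each $\eta_k\in\mathscr Y_0$ is smooth and constant outside the bounded interval $[1,2]$, hence uniformly bounded on $[0,+\infty)$; in particular $\eta_k((T-t)/s)$ is bounded uniformly in $(t,x,s)\in\mathcal C\times(0,1]$. Since the components of $D_x^jf_k$ form a subcollection of those of $D^jf_k$, we have $|D_x^jf_k|\leq|D^jf_k|$ pointwise, and the hypothesis $\la>j+\mu$ together with Lemma~\ref{Lem.X_la_properties}(ii) applied to $f_k\in\mathscr X_\la$ yields $(T-t)^{-\mu}D_x^jf_k\in L^\infty(\mathcal C)$. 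Combining with the $L^\infty$ bound on each cutoff and summing over $k$ completes the proof. The only genuine obstacle is the $\partial_t$ step; once the replacement $\tfrac{1}{s}\eta_k'(\,\cdot\,)=\tfrac{1}{T-t}\tilde\eta_k(\,\cdot\,)$ with $\tilde\eta_k\in\mathscr Y_0$ is identified, everything else is bookkeeping inside the algebra already developed in Lemma~\ref{Lem.X_la_properties}.
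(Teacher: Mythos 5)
Your proof is correct and follows essentially the same route as the paper's: reduce to a single term $f_k(t,x)\eta_k((T-t)/s)$, commute $\Delta$ and $D_x^j$ with the $x$-independent cutoff, and handle $\partial_t$ via the identity $s^{-1}\eta_k'((T-t)/s)=(T-t)^{-1}\tilde\eta_k((T-t)/s)$ with $\tilde\eta_k(y)=y\eta_k'(y)\in\mathscr Y_0$, which is exactly the paper's $\eta_2$. The verification of (ii) via Lemma \ref{Lem.X_la_properties}(ii) and the uniform boundedness of $\eta_k$ also matches the paper.
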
\begin{proof}
By the definition of  $\mathscr X_{\la}^*$, it suffices to prove the result for $f(t,x,s)=f_1(t,x)\eta_1\big(\frac{T-t}{s}\big)$,
$g(t,x,s)=g_1(t,x)\widetilde{\eta}_1\big(\frac{T-t}{s}\big)$ for some $f_1\in \mathscr X_{\la},$ $g_1\in\mathscr X_\mu$, $\eta_1,\widetilde{\eta}_1\in\mathscr Y_0$.

In this case, $\Delta f(t,x,s)=\Delta f_1(t,x)\eta_1\big(\frac{T-t}{s}\big)$. By Lemma \ref{Lem.X_la_properties} (i), we have $\Delta f_1\in \mathscr X_{\la-2},$ thus $\Delta f\in \mathscr X_{\la-2}^*$.
We also have 
$$\partial_t f(t,x,s)=\partial_t f_1(t,x)\eta_1\big((T-t)/{s}\big)+(T-t)^{-1}f_1(t,x)\eta_2\big({(T-t)/{s}}\big)$$ 
with $ \eta_2(z)=-z\eta_1'(z)\in\mathscr Y_0$ (as $\eta_2=0 $, $\eta_2'=0 $ in $[0,1]\cup[2,+\infty) $). By Lemma \ref{Lem.X_la_properties} (i), we have $\partial_t f_1\in \mathscr X_{\la-1},$ $(T-t)^{-1} \in \mathscr X_{-1},$ $(T-t)^{-1} f_1\in \mathscr X_{\la-1},$
thus $\partial_t f\in\mathscr X_{\la-1}^*$.

In this case, $(fg)(t,x,s)=(f_1g_1)(t,x)(\eta_1\widetilde{\eta}_1)\big(\frac{T-t}{s}\big)$. By Lemma \ref{Lem.X_la_properties} (i), we have  $f_1g_1\in \mathscr X_{\la+\mu},$ as $\mathscr Y_0$ is a ring we have
$\eta_1\widetilde{\eta}_1\in\mathscr Y_0$, thus $fg\in\mathscr X_{\la+\mu}^*$. This completes the proof of (i).

Assume that $\alpha_1,\cdots,\alpha_d\in\Z_{\geq 0}$ are such that $\alpha_1+\cdots+\alpha_d=j$, then we have
		\begin{align*}
			&(T-t)^{-\mu}\pa_{x_1}^{\alpha_1}\cdots\pa_{x_d}^{\alpha_d}f(t,x,s)
			=(T-t)^{-\mu}\pa_{x_1}^{\alpha_1}\cdots\pa_{x_d}^{\alpha_d} f_1(t,x)\eta_1(({T-t})/{s}).
		\end{align*}Then (ii) follows from Lemma \ref{Lem.X_la_properties}  (ii) and $ \eta_{1}\in L^{\infty}([0,+\infty))$.
\end{proof}

\begin{proof}[Proof of Lemma \ref{Lem.X_la*}]
	By Lemma \ref{lem7} (i), we have $fg\in\mathscr X_{\la+\mu}^*$, $\Delta f\in\mathscr X_{\la-2}^*$, $\partial_t f\in\mathscr X_{\la-1}^*$, $\partial_t^2 f\in\mathscr X_{\la-2}^*$, thus $\Box f=-\partial_t^2f+\Delta f\in\mathscr X_{\la-2}^*$. As a consequence, we have $\Box (fg)\in \mathscr{X}_{(\lambda+\mu)-2}^* $, $(\Box f)g\in \mathscr{X}_{(\lambda-2)+\mu}^*$, $f\Box g\in \mathscr{X}_{\lambda+(\mu-2)}^* $, hence
$\pa^\alpha f\pa_\alpha g=(\Box (fg)-(\Box f)g-f\Box g)/2\in \mathscr{X}_{\lambda+\mu-2}^*$.

Assume that $\alpha_0,\alpha_1,\cdots,\alpha_d\in\Z_{\geq 0}$ are such that $\alpha_0+\cdots+\alpha_d=j$. By Lemma \ref{lem7} (i),  we have $\partial_t^{\alpha_0} f\in\mathscr X_{\la-\alpha_0}^*$. Then by Lemma \ref{lem7} (ii) and $\la-\alpha_0>j'+\mu$ (here $j'=\alpha_1+\cdots+\alpha_d=j-\alpha_0$), we have \begin{align*}
			&(T-t)^{-\mu}|\pa_t^{\alpha_0}\pa_{x_1}^{\alpha_1}\cdots\pa_{x_d}^{\alpha_d}f|\leq 
(T-t)^{-\mu}|D_x^{j'}\partial_t^{\alpha_0}f|\in L^\infty(\mathcal C\times(0,1]).
		\end{align*}
		
		This completes the proof.
\end{proof}

Now we are in a position to prove Lemma \ref{lem3}.

\begin{proof}[Proof of Lemma \ref{lem3}]

For $t\in[0, T), x\in\R^d, s\in(0,+\infty)$ and $N\in\Z_{\geq 0}$, let\begin{align*}
	&\rho_{N*}(t,x,s):=\sum_{n=0}^{N}\rho_n(t,x) b^n+\eta\left(\frac{T-t}{s}\right)\rho_{N+1}(t,x) b^{N+1},\\
	& \phi_{N*}(t,x,s):=\sum_{n=0}^N \phi_n(t,x)b^n+\eta\left(\frac{T-t}{s}\right)\phi_{N+1}(t,x) b^{N+1}.
\end{align*}
Then by \eqref{Eq.rho_*_phi_*}, {for all $N\in\Z_{\geq N_0-1}$,} we have\begin{align*}
	&\rho_{*}(t,x)=\rho_{N*}(t,x,T_{N+1}),\quad
	\phi_{*}(t,x)=\phi_{N*}(t,x,T_{N+1}),\quad \forall\ T-t\in [T_{N+1},T_N],\ (t,x)\in \mathcal C.
\end{align*}

Let
\begin{align}\label{EJN*}
	&E_{N*}=\rho_{N*}^{p}+\rho_{N*}\partial^{\alpha}\phi_{N*}\partial_{\alpha}\phi_{N*}-b\Box\rho_{N*},\quad
	J_{N*}= 2\partial^{\alpha}\rho_{N*}\partial_{\alpha}\phi_{N*}+\rho_{N*}\Box\phi_{N*}.
\end{align}Then by \eqref{EJ1}{, for all $N\in\Z_{\geq N_0-1}$,} we have\begin{align*}
	&E_{*}(t,x)=E_{N*}(t,x,T_{N+1}),\quad
	J_{*}(t,x)=J_{N*}(t,x,T_{N+1}),\quad \forall\ T-t\in [T_{N+1},T_N],\ (t,x)\in \mathcal C.
\end{align*}
Now \eqref{T0} is reduced to the proof of 
\begin{align}\label{T01}
	(T-t)^{-2k_N}(D^jE_{N*},D^jJ_{N*})\in L^{\infty}(\mathcal C\times(0,1]),\quad \forall\ j\in\Z\cap[0,k_N],\ N\in \Z_{\geq N_0-1}.
\end{align}

For $t\in[0, T), x\in\R^d, s\in(0,+\infty)$, let
\begin{align*}
		&\rho_{N,n}(t,x,s):=\left\{\begin{array}{ll}
		\rho_{n}(t,x) & n\in\Z\cap[0,N] \\
		\rho_{n}^*(t,x,s) &n=N+1 \\
		0 & n\in\Z_{\geq N+2}
	\end{array}
	\right.,\\ &\phi_{N,n}(t,x,s):=\left\{\begin{array}{ll}
		\phi_{n}(t,x) & n\in\Z\cap[0,N] \\
		\phi_{n}^*(t,x,s) &n=N+1 \\
		0 & n\in\Z_{\geq N+2}
	\end{array}
	\right..
\end{align*}
where 
\begin{align}\label{n*}
	&\rho_{n}^*(t,x,s):=\eta\left((T-t)/{s}\right)\rho_{n}(t,x) ,\  \phi_{n}^*(t,x,s):=\eta\left((T-t)/{s}\right)\phi_{n}(t,x),\quad \forall\ n\in\Z_{\geq 0}.
\end{align}
As $\rho_n\in\mathscr X_{\mu_n}$, $\phi_n\in\mathscr X_{\la_n}$ for all $n\in\Z_{\geq 0}$, we have
$\rho_n^*,\rho_{N,n}\in\mathscr X_{\mu_n}^*$, $\phi_n^*,\phi_{N,n}\in\mathscr X_{\la_n}^*$ for all $n,N\in\Z_{\geq 0}$. For $t\in[0, T), x\in\R^d, s\in(0,+\infty)$ and $N\in\Z_{\geq 0}$, 
we have
\begin{align*}
	\rho_{N*}(t,x,s):=\sum_{n=0}^{N+1}\rho_{N,n}(t,x,s) b^n,\quad
	\phi_{N*}(t,x,s):=\sum_{n=0}^{N+1}\phi_{N,n}(t,x,s)b^n.
\end{align*}
Then by \eqref{EJN*}, \eqref{Eq.recurrence_relation} and $\rho_{N,n}(t,x,s)=\rho_{n}(t,x) $ for $n\in\Z\cap[0,N] $, we have
\begin{align}\label{EJN2}
	E_{N*}(t,x,s)=\sum_{n=N+1}^{p(N+1)}E_{N,n}^*(t,x,s) b^n,\quad J_{N*}(t,x,s)=\sum_{n=N+1}^{2(N+1)} J_{N,n}^*(t,x,s)b^n,
\end{align}
with (note that $\rho_{N,n}(t,x,s)=0$ for $n\in\Z_{\geq N+2} $)
\begin{align*}
	E_{N,n}^*&=\sum_{n_1+\cdots+n_p=n}\rho_{N,n_1}\rho_{N,n_2}\cdots\rho_{N,n_p}+\sum_{n_1+n_2+n_3=n}\rho_{N,n_1}\pa^\alpha\phi_{N,n_2}\pa_\alpha\phi_{N,n_3}-\Box\rho_{N,n-1},\\
	J_{N,n}^*&=2\sum_{n_1+n_2=n}\pa^\alpha\rho_{N,n_1}\pa_\alpha\phi_{N,n_2}+\sum_{n_1+n_2=n}\rho_{N,n_1}\Box\phi_{N,n_2}.
\end{align*}
By Lemma \ref{Lem.X_la*} (i), we have
$E_{N,n}^*\in \mathscr{X}_{2n(\beta-1)-2p\beta/(p-1)}^*$, ${ J_{N,n}^*}\in \mathscr{X}_{(2n+1)(\beta-1)-2p\beta/(p-1)}^*$, where we have used the facts that $\mu_{n_1}+\cdots+\mu_{n_p}=\mu_{n-1}-2$ if $n_1+\cdots+n_p=n$, $\mu_{n_1}+\la_{n_2}+\la_{n_3}-2=\mu_{n-1}-2=2n(\beta-1)-2p\beta/(p-1)$ if $n_1+n_2+n_3=n$ and $\mu_{n_1}+\la_{n_2}-2=(2n-1)(\beta-1)-2\beta/(p-1)-2=(2n+1)(\beta-1)-2p\beta/(p-1)$ if $n_1+n_2=n$.

If $n\geq N+1$ and $j\in\Z\cap[0,k_N]$, then we get by \eqref{Eq.k_N} that
\begin{align*}
	&(2n+1)(\beta-1)-2p\beta/(p-1)-j>2n(\beta-1)-2p\beta/(p-1)-j\\
	&>2N(\beta-1)-2p\beta/(p-1)-j>3k_N-j\geq 2k_N.
\end{align*}
Thus, by  Lemma \ref{Lem.X_la*} (ii), we have $(T-t)^{-2k_N}D^jE_{N,n}^*\in L^\infty(\mathcal C\times(0,1])$ for $n\in\Z\cap[N+1,p(N+1)]$, and 
$(T-t)^{-2k_N}D^jJ_{N,n}^*\in L^\infty(\mathcal C\times(0,1])$ for $n\in\Z\cap[N+1,2(N+1)]$, which along with \eqref{EJN2}
implies \eqref{T01}. \end{proof}

Next we prove Lemma \ref{lem4}.

\begin{proof}[Proof of Lemma \ref{lem4}]
For $(t,x)\in [0,T)\times\R^d$, and any fixed $\{T_n\}_{n\geq 0}$ satisfying $T_n=T$ for $0\leq n<N_0$ and $0<T_n\le T_{n-1}/4$ for $n\ge N_0$, we define $\rho_{*}, \phi_*$ by \eqref{Eq.rho_*_phi_*}, then {($ (\rho_n^*,\phi_n^*)$ is defined in \eqref{n*})}
\begin{align*}
	&\rho_{*}(t,x)=\sum_{n=0}^{\infty}\rho_n^*(t,x,T_n) b^n, \quad
	\phi_{*}(t,x)=\sum_{n=0}^{\infty} \phi_n^*(t,x,T_n)b^n,\\
	&(\rho_{*}-\rho_0)(t,x)=\sum_{n=1}^{\infty}\rho_n^*(t,x,T_n) b^n,\quad
	(\phi_{*}-\phi_{0})(t,x)=\sum_{n=1}^{\infty} \phi_n^*(t,x,T_n)b^n.
\end{align*}
Recall that $\rho_n^*\in \mathscr X_{\mu_n}^*$ and $\phi_n^*\in \mathscr X_{\la_n}^*$ for all $n\in\Z_{\geq 0}$. By \eqref{Eq.la_n_mu_n}, we have
\begin{align*}
	\la_n-j&=(2n-1)(\beta-1)-j>(n-1)(\beta-1)-j,\quad \forall\ n\in\Z_{+},\\
	\mu_n-j&=2n(\beta-1)-\frac{2\beta}{p-1}-j>n(\beta-1)-\frac{2\beta}{p-1}-j,\quad \forall\ n\in\Z_{+}.
\end{align*}
Thus, by Lemma \ref{Lem.X_la*} (ii), for any $j,n\in\Z_{\geq 0}$, there exists a constant $B_{n,j}>0$, which is independent of the sequence $\{T_n\}_{n\geq 0}$, such that for all $(t,x)\in \mathcal C$, we have
\begin{align*}
	(T-t)^{j-(n-1)(\beta-1)}|D^j\phi_n^*(t,x,T_n)|+(T-t)^{\frac{2\beta}{p-1}+j-n(\beta-1)}|D^j\rho_n^*(t,x,T_n)|\leq B_{n,j},
\end{align*}
which gives (recalling that $\eta((T-t)/T_n))\neq 0$ implies $T-t\leq 2T_n$)
\begin{align*}
	(T-t)^{j}|D^j\phi_n^*(t,x,T_n)|+(T-t)^{\frac{2\beta}{p-1}+j-\beta+1}|D^j\rho_n^*(t,x,T_n)|\leq B_{n,j}(2T_n)^{(n-1)(\beta-1)}.
\end{align*}

Let $\widetilde{T}_n:=T$ for $0\leq n<N_0$ and for $n\geq N_0$ we let
\[\widetilde{B}_{n}:=2^n\max\limits_{0\leq j\leq n-N_0}B_{n,j}, \qquad\widetilde{T}_n:=\min\left(\widetilde{B}_{n}^{-1/[(n-1)(\beta-1)]}/2,\widetilde{T}_{n-1}/4\right).\]
Now we prove that $\{\widetilde T_n\}_{n\ge 0}$ is a desired sequence for Lemma \ref{lem4}.

Let $\{T_n\}_{n\geq0}$ be such that $T_n=T$ for $0\le n<N_0$ and $0<T_n\leq\min(\widetilde T_n, T_{n-1}/4)$ for $n\geq N_0$. Then
$\widetilde{B}_{n}(2\widetilde{T}_n)^{(n-1)(\beta-1)}\leq 1 $ for $n\geq N_0$. Fix $j\in\Z_{\geq 0}$. For any $(t,x)\in\mathcal C$, we have 
\begin{align*}
	&(T-t)^{j}|D^j(\phi_*-\phi_0)(t,x)|+(T-t)^{\frac{2\beta}{p-1}+j-\beta+1}|D^j(\rho_*-\rho_0)(t,x)|\\
	&\leq \sum_{n=1}^{\infty}\left((T-t)^{j}|D^j\phi_n^*(t,x,T_n)|+(T-t)^{\frac{2\beta}{p-1}+j-\beta+1}|D^j\rho_n^*(t,x,T_n)|\right)\\
	&\leq \sum_{n=1}^{\infty}B_{n,j}(2T_n)^{(n-1)(\beta-1)}\leq \sum_{n=1}^{\infty}B_{n,j}(2\widetilde{T}_n)^{(n-1)(\beta-1)}\\
	&\leq \sum_{n=1}^{N_0+j-1}B_{n,j}(2\widetilde{T}_n)^{(n-1)(\beta-1)}+\sum_{n=N_0+j}^{\infty}2^{-n}\widetilde{B}_{n}(2\widetilde{T}_n)^{(n-1)(\beta-1)}\\
	&\leq \sum_{n=1}^{N_0+j-1}B_{n,j}(2\widetilde{T}_n)^{(n-1)(\beta-1)}+\sum_{n=N_0+j}^{\infty}2^{-n}
	\leq \sum_{n=1}^{N_0+j-1}B_{n,j}(2\widetilde{T}_n)^{(n-1)(\beta-1)}+1,
\end{align*}which implies \eqref{T1}, as  the right hand side is a finite constant independent of $(t,x)\in \mathcal C$.
\end{proof}

Finally, we prove  Lemma \ref{lem5}.

\begin{proof}[Proof of Lemma \ref{lem5}]
By \eqref{Eq.phi_0_selfsimilar}, Lemma \ref{Lem.v_hat_rho} and Lemma \ref{Lem.X_la_properties} (iii), we obtain \eqref{T2}. It suffices to prove \eqref{T3}. By \eqref{Eq.phi_0_selfsimilar}, we have
\begin{equation*}
	\pa_t\phi_0=(T-t)^{-\beta}\left((\beta-1)\hat\phi_0(Z)+Z\hat\phi_0'(Z)\right),\quad \pa_r\phi_0=(T-t)^{-\beta}\hat\phi_0'(Z).
\end{equation*}
It follows from {\eqref{Eq.phi_0_rho_0}} that
\begin{equation}\label{Eq.hat_phi_0'}
\hat\phi_0'(Z)=\frac{(\beta-1)\hat\phi_0(Z)v(Z)}{1-Zv(Z)},\quad	Z\hat\phi_0'(Z)+(\beta-1)\hat\phi_0(Z)=\frac{(\beta-1)\hat\phi_0(Z)}{1-Zv(Z)}.
\end{equation}
Hence,
\begin{align}\label{Eq.3.12}
	(T-t)^\beta\left(\pa_t\phi_0-|D_x\phi_0|\right)=(\beta-1)\frac{\hat\phi_0(Z)(1-{|v(Z)|})}{1-Zv(Z)}.
\end{align}
Since $\beta>1$, $\hat\phi_0(Z)>0, v(Z)\in(-1,1), Zv(Z)<1$ for all $Z\in[0,+\infty)$ and $\hat\phi_0, v\in C^\infty([0,+\infty))$, we know that the right hand side of \eqref{Eq.3.12} is strictly positive and {continuous}. Thus, there exists $\tilde c_1\in(0, T)$ such that
\begin{equation*}
	\inf_{Z\in[0,2]}(\beta-1)\frac{\hat\phi_0(Z)(1-{|v(Z)|})}{1-Zv(Z)}>\tilde c_1.
\end{equation*}

On the other hand, by \eqref{Eq.phi_0_selfsimilar}, we have $(T-t)^{2\beta/(p-1)}\rho_0=\hat\rho_0(Z)$. As $\hat\rho(Z)>0$ and $\hat\rho\in C([0,+\infty))$, there exists $\tilde c_2\in(0, T)$ such that $\inf_{Z\in[0,2]}\hat\rho_0(Z)>\tilde c_2$. As a consequence, letting $\tilde c:=\min(\tilde c_1, \tilde c_2)\in(0, T)$, we have \eqref{T3}. 
\end{proof}

\section{The blow-up solution of nonlinear wave equation}\label{Sec.Sol_wave}

Fix $T=1$. Recall that $\mathcal C=\{(t,x)\in [0, T)\times\R^d: |x|<2(T-t)\}$. Let $w_*\in C^\infty(\mathcal C;\R), \Psi_*\in C^\infty(\mathcal C;\R)$ be such that both \eqref{Eq.Phi_*_w_*_1} and \eqref{Eq.Phi_*_w_*_2}  hold; moreover, \eqref{EJ} also holds for $E_*, J_*$ defined by \eqref{EJ2}.

\subsection{Derivation of the error equation} 

We construct a blow-up solution $u$ to $\Box u=|u|^{p-1}u$ of the form $u=(1+h)w_*\ee^{\ii\Phi_*}$, where $h$ is complex-valued. First of all, we deduce the equation for the error $h$.

\begin{lemma}\label{Lem.h_eq}
	Assume that $u=(1+h)w_*\operatorname{e}^{\operatorname{i}\Phi_*}$ solves $\Box u=|u|^{p-1}u$. Then $h$ satisfies 
	\begin{equation}\label{Eq.h_eq0}
		\Box h+2\operatorname{i} \pa^\alpha\Phi_*\pa_\alpha h+2\frac{\pa^\alpha w_*}{w_*}\pa_\alpha h-(p-1)w_*^{p-1}h_{\operatorname{r}}=w_*^{p-1}\varphi_1(h)+\frac{E_*-\operatorname{i}J_*}{w_*}(1+h),
	\end{equation}
	where $h_{\operatorname{r}}=\mathrm{Re}\,h=(h+\bar h)/2$ and 
	\beno
	\varphi_1(h)
=(|1+h|^{p-1}-1-(p-1)h_{\operatorname{r}})(1+h)+(p-1)h_{\operatorname{r}}h=O(|h|^2).
	\eeno
The converse is also true.
\end{lemma}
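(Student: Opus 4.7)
The plan is a direct calculation that re-expresses $\Box u = |u|^{p-1} u$ in terms of the perturbation $h$, using the definitions of $E_*$ and $J_*$ in \eqref{EJ2} to absorb the non-vanishing errors of the background $(w_*,\Phi_*)$. The only non-algebraic step is an elementary Taylor check of the remainder $\varphi_1(h)$.

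First I would set $W := w_* \mathrm{e}^{\mathrm{i}\Phi_*}$ and compute $\Box W$. A direct differentiation gives
\[
\Box W = \bigl(\Box w_* - w_* \pa^\alpha\Phi_* \pa_\alpha\Phi_*\bigr)\mathrm{e}^{\mathrm{i}\Phi_*} + \mathrm{i}\bigl(2\pa^\alpha w_*\pa_\alpha\Phi_* + w_*\Box\Phi_*\bigr)\mathrm{e}^{\mathrm{i}\Phi_*},
\]
and comparing with \eqref{EJ2} this collapses to $\Box W = (w_*^p - E_* + \mathrm{i}J_*)\mathrm{e}^{\mathrm{i}\Phi_*}$. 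Next, since $u=(1+h)W$, the Leibniz rule $\Box(fg) = g\Box f + 2\pa^\alpha f\pa_\alpha g + f\Box g$ yields
\[
\Box u = W\,\Box h + 2\pa^\alpha h\,\pa_\alpha W + (1+h)\Box W.
\]
Dividing the equation $\Box u = |u|^{p-1} u = |1+h|^{p-1}(1+h)w_*^p\mathrm{e}^{\mathrm{i}\Phi_*}$ through by $W = w_*\mathrm{e}^{\mathrm{i}\Phi_*}$ (permissible since $w_*>0$ by \eqref{Eq.Phi_*_w_*_2}) and using $\pa_\alpha W / W = \pa_\alpha w_*/w_* + \mathrm{i}\pa_\alpha\Phi_*$, I obtain
\[
\Box h + 2\mathrm{i}\pa^\alpha\Phi_*\pa_\alpha h + 2\frac{\pa^\alpha w_*}{w_*}\pa_\alpha h = \bigl(|1+h|^{p-1}-1\bigr)(1+h)\,w_*^{p-1} + \frac{E_* - \mathrm{i}J_*}{w_*}(1+h).
\]

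The final step is to split the nonlinear term around its linearization. By the definition of $\varphi_1(h)$ in the statement,
\[
\bigl(|1+h|^{p-1}-1\bigr)(1+h) = \varphi_1(h) + (p-1)h_{\mathrm r}(1+h) - (p-1)h_{\mathrm r}h = \varphi_1(h) + (p-1)h_{\mathrm r},
\]
so substituting this back and moving the $(p-1)w_*^{p-1}h_{\mathrm r}$ term to the left-hand side produces exactly \eqref{Eq.h_eq0}. The quadratic bound $\varphi_1(h) = O(|h|^2)$ follows from the Taylor expansion $|1+h|^{p-1} = (1 + 2h_{\mathrm r}+|h|^2)^{(p-1)/2} = 1 + (p-1)h_{\mathrm r} + O(|h|^2)$ for $|h|\le 1/2$, so the first summand in $\varphi_1$ is $O(|h|^2)(1+h)=O(|h|^2)$ and the second is manifestly $O(|h|^2)$. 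The converse is automatic: every step above is an identity (not an inequality) and is reversible once $w_*>0$, so solving \eqref{Eq.h_eq0} for $h$ and setting $u = (1+h)w_*\mathrm{e}^{\mathrm{i}\Phi_*}$ recovers a solution of $\Box u = |u|^{p-1}u$.

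The computation is entirely mechanical; the one small subtlety worth stating carefully is the re-assembly $\Box W = (w_*^p - E_* + \mathrm{i}J_*)\mathrm{e}^{\mathrm{i}\Phi_*}$, which is what makes $E_*,J_*$ appear cleanly as a forcing term on the right of \eqref{Eq.h_eq0} rather than in the linear part.
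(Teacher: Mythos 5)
Your proposal is correct and follows essentially the same route as the paper: a direct computation of $\Box\bigl((1+h)w_*\mathrm e^{\mathrm i\Phi_*}\bigr)$, using \eqref{EJ2} to package the background errors as $w_*^p-E_*+\mathrm iJ_*$, followed by the same algebraic identity $(|1+h|^{p-1}-1)(1+h)=\varphi_1(h)+(p-1)h_{\mathrm r}$; organizing the calculation through $W=w_*\mathrm e^{\mathrm i\Phi_*}$ and the Leibniz rule for $\Box$ is only a cosmetic repackaging of the paper's expansion. All signs and the reversibility argument for the converse check out.
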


\begin{proof}
This is a brute force computation. If $u=(1+h)w_*\ee^{\ii\Phi_*}$, then for any $\alpha\in\Z\cap[0, d]$, we have
\begin{align*}
	\pa_\alpha u=\pa_\alpha hw_*\ee^{\ii\Phi_*}+(1+h)\pa_\alpha w_*\ee^{\ii\Phi_*}+\ii(1+h)w_*\ee^{\ii\Phi_*}\pa_\alpha\Phi_*.
\end{align*}
Hence,
\begin{align*}
	(\Box u)\ee^{-\ii\Phi_*}=&(\Box h+2\ii\pa_\alpha h\pa^\alpha\Phi_*)w_*+2\pa_\alpha h\pa^\alpha w_*\\
	&+(1+h)(\Box w_*+2\ii \pa_\alpha w_*\pa^\alpha\Phi_*-w_*\pa^\alpha\Phi_*\pa_\alpha\Phi_*+\ii\Box\Phi_*w_*)\\
	\xlongequal{\eqref{EJ2}}&(\Box h+2\ii\pa_\alpha h\pa^\alpha\Phi_*)w_*+2\pa_\alpha h\pa^\alpha w_*+(1+h)(w_*^p+\ii J_*-E_*),\\
	|u|^{p-1}u=&|1+h|^{p-1}(1+h)w_*^{p}\ee^{\ii\Phi_*}.
\end{align*}
By the definition of $\varphi_1$, we have $1+(p-1)h_\text{r}+h+\varphi_1(h)=(1+h)|1+h|^{p-1}$. Thus,
\begin{align*}
	&\Box u=|u|^{p-1}u\Longleftrightarrow\Box h+2\ii\pa^\alpha\Phi_*\pa_\alpha h+2\frac{\pa^\alpha w_*}{w_*}\pa_\alpha h+(1+h)\left(w_*^{p-1}+\frac{\ii J_*-E_*}{w_*}\right)\\
	&\qquad\qquad\qquad\qquad\qquad\qquad =\big(1+(p-1)h_\text{r}+h+\varphi_1(h)\big)w_*^{p-1}\\
	&\Longleftrightarrow\Box h+2\ii\pa^\alpha\Phi_*\pa_\alpha h+2\frac{\pa^\alpha w_*}{w_*}\pa_\alpha h-(p-1)w_*^{p-1}h_\text{r}=w_*^{p-1}\varphi_1(h)+\frac{E_*-\ii J_*}{w_*}(1+h).
\end{align*}

This completes the proof of Lemma \ref{Lem.h_eq}.
\end{proof}

We fix a bump function $\xi\in C_c^\infty(\R;[0,1])$ such that $\operatorname{supp}\xi\subset[-1,1]$ and $\xi_{[0,4/5]}=1$. We define the vector fields $X, Y$ by\footnote{\label{8}Here we explain the notations to avoid ambiguities. For a smooth function $f(t,x)$, we denote the action of the vector field $X$ on $f$ by $Xf$, i.e., $Xf=X_\alpha\pa^\alpha f=X^\alpha\pa_\alpha f$, where $X_0=\pa_t\Phi_*\xi, X^0=-X_0$ and $X_j=X^j=\pa_{j}\Phi_*\xi$ for $j\in\Z\cap[1,d]$. {The same clarification holds also for $Y$.} Moreover, in \eqref{Eq.X_Y_vector_fields}, although $D\Phi_*(t,x)$ is only defined for $(t,x)\in\mathcal C$, we just simply let $X(t,x)=0$ for $(t,x)\in ([0, T)\times\R^d)\setminus\mathcal C$, noting that $\xi(3|x|/(5(T-t)))=0$ near the boundary of $\mathcal C$. The same clarification holds also for $Y$, and $V_*, N_*$ in \eqref{Eq.V_*_N_*}.}
\begin{equation}\label{Eq.X_Y_vector_fields}
	X(t,x):=D\Phi_*(t,x)\xi\left(\frac{3|x|}{5(T-t)}\right),\quad Y(t,x):=\frac{Dw_*(t,x)}{w_*(t,x)}\xi\left(\frac{3|x|}{5(T-t)}\right)
\end{equation}
for $(t, x)\in [0, T)\times\R^d$. We also define the functions on $[0, T)\times\R^d$ by
\begin{equation}\label{Eq.V_*_N_*}
	V_*(t,x):=(p-1)w_*^{p-1}(t,x)\xi\left(\frac{3|x|}{5(T-t)}\right),\quad N_*(t,x):=\frac{E_*-\operatorname{i}J_*}{w_*}(t,x)\xi\left(\frac{3|x|}{4(T-t)}\right).
\end{equation}
Then $X, Y\in C^\infty ([0, T)\times\R^d; \R^{d+1})$ and $V_*\in C^\infty ([0, T)\times\R^d; \R), N_*\in C^\infty ([0, T)\times\R^d; \C)$. Moreover, we have
\begin{equation}\label{Eq.N_*_support}
	\operatorname{supp}_xN_*(t,\cdot)\subset \{x\in\R^d: |x|\leq 4(T-t)/3\}, \quad \forall\ t\in[0, T).
\end{equation}

Let $c_0\in(0, T)$ satisfy \eqref{Eq.Phi_*_w_*_2}. Let $\mathcal C_1:=\{(t,x)\in[T-c_0, T)\times\R^d: |x|\leq 4(T-t)/3\}\subset\mathcal C$. Using \eqref{Eq.Phi_*_w_*_1}, \eqref{Eq.Phi_*_w_*_2} and \eqref{EJ}, we have $X_0(t,x)>0, V_*(t,x)>0$ for all $(t,x)\in\mathcal C_1$. The following lemma gives more useful properties.
\begin{lemma}\label{Lem.X_Y_V_N_bound}
	There exists a constant $M>0$ such that
	\begin{align}
		&(T-t)\left(\frac{|DX|}{X_0}+|Y|+\frac{|DV_*|}{V_*}\right)\leq M,\label{Eq.X_Y_V_bound}\\
		\frac1M(T-t)^{-\beta}\leq& X_0\leq M(T-t)^{-\beta},\quad \frac1M(T-t)^{-2\beta}\leq V_*\leq M(T-t)^{-2\beta}\label{Eq.X_0_V_bound}
	\end{align}
	on $\mathcal C_1$. Moreover, for any $j\in\Z_{\geq 0}$, 
	\begin{equation}\label{Eq.X_Y_V_*_N_*_bound}
		(T-t)^{\beta+j}|D^j X|+(T-t)^{1+j}|D^j Y|+(T-t)^{2\beta+j}|D^j V_*|+(T-t)^{1+j}|D^j N_*|\in L^{\infty}(\mathcal C_1).
	\end{equation}
For any $j\in\Z_{\geq 0}$ and $\la>0$, there exists a constant $M_{j,\la}>0$ such that
	\begin{equation}\label{Eq.N_*_bound}
		|D^jN_*|\leq M_{j,\la}(T-t)^\la\quad\text{on}\quad\mathcal C_1.
	\end{equation}
\end{lemma}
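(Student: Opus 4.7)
The plan is to reduce each bound in Lemma~\ref{Lem.X_Y_V_N_bound} to the hypotheses \eqref{Eq.Phi_*_w_*_1}, \eqref{Eq.Phi_*_w_*_2} and \eqref{EJ} via the Leibniz rule and Faà di Bruno's formula, exploiting the fact that on $\mathcal C_1$ the cutoff $\xi(3|x|/(5(T-t)))$ equals $1$ (since $3|x|/(5(T-t))\le 4/5$ there), while its derivatives are estimated by noting that each differentiation of $\xi(3|x|/(\kappa(T-t)))$ contributes at most a factor $O((T-t)^{-1})$.

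I would first establish \eqref{Eq.X_0_V_bound} and the lower-bound portions. On $\mathcal C_1$, $X_0=\pa_t\Phi_*$ and $V_*=(p-1)w_*^{p-1}$. The upper bounds $X_0\le M(T-t)^{-\beta}$, $V_*\le M(T-t)^{-2\beta}$ are immediate from \eqref{Eq.Phi_*_w_*_1} (using $|X_0|\le|D\Phi_*|$ and $w_*^{p-1}\le(M(T-t)^{-2\beta/(p-1)})^{p-1}$), while the lower bounds $X_0\ge c_0(T-t)^{-\beta}$ (via $\pa_t\Phi_*\ge\pa_t\Phi_*-|D_x\Phi_*|$) and $V_*\ge (p-1)c_0^{p-1}(T-t)^{-2\beta}$ follow from \eqref{Eq.Phi_*_w_*_2}.

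Next I would prove the scaling estimates \eqref{Eq.X_Y_V_*_N_*_bound}. For $D^jX$, Leibniz gives a sum over terms $D^{j-k+1}\Phi_*\cdot D^k(\xi(3|x|/(5(T-t))))$, each of size $O((T-t)^{-\beta-(j-k+1)})\cdot O((T-t)^{-k})=O((T-t)^{-\beta-j+1-1})=O((T-t)^{-\beta-j})$ (the extra $(T-t)$ from $D\Phi_*$ being compensated by the derivative count); hence $|D^jX|\le C_j(T-t)^{-\beta-j}$. For $V_*=(p-1)w_*^{p-1}\xi$, Faà di Bruno applied to the composition $z\mapsto z^{p-1}$ yields a finite sum of terms $w_*^{p-1-k}\prod_i D^{\alpha_i}w_*$ with $\sum_i\alpha_i\le j$; bounding $w_*^{p-1-k}$ by the appropriate power of $(T-t)^{-2\beta/(p-1)}$ (using the upper bound from \eqref{Eq.Phi_*_w_*_1} when $p-1-k\ge 0$ and the lower bound from \eqref{Eq.Phi_*_w_*_2} when $p-1-k<0$), and each $D^{\alpha_i}w_*$ by \eqref{Eq.Phi_*_w_*_1}, one obtains $|D^jV_*|\le C_j(T-t)^{-2\beta-j}$. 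For $Y=(Dw_*/w_*)\xi$, the essential intermediate step is the estimate $|D^j(1/w_*)|\le C_j(T-t)^{2\beta/(p-1)-j}$: Faà di Bruno gives $D^j(1/w_*)=\sum_\pi c_\pi w_*^{-|\pi|-1}\prod_{B\in\pi}D^{|B|}w_*$, and combining \eqref{Eq.Phi_*_w_*_1} with the lower bound from \eqref{Eq.Phi_*_w_*_2} gives the claim after cancellation. Leibniz together with $|D^{j+1}w_*|=O((T-t)^{-2\beta/(p-1)-j-1})$ then produces $|D^jY|\le C_j(T-t)^{-1-j}$. Combining these first-derivative cases with the lower bounds just established yields \eqref{Eq.X_Y_V_bound}.

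Finally, \eqref{Eq.N_*_bound} follows by writing $N_*=(E_*-\mathrm iJ_*)\cdot(1/w_*)\cdot\xi(3|x|/(4(T-t)))$, applying Leibniz, and combining the arbitrary-order decay $|D^mE_*|+|D^mJ_*|\le C_{m,\mu}(T-t)^\mu$ (available for every $\mu>0$ by \eqref{EJ}) with $|D^m(1/w_*)|\le C_m(T-t)^{2\beta/(p-1)-m}$. Since the $(T-t)^\mu$ decay can be taken with $\mu$ arbitrarily large, it beats every polynomial loss from differentiating $1/w_*$ and the cutoff, yielding $|D^jN_*|\le M_{j,\lambda}(T-t)^\lambda$ for any $\lambda>0$; the $N_*$ term in \eqref{Eq.X_Y_V_*_N_*_bound} is then the special case $\lambda=1$. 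The only real technical burden is the bookkeeping of Faà di Bruno expansions for $1/w_*$ and $w_*^{p-1}$, and checking that the positive power of $(T-t)$ gained from the lower bound on $w_*$ exactly compensates the negative powers produced by differentiating $w_*$; apart from this, everything reduces to the Leibniz rule applied to the given hypotheses.
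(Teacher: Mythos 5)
Your proposal is correct and follows essentially the same route as the paper: restrict to $\mathcal C_1$, where the cutoff in $X,Y,V_*$ is identically $1$, obtain the two-sided bounds on $X_0$ and $V_*$ from \eqref{Eq.Phi_*_w_*_1}--\eqref{Eq.Phi_*_w_*_2}, propagate to higher derivatives by product rules, and let the arbitrary-order decay of $E_*,J_*$ from \eqref{EJ} absorb all polynomial losses in the $N_*$ estimate. The only difference is technical bookkeeping: where you invoke Faà di Bruno for $D^j(1/w_*)$ and $D^j(w_*^{p-1})$, the paper instead runs inductions on $j$ (and on the power $m$ of $w_*$) using the identities $Dw_*=w_*Y$ and $\tilde N_*=w_*N_*$ together with the reverse Leibniz estimate \eqref{Eq.Leibniz_2}, which realizes the same cancellation between the lower bound on $w_*$ and the derivative losses without the partition combinatorics.
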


\begin{proof}
On $\mathcal C_1$, we have
\begin{align}\label{V*}X=D\Phi_*,\quad Y=Dw_*/w_*,\quad V_*=(p-1)w_*^{p-1}.\end{align}
By \eqref{Eq.Phi_*_w_*_1}, we have $(T-t)^{\beta}X_0=(T-t)^{\beta}\pa_t\Phi_*\in L^\infty(\mathcal C_1)$ and $(T-t)^{2\beta}V_*\in L^\infty(\mathcal C_1)$. By \eqref{Eq.Phi_*_w_*_2}, we have $(T-t)^\beta X_0=(T-t)^{\beta}\pa_t\Phi_*\geq c_0, (T-t)^{2\beta}V_*=(p-1)(T-t)^{2\beta}w_*^{p-1}\geq (p-1)c_0^{p-1}$ on $\mathcal C_1$. 
This proves \eqref{Eq.X_0_V_bound}. 

It follows from \eqref{Eq.Phi_*_w_*_1} that $(T-t)^{\beta+1}|DX|=(T-t)^{\beta+1}|D^2\Phi_*|\in L^\infty(\mathcal C_1)$, hence by $(T-t)^\beta X_0\geq c_0$ on $\mathcal C_1$, we have $(T-t)|DX|/X_0\in L^\infty(\mathcal C_1)$. Similarly, by using \eqref{Eq.Phi_*_w_*_1} and \eqref{Eq.X_0_V_bound}, we get $(T-t)(|Y|+|DV_*|/V_*)\in L^\infty(\mathcal C_1)$. This proves \eqref{Eq.X_Y_V_bound}.

Next we prove \eqref{Eq.X_Y_V_*_N_*_bound} and \eqref{Eq.N_*_bound}. Recall the product rule: for smooth functions $f,g$ and $(\alpha_0, \alpha_1,\cdots, \alpha_d)\in\Z_{\geq 0}^{d+1}$, we have (see \cite{Hardy})
\begin{align*}
	\frac{\pa^{\alpha_0+\alpha_1+\cdots+\alpha_d}}{\pa_t^{\alpha_0}\pa_{x_1}^{\alpha_1}\cdots\pa_{x_d}^{\alpha_d}}(fg)=
&\sum_{j=0}^{\alpha_0}\sum_{j_1=0}^{\alpha_1}\cdots\sum_{j_d=0}^{\alpha_d}\binom{\alpha_0}{j_0}\binom{\alpha_1}{j_1}\cdots\binom{\alpha_d}{j_d}\\
	&\qquad\qquad\times\frac{\pa^{j_0+j_1+\cdots+j_d}f}{\pa_t^{j_0}\pa_{x_1}^{j_1}\cdots\pa_{x_d}^{j_d}}\cdot
\frac{\pa^{\alpha_0-j_0+\alpha_1-j_1+\cdots+\alpha_d-j_d}g}{\pa_t^{\alpha_0-j_0}\pa_{x_1}^{\alpha_1-j_1}\cdots\pa_{x_d}^{\alpha_d-j_d}}.
\end{align*}
Hence,
\begin{align}
	|D^n(fg)|&\lesssim_n\sum_{j=0}^n|D^jf||D^{n-j}g|,\quad |D_x^n(fg)|\lesssim_n\sum_{j=0}^n|D_x^jf||D_x^{n-j}g|,\quad \forall\ n\in\Z_{\geq 0},\label{Eq.Leibniz_1}\\
	|gD^nf|&\lesssim_n |D^n(fg)|+\sum_{j=0}^{n-1}|D^jf||D^{n-j}g|,\quad\forall\ n\in\Z_{+}.\label{Eq.Leibniz_2}
\end{align}

As $X=D\Phi_*$ on $\mathcal C_1$, we get by \eqref{Eq.Phi_*_w_*_1} that
\begin{equation}\label{Eq.X_bound}
	(T-t)^{\beta+j}|D^jX|\in L^\infty(\mathcal C_1),\quad\forall\ j\in\Z_{\geq 0}.
\end{equation} 
Now we use the induction argument to prove that
\begin{equation}\label{Eq.Y_claim}
	(T-t)^{1+j}|D^jY|\in L^\infty(\mathcal C_1),\quad\forall\ j\in\Z_{\geq 0}.
\end{equation}
By \eqref{Eq.X_Y_V_bound}, we know that \eqref{Eq.Y_claim} holds for $j=0$. Assume that \eqref{Eq.Y_claim} holds for all $j\in\Z\cap[0, n-1]$ for some $n\in\Z_{+}$. Note that $Dw_*=w_*Y$ on $\mathcal C_1$, hence by \eqref{Eq.Leibniz_2} we have
\[|w_*D^nY|\lesssim_n|D^n(Dw_*)|+\sum_{j=1}^{n}|D^jw_*||D^{n-j}Y|\quad\text{on}\quad \mathcal C_1.\]
Using \eqref{Eq.Phi_*_w_*_2}, \eqref{Eq.Phi_*_w_*_1} and the induction assumption, we obtain
\begin{align*}
	&(T-t)^{1+n}|D^nY|\leq \frac1{c_0}(T-t)^{1+n+\frac{2\beta}{p-1}}|w_*D^nY|\\
	&\lesssim_n (T-t)^{1+n+\frac{2\beta}{p-1}}|D^{1+n}w_*|+\sum_{j=1}^{n}(T-t)^{\frac{2\beta}{p-1}+j}|D^jw_*|(T-t)^{1+n-j}|D^{n-j}Y|\in L^\infty(\mathcal C_1).
\end{align*}
This proves \eqref{Eq.Y_claim}.

Now we prove that
\begin{equation}\label{Eq.V_claim}
	(T-t)^{\frac{2\beta}{p-1}m+j}\left|D^j(w_*^m)\right|\in L^\infty(\mathcal C_1),\quad \forall\ m\in\Z_{+},\ \forall\ j\in\Z_{\geq 0}.
\end{equation}
By \eqref{Eq.Phi_*_w_*_1}, we know that \eqref{Eq.V_claim} holds for $m=1$. We assume that \eqref{Eq.V_claim} holds for $m-1$, where $m\in\Z\cap[2,+\infty)$. By \eqref{Eq.Leibniz_1}, for $j\in\Z_{\geq 0}$ we have
\begin{align*}
	\left|D^j(w_*^m)\right|=\left|D^j(w_*^{m-1}w_*)\right|\lesssim_j \sum_{i=0}^j\left|D^i(w_*^{m-1})\right|\left|D^{j-i}w_*\right|,
\end{align*}
which gives
\begin{align*}
	(T-t)^{\frac{2\beta}{p-1}m+j}\left|D^j(w_*^m)\right|&\lesssim_j\sum_{i=0}^j(T-t)^{\frac{2\beta}{p-1}(m-1)+i}\left|D^i(w_*^{m-1})\right|(T-t)^{\frac{2\beta}{p-1}+j-i}\left|D^{j-i}w_*\right|\\
	&\in L^\infty(\mathcal C_1).
\end{align*}
By the induction argument, we have \eqref{Eq.V_claim}. Letting $m=p-1$ in \eqref{Eq.V_claim}, we get (using \eqref{V*}) 
\begin{equation}\label{Eq.V*_bound}
	(T-t)^{2\beta+j}|D^jV_*|\in L^\infty(\mathcal C_1),\quad\forall\ j\in\Z_{\geq 0}.
\end{equation}

Finally, we estimate $N_*$. Let $\tilde\xi(t,x):=\xi(3|x|/(4(T-t)))$. Then by Lemma \ref{Lem.X_la_properties} (iii), we have $(T-t)^{j}D^j\tilde\xi\in L^\infty(\mathcal C)$. Let $\tilde N_*:=(E_*-\ii J_*)\tilde\xi$, then $N_*=\tilde N_*/w_*$. By \eqref{Eq.Leibniz_1}, we have
\begin{align*}
	\left|D^j\tilde N_*\right|\lesssim_j \sum_{i=0}^j\left|D^i(E_*-\ii J_*)\right|\left|D^{j-i}\tilde \xi\right|,
\end{align*}
hence by \eqref{EJ}, for all $\la>0$ and $j\in\Z_{\geq 0}$ we have
\begin{align}\label{Eq.tilde_N_bound}
	(T-t)^{-\la}\left|D^j\tilde N_*\right|\lesssim_j\sum_{i=0}^j(T-t)^{-(\la+j-i)}\left|D^i(E_*-\ii J_*)\right|(T-t)^{j-i}\left|D^{j-i}\tilde \xi\right|\in L^\infty(\mathcal C).
\end{align}
Now we use the induction argument to prove that
\begin{equation}\label{Eq.N_*_claim}
	(T-t)^{-\la}|D^jN_*|\in L^\infty(\mathcal C_1),\quad\forall\ j\in\Z_{\geq 0},\ \forall\ \la>0.
\end{equation}
For $j=0$, \eqref{Eq.N_*_claim} follows from \eqref{Eq.tilde_N_bound} and \eqref{Eq.Phi_*_w_*_2}. Assume that \eqref{Eq.N_*_claim} holds for all $j\in\Z\cap[0, n-1]$ for some $n\in\Z_{+}$. As $\tilde N_*=w_*N_*$, we get by \eqref{Eq.Leibniz_2} that
\[\left|w_*D^nN_*\right|\lesssim_n \left|D^n\tilde N_*\right|+\sum_{j=1}^{n}|D^jw_*||D^{n-j}N_*|\quad\text{on}\quad \mathcal C_1.\]
Using \eqref{EJ}, \eqref{Eq.Phi_*_w_*_1}, \eqref{Eq.Phi_*_w_*_2} and the induction assumption, for any $\la>0$ we obtain
\begin{align*}
	&(T-t)^{-\la}|D^nN_*|\leq \frac1{c_0}(T-t)^{-\la+\frac{2\beta}{p-1}}\left|w_*D^nN_*\right|\\
	\lesssim_n& (T-t)^{\frac{2\beta}{p-1}}(T-t)^{-\la}\left|D^n\tilde N_*\right|+\sum_{j=1}^{n}(T-t)^{\frac{2\beta}{p-1}+j}\left|D^jw_*\right|(T-t)^{-(\la+j)}\left|D^{n-j}N_*\right|
\in L^\infty(\mathcal C_1).
\end{align*}
{This proves \eqref{Eq.N_*_claim} for $j=n$. By induction, we have \eqref{Eq.N_*_claim}, which is equivalent to \eqref{Eq.N_*_bound}.}

Taking $\la=1$ in \eqref{Eq.N_*_bound}, we get
\begin{equation}\label{Eq.N_*_bound1}
	(T-t)^{1+j}|D^jN_*|\leq T^{j+2}(T-t)^{-1}|D^jN_*|\in L^\infty(\mathcal C_1),\quad\forall\ j\in\Z_{\geq 0}.
\end{equation}
Therefore, \eqref{Eq.X_Y_V_*_N_*_bound} follows from \eqref{Eq.X_bound}, \eqref{Eq.Y_claim}, \eqref{Eq.V*_bound} and \eqref{Eq.N_*_bound1}. 
\end{proof}

\subsection{Energy estimates for the linearized wave equation}

\begin{lemma}\label{Lem.Energy_estimate}
	Let $T_*\in(0, c_0)$ and $h\in C_c^\infty([T-T_*, T)\times\R^d;\C)$ be such that $\operatorname{supp}_xh(t,\cdot)\subset\{x\in\R^d: |x|\leq 4(T-t)/3\}$ for all $t\in[T-T_*, T)$. We define the linear operator
	\begin{equation}\label{Eq.Lh} 
		\mathcal L h:= \Box h+2\operatorname{i} Xh+2Yh-V_*h_{\operatorname{r}},
	\end{equation}
	where $h_{\operatorname{r}}=(h+\bar h)/2$ and energy functionals
	\begin{align}\label{E0}
		E_0[h](t):&=\frac12\int_{\R^d}\left(|Dh(t,x)|^2+V_*(t,x)|h_{\operatorname{r}}(t,x)|^2\right)\,\mathrm dx,\quad\forall\ t\in[T-T_*, T),\\
		E_j[h]:&=E_0[D_x^jh],\quad\forall\ j\in\Z_{+}.\label{Eq.E_j}
	\end{align}
	Then there exist positive constants $M_1>1$ and $\{C_j\}_{j\in\Z_{\geq 0}}$ such that
	\begin{equation}\label{Eq.Energy_estimate}
		\sqrt{E_j[h](t)}\leq C_j\int_t^T\left(\frac{T-t}{T-s}\right)^{M_1}\sum_{i=0}^j\frac{\left\|D_x^i\mathcal Lh(s)\right\|_{L^2}}{(T-s)^{(j-i)\beta}}\mathrm ds,\quad\forall\ t\in[T-T_*, T),\ \forall\ j\in\Z_{\geq 0}.
	\end{equation}
\end{lemma}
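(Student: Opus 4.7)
The plan is to prove the estimate by induction on $j\geq 0$. The case $j=0$ is the core energy identity, and the general case follows by commuting spatial derivatives with $\mathcal L$ and reusing the base case.

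\textbf{Step 1: Energy identity for $j=0$.} Multiplying the equation $\mathcal L h=F$ by $\overline{\partial_t h}$, taking the real part, and integrating over $\mathbb R^d$ (no boundary contributions since $h(t,\cdot)$ has compact support), the standard wave-equation integration by parts gives
\begin{equation*}
\frac{d}{dt}E_0[h](t) = -\int \mathrm{Re}\bigl(\overline{\partial_t h}\,\mathcal L h\bigr)\,dx + 2\int\mathrm{Re}\bigl(\mathrm i\overline{\partial_t h}\,Xh\bigr)\,dx + 2\int \mathrm{Re}\bigl(\overline{\partial_t h}\,Yh\bigr)\,dx + \tfrac12\int(\partial_tV_*)\,h_r^2\,dx.
\end{equation*}
The $X^0$-contribution in the second integral vanishes identically (since $-X_0|\partial_t h|^2$ is real), leaving only the spatial cross term $-2\sum_j\int X^j\,\mathrm{Im}(\overline{\partial_t h}\,\partial_j h)\,dx$.

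\textbf{Step 2: Differential inequality.} The aim is to prove
\begin{equation*}
\Bigl|\frac{d}{dt}E_0[h](t)\Bigr|\leq \frac{2M_1}{T-t}\,E_0[h](t) + 2\sqrt{E_0[h](t)}\,\|\mathcal L h(t)\|_{L^2}
\end{equation*}
for a sufficiently large constant $M_1>0$. The $Y$-term and the $\partial_tV_*$-term are controlled directly by $C(T-t)^{-1}E_0[h]$ using $|Y|\lesssim(T-t)^{-1}$ and $|\partial_tV_*|\lesssim(T-t)^{-1}V_*$ from Lemma \ref{Lem.X_Y_V_N_bound}, and the source term by Cauchy--Schwarz. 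The delicate term is the $X$-contribution: the naive bound $\|X\|_{L^\infty}\lesssim(T-t)^{-\beta}$ with $\beta>1$ gives a $(T-t)^{-\beta}E_0[h]$ term whose Gronwall integral over $[t,T)$ diverges. The resolution exploits the gradient structure $X=D\Phi_*$ on the support of $h$. Passing to the phase-conjugated unknown $\tilde h = e^{\mathrm i\Phi_*}h$ trivializes the first-order $\mathrm iXh$ term (since $\Box+2\mathrm iX$ is the leading part of the gauge-covariant d'Alembertian $D^\alpha D_\alpha$ with $D_\alpha=\partial_\alpha+\mathrm iX_\alpha$), replacing it by a zero-order term of size comparable to $V_*$. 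The associated energy for $\tilde h$ controls $E_0[h]$ thanks to the weighted Poincar\'e inequality $\|h(t)\|_{L^2}\lesssim(T-t)\|D_x h(t)\|_{L^2}$, which holds since $h(t,\cdot)$ vanishes outside the ball of radius $4(T-t)/3$.

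\textbf{Step 3: Backward Gronwall.} Because $h\in C_c^\infty([T-T_*,T)\times\mathbb R^d)$, the energy $E_0[h](t)$ vanishes for $t$ sufficiently close to $T$. Dividing the differential inequality by $2\sqrt{E_0[h](t)}$ where it is positive and applying the integrating factor $(T-t)^{-M_1}$ to integrate backward from $t$ to $T$ yields
\begin{equation*}
\sqrt{E_0[h](t)} \leq C\int_t^T\Bigl(\frac{T-t}{T-s}\Bigr)^{M_1}\|\mathcal L h(s)\|_{L^2}\,ds,
\end{equation*}
establishing the base case.

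\textbf{Step 4: Induction on $j\geq 1$.} Applying $D_x^j$ to the equation yields $\mathcal L(D_x^j h)=D_x^j(\mathcal L h)+[\mathcal L,D_x^j]h$. The commutator decomposes as a sum of terms of the form $(D_x^a X)\cdot D^b h$, $(D_x^a Y)\cdot D^b h$, $(D_x^a V_*)\cdot D_x^b h_r$ with $a+b\leq j$ and $1\leq a\leq j$. By \eqref{Eq.X_Y_V_*_N_*_bound}, the coefficient bounds are $\|D_x^a X\|_{L^\infty}\lesssim(T-t)^{-\beta-a}$, $\|D_x^a Y\|_{L^\infty}\lesssim(T-t)^{-1-a}$, and $\|D_x^a V_*\|_{L^\infty}\lesssim(T-t)^{-2\beta-a}$. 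The $\beta$-weighted denominator $(T-s)^{(j-i)\beta}$ in the claim is precisely calibrated to absorb the extra $(T-t)^{-\beta}$ cost per commutator with $X$. Applying the $j=0$ estimate to $D_x^j h$ and the inductive hypothesis to the lower-order commutator terms closes the induction.

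\textbf{Main obstacle.} The principal difficulty is the supercritical size $\|X\|_{L^\infty}\sim(T-t)^{-\beta}$ with $\beta>1$; a naive energy bound would yield a divergent Gronwall factor incompatible with the polynomial weight $((T-t)/(T-s))^{M_1}$ in the claim. Reducing this to a $(T-t)^{-1}$ weight requires both the gauge-potential structure $X=D\Phi_*$ (so that a phase conjugation trivializes the first-order term on the support of $h$) and the weighted Poincar\'e inequality coming from the compact support of $h$ inside the backward light cone.
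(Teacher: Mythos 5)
Your induction scaffolding (Step 4), the commutator bookkeeping with the calibrated weights $(T-s)^{-(j-i)\beta}$, and the backward Gronwall of Step 3 all match the paper's proof. The divergence is in how the base case $j=0$ is closed, and there your argument has a genuine gap. The paper does not gauge-transform: it replaces the multiplier $\overline{\partial_t h}$ by $\overline{Xh}$, i.e.\ it contracts the energy--momentum tensor $T[h]_{\mu\nu}$ with the vector field $X$ itself and runs Gronwall on the weighted flux $\wt E_0=\int(-P_0^X[h])\sim (T-t)^{-\beta}E_0[h]$. This annihilates the dangerous term exactly, since $\Re\bigl(2\mathrm iXh\cdot\overline{Xh}\bigr)=2\Re\bigl(\mathrm i|Xh|^2\bigr)=0$; coercivity $-P_0^X\geq\wt c_0X_0T[h]_{00}$ comes from the timelike condition $\partial_t\Phi_*-|D_x\Phi_*|\geq c_0(T-t)^{-\beta}$ in \eqref{Eq.Phi_*_w_*_2}; and every remaining term in $\partial^\mu P_\mu^X$ is of relative size $(T-t)^{-1}$ with respect to $\wt E_0$ because the logarithmic derivatives $|DX|/X_0$, $|DV_*|/V_*$, $|Y|$ are all $O((T-t)^{-1})$ by \eqref{Eq.X_Y_V_bound}. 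In particular $V_*h_{\mathrm r}\,Xh_{\mathrm r}$ combines with $\tfrac12X(V_*h_{\mathrm r}^2)$ up to $\tfrac12h_{\mathrm r}^2XV_*$, which is harmless.

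Your alternative --- conjugating by $e^{\mathrm i\Phi_*}$ while keeping the $\overline{\partial_t h}$ multiplier --- founders on the term $-V_*h_{\mathrm r}$, which is not invariant under the phase rotation. With $\tilde h=e^{\mathrm i\Phi_*}h$ one has $-V_*h_{\mathrm r}e^{\mathrm i\Phi_*}=-\tfrac{V_*}{2}\tilde h-\tfrac{V_*}{2}e^{2\mathrm i\Phi_*}\overline{\tilde h}$, and the second summand couples $\tilde h$ to $\overline{\tilde h}$ through the rapidly oscillating coefficient $V_*e^{2\mathrm i\Phi_*}$. In the energy identity its pairing with $\overline{\partial_t\tilde h}$ leaves, after removing the total time derivative, a term controlled only by $\int\bigl|\partial_t\bigl(V_*e^{2\mathrm i\Phi_*}\bigr)\bigr|\,|\tilde h|^2\,\mathrm dx\sim(T-t)^{-3\beta}\|h\|_{L^2}^2$, which relative to the massive energy $\int\bigl(|D\tilde h|^2+(T-t)^{-2\beta}|\tilde h|^2\bigr)$ is still of size $(T-t)^{-\beta}$ --- exactly the rate the conjugation was supposed to remove --- and the weighted Poincar\'e inequality gains only one power of $(T-t)$, not the $\beta>1$ powers needed (relatedly, Poincar\'e does not make the two energies equivalent; only the one-sided bound $E_0[h]\lesssim\tilde E_0[\tilde h]$ holds, which happens to be the direction you need, but it does not rescue this term). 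As written the base case therefore does not close; repairing it essentially forces you back to the multiplier $\overline{Xh}$, i.e.\ to the paper's argument.
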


\begin{proof}
	Let $T_*\in(0, c_0)$ and $h\in C_c^\infty([T-T_*, T)\times\R^d;\C)$ be such that $$\operatorname{supp}_xh(t,\cdot)\subset\{x\in\R^d: |x|\leq 4(T-t)/3\},\quad \forall\ t\in[T-T_*, T).$$
	We define the energy momentum tensor $T[h]$ by
	\begin{equation}\label{Eq.T[h]}
		T[h]_{\mu\nu}:=\Re\left(\pa_\mu h\overline{\pa_\nu h}\right)-\frac12m_{\mu\nu}\left(\pa^\alpha h\overline{\pa_\alpha h}+V_*h_\text{r}^2\right), \quad\forall\ \mu,\nu\in\Z\cap[0, d],
	\end{equation}
	where we have used the Einstein's convention in $\pa^\alpha h\overline{\pa_\alpha h}$. Then we have
	\begin{equation}\label{Eq.E_0}
		E_0[h](t)=\int_{\R^d}T[h]_{00}(t,x)\,\mathrm dx,\quad \forall\ t\in[T-T_*, T).
	\end{equation}
	We define
	\begin{equation}
		P_\mu^X[h]:=T[h]_{\mu\nu}X^\nu,\quad \forall\ \mu\in\Z\cap[0,d].
	\end{equation}
	
	Let's first claim that there exists a constant $\wt c_0>0$ such that
	\begin{align}\label{Eq.P_0^X}
		P_0^X[h]\leq \wt c_0T[h]_{00}X^0\leq0\ \text{on}\ \mathcal C_*:=\{(t,x)\in[T-T_*, T)\times\R^d: |x|\leq 4(T-t)/3\};
	\end{align}
	and there exists a constant $C_*>0$ such that
	\begin{align}\label{Eq.P_mu^X_div}
		\left|\pa^\mu P_\mu^X[h]\right|\leq C_*(T-t)^{-\beta}\left((T-t)^{-1}T[h]_{00}+\sqrt{T[h]_{00}}|\mathcal Lh|\right)\quad\text{on}\quad\mathcal C_*,
	\end{align}
	and moreover, for all $j\in\Z_{\geq 0}$,
	\begin{align}
		\left\|D_x^j(\mathcal LD_xh-D_x\mathcal Lh)\right\|_{L_x^2}\lesssim_j \sum_{i=0}^j(T-t)^{-\beta-1+i-j}\sqrt{E_i[h](t)}\quad\forall\ t\in[T-T_*, T),\label{Eq.L_D_commute}
	\end{align}
	where the implicit constants only depend on $X, Y, V_*, N_*$ (and they are independent of $h$).

Now we prove \eqref{Eq.Energy_estimate} by the  induction argument. We first consider $j=0$. For all $t\in[T-T_*, T)$, by \eqref{Eq.E_0}, $X^0=-X_0$, $\operatorname{supp}_xh(t,\cdot)\subset\{x\in\R^d: |x|\leq 4(T-t)/3\}$, 
\eqref{Eq.P_0^X} and \eqref{Eq.X_0_V_bound}, we have
	\begin{align}\label{Eq.E_0_tilde_E_0}
		E_0[h](t)\leq \int_{\R^d}\frac{-P_0^X[h](t,x)}{\wt c_0X_0(t,x)}\,\mathrm dx\leq \frac{M}{\wt c_0}(T-t)^{\beta}\int_{\R^d}-P_0^X[h](t,x)\,\mathrm dx.
	\end{align}	
	Let
	\[\wt E_0[h](t)=\int_{\R^d}-P_0^X[h](t,x)\,\mathrm dx\geq0, \quad\forall\ t\in[T-T_*, T).\]
	By the divergence theorem (recall that $\pa^0=-\pa_0=-\pa_t$), we get
	\begin{align*}
		\frac{\mathrm d}{\mathrm dt}\wt E_0[h](t)=\int_{\R^d}\pa^0P_0^X[h](t,x)\,\mathrm dx=\int_{\R^d}\pa^\mu P_\mu^X[h](t,x)\,\mathrm dx, \quad\forall\ t\in[T-T_*, T).
	\end{align*}
	Using \eqref{Eq.P_mu^X_div}, \eqref{Eq.E_0} and Cauchy's inequality, we obtain
	\begin{align*}
		&\left|\frac{\mathrm d}{\mathrm dt}\wt E_0[h](t)\right|\leq C_*(T-t)^{-\beta}\left((T-t)^{-1}E_0[h](t)+\sqrt{E_0[h](t)}\|\mathcal Lh(t)\|_{L_x^2}\right),\quad\forall\ t\in[T-T_*, T).
	\end{align*}
	Hence, by \eqref{Eq.E_0_tilde_E_0} and $h\in C_c^\infty([T-T_*, T)\times\R^d;\C)$, for all $t\in[T-T_*, T)$ we have
	\begin{align*}
		E_0[h](t)&\leq \frac M{\wt c_0}(T-t)^\beta\wt E_0[h](t)\leq \frac M{\wt c_0}(T-t)^\beta\int_t^T\left|\frac{\mathrm d}{\mathrm dt}\wt E_0[h](s)\right|\,\mathrm ds\\
		&\leq \frac{MC_*}{\wt c_0}(T-t)^\beta\int_t^T(T-s)^{-\beta}\left((T-s)^{-1}E_0[h](s)+\sqrt{E_0[h](s)}\|\mathcal Lh(s)\|_{L_x^2}\right)\,\mathrm ds.
	\end{align*}
	By Gr\"{o}nwall's lemma, we have
	\begin{align*}
		\sqrt{E_0[h](t)}\leq\frac{MC_*}{2\wt c_0}\int_t^T\left(\frac{T-t}{T-s}\right)^{\frac{MC_0/\wt c_0+\beta}{2}}\|\mathcal Lh(s)\|_{L_x^2}\,\mathrm ds,\quad\forall\ t\in[T-T_*, T).
	\end{align*}
	Letting $M_1:=\frac{MC_*/\wt c_0+\beta}{2}>0$, we know that \eqref{Eq.Energy_estimate} holds for $j=0$.

Let $n\in\Z_{+}$. We assume that \eqref{Eq.Energy_estimate} holds for all $j\in\Z\cap[0,n-1]$. Then by \eqref{Eq.Energy_estimate} for $j=n-1$ and \eqref{Eq.L_D_commute}, for $t\in[T-T_*, T)$ we have (also using \eqref{E0} and \eqref{Eq.E_j})
	\begin{align*}
		&\sqrt{E_n[h](t)}=\sqrt{E_{n-1}[D_xh](t)}\lesssim_n\int_t^T\left(\frac{T-t}{T-s}\right)^{M_1}\sum_{j=0}^{n-1}\frac{\|D_x^j\mathcal LD_xh(s)\|_{L_x^2}}{(T-s)^{(n-1-j)\beta}}\,\mathrm ds\\
		&\lesssim_n \int_t^T\left(\frac{T-t}{T-s}\right)^{M_1}\sum_{j=0}^{n-1}\frac{\|D_x^{j+1}\mathcal Lh(s)\|_{L_x^2}+\sum_{i=0}^j(T-s)^{-\beta-1+i-j}\sqrt{E_i[h](s)}}{(T-s)^{(n-1-j)\beta}}\,\mathrm ds\\
		&\lesssim_n \int_t^T\left(\frac{T-t}{T-s}\right)^{M_1}\sum_{j=1}^{n}\frac{\|D_x^{j}\mathcal Lh(s)\|_{L_x^2}}{(T-s)^{(n-j)\beta}}\,\mathrm ds+I_n(t),
		\end{align*}
		where
		\beno
		I_n(t):=\sum_{j=0}^{n-1}\sum_{i=0}^j\int_t^T\left(\frac{T-t}{T-s}\right)^{M_1}(T-s)^{-1+i-j-(n-j)\beta}\sqrt{E_i[h](s)}\,\mathrm ds.		
		\eeno

		For $T-T_*\leq t<s<T$, $j\geq i\geq0$ we have $0<T-s<T_*<c_0<T=1$ and $ (T-s)^{-1+i-j-(n-j)\beta}=(T-s)^{-1-(n-i)\beta+(j-i)(\beta-1)}\leq (T-s)^{-1-(n-i)\beta}$ (as $ \beta>1$). Then
		\begin{align*}
		&I_n(t)\leq n\sum_{i=0}^{n-1}\int_t^T\left(\frac{T-t}{T-s}\right)^{M_1}(T-s)^{-1-(n-i)\beta}\sqrt{E_i[h](s)}\,\mathrm ds.
	\end{align*}
	Using the induction assumption and Fubini's theorem, we have\begin{align*}
		I_n(t)&\lesssim_n\sum_{i=0}^{n-1}\sum_{j=0}^i
		\int_t^T\frac{\left((T-t)/(T-s)\right)^{M_1}}{(T-s)^{1+(n-i)\beta}}
\int_s^T\left(\frac{T-s}{T-\tau}\right)^{M_1}\frac{\|D_x^j\mathcal Lh(\tau)\|_{L_x^2}}{(T-\tau)^{(i-j)\beta}}\,\mathrm d\tau\,\mathrm ds\\
		&=\sum_{i=0}^{n-1}\sum_{j=0}^i\int_t^T\left(\frac{T-t}{T-\tau}\right)^{M_1}\frac{\|D_x^j\mathcal Lh(\tau)\|_{L_x^2}}{(T-\tau)^{(i-j)\beta}}\int_t^\tau\frac{\mathrm ds}{(T-s)^{1+(n-i)\beta}}\,\mathrm d\tau\\
		&\leq\sum_{i=0}^{n-1}\sum_{j=0}^i\int_t^T\left(\frac{T-t}{T-\tau}\right)^{M_1}\frac{\|D_x^j\mathcal Lh(\tau)\|_{L_x^2}}{(T-\tau)^{(i-j)\beta}}\frac{1}{(T-\tau)^{(n-i)\beta}}\,\mathrm d\tau\\
		&\leq n\sum_{j=0}^{n-1}\int_t^T\left(\frac{T-t}{T-\tau}\right)^{M_1}\frac{\|D_x^j\mathcal Lh(\tau)\|_{L_x^2}}{(T-\tau)^{(n-j)\beta}}\,\mathrm d\tau.
	\end{align*}
	Therefore, we obtain \eqref{Eq.Energy_estimate} for $j=n$. This proves \eqref{Eq.Energy_estimate} for all $j\in\Z_{\geq 0}$.

Thus, it remains to prove \eqref{Eq.P_0^X}, \eqref{Eq.P_mu^X_div} and \eqref{Eq.L_D_commute}. We start with
	\[P_0^X[h]=T[h]_{0\nu}X^\nu=T[h]_{00}X^0+\sum_{i=1}^dT[h]_{0i}X^i.\]
	On $\mathcal C_*\subset\mathcal C_1$, by \eqref{Eq.X_Y_vector_fields}, we have $-X^0=X_0=\pa_t\Phi_*$ and $X^i=X_i=\pa_i\Phi_*$ for $i\in\Z\cap[1,d]$, hence by \eqref{Eq.T[h]} and Cauchy's inequality,
	\begin{align*}
		\left|\sum_{i=1}^dT[h]_{0i}X^i\right|&\leq\sum_{i=1}^d|\pa_th||\pa_ih||\pa_i\Phi_*|\leq |\pa_th||D_xh||D_x\Phi_*|\leq \frac{|\pa_th|^2+|D_xh|^2}{2}|D_x\Phi_*|\\
		&\leq T[h]_{00}|D_x\Phi_*|.
	\end{align*}On the other hand, by \eqref{Eq.Phi_*_w_*_1} and \eqref{Eq.Phi_*_w_*_2}, there exists a constant $\wt c_0\in(0,1)$ such that
	\[\pa_t\Phi_*-|D_x\Phi_*|\geq c_0(T-t)^{-\beta}\geq \wt c_0\pa_t\Phi_*>0\quad\text{on}\quad \mathcal C_1.\]
	Thus, we have $|X|\leq|\pa_t\Phi_*|+|D_x\Phi_*|\leq 2\pa_t\Phi_*=2X_0 $ and
	\[\left|\sum_{i=1}^dT[h]_{0i}X^i\right|\leq T[h]_{00}|D_x\Phi_*|\leq T[h]_{00}(1-\wt c_0)\pa_t\Phi_*=T[h]_{00}(1-\wt c_0)X_0,\]
	hence
	\begin{align*}
		P_0^X[h]\leq T[h]_{00}X^0+T[h]_{00}(1-\wt c_0)X_0=\wt c_0T[h]_{00}X^0\leq0\quad\text{on}\quad \mathcal C_*.
	\end{align*}
	This proves \eqref{Eq.P_0^X}.	

As for \eqref{Eq.P_mu^X_div}, we compute
	\begin{align*}
		\pa^\mu T[h]_{\mu\nu}&=\Re\left(\Box h\overline{\pa_\nu h}\right)+\Re\left(\pa_\mu h\pa^\mu\overline{\pa_\nu h}\right)-\frac12\pa_\nu\left(\pa^\alpha h\overline{\pa_\alpha h}+V_*h_\text{r}^2\right)\\
		&=\Re\left(\Box h\overline{\pa_\nu h}\right)+\frac12\Re\pa_\nu\left(\pa_\mu h\overline{\pa^\mu h}\right)-\frac12\pa_\nu\left(\pa^\alpha h\overline{\pa_\alpha h}\right)-h_\text{r}\pa_\nu h_\text{r}V_*-\frac12 h_\text{r}^2\pa_\nu V_*\\
		&=\Re\left(\Box h\overline{\pa_\nu h}\right)-h_\text{r}\pa_\nu h_\text{r}V_*-\frac12 h_\text{r}^2\pa_\nu V_*
	\end{align*}
	for $\nu\in\Z\cap[0,d]$. Hence,
	\begin{align*}
		\pa^\mu P_\mu^X[h]&=T[h]_{\mu\nu}\pa^\mu X^\nu+(\pa^\mu T[h]_{\mu\nu})X^\nu\\
		&=T[h]_{\mu\nu}(\pi^X)^{\mu\nu}+\Re\left(\Box hX^\nu\overline{\pa_\nu h}\right)-h_\text{r}X^\nu\pa_\nu h_{\text r}V_*-\frac12h_\text{r}^2X^\nu\pa_\nu V_*\\
		&=T[h]_{\mu\nu}(\pi^X)^{\mu\nu}+\Re\left(\Box h\overline{Xh}\right)-V_*h_{\text r}Xh_{\text r}-\frac12h_{\text r}^2XV_*,
	\end{align*}
	where we have used the fact that $X^\nu$ is real-valued for $\nu\in\Z\cap[0, d]$, $Xh=X^\nu\pa_\nu h$ and we define
	\begin{align}
		(\pi^X)^{\mu\nu}:=\frac{\pa^\mu X^\nu+\pa^\nu X^\mu}2,\quad\forall\ \mu,\nu\in\Z\cap[0,d].
	\end{align}
	Hence, it follows from \eqref{Eq.Lh} that
	\begin{align}
		\pa^\mu P_\mu^X[h]&=T[h]_{\mu\nu}(\pi^X)^{\mu\nu}+\Re\left(\Box h\overline{Xh}\right)-V_*h_{\text r}Xh_{\text r}-\frac12h_{\text r}^2XV_*\nonumber\\
		&=T[h]_{\mu\nu}(\pi^X)^{\mu\nu}-\frac12h_{\text r}^2XV_*+\Re\left(\mathcal Lh\overline{Xh}\right)-2\Re\left(Yh\overline{Xh}\right).\label{Eq.P_mu^X_div1}
	\end{align}By \eqref{Eq.T[h]}, we have $|T[h]_{\mu\nu}|\leq T[h]_{00}$ for all $\mu,\nu\in\Z\cap[0,d]$ and $|Dh|^2\leq 2T[h]_{00}$. Thus, by {\eqref{Eq.P_mu^X_div1}}, $|X|\leq2 X_0$, \eqref{Eq.X_Y_V_bound} and \eqref{Eq.X_0_V_bound}, on $\mathcal C_*$ we have 
(note that $|XV_*|\leq |X||DV_*|$, $|Xh|\leq |X||Dh|$, $|Yh|\leq |Y||Dh|$, see footnote \ref{8})
	\begin{align*}
		\left|\pa^\mu P_\mu^X[h]\right|&\lesssim T[h]_{00}|DX|+T[h]_{00}X_0\frac{|DV_*|}{V_*}+|\mathcal Lh||X||Dh|+|Y||Dh||X||Dh|\\
		&\lesssim T[h]_{00}(T-t)^{-1}X_0+|\mathcal Lh|X_0\sqrt{T[h]_{00}}\\
		&\lesssim (T-t)^{-\beta}\left((T-t)^{-1}T[h]_{00}+\sqrt{T[h]_{00}}|\mathcal Lh|\right),
	\end{align*}
	which gives \eqref{Eq.P_mu^X_div}. 

Finally, we prove \eqref{Eq.L_D_commute}. By \eqref{Eq.Lh}, we have
	\begin{align*}
		\mathcal LD_xh-D_x\mathcal Lh=-2\ii D_xX^{\alpha}\partial_{\alpha}h-2D_xY^{\alpha}\partial_{\alpha}h+D_xV_*\cdot h_{\text r}.
	\end{align*}
	Let $j\in\Z_{\geq 0}$, by \eqref{Eq.Leibniz_1} and \eqref{Eq.X_Y_V_*_N_*_bound}, for any $t\in[T-T_*, T)$ we have
	\begin{align*}
		\|D_x^j(D_xX^{\alpha}\partial_{\alpha}h)(t)\|_{L_x^2}&\lesssim_j\sum_{i=0}^j\|D_x^{j-i}D_xX(t)\|_{L_x^\infty}\|DD_x^{i}h(t)\|_{L_x^2}\\
		&\lesssim_j\sum_{i=0}^j(T-t)^{-\beta-1+i-j}\sqrt{E_i[h](t)}.
	\end{align*}
	Similarly, we have (recalling $\beta>1$)
	\begin{align*}
		\|D_x^j(D_xY^{\alpha}\partial_{\alpha}h)(t)\|_{L_x^2}\lesssim_j \sum_{i=0}^j(T-t)^{-2+i-j}\sqrt{E_i[h](t)}\lesssim_j\sum_{i=0}^j(T-t)^{-\beta-1+i-j}\sqrt{E_i[h](t)}.
	\end{align*}
	By \eqref{Eq.X_0_V_bound} and \eqref{Eq.X_Y_V_*_N_*_bound}, we have
	\begin{align*}
		\|D_x^j(D_xV_*\cdot h_r)\|_{L_x^2}&\lesssim_j\sum_{i=0}^j\|D_x^{j-i+1}V_*/\sqrt{V_*}\|_{L_x^\infty}\|\sqrt{V_*}D_x^ih_\text{r}\|_{L_x^2}\\
		&\lesssim_j\sum_{i=0}^j(T-t)^{-\beta-1+i-j}\sqrt{E_i[h](t)}.
	\end{align*}
	Hence, we get \eqref{Eq.L_D_commute}.
\end{proof}

\subsection{Solving the error equation}

\begin{lemma}\label{Lem.h_solution_property}
	There exists a constant  $c_2\in (0, c_0)$ that depends only on $X, Y, V_*, N_*$ such that for any $f\in C^\infty_c([T-c_0, T)\times\R^d;\C)$ satisfying $|D_x^jf|\leq |D_x^j N_*|$ for all $j\in\Z_{\geq 0}$, there is a solution $h\in C_c^\infty([T-c_2, T)\times \R^d;\C)$ to the error equation
	\begin{equation}\label{Eq.h_eq_f}
		\Box h+2\operatorname{i}Xh+2Yh-V_*h_{\operatorname{r}}-\frac1{p-1}V_*\varphi_1(h)-N_*h=f.
	\end{equation}
	Moreover, $\operatorname{supp}_xh(t,\cdot)\subset\{x\in\R^d: |x|\leq4(T-t)/3\}$ for all $t\in [T-c_2, T)$, and there exists a constant $C_{\Box}>0$ that depends only on $X, Y, V_*, N_*$ ($C_{\Box}$ does not depend on $f$) such that
	\begin{equation}
		|\Box h(t,x)|\leq C_\Box,\quad \forall\ t\in [T-c_2, T),\ \forall\  x\in\R^d,
	\end{equation}
	and for any $j\in \Z_{\geq 0}$, $\la>0$, there exists a constant $C_{j,\la}>0$ that depends only on $X, Y, V_*, N_*$ ($C_{j,\la}$ does not depend on $f$) such that
	\begin{equation}
		|D_x^jh(t,x)|+|\pa_tD_x^jh(t,x)|\leq C_{j,\la}(T-t)^\la,\quad \forall\ t\in [T-c_2, T),\ \forall\ x\in\R^d.
	\end{equation}
\end{lemma}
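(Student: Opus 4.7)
Following the roadmap after Proposition \ref{Prop.Solve_wave}, I would obtain $h$ as the limit of solutions $h_n$ of a truncated equation, solved backward in time from $t=T-\varepsilon_n$ with zero Cauchy data, where $\varepsilon_n\downarrow 0$. Introduce a smooth cutoff $\chi\in C_c^\infty(\C;[0,1])$ with $\chi=1$ on $\{|z|\leq 1/2\}$ and $\operatorname{supp}\chi\subset\{|z|\leq 1\}$, and set $\widetilde\varphi_1(z):=\chi(z)\varphi_1(z)$, which is globally smooth, of quadratic order at $0$, agrees with $\varphi_1$ on $\{|z|\leq 1/2\}$, and has globally bounded derivatives of every order. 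For each $n$, solve
\[
\mathcal L h_n=f+\tfrac{1}{p-1}V_*\widetilde\varphi_1(h_n)+N_* h_n,\qquad h_n(T-\varepsilon_n,\cdot)=\partial_t h_n(T-\varepsilon_n,\cdot)=0,
\]
backward on $[T-c_0,T-\varepsilon_n]\times\R^d$; since the coefficients are smooth on $[0,T)\times\R^d$ and $\widetilde\varphi_1$ is globally Lipschitz, standard semilinear wave theory produces a unique $h_n\in C^\infty$. Finite speed of propagation for $\Box$ (using that $\mathcal L$ is a lower-order perturbation of $\Box$, and that $V_*,X,Y$ vanish outside $\{|x|\leq 5(T-t)/3\}$), together with $\operatorname{supp}_x f(t,\cdot)\subset\{|x|\leq 4(T-t)/3\}$ (inherited from \eqref{Eq.N_*_support} via $|f|\leq|N_*|$), confines $\operatorname{supp}_x h_n(t,\cdot)\subset\{|x|\leq 4(T-t)/3\}$, so that Lemma \ref{Lem.Energy_estimate} applies to $h_n$.

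The heart of the argument is a bootstrap delivering, uniformly in $n$, the estimate
\[
\sqrt{E_j[h_n](t)}\leq A_{j,\la}(T-t)^\la,\qquad \forall\,t\in[T-c_2,T-\varepsilon_n],\ j\in\Z_{\geq 0},\ \la>0,
\]
for some $c_2\in(0,c_0)$ and constants $A_{j,\la}$ depending only on $X,Y,V_*,N_*$. Assuming this with the doubled constant $2A_{j,\la}$ on some sub-interval, Sobolev embedding yields $\|h_n(t)\|_{L^\infty}\lesssim(T-t)^{\la-d/2}\ll 1$ for $\la$ large, so the cutoff is inactive and $\widetilde\varphi_1(h_n)=\varphi_1(h_n)$ is genuinely quadratic in $h_n$ (and polynomial in $(h_n,\bar h_n)$, since $p-1$ is even). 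Feeding this back into \eqref{Eq.Energy_estimate}, controlling spatial derivatives of $V_*\varphi_1(h_n)$ through \eqref{Eq.X_Y_V_*_N_*_bound} and Moser-type product estimates, and controlling those of $N_*h_n$ and $f$ through \eqref{Eq.N_*_bound} and the hypothesis $|D_x^j f|\leq|D_x^jN_*|$, improves the bound to one with constant $A_{j,\la}$ in place of $2A_{j,\la}$, provided $c_2$ is chosen small enough (depending only on $X,Y,V_*,N_*$). A continuous induction on $t$ closes the bootstrap uniformly in $n$. Passing to the limit $n\to\infty$ by diagonal extraction and Arzel\`a--Ascoli on compact subsets of $[T-c_2,T)\times\R^d$ (temporal derivatives recovered from the equation) yields a smooth solution $h$ to \eqref{Eq.h_eq_f} with the same support property, and the pointwise bound $|D_x^jh|+|\partial_tD_x^jh|\leq C_{j,\la}(T-t)^\la$ follows from the bootstrap applied to enough derivatives via Sobolev embedding, combined with using the equation to exchange one time derivative for spatial derivatives and lower-order terms. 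The bound $|\Box h|\leq C_\Box$ then reads off \eqref{Eq.h_eq_f} using \eqref{Eq.X_Y_V_*_N_*_bound} and the pointwise bounds on $h$ and $Dh$, noting that $V_*|h|^2\lesssim(T-t)^{-2\beta+2\la}$ is uniformly bounded for $\la\geq\beta$.

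\textbf{Main obstacle.} The delicate step is the weight bookkeeping in the bootstrap: each spatial derivative of $V_*\varphi_1(h_n)$ can fall on $V_*$ (costing a factor $(T-t)^{-1}$ by \eqref{Eq.X_Y_V_*_N_*_bound}) or on $h_n$ (producing factors $(T-t)^{-\beta}$ through the commutator estimate \eqref{Eq.L_D_commute}), and all these losses must be dominated, after integration against the kernel $((T-t)/(T-s))^{M_1}(T-s)^{-(j-i)\beta}$ in \eqref{Eq.Energy_estimate}, by the decay $(T-t)^\la$ from the bootstrap hypothesis. Choosing $\la$ sufficiently larger than $j\beta+M_1$ plus the Sobolev losses, and then shrinking $c_2$, is precisely what renders all these losses subcritical; the hypothesis $|D_x^j f|\leq|D_x^jN_*|$ combined with \eqref{Eq.N_*_bound} is exactly what guarantees that the source terms decay faster than any power, which in turn is what allows $A_{j,\la}$ and $c_2$ to be chosen independently of $f$, as required by the statement.
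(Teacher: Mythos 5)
Your proposal is essentially correct and rests on the same pillars as the paper's argument (backward solving from zero data, the energy estimate of Lemma \ref{Lem.Energy_estimate}, a bootstrap closed by the rapid decay of $N_*$ and the hypothesis $|D_x^jf|\leq|D_x^jN_*|$), but it is organized differently and imports machinery that is not needed at this stage. The paper exploits that $f\in C_c^\infty([T-c_0,T)\times\R^d)$ already vanishes for $t$ near $T$: it solves \eqref{Eq.h_eq_f} (untruncated) with zero Cauchy data at a time where $f$ has already vanished, so by uniqueness $h\equiv 0$ near $t=T$, and then continues the solution backward using the standard blow-up criterion; the sequence $\varepsilon_n\downarrow0$ and the limiting procedure appear only later, in the proof of Proposition \ref{Prop.Solve_wave}, where they are applied to the truncated sources $f_n=N_*\xi_1((T-t)/\varepsilon_n)$, each of which is handled by this lemma. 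Your truncation $\widetilde\varphi_1=\chi\varphi_1$ and the Arzel\`a--Ascoli limit are therefore redundant here (indeed, once your bootstrap gives $|h_n|\leq 1/2$, uniqueness forces all your $h_n$ to be restrictions of one and the same function), though not incorrect. The one point that genuinely needs repair is the structure of your bootstrap: you run a continuity argument with the infinite family of hypotheses $\sqrt{E_j[h_n](t)}\leq 2A_{j,\la}(T-t)^\la$ for all $j\in\Z_{\geq0}$ and all $\la>0$ simultaneously, and without uniformity in $(j,\la)$ the set of times where all improved bounds hold need not be relatively open, so the continuous induction does not close as stated. The paper avoids this by bootstrapping a single scalar quantity, $\|h(t)\|_{L^\infty}\leq(T-t)^{2\beta-1}$, isolating in Lemma \ref{Lem.h_bootstrap} the implication ``bootstrap hypothesis $\Rightarrow$ all energy and pointwise bounds with constants independent of $f$ and of the interval,'' and then improving the single hypothesis via $|h|\leq C_{0,2\beta}(T-t)\cdot(T-t)^{2\beta-1}\leq C_{0,2\beta}c_2(T-t)^{2\beta-1}<\tfrac12(T-t)^{2\beta-1}$ after choosing $c_2$ with $C_{0,2\beta}c_2<1/2$; you should restructure your argument the same way, deriving the full family of estimates a posteriori rather than carrying it as a hypothesis.
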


The proof is based on the following lemma.

\begin{lemma}\label{Lem.h_bootstrap}
	Let $f\in C_c^\infty([T-c_0, T)\times\R^d;\C)$ be such that $|D_x^jf|\leq |D_x^jN_*|$ for all $j\in\Z_{\geq 0}$. Let $T_*\in(0, c_0)$. Assume that $h\in C_c^\infty([T-T_*, T)\times\R^d;\C)$ solves \eqref{Eq.h_eq_f} on $[T-T_*, T)\times\R^d$; moreover, $\operatorname{supp}_xh(t,\cdot)\subset \{x\in\R^d: |x|\leq 4(T-t)/3\}$ for all $t\in[T-T_*, T)$ and
	\begin{equation}\label{Eq.h_bound_bootstrap}
		\|h(t,\cdot)\|_{L^\infty(\R^d)}\leq (T-t)^{2\beta-1},\quad\forall\ t\in[T-T_*, T).
	\end{equation}
	Then there exists a constant $C_{\Box}>0$ that depends only on $X, Y, V_*, N_*$ ($C_{\Box}$ does not depend on $f, T_*$) such that
	\begin{equation}\label{h1}
		|\Box h(t,x)|\leq C_\Box,\quad \forall\ t\in [T-T_*, T),\ \forall\  x\in\R^d,
	\end{equation}
	and for any $j\in \Z_{\geq 0}$, $\la>0$, there exists a constant $C_{j,\la}>0$ that depends only on $X, Y, V_*, N_*$ ($C_{j,\la}$ does not depend on $f, T_*$) such that
	\begin{equation}\label{Eq.h_small1}
		|D_x^jh(t,x)|+|\pa_tD_x^jh(t,x)|\leq C_{j,\la}(T-t)^\la,\quad \forall\ t\in [T-T_*, T),\ \forall\ x\in\R^d.
	\end{equation}
\end{lemma}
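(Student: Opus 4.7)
The plan is to recast \eqref{Eq.h_eq_f} as $\mathcal L h = G_*$, where
$$G_* := \tfrac{V_*}{p-1}\varphi_1(h) + N_* h + f,$$
and then iterate the linear energy estimate of Lemma \ref{Lem.Energy_estimate}. The decisive structural point is that $\varphi_1(h)=O(|h|^2)$, so under the bootstrap hypothesis $\|h(t)\|_{L^\infty}\le (T-t)^{2\beta-1}$ and the bound $V_*\le M(T-t)^{-2\beta}$ from Lemma \ref{Lem.X_Y_V_N_bound}, the nonlinear piece $V_*\varphi_1(h)$ is pointwise of size $(T-t)^{2\beta-2}$, decaying because $\beta>1$. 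The remaining terms $N_*h$ and $f$ already decay faster than any power of $T-t$ by \eqref{Eq.N_*_bound} and the hypothesis $|D_x^j f|\le|D_x^j N_*|$.

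The main work is to show, by induction on $j$ and a nested iteration on the decay exponent, that for every $j\in\Z_{\geq 0}$ and every $\lambda>0$,
$$\|D_x^j h(t)\|_{L^2}\le C_{j,\lambda}(T-t)^\lambda,\qquad t\in[T-T_*,T),$$
with $C_{j,\lambda}$ depending only on $X,Y,V_*,N_*$. The starting bound $\|h(t)\|_{L^2}\lesssim (T-t)^{2\beta-1+d/2}$ follows from the $L^\infty$ hypothesis and the support condition $|x|\le 4(T-t)/3$. Inserting $\|V_*\varphi_1(h)\|_{L^2}\lesssim (T-t)^{-2\beta}\|h\|_{L^\infty}\|h\|_{L^2}$ (together with the small contributions of $N_*h$ and $f$) into \eqref{Eq.Energy_estimate} at $j=0$, and recovering $\|h\|_{L^2}$ from $\sqrt{E_0[h]}$ via the Poincar\'e inequality on the spatial support plus the lower bound on $V_*$, one obtains an improved $L^2$ bound with a per-iteration gain of at least $\beta>1$ in the decay exponent; iterating drives the $L^2$ decay arbitrarily fast. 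For $j\ge 1$, the Leibniz rule \eqref{Eq.Leibniz_1} reduces $\|D_x^j G_*\|_{L^2}$ to $\|D_x^i h\|_{L^2}$ for $i\le j$ (coefficient derivatives being controlled by Lemma \ref{Lem.X_Y_V_N_bound}), and the factor $(T-s)^{-(j-i)\beta}$ in \eqref{Eq.Energy_estimate} is absorbed by the per-iteration gain. Sobolev embedding $H^s(\R^d)\hookrightarrow L^\infty(\R^d)$ for $s>d/2$ then upgrades $L^2$ bounds on sufficiently many spatial derivatives to pointwise bounds $\|D_x^j h(t)\|_{L^\infty}\le C_{j,\lambda}(T-t)^\lambda$ for all $j,\lambda$.

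Time derivatives are then recovered from the equation itself: rewriting $\mathcal L h=G_*$ via \eqref{Eq.Lh} as
$$\pa_t^2 h = \Delta h + 2\ii Xh + 2Yh - V_* h_{\mathrm r} - G_*,$$
any $\pa_t^k D_x^m h$ is inductively a combination of purely spatial derivatives of $h$ and of the coefficients $X,Y,V_*,f$. The loss factors $(T-t)^{-\beta}$ from $X$, $(T-t)^{-1}$ from $Y$, and $(T-t)^{-2\beta}$ from $V_*$ are swallowed by the arbitrarily fast decay of $h$ and its spatial derivatives, yielding \eqref{Eq.h_small1}. The bound \eqref{h1} is then immediate from $\Box h = G_* - 2\ii Xh - 2Yh + V_* h_{\mathrm r}$, since each term is uniformly bounded on $[T-T_*,T)\times\R^d$ once $\|h\|_{L^\infty}$ and $\|Dh\|_{L^\infty}$ are known to decay of order $\lambda\ge 2\beta$.

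The main obstacle is organizing the $L^2$ iteration so that gains genuinely compound: one must verify at each pass that the integrand in \eqref{Eq.Energy_estimate} is integrable near $s=T$ and produces the expected decay, and that the cross-derivative loss $(T-s)^{-(j-i)\beta}$ does not cancel the nonlinear improvement coming from the quadratic factor in $\varphi_1(h)$. The key quantitative point is that each round actually produces a positive gain of order $\beta>1$ in the decay exponent, so the bootstrap closes after finitely many passes for any prescribed $\lambda$, and the constants $C_{\Box}$, $C_{j,\lambda}$ end up depending only on the coefficients $X,Y,V_*,N_*$ and not on $f$ or $T_*$.
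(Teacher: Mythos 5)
Your overall architecture matches the paper's: recast \eqref{Eq.h_eq_f} as $\mathcal Lh=\frac{1}{p-1}V_*\varphi_1(h)+N_*h+f$, use the bootstrap bound $\|h\|_{L^\infty}\le (T-t)^{2\beta-1}$ to reduce the nonlinearity to $\|D_x^j(V_*\varphi_1(h))\|_{L^2}\lesssim (T-t)^{-1}\|D_x^jh\|_{L^2}$, feed this into Lemma \ref{Lem.Energy_estimate}, upgrade by Sobolev embedding, and recover time derivatives and \eqref{h1} from the equation. The last steps (Sobolev, time derivatives, $\Box h$) are fine.

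The genuine gap is in how you close the self-referential $L^2$ inequality at $j=0$. You propose a bootstrap on decay exponents: start from $\|h(t)\|_{L^2}\lesssim (T-t)^{\lambda_0}$ with $\lambda_0=2\beta-1+d/2$, plug into the right-hand side of \eqref{Eq.Energy_estimate}, and read off an improved exponent. But the integrand you produce is $\bigl(\frac{T-t}{T-s}\bigr)^{M_1}(T-s)^{\lambda_0-1}$, and $\int_t^T(T-s)^{\lambda_0-1-M_1}\,\mathrm ds$ diverges whenever $\lambda_0\le M_1$. The constant $M_1$ from Lemma \ref{Lem.Energy_estimate} is $\frac{MC_*/\tilde c_0+\beta}{2}$, which is large and not comparable to $2\beta-1+d/2$, so the very first pass of your iteration need not produce a finite bound; you flag integrability as something ``to verify'' but it actually fails in general. (A secondary inaccuracy: the claimed per-round gain of $\beta$ only applies to $h_{\mathrm r}$ through the $V_*|h_{\mathrm r}|^2$ term in $E_0$; the imaginary part is recovered only through Poincar\'e, giving a gain of $1$. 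That alone would not be fatal, but the divergence is.) The paper's resolution is different in kind: it never inserts an a priori decay rate for $h$ into the right-hand side. Instead it uses Poincar\'e on the support $\{|x|\le 4(T-t)/3\}$ to absorb the singular factor, $(T-s)^{-1}\|D_x^jh(s)\|_{L^2}\lesssim\|D_x^{j+1}h(s)\|_{L^2}\le\sqrt{2E_j[h](s)}$, so that the energy inequality becomes $\sqrt{E_0[h](t)}\le C\int_t^T\bigl(\frac{T-t}{T-s}\bigr)^{M_1}\bigl(\sqrt{E_0[h](s)}+\|f(s)\|_{L^2}\bigr)\,\mathrm ds$ with a \emph{bounded} coefficient in front of $\sqrt{E_0[h](s)}$, and then closes it by Gr\"onwall's lemma applied to $(T-t)^{-M_1}\sqrt{E_0[h](t)}$ (using that $h$ vanishes near $t=T$, so this quantity is finite). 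After that, all energies are controlled solely by $\|D_x^if\|_{L^2}\le\|D_x^iN_*\|_{L^2}$, which decays faster than any power, and the cross-derivative losses $(T-s)^{-(j-i)\beta}$ are harmless. You need this Gr\"onwall step (or an equivalent device) to replace your exponent iteration; as written, your argument does not get off the ground.
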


Now we present the proof  of Lemma \ref{Lem.h_solution_property}
\begin{proof}[Proof of Lemma \ref{Lem.h_solution_property}]
	Let $f\in C_c^\infty([T-c_0, T)\times\R^d;\C)$ be such that $|D_x^jf|\leq |D_x^jN_*|$ for all $j\in\Z_{\geq 0}$. We assume that $\varepsilon\in(0, c_0)$ satisfies $f(t,x)=0$ for all $(t,x)\in(T-\varepsilon, T)\times\R^d$. By the standard local well-posedness theory (Theorem 6.4.11 in \cite{Hormander}), there is a unique local solution $h\in C^\infty((T-T_+, T)\times \R^d;\C)$ to \eqref{Eq.h_eq_f} with $(h, \pa_th)|_{t=T-\varepsilon/2}=(0, 0)$, where $\varepsilon<T_+\leq c_0$ corresponds to the left life span of $h$; moreover, if $T_+<c_0$, then
	\begin{equation}\label{Eq.h_blow_up_crit}
		\limsup_{t\downarrow T-T_+}\|h(t,\cdot)\|_{L^\infty(\R^d)}=+\infty.
	\end{equation}
	By the uniqueness and $f(t,x)=0$ for all $(t,x)\in(T-\varepsilon, T)\times\R^d$, we have $h(t,x)=0$ for all $(t,x)\in(T-\varepsilon, T)\times\R^d$. Moreover, by $|f|\leq |N_*|$, \eqref{Eq.N_*_support} and the finite speed of propagation, we have
	\[\operatorname{supp}_xh(t,\cdot)\subset \{x\in\R^d: |x|\leq 4(T-t)/3\},\quad\forall\ t\in[T-T_+, T).\]
	
	Let $c_2\in(0, c_0)$ be such that $C_{0,2\beta}\cdot c_2<1/2$, where $C_{0,2\beta}>0$ is given by \eqref{Eq.h_small1}. Note that $c_2$ is independent of $f$ and $T_*$. We claim that $T_+\geq c_2$. We assume in contrary that $\varepsilon<T_+<c_2$. Let
	\begin{equation}
		\mathscr{E}:=\left\{T_0\in(0, T_+): \|h(t,\cdot)\|_{L^\infty(\R^d)}\leq (T-t)^{2\beta-1} \text{ for all }t\in[T-T_0, T)\right\}.
	\end{equation}
	Then $(0,\varepsilon)\subset\mathscr{E}$. Let $T_s:=\sup\mathscr{E}\in[\varepsilon, T_+]$. By \eqref{Eq.h_blow_up_crit}, we have $T_s<T_+$, hence $T_s\in\mathscr{E}$ and $T_s<T_+<c_2$. By \eqref{Eq.h_small1}, we have
	\begin{align*}
		|h(t,x)|\leq C_{0,2\beta}(T-t)^{2\beta}=C_{0,2\beta}(T-t)(T-t)^{2\beta-1}\leq C_{0,2\beta}c_2(T-t)^{2\beta-1}<\frac12(T-t)^{2\beta-1}
	\end{align*}
	for all $t\in [T-T_s, T)\subset[T-c_2, T)$. Thus, by the continuity we have $T_s+\delta\in\mathscr{E}$ for some $\delta>0$. This contradicts with $T_s=\sup\mathscr{E}$. Therefore, $T_+\geq c_2$ and $\|h(t,\cdot)\|_{L^\infty(\R^d)}\leq (T-t)^{2\beta-1}$ for all $t\in[T-c_2, T)$. Now Lemma \ref{Lem.h_bootstrap} (letting $T_*=c_2$) implies Lemma \ref{Lem.h_solution_property}.
\end{proof}

Let's complete the proof of Lemma \ref{Lem.h_bootstrap}.

\begin{proof}[Proof of Lemma \ref{Lem.h_bootstrap}]
	Assume that $h$ solves \eqref{Eq.h_eq_f}. Then {($\mathcal L$ is defined in \eqref{Eq.Lh})}
	\[\mathcal Lh=\frac1{p-1}V_*\varphi_1(h)+N_*h+f.\]
	
	We claim that for each $j\in\Z_{\geq 0}$, there exists a constant $\tilde C_j>0$ such that
	\begin{equation}\label{Eq.E_j_claim}
		\sqrt{E_j[h](t)}\leq \wt C_j\int_t^T\left(\frac{T-t}{T-s}\right)^{M_1}\sum_{i=0}^j\frac{\left\|D_x^i f(s)\right\|_{L_x^2}}{(T-s)^{(j-i)\beta}}\mathrm ds,\quad\forall\ t\in[T-T_*, T).
	\end{equation}
	By the definition of $\varphi_1$, we know that $\varphi_1$ is a polynomial on $(h, \overline{h})$ of the form $\varphi_1=\sum_{2\leq i+j\leq p}c_{i,j}h^i\overline{h}^j$, 
with $c_{i,j}\in\R$, thus
	\begin{equation}\label{Eq.varphi_1_bound}
		|\varphi_1(h)|\lesssim |h|^{2}+|h|^p,\quad\forall\ h\in\C.
	\end{equation}
	Hence, $\operatorname{supp}_x\varphi_1(h)(t,\cdot)\subset\{x\in\R^d: |x|\leq 4(T-t)/3\}$ for all $t\in[T-T_*, T)$. For $j\in\Z_{\geq 0}$ and $t\in [T-T_*, T)$, by \eqref{Eq.Leibniz_1}, \eqref{Eq.X_Y_V_*_N_*_bound}
 and Poincaré's inequality, we have 
	\begin{align*}
		\left\|D_x^j(V_*\varphi_1(h))(t)\right\|_{L_x^2}&\lesssim_j \sum_{i=0}^j\|D_x^{j-i}V_*(t)\|_{L_x^\infty}\|D_x^i\varphi_1(h)(t)\|_{L_x^2}\\
		&\lesssim_j \sum_{i=0}^j(T-t)^{-(2\beta+j-i)}(T-t)^{j-i}\|D_x^j\varphi_1(h)(t)\|_{L_x^2}\\&\lesssim_j (T-t)^{-2\beta}\|D_x^j\varphi_1(h)(t)\|_{L_x^2}\end{align*}
Using the classical  product estimate,
\begin{equation}\label{eq:prod}
\|D_x^n(fg)\|_{L_x^2}\lesssim_n \|f\|_{L_x^\infty}\|D_x^ng\|_{L_x^2}+\|g\|_{L_x^\infty}\|D_x^nf\|_{L_x^2},\quad\forall\ n\in\Z_{\geq0},
\end{equation}
and \eqref{Eq.h_bound_bootstrap}, we infer 
\begin{align*}
		&\|D_x^n(h^i\overline{h}^j)\|_{L_x^2}\lesssim_{n,i,j} \|h\|_{L_x^\infty}^{i+j-1}\|D_x^nh\|_{L_x^2},\quad\forall\ n,i,j\in\Z_{\geq0},\ i+j\geq2,\\
&\|D_x^j\varphi_1(h)(t)\|_{L_x^2}\lesssim_j\left(\|h(t)\|_{L_x^\infty}+\|h(t)\|_{L_x^\infty}^{p-1}\right)\|D_x^jh(t)\|_{L_x^2}\lesssim_j (T-t)^{2\beta-1}\|D_x^jh(t)\|_{L_x^2},
	\\
		&\left\|D_x^j(V_*\varphi_1(h))(t)\right\|_{L_x^2}\lesssim_j(T-t)^{-2\beta} (T-t)^{2\beta-1}\|D_x^jh(t)\|_{L_x^2}=(T-t)^{-1}\|D_x^jh(t)\|_{L_x^2};
	\end{align*}
	Similarly, by \eqref{Eq.X_Y_V_*_N_*_bound} and Poincaré's inequality, we have
	\begin{align*}
		\left\|D_x^j(N_*h)(t)\right\|_{L_x^2}\lesssim_j(T-t)^{-1}\|D_x^jh(t)\|_{L_x^2}.
	\end{align*}
	Therefore, for each $j\in\Z_{\geq 0}$, there holds
	\begin{equation}\label{Eq.Lh_estimate}
		\left\|D_x^j\mathcal Lh(t)\right\|_{L_x^2}\lesssim_j (T-t)^{-1}\|D_x^jh(t)\|_{L_x^2}+\|D_x^jf(t)\|_{L_x^2}.
	\end{equation}
	
	By \eqref{Eq.Lh_estimate}, \eqref{Eq.Energy_estimate},  for any $j\in\Z_{\geq 0}$ and $t\in[T-T_*, T)$ we have
	\begin{align}
		\label{Eq.E_j_induction}\sqrt{E_j[h](t)}&\lesssim_j\int_t^T\left(\frac{T-t}{T-s}\right)^{M_1}\sum_{i=0}^j\frac{(T-s)^{-1}\|D_x^ih(s)\|_{L_x^2}+\|D_x^if(s)\|_{L_x^2}}{(T-s)^{(j-i)\beta}}\mathrm ds.
	\end{align}
	 It follows from Poincaré's inequality and $\operatorname{supp}_xh(t,\cdot)\subset\{x\in\R^d: |x|\leq 4(T-t)/3\}$ that
\begin{align}\label{Ej1}(T-t)^{-1}\|D_x^jh(t)\|_{L_x^2}\lesssim\|D_x^{j+1}h(t)\|_{L_x^2}\leq\sqrt{2E_j[h](t)},\quad \forall\ t\in [T-T_*, T),\ j\in\Z_{\geq0}.\end{align}
Here we also used the definitions of $E_0$ and $E_j$ in \eqref{E0} and \eqref{Eq.E_j}. Next we use the induction argument to prove \eqref{Eq.E_j_claim}.

For $j=0$,  by \eqref{Eq.E_j_induction} and \eqref{Ej1}, there exists a constant $C_0'>0$ satisfying
	\begin{align*}
		\sqrt{E_0[h](t)}\leq C_0'\int_t^T\left(\frac{T-t}{T-s}\right)^{M_1}\left(\sqrt{E_0[h](s)}+\|f(s)\|_{L_x^2}\right)\,\mathrm ds,\quad\forall\ t\in[T-T_*, T).
	\end{align*}
	By Gr\"{o}nwall's lemma, we get
	\begin{align*}
		(T-t)^{-M_1}\sqrt{E_0[h](t)}&\leq C_0'\int_t^T(T-s)^{-M_1}\ee^{C_0'(s-t)}\|f(s)\|_{L_x^2}\,\mathrm ds\\
		&\leq C_0'\ee^{C_0'T}\int_t^T(T-s)^{-M_1}\|f(s)\|_{L_x^2}\,\mathrm ds
	\end{align*}
	for all $t\in[T-T_*, T)$. This proves \eqref{Eq.E_j_claim} for $j=0$. Let $n\in \Z_{+}$, assume that \eqref{Eq.E_j_claim} holds for $j\in\Z\cap[0, n-1]$.
By \eqref{E0}, \eqref{Eq.E_j} and $ \beta>1$, we have 
\beno
(T-s)^{-1}\|D_x^nh(s)\|_{L_x^2}\leq (T-s)^{-1}\sqrt{2E_{n-1}[h](s)}\leq (T-s)^{-\beta}\sqrt{2E_{n-1}[h](s)},
\eeno
{for $ s\in[T-T_*, T)$.} Then by \eqref{Eq.E_j_induction} for $j=n$, \eqref{Ej1} for $j=i<n$, 
 and the induction assumption, we have (as $0<T-t\leq T_*<c_0<1$)
	\begin{align*}
		\sqrt{E_n[h](t)}&\lesssim_n\int_t^T\left(\frac{T-t}{T-s}\right)^{M_1}\left(\|D_x^nf(s)\|_{L_x^2}+\sum_{j=0}^{n-1}\frac{\sqrt{E_j[h](s)}+\|D_x^jf(s)\|_{L_x^2}}{(T-s)^{(n-j)\beta}}\right)\,\mathrm ds\\
		&\lesssim_n \int_t^T\left(\frac{T-t}{T-s}\right)^{M_1}\sum_{j=0}^n\frac{\|D_x^jf(s)\|_{L_x^2}}{(T-s)^{(n-j)\beta}}\,\mathrm ds+I_n(t),
		\end{align*}
where
\begin{align*}
		I_n(t):&=\sum_{j=0}^{n-1}\int_t^T\frac{\big((T-t)/(T-s)\big)^{M_1}}{(T-s)^{(n-j)\beta}}\int_s^T\left(\frac{T-s}{T-\tau}\right)^{M_1}\sum_{i=0}^j\frac{\|D_x^if(\tau)\|_{L_x^2}}{(T-\tau)^{(j-i)\beta}}\,\mathrm d\tau\,\mathrm ds\\
&=\sum_{j=0}^{n-1}\sum_{i=0}^j\int_t^T\left(\frac{T-t}{T-\tau}\right)^{M_1}\frac{\|D_x^if(\tau)\|_{L_x^2}}{(T-\tau)^{(j-i)\beta}}\int_t^{\tau}\frac{\mathrm ds}{(T-s)^{(n-j)\beta}}\,\mathrm d\tau\\
&\leq \sum_{j=0}^{n-1}\sum_{i=0}^j\int_t^T\left(\frac{T-t}{T-\tau}\right)^{M_1}\frac{\|D_x^if(\tau)\|_{L_x^2}}{(T-\tau)^{(j-i)\beta}}\frac{1}{(T-\tau)^{(n-j)\beta}}\,\mathrm d\tau\\
&\leq n\sum_{i=0}^{n-1}\int_t^T\left(\frac{T-t}{T-\tau}\right)^{M_1}\frac{\|D_x^if(\tau)\|_{L_x^2}}{(T-\tau)^{(n-i)\beta}}\,\mathrm d\tau.\end{align*}
Thus, \eqref{Eq.E_j_claim} holds for $j=n$. Therefore, by the the induction, \eqref{Eq.E_j_claim} holds for all $j\in\Z_{\geq 0}$.

As $|D_x^jf|\leq |D_x^jN_*|$ for all $j\in\Z_{\geq 0}$, by \eqref{Eq.N_*_support} and \eqref{Eq.N_*_bound}, for all $j\in\Z_{\geq 0}$ and $\la>0$ there exists a constant $\wt M_{j,\la}>0$ which is independent of $f$
and $T_*$ such that  $\|D_x^jf(t)\|_{L^2}\leq \wt M_{j,\la}(T-t)^\la$ for all $t\in[T-T_*, T)$. Using \eqref{Eq.E_j_claim}, \eqref{Eq.E_j} and \eqref{Ej1}, for all $j\in\Z_{\geq 0}$ and $\la>0$ there exists a constant
$M_{j,\la}'>0$ independent of $f$ and $T_*$ {(depending on $\wt M_{j,\la'} $ for some $\la'>\la $)} such that \begin{align*}
		\|h(t)\|_{H_x^j}+\|\pa_th(t)\|_{H_x^j}\leq M_{j,\la}'(T-t)^\la,\quad\forall\ t\in[T-T_*, T).
	\end{align*}By Sobolev's embedding theorem ($H_x^d(\R^d)\hookrightarrow L_x^\infty(\R^d)$), we have \eqref{Eq.h_small1}. \if0
by \eqref{Eq.E_j_claim} we have\begin{equation}\label{Ej2}
		\sqrt{E_j[h](t)}\leq \wt C_j\int_t^T\left(\frac{T-t}{T-s}\right)^{M_1}\sum_{i=0}^j\frac{\left\|D_x^i N_*(s)\right\|_{L_x^2}}{(T-s)^{(j-i)\beta}}\mathrm ds,\quad\forall\ t\in[T-T_*, T).
	\end{equation}As $\operatorname{supp}_xN_*(t,\cdot)\subset\{x\in\R^d: |x|\leq 4(T-t)/3\}$ for $t\in[T-T_*, T)$, by \eqref{Eq.N_*_bound} we have\begin{align*}
		&\int_t^T\left(\frac{T-t}{T-s}\right)^{M_1}\frac{\left\|D_x^i N_*(s)\right\|_{L_x^2}}{(T-s)^{(j-i)\beta}}\mathrm ds\leq
\sqrt{\omega_d}\int_t^T\left(\frac{T-t}{T-s}\right)^{M_1}\frac{M_{i,\lambda}(T-s)^{\lambda+d/2}}{(T-s)^{(j-i)\beta}}\mathrm ds\\ \leq&\sqrt{\omega_d} M_{i,\lambda}(T-t)^{\lambda+d/2+1-(j-i)\beta},\quad \forall\ \lambda\geq (j-i)\beta+M_1> 0,\  t\in[T-T_*, T).
	\end{align*}Here $ \omega_d$ is the volume of the ball $\{x\in\R^d: |x|\leq 4/3\} $ and $i,j\in\Z $, $j\geq i\geq0 $. Thus for $i,j\in\Z $, $j\geq i\geq0$, $t\in[T-T_*, T)$ and $\lambda\geq M_1$ we have\begin{align*}
		&\int_t^T\left(\frac{T-t}{T-s}\right)^{M_1}\frac{\left\|D_x^i N_*(s)\right\|_{L_x^2}}{(T-s)^{(j-i)\beta}}\mathrm ds\leq\sqrt{\omega_d} M_{i,\lambda+(j-i)\beta}(T-t)^{\lambda+d/2+1}.
	\end{align*}Then for $j\in\Z_{\geq0}$, $t\in[T-T_*, T)$ and $\lambda\geq M_1$, by \eqref{Ej2} we have\begin{align}\label{Ej3}
		&\sqrt{E_j[h](t)}\leq \wt C_j\sum_{i=0}^j\sqrt{\omega_d} M_{i,\lambda+(j-i)\beta}(T-t)^{\lambda+d/2+1}=\widetilde{M}_{j,\lambda}(T-t)^{\lambda+d/2+1}.
	\end{align}Here $\widetilde{M}_{j,\lambda}:=\wt C_j\sum_{i=0}^j\sqrt{\omega_d} M_{i,\lambda+(j-i)\beta} $. By the Gagliardo–Nirenberg inequality, there exists a constant $\widetilde{C}>0$ such that (for all $j\in\Z_{\geq0}$)
\begin{equation}\label{Dxj}
|D_x^jDh(t,x)|\leq \widetilde{C}\|D_x^jD h(t)\|_{L_x^2}^{1/2}\|D_x^{j+d}D h(t)\|_{L_x^2}^{1/2},\quad \forall\ t\in [T-T_*, T),\ \forall\ x\in\R^d.
	\end{equation}By \eqref{E0} and \eqref{Eq.E_j} we have $\|D_x^jD h(t)\|_{L_x^2}=\|DD_x^jh(t)\|_{L_x^2}\leq \sqrt{2E_j[h](t)}$ for $j\in\Z_{\geq0}$, which along with \eqref{Ej3} and \eqref{Dxj} implies
(for all $j\in\Z_{\geq0}$)
\begin{align*}
		&|D_x^jDh(t,x)|\leq \widetilde{C}\sqrt{2\widetilde{M}_{j,\lambda}\widetilde{M}_{j+d,\lambda}}(T-t)^{\lambda+d/2+1},\quad \forall\ t\in [T-T_*, T),\ \forall\ x\in\R^d,\ \lambda\geq M_1.
	\end{align*}Let $\widetilde{C}_{j,\lambda}:=\widetilde{C}\sqrt{2\widetilde{M}_{j,\lambda}\widetilde{M}_{j+d,\lambda}}  $ then \begin{align*}
		&|D_x^jDh(t,x)|\leq \widetilde{C}_{j,\lambda}(T-t)^{\lambda+d/2+1},\quad \forall\ t\in [T-T_*, T),\ \forall\ x\in\R^d,\ \lambda\geq M_1,\ j\in\Z_{\geq0}.
	\end{align*}Recall that $\operatorname{supp}_xh(t,\cdot)\subset\{x\in\R^d: |x|\leq 4(T-t)/3\}$ for $t\in [T-T_*, T) $ we have\begin{align*}
		&|h(t,x)|\leq 4(T-t)/3\cdot\|D_x h(t)\|_{L_x^{\infty}}\leq 4/3\cdot\widetilde{C}_{0,\lambda}(T-t)^{\lambda+d/2+2},
	\end{align*}for $ t\in [T-T_*, T)$, $ x\in\R^d$, $ \lambda\geq M_1$. Now we take  ${C}_{j,\lambda}=\widetilde{C}_{j,\lambda}+\widetilde{C}_{j-1,\lambda}$ for $j\in\Z_+$, $\lambda\geq M_1 $,
and ${C}_{0,\lambda}=\frac{7}{3}\widetilde{C}_{0,\lambda}$  then \eqref{Eq.h_small1} holds for $\lambda\geq M_1 $. For the case $\lambda< M_1 $, \eqref{Eq.h_small1} holds for
${C}_{j,\lambda}={C}_{j,M_1}$.\fi It remains to prove \eqref{h1}. By \eqref{Eq.varphi_1_bound}, \eqref{Eq.h_bound_bootstrap}, \eqref{Eq.h_eq_f}, \eqref{Eq.h_small1}, \eqref{Eq.X_Y_V_*_N_*_bound} ($j=0$) and \eqref{Eq.N_*_bound} ($j=0, \la=1$), we know that there there exist constants $ C_2'>0,  C_3'>0$ such that\begin{align*}
		|\Box h|&\leq  C_2'(T-t)^{-2\beta}|D^{\leq 1}h|+|f|\leq  C_2'(T-t)^{-2\beta}(C_{0, 2\beta}+C_{1,2\beta})(T-t)^{2\beta}+|N_*|
		\leq  C_3'
	\end{align*}on $[T-T_*, T)\times\R^d$, which implies \eqref{h1}.
\end{proof}

\subsection{Solving nonlinear wave equation}

\begin{proof}[Proof of Proposition \ref{Prop.Solve_wave}]
	Let $\xi_1=1-\xi$, then $\xi_1|_{[0,4/5]}=0, \xi_1|_{[1,+\infty)}=1$. Let
	\[\varepsilon_n:=c_2/2^n,\quad f_n(t,x):=N_*(t,x)\xi_1((T-t)/\varepsilon_n), \quad \forall\ [T-c_0, T)\times\R^d,\ \forall\ n\in\Z_{+}.\]
	Then for each $n\in\Z_{+}$, we have $f_n\in C^\infty([T-c_0, T)\times\R^d;\C)$ and $\operatorname{supp}f_n\subset \{(t,x)\in [T-c_0, T-4\varepsilon_n/5]\times\R^d: |x|\leq 4(T-t)/3\}$, hence $f_n\in C_c^\infty([T-c_0, T)\times\R^d;\C)$; as $f_n$ equals to $N_*$ multiplied by a function in $t$ that takes values in $[0, 1]$, we have $|D_x^jf_n|\leq |D_x^jN_*|$ for all $j\in\Z_{+}$. By Lemma \ref{Lem.h_solution_property}, for each $n\in\Z_{+}$, there exists $h_n\in C_c^\infty([T-c_2, T)\times \R^d;\C)$ satisfying
	\begin{equation}\label{Eq.h_n_eq}
		\Box h_n+2\ii Xh_n+2Yh_n-V_*(h_n)_\text{r}-\frac1{p-1}V_*\varphi_1(h_n)-N_*h_n=f_n=N_*\xi_1\left(\frac{T-t}{\varepsilon_n}\right)
	\end{equation}
	on $[T-c_2, T)\times \R^d$. Moreover, for $n\in\Z_{+}$, we have
	\begin{align}\label{Eq.h_n_support}
		\operatorname{supp}_xh_n(t,\cdot)\subset\{x\in\R^d: |x|\leq4(T-t)/3\},\quad\forall\  t\in [T-c_2, T),
	\end{align}
	and for $j\in\Z_{\geq 0},\la>0$ we have
	\begin{align}
		|\Box h_n(t,x)|\leq C_\Box, \quad\forall\ t\in [T-c_2, T),\ \forall\  x\in\R^d,\label{Eq.h_n_Box}\\
		|D_x^jh_n(t,x)|+|\pa_tD_x^jh_n(t,x)|\leq C_{j,\la}(T-t)^\la,\quad \forall\ t\in [T-c_2, T),\ \forall\ x\in\R^d,\label{Eq.h_n_small}
	\end{align}
	where $C_\Box$ and $C_{j,\la}$ are given by Lemma \ref{Lem.h_solution_property}. By $h_n\in C_c^\infty([T-c_2, T)\times \R^d;\C)$, \eqref{Eq.h_n_support}, \eqref{Eq.h_n_Box}, \eqref{Eq.h_n_small} and the Arzelà–Ascoli theorem, there exists a subsequence of $\{h_n\}_{n=1}^\infty$, which is still denoted by $\{h_n\}_{n=1}^\infty$\footnote{Then $\{\varepsilon_n\}$ becomes its subsequence satisfying $\varepsilon_n\leq c_2/2^n$.}, such that $h_n\to h$ in $C^1([T-c_0, T)\times \R^d)$ for some $h\in C^1([T-c_0, T)\times \R^d;\C)$ and (here $D^{\leq 1}f:=(f,Df)$)
	\begin{equation}\label{Eq.h_n_Cauchy}
		\left\|D^{\leq 1}(h_n-h_{n+1})\right\|_{L^\infty([T-c_0, T)\times \R^d)}\leq 2^{-n},\quad \forall\ n\in\Z_{+}.
	\end{equation}
	Letting $j=2$, $\la=1$ in \eqref{Eq.h_n_small}, by \eqref{Eq.h_n_Cauchy} and the Gagliardo–Nirenberg inequality, we have
	\begin{equation}\label{Eq.h_n_Cauchy_C_1,2}
		\left\|D_xD(h_n-h_{n+1})\right\|_{L^\infty([T-c_0, T)\times \R^d)}\leq \tilde C_0 2^{-n/2},\quad \forall\ n\in\Z_{+}
	\end{equation}
	for some constant $\tilde C_0>0$ which is independent of $n\in\Z_{+}$.
	Letting $j=0$ and $\la=1$ in \eqref{Eq.h_n_small}, by the definition of $\varphi_1$, there exists a constant $\tilde C_1>0$ such that for all $n\in \Z_+$ and $(t,x)\in [T-c_2, T)\times\R^d$, we have
	\[|\varphi_1(h_n)-\varphi_1(h_{n+1})|\leq \tilde C_1|h_n-h_{n+1}|.\]
	Combining this with \eqref{Eq.h_n_eq}, \eqref{Eq.h_n_small}, \eqref{Eq.X_Y_V_*_N_*_bound} ($j=0$), and \eqref{Eq.N_*_bound} ($j=0, \la=1$), we know that there exist constants $\tilde C_2>0, \tilde C_3>0$ such that for all $n\in \Z_{+}$, we have\footnote{Here, we need to estimate $|f_n-f_{n+1}|$, which is achieved by combining \eqref{Eq.N_*_bound} ($j=0, \la=1$) and
	\begin{align*}
		\left|\xi_1\left(\frac{T-t}{\varepsilon_n}\right)-\xi_1\left(\frac{T-t}{\varepsilon_m}\right)\right|\leq \int_{\varepsilon_m}^{\varepsilon_n}\frac{T-t}{\varepsilon^2}\left|\xi_1'\left(\frac{T-t}{\varepsilon}\right)\right|\,\mathrm d\varepsilon\leq \|z^2\xi_1'(z)\|_{L^\infty}(T-t)^{-1}\varepsilon_n
	\end{align*} for all positive integers $m>n$, where we have used the fact that $\operatorname{supp}\xi_1'\subset[-1, 1]$.}
	\begin{align*}
		|\Box h_{n}-\Box h_{n+1}|&\leq \tilde C_2(T-t)^{-2\beta}\left|D^{\leq 1}(h_n-h_{n+1})\right|+\tilde C_2\varepsilon_n\\
		&\overset{\eqref{Eq.h_n_small}}{\underset{\eqref{Eq.h_n_Cauchy}}{\leq}}\tilde C_2(T-t)^{-2\beta}\min\left(2^{-n}, 2(C_{0, 4\beta}+C_{1,4\beta})(T-t)^{4\beta}\right)+\tilde C_2\varepsilon_n\\
		&\leq \tilde C_3(2^{-n/2}+\varepsilon_n)
	\end{align*}
	on $[T-c_2, T)\times\R^d$. Hence, by \eqref{Eq.h_n_Cauchy} and \eqref{Eq.h_n_Cauchy_C_1,2} we know that $\{h_n\}_{n=1}^\infty$ is Cauchy in $C^2([T-c_2, T)\times\R^d;\C)$, hence $h\in C^2([T-c_2, T)\times\R^d;\C)$ {and $h_n\to h$ in $C^2$}. Moreover, by \eqref{Eq.h_n_small}, \eqref{Eq.h_n_Cauchy} and the Gagliardo–Nirenberg inequality we know that $\{D_x^jD^{\leq 1}h_n\}$ is Cauchy in $L^\infty([T-c_2, T)\times\R^d)$ for all $j\in\Z_{\geq 0}$. Hence (also using \eqref{Eq.h_n_support}) $h(t,\cdot), \pa_th(t,\cdot)\in C_c^\infty(\R^d)$ with $\operatorname{supp}_xh(t,\cdot)\subset \{x\in\R^d: |x|\leq 4(T-t)/3\}$. Moreover, $h$ solves the equation (as $f_n\to N_*$)
	\begin{equation}\label{Eq.h_eq}
		\Box h+2\ii Xh+2Yh-V_*h_\text{r}-\frac1{p-1}V_*\varphi_1(h)-N_*h=N_*\quad\text{on}\quad [T-c_2, T)\times\R^d.
	\end{equation}
	By \eqref{Eq.h_n_small}, we have
	\[|h(t,x)|\leq C_{0,1}(T-t),\quad \forall\ (t,x)\in[T-c_2, T)\times\R^d.\]
	Let $c_1\in(0,c_2)\subset (0, T)$ be such that $C_{0, 1}c_1<1/2$, hence $|h(t,x)|<1/2$ for all $(t,x)\in[T-c_1, T)\times\R^d$. Let $$u(t,x):=(1+h(t,x))w_*(t,x)\xi\left(\frac{3|x|}{5(T-t)}\right)\exp\left(\ii\Phi_*(t,x)\xi\left(\frac{3|x|}{5(T-t)}\right)\right)$$
	for $(t,x)\in [T-c_1, T)\times\R^d$. Then $u\in C^2([T-c_1, T)\times\R^d;\C)$ with $u(t,\cdot), \pa_tu(t,\cdot)\in C_c^\infty(\R^d)$ and $\operatorname{supp}_xu(t,\cdot), \operatorname{supp}_x\pa_tu(t,\cdot)\subset \{x\in\R^d: |x|\leq 5(T-t)/3\}$ for $t\in[T-c_1, T)$. Moreover, on $\mathcal C_0:=\{(t,x)\in (T-c_1, T)\times\R^d: |x|\leq T-t\}$ we have $u=(1+h)w_*\text{e}^{\ii\Phi_*}$, and by \eqref{Eq.X_Y_vector_fields}, \eqref{Eq.V_*_N_*} and \eqref{Eq.h_eq}, we know that $h$ satisfies \eqref{Eq.h_eq0} on $\mathcal C_0$, hence by Lemma \ref{Lem.h_eq} we know that $\Box u=|u|^{p-1}u$ on $\mathcal C_0$. Finally, by $|h|<1/2$, \eqref{Eq.Phi_*_w_*_1} (for $j=0$) and \eqref{Eq.Phi_*_w_*_2} we have \eqref{Eq.u_bound} on $\mathcal C_0$. 
	
	This completes the proof of Proposition \ref{Prop.Solve_wave}.
\end{proof}

\section{The linearized operator $\LL$}\label{Sec.Surjective}

\subsection{Functional spaces}\label{Subsec.Functional_spaces}
In this subsection, we define some functional spaces consisting of smooth functions. Let $I\subset[0,+\infty)$ be an interval. We denote $I^2:=\{x^2: x\in I\}$ and\footnote{In \eqref{Space.C_e^infty}, ``e" stands for ``even"; In \eqref{Space.C_o^infty}, ``o" stands for ``odd". Please don't confuse $C_{\text{o}}^\infty(I)$ with ``$C_0^\infty(I)$".}
\begin{align}
	C_{\text{e}}^\infty(I):&=\left\{f\in C^\infty(I;\C): \exists \tilde f\in C^\infty(I^2) \text{ s.t. }f(Z)=\tilde f(Z^2),\quad \forall\ Z\in I\right\},\label{Space.C_e^infty}\\
	C_{\text{o}}^\infty(I):&=\left\{f\in C^\infty(I;\C): \exists \tilde f\in C^\infty(I^2) \text{ s.t. }f(Z)=Z\tilde f(Z^2),\quad \forall\ Z\in I\right\}.\label{Space.C_o^infty}
\end{align}
Then $ C_{\text{e}}^\infty(I)$ is a ring, and $ C_{\text{o}}^\infty(I)$ is a linear vector space.
Note that when $I=[0,+\infty)$, the definitions in \eqref{Space.C_e^infty}, \eqref{Space.C_o^infty} are the same as in \eqref{Ce}, \eqref{Co}. 
{For example, we have $f(Z)=Z\in C_{\text{o}}^\infty([0,+\infty))\setminus C_{\text{e}}^\infty([0,+\infty))$ and $f(Z)=Z^2\in C_{\text{e}}^\infty([0,+\infty))\setminus C_{\text{o}}^\infty([0,+\infty))$.}

\if0\begin{remark}\label{Rmk.C_e^infty}
	Let $I\subset[0,+\infty)$ be an interval. If $0\notin I$, then $C_{\text{e}}^\infty(I)=C^\infty(I)$; if $0\in I$, then\footnote{It is a classical fact that every smooth even function $f$ can be write in the form $f(x)=g(x^2)$ for some smooth function $g$, see \cite{Whitney}.}
		\[C_{\text{e}}^\infty(I)=\left\{f\in C^\infty(I):f^\text e\in C^\infty(I^\text{e})\right\},\]
		where $I^\text{e}:=\{Z\in\R: |Z|\in I\}$ and $f^\text{e}(Z):=f(|Z|), \forall\ Z\in I^\text{e}$.
		\end{remark}\fi

\begin{lemma}\label{Lem.smooth_R^d}
	Let $f\in C_{\operatorname{e}}^\infty([0,+\infty))$. Define $F(x)=f(|x|)$ for $x\in\R^d$, then $F\in C^\infty(\R^d)$.
\end{lemma}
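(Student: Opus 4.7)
\smallskip

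\textbf{Proof proposal for Lemma \ref{Lem.smooth_R^d}.}

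The plan is to reduce smoothness of $F(x)=f(|x|)$ on $\R^d$ to smoothness of a composition of the form $\tilde f(x_1^2+\cdots+x_d^2)$, where $\tilde f\in C^\infty([0,+\infty))$ is the function given by the definition of $C_{\operatorname{e}}^\infty([0,+\infty))$ in \eqref{Ce}, so that $f(Z)=\tilde f(Z^2)$ for all $Z\geq 0$. Substituting $Z=|x|$ gives $F(x)=\tilde f(|x|^2)=\tilde f(x_1^2+\cdots+x_d^2)$. The polynomial $P(x):=x_1^2+\cdots+x_d^2$ is $C^\infty$ on all of $\R^d$ and takes values in $[0,+\infty)$, so the only issue is that $\tilde f$ is a priori only defined on $[0,+\infty)$ and we want the composition to be $C^\infty$ in the classical (two-sided) sense at points where $P(x)=0$, i.e.\ at $x=0$.

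First I would handle the points $x\in\R^d\setminus\{0\}$: there $P(x)>0$, so in a neighborhood $P$ maps into the open half-line $(0,+\infty)$ on which $\tilde f$ is ordinary $C^\infty$, and the chain rule gives $F\in C^\infty$ near such $x$. The remaining task is to show $F$ is $C^\infty$ at the origin. For this I would invoke the classical Seeley extension theorem (or equivalently the Borel-type construction used in Whitney's extension): there exists $\hat f\in C^\infty(\R)$ with $\hat f|_{[0,+\infty)}=\tilde f$. Then
\[
F(x)=\tilde f(P(x))=\hat f(P(x))\qquad\text{for all }x\in\R^d,
\]
because $P(x)\geq 0$. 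The right-hand side is a composition of two $C^\infty$ maps between Euclidean spaces, hence $C^\infty(\R^d)$ by the usual chain rule.

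The only potentially delicate step is the Seeley extension, which is standard and usually cited rather than reproved; if one prefers to avoid it, a direct check works as well. Namely, by induction on $|\alpha|$ one shows that
\[
\partial^\alpha F(x)=\sum_{k=1}^{|\alpha|}\tilde f^{(k)}(|x|^2)\,Q_{\alpha,k}(x),
\]
where each $Q_{\alpha,k}$ is a polynomial in $x_1,\dots,x_d$ (arising from differentiating $P(x)=|x|^2$). Since $\tilde f^{(k)}\in C([0,+\infty))$, each term on the right is continuous on $\R^d$, so all partial derivatives of $F$ exist and are continuous, proving $F\in C^\infty(\R^d)$. I expect the smoothness at $x=0$ to be the only real point requiring care; everything else is immediate from the definition of $C_{\operatorname{e}}^\infty([0,+\infty))$.
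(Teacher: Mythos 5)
Your proposal is correct and follows essentially the same route as the paper: both write $F(x)=\tilde f(|x|^2)$ with $\tilde f\in C^\infty([0,+\infty))$ from the definition \eqref{Ce} and conclude by smoothness of the composition. The only difference is that you spell out the justification at the origin (via a Seeley/Whitney extension of $\tilde f$ or the direct inductive formula for $\partial^\alpha F$), a point the paper's one-line proof leaves implicit; this is a legitimate and welcome clarification, not a different method.
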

\begin{proof}
	As $f\in C_{\text{e}}^\infty([0,+\infty))$, there exists a function $\tilde f\in C^\infty([0,+\infty))$ such that $f(Z)=\tilde f(Z^2)$ for all $Z\in[0,+\infty)$, hence $F(x)=\tilde f(|x|^2)$ for all $x\in \R^d$. The smoothness of $F$ follows from the smoothness of $\tilde f$ and $x\mapsto |x|^2$.
\end{proof}
\if0
\begin{remark}
	The opposite direction also holds: if $F\in C^\infty\left(B_\delta^{(\R^d)}\right)$ and $F(x)=f(|x|)$ for some function $f$ defined on $[0,\delta)$, then $f\in C_{\text{e}}^\infty([0,\delta))$.
\end{remark}
\fi 
We also have the following fundamental properties. Let $I\subset [0,+\infty)$ be an interval, then
\begin{align}
	f\in C_{\text{e}}^\infty(I)&\Longrightarrow f'\in C_{\text{o}}^\infty(I);\label{Eq.e_derivative_to_o}\\
	f\in C_{\text{o}}^\infty(I)&\Longrightarrow f'\in C_{\text{e}}^\infty(I);\label{Eq.o_derivative_to_e}\\
	f_1\in C_{\text{e}}^\infty(I), f_2\in C_{\text{e}}^\infty(I)&\Longrightarrow f_1f_2\in C_{\text{e}}^\infty(I);\label{Eq.ee_to_e}\\
	f_1\in C_{\text{o}}^\infty(I), f_2\in C_{\text{o}}^\infty(I)&\Longrightarrow f_1f_2\in C_{\text{e}}^\infty(I);\label{Eq.oo_to_e}\\
	f_1\in C_{\text{e}}^\infty(I), f_2\in C_{\text{o}}^\infty(I)&\Longrightarrow f_1f_2\in C_{\text{o}}^\infty(I).\label{Eq.eo_to_o}
\end{align}
Moreover, if $\Omega\subset\C$ is open, $\varphi\in C^\infty(\Omega;\C)$ (not necessary to be holomorphic), and $f\in C_\text{e}^\infty(I)$ with $f(Z)\in\Omega$ for all $Z\in I$, then the composition $\varphi\circ f\in C_\text{e}^\infty(I)$.
In particular,
\begin{align}
	f\in C_\text{e}^\infty(I)&\text{ with }f(Z)\neq 0 \ \forall\ Z\in I \Longrightarrow 1/f\in C_\text{e}^\infty(I),\label{Eq.reciprocal}\\
	&f\in C_\text{e}^\infty(I)\Longrightarrow \exp f\in C_\text{e}^\infty(I),\label{Eq.exponential}\\
    a\in\R, f\in C_\text{e}^\infty(I)&\text{ with }f(Z)>0 \text{ for all }Z\in I\Longrightarrow f^a\in C_\text{e}^\infty(I).\label{Eq.power}
\end{align}

\begin{lemma}\label{lem6}
	If $ f\in \mathscr X_0$, then $\pa_\tau  f, Z\pa_Z f, \pa_Z^2 f, \pa_Z f/Z\in\mathscr X_0$.
\end{lemma}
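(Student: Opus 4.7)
The plan is to verify the claim term by term on the defining representation of $\mathscr X_0$. Write $f=\sum_{j=0}^n f_j(Z)\tau^j$ with $f_j\in C_\text{e}^\infty([0,+\infty))$. Since all four differential operations in question act diagonally on the representation ($\partial_\tau$ shifts the power of $\tau$, while $Z\partial_Z$, $\partial_Z^2$, $Z^{-1}\partial_Z$ act only on $f_j(Z)$), the question reduces to checking closure of $C_\text{e}^\infty([0,+\infty))$ under four elementary operations on its elements.

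First I would compute $\partial_\tau f=\sum_{j=1}^n j f_j(Z)\tau^{j-1}=\sum_{j=0}^{n-1}(j+1)f_{j+1}(Z)\tau^j$. Since each $f_j\in C_\text{e}^\infty$, this is immediately in $\mathscr X_0$. Next, for $Z\partial_Z f=\sum_{j=0}^n Z f_j'(Z)\tau^j$, I would invoke \eqref{Eq.e_derivative_to_o} to get $f_j'\in C_\text{o}^\infty$, then apply \eqref{Eq.oo_to_e} together with the fact that the identity $Z\mapsto Z$ lies in $C_\text{o}^\infty$ to conclude $Z f_j'(Z)\in C_\text{e}^\infty$. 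For $\partial_Z^2 f=\sum_{j=0}^n f_j''(Z)\tau^j$, apply \eqref{Eq.e_derivative_to_o} then \eqref{Eq.o_derivative_to_e}: $f_j'\in C_\text{o}^\infty$ and hence $f_j''\in C_\text{e}^\infty$.

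The remaining case $\partial_Z f/Z$ is the only one that is not purely formal, because dividing by $Z$ could in principle produce a singularity at $Z=0$; this is the step I expect to require a small unpacking of the definition. The argument is to use that $f_j'\in C_\text{o}^\infty([0,+\infty))$ means there exists $\tilde g_j\in C^\infty([0,+\infty))$ with $f_j'(Z)=Z\tilde g_j(Z^2)$; consequently $f_j'(Z)/Z=\tilde g_j(Z^2)$, which lies in $C_\text{e}^\infty([0,+\infty))$ by the very definition \eqref{Ce}. Therefore $\partial_Z f/Z=\sum_{j=0}^n \tilde g_j(Z^2)\tau^j\in\mathscr X_0$, which completes the proof. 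No other ingredients (analyticity, growth bounds, etc.) should be needed — the entire argument is a routine check using \eqref{Eq.e_derivative_to_o}, \eqref{Eq.o_derivative_to_e}, \eqref{Eq.oo_to_e} and the definition of $C_\text{e}^\infty$.
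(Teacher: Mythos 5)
Your proof is correct and follows essentially the same route as the paper: reduce to monomials $f_j(Z)\tau^j$ (equivalently, argue by linearity on the sum), use \eqref{Eq.e_derivative_to_o} and \eqref{Eq.o_derivative_to_e} for the derivative operations, and unpack the definition of $C_{\operatorname{o}}^\infty$ to handle $\pa_Z f/Z$ at $Z=0$. The only cosmetic difference is that you invoke \eqref{Eq.oo_to_e} with the identity function for $Z\pa_Z f$, whereas the paper reads $Zf_j'(Z)\in C_{\operatorname{e}}^\infty$ directly off the definitions; both are fine.
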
\begin{proof}
By the definition of $\mathscr X_0$ in \eqref{X0}, it suffices to prove the result for $f=\hat f(Z)\tau^n$ for some $\hat f\in C^\infty_\text{e}([0,+\infty))$ and some $n\in\Z_{\geq 0}$.

As $\pa_\tau f=n\hat f(Z)\tau^{n-1}$, we have $\pa_\tau  f\in\mathscr X_0$ ($\pa_\tau f=0$ for $n=0$).

Note that $Z\pa_Z f=Z\hat f'(Z)\tau^n,$ $\pa_Z^2 f=\hat f''(Z)\tau^n,$ $\pa_Z f/Z=\frac{1}{Z}\hat f'(Z)\tau^n$. As $\hat f\in C^\infty_\text{e}([0,+\infty))$, by \eqref{Eq.e_derivative_to_o} we have
$\hat f'\in C^\infty_\text{o}([0,+\infty))$, then by the definitions of $C^\infty_\text{e}([0,+\infty))$ and $C^\infty_\text{o}([0,+\infty))$ we have
$Z\hat f'(Z),\frac{1}{Z}\hat f'(Z)\in C^\infty_\text{e}([0,+\infty))$, by \eqref{Eq.o_derivative_to_e} we have $\hat f''\in C^\infty_\text{e}([0,+\infty))$. Thus, $Z\pa_Z f$, $\pa_Z^2 f$, $\pa_Z f/Z\in\mathscr X_0$.
\end{proof}

Now we prove  Lemma \ref{Lem.X_la_properties}.

\begin{proof}
	\begin{enumerate}[(i)]
		\item Let $f(t,x)=(T-t)^\la\hat f(\tau, Z)$ for some $\hat f\in \mathscr X_0$ and $g(t,x)=(T-t)^\mu\hat g(\tau, Z)$ for some $\hat g\in \mathscr X_0$. Then
		$(fg)(t,x)=(T-t)^{\la+\mu}(\hat f\hat g)(\tau, Z)$. As $\mathscr X_0 $ is a ring, we have $\hat f\hat g\in \mathscr X_0$ and then $fg\in \mathscr X_{\la+\mu}$. It is direct to check that
		\begin{align*}
			\partial_t f=(T-t)^{\la-1}\left(\pa_\tau+Z\pa_Z-\la\right)\hat f,\quad \Delta f=(T-t)^{\la-2}\left(\pa_Z^2+(k/Z)\pa_Z\right)\hat f.
		\end{align*}
		Since $\hat f\in \mathscr X_0$, by Lemma \ref{lem6} we have $\pa_\tau \hat f, Z\pa_Z\hat f, \pa_Z^2\hat f, \pa_Z\hat f/Z\in\mathscr X_0$, which gives
		\[\left(\pa_\tau+Z\pa_Z-\la\right)\hat f\in\mathscr X_0,\quad \left(\pa_Z^2+(k/Z)\pa_Z\right)\hat f\in\mathscr X_0.\]
		Thus, $\partial_t f\in\mathscr X_{\la-1}$, $\Delta f\in\mathscr X_{\la-2}$. As a consequence, we have $\partial_t^2 f\in\mathscr X_{\la-2}$, $\Box f=-\partial_t^2 f+\Delta f\in \mathscr X_{\la-2}$ and (also using $fg\in \mathscr X_{\la+\mu} $ for $ f\in \mathscr X_{\la}$, $ g\in \mathscr X_{\mu}$)
		\[\Box(fg)\in\mathscr X_{(\la+\mu)-2}, \quad (\Box f)g\in\mathscr X_{(\la-2)+\mu}, \quad f\Box g\in\mathscr X_{\la+(\mu-2)},\]
		hence,
		\beno
		&&\pa^\alpha f\pa_\alpha g=[\Box(fg)-(\Box f)g-f\Box g]/2\in\mathscr X_{\la+\mu-2},\\
		&& \pa^\alpha(f\pa_\alpha g)=\pa^\alpha f\pa_\alpha g+f\Box g\in\mathscr X_{\la+\mu-2}.
		\eeno
		\item[(iii)] Let $\la,\mu\in\R$ and $j\in \Z_{\geq 0}$ be such that $\la\geq j+\mu$. Let $f(t,x)=(T-t)^\la\hat f(Z)$ for some $\hat f\in C^\infty_\text{e}([0,+\infty))$. Assume that $\alpha_0,\alpha_1,\cdots,\alpha_d\in\Z_{\geq 0}$ are such that $\alpha_0+\cdots+\alpha_d=j$.  We 
{only need} to prove that
		\begin{equation}\label{Eq.D^jf_bounded}
			(T-t)^{-\mu}\pa_t^{\alpha_0}\pa_{x_1}^{\alpha_1}\cdots\pa_{x_d}^{\alpha_d}f\in L^\infty(\mathcal C).
		\end{equation}
		Let $\tilde f(x):=\hat f(|x|)$ for $x\in\R^d$, then by Lemma \ref{Lem.smooth_R^d} we have $\tilde f\in C^\infty(\R^d)$. Let $j'=\alpha_1+\cdots+\alpha_d\in\Z\cap[0,j]$, and we let
		\[f_\alpha:=(-(\la-j')+x\cdot\nabla_x)(-(\la-j'-1)+x\cdot\nabla_x)\cdots(-(\la-j+1)+x\cdot\nabla_x)\pa_{x_1}^{\alpha_1}\cdots\pa_{x_d}^{\alpha_d}\tilde f.\]
		Then $f_\alpha\in C^\infty(\R^d)$ and one can check by direct computation that
		\[\pa_t^{\alpha_0}\pa_{x_1}^{\alpha_1}\cdots\pa_{x_d}^{\alpha_d}f(t,x)=(T-t)^{\la-j}f_\alpha(x/(T-t)),\quad \forall\ (t,x)\in[0, T)\times\R^d.\]
		As $|x/(T-t)|<2$ for $(t,x)\in\mathcal C$ and $\la-j-\mu\geq 0$, we have \eqref{Eq.D^jf_bounded}.
		
		\item Let $\la,\mu\in\R$ and $j\in \Z_{\geq 0}$ be such that $\la> j+\mu$. By the definitions of $\mathscr X_0$ and $\mathscr X_\la$, it suffices to prove $(T-t)^{-\mu}D^jf\in L^\infty(\mathcal C)$ for $f(t,x)=(T-t)^\la\hat f(Z)\tau^n$ for some $\hat f\in C^\infty_\text{e}([0,+\infty))$ and some $n\in\Z_{\geq 0}$. Let $P(\tau):=\tau^n$ and $\tilde P(t):=P(\tau)=P(-\ln(T-t))$. Then by the induction, for any $i\in\Z_{\geq 0}$, there is a polynomial $P_i(\tau)$ such that $\tilde P^{(i)}(t)=(T-t)^{-i}P_i(\tau)$. Hence,
		\begin{equation}\label{Eq.D^iP_bound}
			(T-t)^{i+\varepsilon}\tilde P^{(i)}(t)\in L^\infty([0,T)),\quad\forall\ i\in\Z_{\geq 0},\ \forall\ \varepsilon>0.
		\end{equation}
		Let $\tilde f(t,x):=(T-t)^\la\hat f(Z)$ for $(t,x)\in [0,T)\times\R^d$, then $f(t,x)=\tilde f(t,x)\tilde P(t)$ for $(t,x)\in [0,T)\times\R^d$, and by (iii) we have
		\begin{equation}\label{Eq.D^i_tilde_f_bound}
			(T-t)^{i-\la}D^i\tilde f\in L^\infty(\mathcal C),\quad \forall\ i\in\Z_{\geq 0}.
		\end{equation}
		Assume that $\alpha_0,\alpha_1,\cdots,\alpha_d\in\Z_{\geq 0}$ are such that $\alpha_0+\cdots+\alpha_d=j$. It suffices to prove
		\begin{equation}\label{Eq.D^jf_bounded1}
			(T-t)^{-\mu}\pa_t^{\alpha_0}\pa_{x_1}^{\alpha_1}\cdots\pa_{x_d}^{\alpha_d}(\tilde f(t,x)\tilde P(t))\in L^\infty(\mathcal C).
		\end{equation}
		By Leibnitz's product rule, we have
		\begin{align*}
			&(T-t)^{-\mu}\pa_t^{\alpha_0}\pa_{x_1}^{\alpha_1}\cdots\pa_{x_d}^{\alpha_d}(\tilde f(t,x)\tilde P(t))\\
			&=(T-t)^{-\mu}\sum_{i=0}^{\alpha_0}\binom{\alpha_0}{i}\tilde P^{(i)}(t)\pa_t^{\alpha_0-i}\pa_{x_1}^{\alpha_1}\cdots\pa_{x_d}^{\alpha_d}\tilde f(t,x)\\
			&=\sum_{i=0}^{\alpha_0}\binom{\alpha_0}{i}(T-t)^{\la-j-\mu+i}\tilde P^{(i)}(t)\cdot(T-t)^{j-i-\la}\pa_t^{\alpha_0-i}\pa_{x_1}^{\alpha_1}\cdots\pa_{x_d}^{\alpha_d}\tilde f(t,x).
		\end{align*}
		Then \eqref{Eq.D^jf_bounded1} follows from $\la-j-\mu>0$, \eqref{Eq.D^iP_bound} and \eqref{Eq.D^i_tilde_f_bound}.
	\end{enumerate}
	
	This completes the proof of Lemma \ref{Lem.X_la_properties}.
\end{proof}

\subsection{$\LL$ acting on $\mathscr X_\la$}

Let's first compute the linear operator $\LL_\la$ induced by $\LL$ acting on $\mathscr X_\la$.

\begin{lemma}\label{Lem.LL_la_formula}
	There exist real-valued $A_0(Z),\tilde B_0(Z),D_1(Z),D_2(Z)\in C_{\operatorname{e}}^\infty([0,+\infty))$ and
$\hat B_0\in C_{\operatorname{o}}^\infty([0,+\infty))$ with\begin{align}
		A_0(Z)=\hat\rho_0(Z)^2\frac{(1-Zv(Z))^2-\ell(v(Z)-Z)^2}{1-v(Z)^2}=\hat\rho_0(Z)^2\frac{\Delta_Z(Z, v(Z))}{Z(1-v(Z)^2)},\label{Eq.A_0}
	\end{align}
	such that if we define 
	\beno
	B_0(Z;\la):=Z^{-1}\tilde B_0(Z)+\la \hat B_0(Z), \quad D_0(Z;\la):=\lambda D_1(Z)+\lambda^2D_2(Z)
	\eeno
	and
	\begin{equation}\label{Lf}
		(\LL_\la f)(Z):=A_0(Z)f''(Z)+B_0(Z;\la)f'(Z)+D_0(Z;\la)f(Z),
	\end{equation}
then there hold {(here $\LL$ is defined in \eqref{Eq.L} and $\gamma:=4\beta/(p-1)+2=\beta(\ell-1)+2$)}
\beno
\LL((T-t)^{\lambda}f(Z))=(T-t)^{\lambda-\gamma}(\LL_\la f)(Z)\quad \text{for}\,\, f\in C_{\operatorname{e}}^\infty([0,+\infty)), \quad \la\in\C,
\eeno
 and
	\begin{align}
		&\qquad A_0(0)=1,\quad A_0(Z_1)=0, \quad A_0'(Z_1)<0, \label{Eq.A_0_non_degenerate}\\
		&\quad A_0(Z)>0 \ \forall\ Z\in[0, Z_1),\quad A_0(Z)<0\  \forall\ Z\in(Z_1,+\infty),\label{Eq.A_0_neq0}\\
		&\qquad\hat B_0(Z)>0 \text{ for all }Z>0, \quad \tilde B_0(0)=k\in \Z_+.\label{Eq.B_0}
	\end{align}
\end{lemma}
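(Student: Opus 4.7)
My plan is a direct self-similar computation. I substitute $\psi(t,r) = (T-t)^\la f(Z)$ with $Z=r/(T-t)$ into the definition \eqref{Eq.L} of $\LL$, and use the explicit self-similar forms $\phi_0 = (T-t)^{1-\beta}\hat\phi_0(Z)$, $\rho_0 = (T-t)^{-2\beta/(p-1)}\hat\rho_0(Z)$ together with the profile identities $\pa_t\phi_0 = (T-t)^{-\beta}(\beta-1)\hat\phi_0/(1-Zv)$ and $\pa_r\phi_0 = v\,\pa_t\phi_0$ derived from \eqref{Eq.hat_phi_0'}. A quick bookkeeping of scaling exponents verifies that every term in $\LL(\psi)$ carries the factor $(T-t)^{\la-\gamma}$, so $\LL$ descends to a linear second-order ODE in $Z$. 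The polynomial dependence on $\la$ is controlled as follows: a factor of $\la$ appears precisely when $\pa_t$ lands on $(T-t)^\la$, so since $\LL$ is second order in derivatives on $\psi$, the $f,f',f''$ coefficients are polynomials in $\la$ of degree at most $2,1,0$ respectively. Moreover $\LL(1)=0$ (constants are annihilated by $\LL$), which forces the constant-in-$\la$ part of the $f$-coefficient to vanish, giving the claimed form $D_0 = \la D_1 + \la^2 D_2$.

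To identify $A_0$, I extract the $f''$-contribution (the principal symbol). Using radial symmetry to collapse
\[
\rho_0^{3-p}(\pa^\al\phi_0)(\pa^{\tilde\al}\phi_0)\pa_\al\pa_{\tilde\al}\psi = \rho_0^{3-p}\bigl[(\pa_t\phi_0)^2\pa_t^2\psi - 2(\pa_t\phi_0)(\pa_r\phi_0)\pa_t\pa_r\psi + (\pa_r\phi_0)^2\pa_r^2\psi\bigr],
\]
then substituting the profile formulas and extracting the $f''$ piece yields
\[
A_0 = \hat\rho_0^2(1-Z^2) - \frac{4}{p-1}\cdot \frac{(\beta-1)^2\hat\phi_0^2\,\hat\rho_0^{3-p}(v-Z)^2}{(1-Zv)^2}.
\]
Now I apply the relation $(\beta-1)^2\hat\phi_0^2 = \hat\rho_0^{p-1}(1-Zv)^2/(1-v^2)$ from \eqref{Eq.phi_0_rho_0} (equivalently, $\pa^\al\phi_0\pa_\al\phi_0 = -\rho_0^{p-1}$), use $\ell-1=4/(p-1)$, and invoke the algebraic identity $(1-Z^2)(1-v^2) = (1-Zv)^2 - (v-Z)^2$ to reduce $A_0$ to the stated form \eqref{Eq.A_0}. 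Smoothness $A_0\in C_e^\infty$ follows from $\hat\rho_0, \hat\phi_0 \in C_e^\infty$ (Lemma \ref{Lem.v_hat_rho}), $v\in C_o^\infty$ (Assumption \ref{Assumption}), and the closure rules \eqref{Eq.ee_to_e}--\eqref{Eq.power}.

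The coefficients $B_0, D_0$ are obtained by tracking the remaining contributions. The key observation is that the only source of a $1/Z$ singularity in the whole ODE is the angular Laplacian $(k/r)\pa_r$ inside $\rho_0^2\Delta\psi$, which contributes $k\hat\rho_0^2 Z^{-1}f'$; every other coefficient arising from derivatives of $\hat\rho_0, \hat\phi_0, v$ is smooth at $Z=0$, because $\hat\rho_0', \hat\phi_0' \in C_o^\infty$ and $v\in C_o^\infty$ all vanish there. This yields $B_0 = Z^{-1}\tilde B_0(Z) + \la\hat B_0(Z)$ with $\tilde B_0\in C_e^\infty$ and $\tilde B_0(0) = k\hat\rho_0(0)^2 = k$, while $\hat B_0 \in C_o^\infty$ follows because each $\la$-producing $\pa_t$ acting on $\psi$ is paired with a factor $Z$ from $\pa_t Z = Z/(T-t)$. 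Two identities are helpful to simplify the bookkeeping: differentiating $\pa^\al\phi_0\pa_\al\phi_0 = -\rho_0^{p-1}$ gives $\pa_\al\phi_0 \pa^\al\pa^{\tilde\al}\phi_0 = -\tfrac{p-1}{2}\rho_0^{p-2}\pa^{\tilde\al}\rho_0$, and the continuity equation in \eqref{Eq.wave_eq_n=0} collapses $\pa^\al(\rho_0^{3-p}\pa_\al\phi_0)$ to $(1-p)\rho_0^{2-p}\pa^\al\rho_0\,\pa_\al\phi_0$; together these combine four separate first-order pieces into two, making sign analysis tractable.

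Finally, the non-degeneracy claims \eqref{Eq.A_0_non_degenerate}--\eqref{Eq.B_0} follow from Remark \ref{Rmk.v(Z)_properties}: since $A_0 = \hat\rho_0^2 \Delta_Z(Z,v(Z))/(Z(1-v^2))$ and the prefactor $\hat\rho_0^2/(Z(1-v^2))>0$ on $(0,\infty)$, the sign structure of $A_0$ is inherited from $\Delta_0 := \Delta_Z(\cdot,v(\cdot))$, yielding \eqref{Eq.A_0_neq0}, $A_0(Z_1)=0$, and $A_0'(Z_1)<0$ (because $\Delta_0'(Z_1)\neq 0$ with $\Delta_0$ passing from positive to negative); $A_0(0)=1$ is direct from $v(0)=0$, $\hat\rho_0(0)=1$. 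The main technical hurdle I anticipate is verifying the strict positivity $\hat B_0(Z)>0$ for $Z>0$: this requires carrying out the explicit $\la$-linear bookkeeping (after the simplifications described) and reading off a sign, and it is here that the hypothesis $\beta>1$ together with the profile formulas must be used in an essential way.
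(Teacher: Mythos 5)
Your proposal is correct and follows essentially the same route as the paper: substitute the self-similar ansatz into \eqref{Eq.L} using the profile identities from \eqref{Eq.hat_phi_0'}, read off $A_0,B_0,D_0$ by comparing the coefficients of $f,f',f''$, simplify $A_0$ via the identity $(1-Z^2)(1-v^2)=(1-Zv)^2-(v-Z)^2$, and deduce the smoothness and sign statements from the closure rules for $C_{\operatorname{e}}^\infty,C_{\operatorname{o}}^\infty$ together with Remark \ref{Rmk.v(Z)_properties}. The one hurdle you flag, the positivity of $\hat B_0$, dissolves once the explicit bookkeeping is done: the computation gives $\hat B_0=\frac{2\hat\rho_0^2}{1-v^2}\big(Z(1-v^2)+(\ell-1)(Z-v)\big)$, which is manifestly positive for $Z>0$ using only $|v|<1$, $\ell>1$ and $v(Z)<Z$, with no further essential use of $\beta>1$.
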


\begin{proof}

We first consider the  functions in the form of $(T-t)^{\la} f(\tau, Z)$, where $f$ is a smooth function and
\begin{equation}\label{Eq.ss_variables}
	\tau=\ln\frac1{T-t},\qquad Z=\frac r{T-t},\qquad r=|x|.
\end{equation}
Let $\gamma:=4\beta/(p-1)+2=\beta(\ell-1)+2$, and let $f=f(\tau, Z)$ and $\la\in\C$. Now we compute $\mathscr{L}\big((T-t)^\la f\big)$. 

We will use the following identities: for $\la,\mu\in\C, f=f(\tau, Z), g=g(\tau, Z)$,
\begin{align}
	&\pa^\alpha\left((T-t)^\la f\right)\pa_\alpha\left((T-t)^\mu g\right)\label{Eq.pa^al_pa_al}\\&\qquad=(T-t)^{\la+\mu-2}\Big[-(\pa_\tau f+Z\pa_Z f-\la f)
	(\pa_\tau g+Z\pa_Zg-\mu g)+\pa_Z f\pa_Zg\Big],\nonumber\\
	&\pa^\alpha\left((T-t)^\la f\pa_\alpha\left((T-t)^\mu g\right)\right)=(T-t)^{\la+\mu-2}\Big[-\big(\pa_\tau+Z\pa_Z-(\la+\mu-1)\big)\big(f(\pa_\tau g\label{Eq.divergence}\\
	&\qquad\qquad\qquad\qquad+Z\pa_Zg-\mu g)\big)+(\pa_Z+k/Z)(f\pa_Zg)\Big].\nonumber
\end{align}
Readers can check \eqref{Eq.pa^al_pa_al} and \eqref{Eq.divergence} by using direct computation.

By \eqref{Eq.phi_0_selfsimilar}, \eqref{Eq.divergence} {and $\gamma=4\beta/(p-1)+2$}, we have
\begin{align*}
	\pa^\alpha\left(\rho_0^2\pa_\alpha\left((T-t)^\la f\right)\right)=&(T-t)^{\la-\gamma}\big\{-\big(\pa_\tau+Z\pa_Z-(\la-\gamma+1)\big)\left[\hat\rho_0(Z)^2(\pa_\tau+Z\pa_Z-\la)f\right]\\
	&+\left(\pa_Z+k/Z\right)\left(\hat\rho_0(Z)^2\pa_Zf\right)\big\}.
\end{align*}
By \eqref{Eq.phi_0_selfsimilar} and \eqref{Eq.pa^al_pa_al}, we have
\begin{align*}
	\pa^{\tilde\alpha}\phi_0\pa_{\tilde\alpha}\left((T-t)^\la f\right)=(T-t)^{\la-\beta-1}\left[-(Z\pa_Z\hat\phi_0-(1-\beta)\hat\phi_0)(\pa_\tau+Z\pa_Z-\la)f+\pa_Z\hat\phi_0\pa_Zf\right].
\end{align*}
It follows from \eqref{Eq.hat_phi_0'} that
\begin{align*}
	-(Z\pa_Z\hat\phi_0-(1-\beta)\hat\phi_0)(\pa_\tau+Z\pa_Z-\la)f+\pa_Z\hat\phi_0\pa_Zf&=\frac{(\beta-1)\hat\phi_0}{1-Zv(Z)}\big(-\pa_\tau+(v-Z)\pa_Z+\la\big)f\\
	&=:g(\tau, Z),
\end{align*}
and then $\pa^{\tilde\alpha}\phi_0\pa_{\tilde\alpha}\left((T-t)^\la f\right)=(T-t)^{\la-\beta-1}g(\tau, Z)$. By \eqref{Eq.phi_0_selfsimilar} and \eqref{Eq.divergence}, we have
\begin{align*}
	\pa^\alpha&\left(\rho_0^{3-p}\pa_\alpha\phi_0\pa^{\tilde\alpha}\phi_0\pa_{\tilde\alpha}\left((T-t)^\la f\right)\right)=\pa^\alpha\left((T-t)^{\mu}\hat\rho_0^{3-p}g\pa_\alpha\left((T-t)^{1-\beta}\hat\phi_0\right)\right)\\
	&=(T-t)^{\la-\gamma}\Big\{-\big(\pa_\tau+Z\pa_Z-(\la-\g+1)\big)\left[\hat\rho_0^{3-p}g\big(Z\pa_Z\hat\phi_0-(1-\beta)\hat\phi_0\big)\right]\\
	&\qquad\qquad+(\pa_Z+k/Z)\left(\hat\rho_0^{3-p}g\pa_Z\hat\phi_0\right)\Big\},
\end{align*}
where $\mu=-\frac{2\beta}{p-1}(3-p)+\la-\beta-1$ satisfies $\mu+(1-\beta)-2=\la-\gamma$. By \eqref{Eq.phi_0_rho_0} and \eqref{Eq.hat_phi_0'},
\begin{align*}
	 \hat\rho_0(Z)^{3-p}\frac{(\beta-1)\hat\phi_0(Z)}{1-Zv(Z)}\big(Z\pa_Z\hat\phi_0-(1-\beta)\hat\phi_0\big)&=\hat\rho_0(Z)^2\hat\rho_0(Z)^{1-p}\frac{(\beta-1)^2\hat\phi_0(Z)^2}{(1-Zv(Z))^2}=\frac{\hat\rho_0(Z)^2}{1-v(Z)^2},\\
	\hat\rho_0(Z)^{3-p}\frac{(\beta-1)\hat\phi_0(Z)}{1-Zv(Z)}\pa_Z\hat\phi_0&=\frac{\hat\rho_0(Z)^2v(Z)}{1-v(Z)^2},
\end{align*}
thus
\begin{align*}
	\hat\rho_0^{3-p}g\big(Z\pa_Z\hat\phi_0-(1-\beta)\hat\phi_0\big)&=\frac{\hat\rho_0(Z)^2}{1-v(Z)^2}\big(-\pa_\tau+(v(Z)-Z)\pa_Z+\la\big)f,\\
	\hat\rho_0^{3-p}g\pa_Z\hat\phi_0&=\frac{\hat\rho_0(Z)^2v(Z)}{1-v(Z)^2}\big(-\pa_\tau+(v(Z)-Z)\pa_Z+\la\big)f.
\end{align*}
Therefore,
\begin{align*}
	&\pa^\alpha\left(\rho_0^{3-p}\pa_\alpha\phi_0\pa^{\tilde\alpha}\phi_0\pa_{\tilde\alpha}\left((T-t)^\la f\right)\right)\\
	=&(T-t)^{\la-\g}\Bigg\{-\big(\pa_\tau+Z\pa_Z-(\la-\g+1)\big)\left[\frac{\hat\rho_0(Z)^2}{1-v(Z)^2}\big(-\pa_\tau+(v(Z)-Z)\pa_Z+\la\big)f\right]\\
	&\qquad\qquad\qquad+\left(\pa_Z+\frac kZ\right)\left[\frac{\hat\rho_0(Z)^2v(Z)}{1-v(Z)^2}\big(-\pa_\tau+(v(Z)-Z)\pa_Z+\la\big)f\right]\Bigg\}.
\end{align*}
Finally, recall that $ \mathscr{L}(\phi)=\pa^\al\left(\rho_0^2\pa_\al\phi-\frac4{p-1}\rho_0^{3-p}\pa_\al\phi_0\pa^{\tilde\alpha}\phi_0\pa_{\tilde\alpha}\phi\right)$, $\frac4{p-1}=\ell-1$, we obtain
\begin{align*}
	\mathscr{L}\big((T-t)^\la& f\big)=(T-t)^{\la-\gamma}\Bigg\{-\big(\pa_\tau+Z\pa_Z-(\la-\gamma+1)\big)\left[\hat\rho_0(Z)^2(\pa_\tau+Z\pa_Z-\la)f\right]\\
	+&\left(\pa_Z+k/Z\right)\left(\hat\rho_0(Z)^2\pa_Zf\right)\\
	+&(\ell-1)\big(\pa_\tau+Z\pa_Z-(\la-\gamma+1)\big)\left[\frac{\hat\rho_0(Z)^2}{1-v(Z)^2}\big(-\pa_\tau+(v(Z)-Z)\pa_Z+\la\big)f\right]\\
	-&(\ell-1)\left(\pa_Z+\frac kZ\right)\left[\frac{\hat\rho_0(Z)^2v(Z)}{1-v(Z)^2}\big(-\pa_\tau+(v(Z)-Z)\pa_Z+\la\big)f\right]\Bigg\}.
\end{align*}

For any $\lambda\in\C$, we define a linear operator $\LL_\lambda$ 
by
\begin{align}
	(\LL_\lambda f)(Z):=&-\big(Z\pa_Z-(\lambda-\g+1)\big)\left[\hat\rho_0(Z)^2(Z\pa_Z-\lambda)f\right]+\left(\pa_Z+k/Z\right)\left(\hat\rho_0(Z)^2\pa_Zf\right)\label{Eq.LL_la}\\
	&+(\ell-1)\big(Z\pa_Z-(\la-\gamma+1)\big)\left[\frac{\hat\rho_0(Z)^2}{1-v(Z)^2}\big((v(Z)-Z)\pa_Z+\la\big)f\right]\nonumber\\
	&-(\ell-1)\left(\pa_Z+\frac kZ\right)\left[\frac{\hat\rho_0(Z)^2v(Z)}{1-v(Z)^2}\big((v(Z)-Z)\pa_Z+\la\big)f\right],\nonumber
\end{align}
where $f=f(Z)$ depends only on $Z\in[0,+\infty)$ (not on $ \tau$). Assume that $f=f(Z)=f(Z; \lambda)$ satisfies $(\LL_\lambda f)(Z)=g(Z)=g(Z; \lambda)$, then (here $\LL$ and $\LL_\lambda$ do not act on $ \lambda$)
\begin{equation}\label{Eq.L_L_la_relation}
	\LL\big((T-t)^\la f\big)=(T-t)^{\la-\g}g(Z).
\end{equation}

Now it is enough to prove that $\LL_\lambda$ defined in \eqref{Eq.LL_la} can be written in the form of \eqref{Lf} with $B_0(Z;\la)=Z^{-1}\tilde B_0(Z)+\la \hat B_0(Z)$, $D_0(Z;\la)=\lambda D_1(Z)+\lambda^2D_2(Z)$ and
$A_0(Z),\tilde B_0(Z),D_1(Z),$ $D_2(Z)\in C_{\operatorname{e}}^\infty([0,+\infty))$,
$\hat B_0\in C_{\operatorname{o}}^\infty([0,+\infty))$ satisfying \eqref{Eq.A_0}, \eqref{Eq.A_0_non_degenerate}, \eqref{Eq.A_0_neq0}, \eqref{Eq.B_0}.

Comparing the coefficients of
	   $ \partial_Z^j$ ($j=0,1,2$) in \eqref{Eq.LL_la} and \eqref{Lf}, we find
	\begin{align*}
		A_0(Z)&=\hat\rho_0(Z)^2(1-Z^2)+(\ell-1)\frac{\hat\rho_0(Z)^2}{1-v(Z)^2}Z(v(Z)-Z)-(\ell-1)\frac{\hat\rho_0(Z)^2v(Z)}{1-v(Z)^2}(v(Z)-Z)\\
		&=\hat\rho_0(Z)^2\left((1-Z^2)-(\ell-1)\frac{(v(Z)-Z)^2}{1-v(Z)^2}\right)=\hat\rho_0(Z)^2\frac{(1-Zv(Z))^2-\ell(v(Z)-Z)^2}{1-v(Z)^2},
	\end{align*}\begin{align*}
		&B_0(Z;\la)=-Z^2\pa_Z(\hat\rho_0^2)-Z\hat\rho_0^2+\la Z\hat\rho_0^2+(\la-\g+1)Z\hat\rho_0^2+\pa_Z(\hat\rho_0^2)+(k/Z)\hat\rho_0^2\\
		&+(\ell-1)Z\pa_Z\left(\frac{\hat\rho_0^2}{1-v^2}\right)(v-Z)+\frac{(\ell-1)\hat\rho_0^2}{1-v^2}\big(Zv'-Z-(\la-\g+1)(v-Z)+\la Z\big)\\
		&-(\ell-1)\pa_Z\left(\frac{\hat\rho_0^2v}{1-v^2}\right)(v-Z)-(\ell-1)\frac{\hat\rho_0^2v}{1-v^2}\big(v'-1+(k/Z)(v-Z)+\la\big),
	\end{align*}\begin{align*}
		D_0(Z;\la)=&-\big(Z\pa_Z-(\la-\g+1)\big)\left(-\la\hat\rho_0(Z)^2\right)+(\ell-1)\big(Z\pa_Z-(\la-\g+1)\big)\left(\la\frac{\hat\rho_0(Z)^2}{1-v(Z)^2}\right)\\
		&-(\ell-1)\left(\pa_Z+\frac kZ\right)\left(\la\frac{\hat\rho_0(Z)^2v(Z)}{1-v(Z)^2}\right).
	\end{align*}Then \eqref{Eq.LL_la} and \eqref{Lf} are equivalent
	and $A_0$ satisfies \eqref{Eq.A_0}.

By the expression of $ B_0(Z;\la) $, we have $B_0(Z;\la)=Z^{-1}\tilde B_0(Z)+\la \hat B_0(Z)$ with\begin{align*}
		\hat B_0(Z)&=2Z\hat\rho_0^2+\frac{(\ell-1)\hat\rho_0^2}{1-v^2}(2Z-v)-\frac{(\ell-1)\hat\rho_0^2v}{1-v^2}
		=\frac{2\hat\rho_0^2}{1-v^2}\big(Z(1-v^2)+(\ell-1)(Z-v)\big).
	\end{align*}\begin{align*}
		\tilde{B}_0(Z)=&\ k\hat \rho_0^2+(1-Z^2)Z\pa_Z(\hat\rho_0^2)-\gamma Z^2\hat\rho_0^2+(\ell-1)(v-Z)\bigg[Z^2\pa_Z\left(\frac{\hat\rho_0^2}{1-v^2}\right)\\
		&-Z\pa_Z\left(\frac{\hat\rho_0^2v}{1-v^2}\right)\bigg]+(\ell-1)\frac{\hat\rho_0^2}{1-v^2}\left(Z(Z-v)v'+(k+\g)Zv-\gamma Z^2-kv^2\right).
	\end{align*}
	By the expression of $ D_0(Z;\la)$, we have $D_0(Z;\la)=\lambda D_1(Z)+\lambda^2D_2(Z)$ with
	\begin{align*}
		D_1(Z)=&\big(Z\pa_Z+\g-1\big)\left(\hat\rho_0(Z)^2\right)+(\ell-1)\big(Z\pa_Z+\g-1\big)\left(\frac{\hat\rho_0(Z)^2}{1-v(Z)^2}\right)\\
		&-(\ell-1)\left(\pa_Z+\frac kZ\right)\left(\frac{\hat\rho_0(Z)^2v(Z)}{1-v(Z)^2}\right),\\
D_2(Z)=&-\hat\rho_0(Z)^2-(\ell-1)\frac{\hat\rho_0(Z)^2}{1-v(Z)^2}=-\hat\rho_0(Z)^2\frac{\ell-v(Z)^2}{1-v(Z)^2}.
	\end{align*}It remains to prove that $A_0(Z),\tilde B_0(Z),D_1(Z),$ $D_2(Z)\in C_{\operatorname{e}}^\infty([0,+\infty))$,
$\hat B_0\in C_{\operatorname{o}}^\infty([0,+\infty))$ and \eqref{Eq.A_0_non_degenerate}, \eqref{Eq.A_0_neq0}, \eqref{Eq.B_0}.

By Lemma \ref{Lem.v_hat_rho} we have $\hat\rho_0\in C_\text e^\infty([0,+\infty)), v\in C_\text o^\infty([0,+\infty))$; by \eqref{Eq.ee_to_e}  and \eqref{Eq.oo_to_e} we have $\hat\rho_0^2\in C_\text e^\infty([0,+\infty))$, $(v(Z)-Z)^2\in C_\text e^\infty([0,+\infty))$ and $1-v(Z)^2\in C_\text e^\infty([0,+\infty))$; by \eqref{Eq.oo_to_e} we have $Zv(Z)\in C_\text e^\infty([0,+\infty))$, hence $1-Zv(Z)\in C_\text e^\infty([0,+\infty))$, then using \eqref{Eq.ee_to_e} we get $(1-Zv(Z))^2\in C_\text e^\infty([0,+\infty))$. Therefore, by $v\in(-1, 1)$ (see Assumption \ref{Assumption}), \eqref{Eq.reciprocal} {and \eqref{Eq.A_0}} we have $A_0(Z)\in C_\text e^\infty([0,+\infty))$.

Similarly, by Lemma \ref{Lem.v_hat_rho} and \eqref{Eq.e_derivative_to_o}--\eqref{Eq.reciprocal} we have $\hat B_0(Z)\in C_\text o^\infty([0,+\infty))$ and $\tilde B_0(Z)$, $D_1(Z)$, $D_2(Z)\in C_{\operatorname{e}}^\infty([0,+\infty))$.

It follows from $\hat\rho_0(0)=1$ and $v(0)=0$ that $A_0(0)=1$. By Remark \ref{Rmk.v(Z)_properties} and $\hat\rho_0(Z)>0$ for all $Z\in[0,+\infty)$, we have $A_0(Z_1)=0$ and \eqref{Eq.A_0_neq0}. 
Let $\Delta_0(Z)=\Delta_Z(Z, v(Z))$, then by $\Delta_0(Z_1)=0$ and Remark \ref{Rmk.v(Z)_properties}, $A_0'(Z_1)=\hat\rho_0(Z_1)^2\Delta_0'(Z_1)/(Z_1(1-v(Z_1)^2))\neq 0$. This along with $A_0(Z_1)<A_0(Z)$ for all 
$Z\in[0, Z_1)$ implies $A_0'(Z_1)< 0$. So we have \eqref{Eq.A_0_non_degenerate}.

As $v(Z)\in(0, 1)$, $v(Z)<Z$ for all $Z>0$ (see Remark \ref{Rmk.v(Z)_properties}) and $\hat\phi_0(Z)>0$ for all $Z\in[0,+\infty)$, we have $\hat B_0(Z)>0$ for all $Z>0$; as $\hat\rho_0(0)=1$ and $v(0)=0$
we have $\tilde B_0(0)=k\in\Z_{+} $. This proves \eqref{Eq.B_0}.
\end{proof}

Next we compute the dual operator of $\LL_\la$. For any $\la\in\C$, we define an operator $\LL_\la^*$, called the \emph{dual operator} of $\LL_\la$, by
\begin{equation}\label{Eq.duality_def}
	\int_0^\infty (\LL_\la f)(Z)g(Z)Z^k\,\mathrm dZ=\int_0^\infty f(Z)(\LL_\la^*g)(Z)Z^k\,\mathrm dZ,\quad \forall\ f,g\in C_c^\infty((0,+\infty)).
\end{equation}

\begin{lemma}\label{Lem.duality}
	For any $\la\in\C$, we have $\LL_\la^*=\LL_{-\la+\g-k-2}$.
\end{lemma}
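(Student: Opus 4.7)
\textbf{Proof plan for Lemma \ref{Lem.duality}.} The plan is to compute $\LL_\la^*$ directly from definition \eqref{Eq.duality_def} by integration by parts against the weighted measure $Z^k\,\mathrm dZ$, then match coefficients with $\LL_{-\la+\g-k-2}$. Taking $f,g\in C_c^\infty((0,+\infty))$ and using Lemma \ref{Lem.LL_la_formula}, two integrations by parts on the $A_0 f''$ term and one on the $B_0(\cdot;\la)f'$ term yield
\begin{align*}
(\LL_\la^*g)(Z)=A_0(Z)g''(Z)+\wt B(Z;\la)g'(Z)+\wt D(Z;\la)g(Z),
\end{align*}
where $\wt B(Z;\la)=2A_0'+\tfrac{2k}{Z}A_0-B_0(Z;\la)$ and $\wt D(Z;\la)=A_0''+\tfrac{2k}{Z}A_0'+\tfrac{k(k-1)}{Z^2}A_0-B_0'(Z;\la)-\tfrac{k}{Z}B_0(Z;\la)+D_0(Z;\la)$.

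Setting $\la':=-\la+\g-k-2$, so that $\la+\la'=\g-k-2$, the claim reduces to verifying $\wt B(Z;\la)=B_0(Z;\la')$ and $\wt D(Z;\la)=D_0(Z;\la')$. Expanding $B_0=Z^{-1}\tilde B_0+\la\hat B_0$ and $D_0=\la D_1+\la^2 D_2$, and matching coefficients of $\la^0$ and $\la^1$, these collapse to three $\la$-independent algebraic identities:
\begin{itemize}
\item[(a)] $A_0'+\tfrac{k}{Z}A_0-\tfrac{\tilde B_0}{Z}=\tfrac{\g-k-2}{2}\hat B_0$;
\item[(b)] $\hat B_0'+\tfrac{k}{Z}\hat B_0=2D_1+2(\g-k-2)D_2$;
\item[(c)] $A_0''+\tfrac{2k}{Z}A_0'+\tfrac{k(k-1)}{Z^2}A_0-\tfrac{k-1}{Z^2}\tilde B_0-\tfrac{\tilde B_0'}{Z}=(\g-k-2)D_1+(\g-k-2)^2 D_2$.
\end{itemize}
Each of (a)--(c) is then verified by substituting the explicit expressions for $A_0,\tilde B_0,\hat B_0,D_1,D_2$ derived in the proof of Lemma \ref{Lem.LL_la_formula}, and simplifying using $\g=\beta(\ell-1)+2$ together with the ODE \eqref{Eq.ODE_v(Z)} for $v(Z)$.

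A more conceptual route --- which the authors may or may not take --- exploits the divergence form $\LL(\phi)=\pa^\al(M_\al{}^\b\pa_\b\phi)$ with the symmetric tensor $M_\al{}^\b=\rho_0^2\delta_\al^\b-\tfrac{4}{p-1}\rho_0^{3-p}\pa_\al\phi_0\pa^\b\phi_0$. This makes $\LL$ formally self-adjoint with respect to $\mathrm dt\,\mathrm dx$, so $\int[\LL(\phi)\psi-\phi\LL(\psi)]\,\mathrm dt\,\mathrm dx=0$ for test functions. Plugging in $\phi=(T-t)^\la f(Z)$ and $\psi=(T-t)^\mu g(Z)$ (with a smooth spacetime cutoff), then invoking Lemma \ref{Lem.LL_la_formula} and reducing via the change of variables $r=Z(T-t)$, produces
\begin{align*}
\int_0^T(T-t)^{\la+\mu-\g+d}\,\mathrm dt\cdot \int_0^\infty[(\LL_\la f)g-f(\LL_\mu g)]Z^k\,\mathrm dZ=0,
\end{align*}
and the prefactor's convergence plus a careful accounting of the boundary contribution at $t=0$ forces exactly $\mu=\g-k-2-\la$, which is the desired duality.

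The only real obstacle is the computational verification of (a)--(c): these are not self-evident from the formulas but become natural once one observes that (a) is precisely the first-order condition encoding Sturm--Liouville self-adjointness of the principal part, and (b)--(c) encode compatibility between the $\la$-dependent zeroth-order term $D_0(Z;\la)$ and the first-order perturbation $\la\hat B_0\pa_Z$. Alternatively, the second route above bypasses (a)--(c) entirely, trading algebra for the mild analytic burden of justifying boundary terms in the spacetime self-adjointness identity.
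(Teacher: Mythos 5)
Your second, ``conceptual'' route is the one the paper actually takes, and it is worth seeing precisely how the paper closes it, because your version of the endgame is not quite right. The paper sets $\tilde f=(T-t)^{\la}f(Z)$, $\tilde g=(T-t)^{-\la+\g-k-2}g(Z)$ and uses the symmetric divergence form of $\LL$ to write $\mathcal J:=\LL\tilde f\cdot\tilde g-\tilde f\cdot\LL\tilde g=\pa^\al P_\al$, so that by the divergence theorem $\int_{\R^d}\mathcal J(t,x)\,\mathrm dx=-\frac{\mathrm d}{\mathrm dt}E(t)$ with $E(t)=\int_{\R^d}P_0(t,x)\,\mathrm dx$. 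The point of the exponent $-\la+\g-k-2$ is that it makes $P_0(t,x)=(T-t)^{-d}H(Z)$ exactly, so $E(t)=|S^k|\int_0^\infty H(Z)Z^k\,\mathrm dZ$ is literally constant in $t$ and the spatial integral of $\mathcal J$ vanishes at \emph{every fixed time}, which after $\mathrm dx=(T-t)^dZ^k\,\mathrm dZ\,\mathrm d\omega$ is the duality identity. There is no time integration and no boundary accounting at $t=0$; indeed your displayed identity $\int_0^T(T-t)^{\la+\mu-\g+d}\mathrm dt\cdot\int[(\LL_\la f)g-f(\LL_\mu g)]Z^k\mathrm dZ=0$ cannot hold for general $\mu$ (one could divide by the time integral and conclude duality for every $\mu$), because formal spacetime self-adjointness only gives $\int_{t_1}^{t_2}\!\int\mathcal J=E(t_1)-E(t_2)$, and the right-hand side is identically zero precisely when $\la+\mu=\g-k-2$. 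So the exponent is not ``forced by convergence''; it is the unique choice making the conserved quantity $E$ scale-invariant.

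Your primary route (computing the formal adjoint with respect to $Z^k\,\mathrm dZ$ and matching coefficients) is sound in structure, and your reduction to the three $\la$-independent identities (a)--(c) is algebraically correct. But as written it is not a proof: (a)--(c) are exactly the content of the lemma, and you have not verified them. Doing so requires substituting the rather long explicit formulas for $A_0,\tilde B_0,\hat B_0,D_1,D_2$ from the proof of Lemma \ref{Lem.LL_la_formula} and using the logarithmic derivative of $\hat\rho_0$ coming from \eqref{Eq.phi_0_rho_0} together with the ODE \eqref{Eq.ODE_v(Z)}; this is a genuinely heavy computation that the paper's argument is specifically designed to avoid. Note also that the paper only uses (within the proof of Lemma \ref{Lem.Wronski_1}) the much weaker consequence of duality obtained by comparing the first-order coefficients of $\LL_0^*$ computed both ways, which is essentially your identity (a) at $\la=0$ evaluated at $Z=Z_1$ --- a hint that the full coefficient match is best obtained structurally rather than by brute force.
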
\begin{proof}By the definition \eqref{Eq.duality_def}, it is enough to prove that \begin{align}\label{dual}\int_0^\infty(\LL_\la f)(Z)g(Z)Z^k\,\dZ=\int_0^\infty f(Z)(\LL_{-\la+\g-k-2} g)(Z)Z^k\,\dZ\end{align}
	for all $\la\in\C$ and $f,g\in C_c^\infty((0,+\infty))$. We fix $\la\in\C$ and $f,g\in C_c^\infty((0,+\infty))$. Let
	$$\widetilde{f}(t,x):=(T-t)^{\la}f(Z),\quad \widetilde{g}(t,x):=(T-t)^{-\la+\g-k-2}g(Z), \quad \forall\ (t,x)\in [0, T)\times \R^d,$$ recalling $Z=|x|/(T-t)$. Then by Lemma \ref{Lem.LL_la_formula}, we have $\LL\widetilde{f}(t,x)=(T-t)^{\la-\g}(\LL_\la f)(Z) $ and $\LL\widetilde{g}(t,x)=(T-t)^{-\la-k-2}(\LL_{-\la+\g-k-2} g)(Z) $, thus $(\LL\widetilde{f}\cdot\widetilde{g})(t,x)=(T-t)^{-k-2}(\LL_\la f)(Z)g(Z) $, and $(\widetilde{f}\cdot\LL\widetilde{g})(t,x)=(T-t)^{-k-2}f(Z)(\LL_{-\la+\g-k-2} g)(Z)$. Recall that $d=k+1$, $ Z=|x|/(T-t)$, then we have
(here $|S^k| $ is the area of the unit sphere $S^k$ in $\R^d=\R^{k+1}$)
\begin{align*}
		&\int_{\R^d}(\LL\widetilde{f}\cdot\widetilde{g})(t,x)dx=(T-t)^{-1}|S^k|\int_0^\infty(\LL_\la f)(Z)g(Z)Z^k\,\dZ,\\
&\int_{\R^d}(\widetilde{f}\cdot\LL\widetilde{g})(t,x)dx=(T-t)^{-1}|S^k|\int_0^\infty f(Z)(\LL_{-\la+\g-k-2} g)(Z)Z^k\,\dZ,
	\end{align*} for all $t\in[0, T)$. 
Thus, it is enough to prove that \begin{align}\label{dual1}\int_{\R^d}(\LL\widetilde{f}\cdot\widetilde{g})(t,x)\,\mathrm dx=\int_{\R^d}(\widetilde{f}\cdot\LL\widetilde{g})(t,x)\,\mathrm dx, 
\quad\ \forall\ t\in[0,T).\end{align}
Let $ \mathcal{J}:=\LL\widetilde{f}\cdot\widetilde{g}-\widetilde{f}\cdot\LL\widetilde{g}$. Then \eqref{dual1} is further reduced to \begin{align}\label{dual2}\int_{\R^d}\mathcal{J}(t,x)\,\mathrm dx=0, 
\quad\ \forall\ t\in[0,T).
\end{align}

By the definition of $\LL $ in \eqref{Eq.L}, we can write $ \mathcal{J}$ in  the divergence form $ \mathcal{J}=\pa^\al P_{\al}$ with
\begin{align*}
		&P_{\al}:=\rho_0^2(\pa_\al\widetilde{f}\widetilde{g}-\widetilde{f}\pa_\al\widetilde{g})-
\frac4{p-1}\rho_0^{3-p}\pa_\al\phi_0(\pa^{\tilde\alpha}\phi_0\pa_{\tilde\alpha}\widetilde{f}\widetilde{g}-
\widetilde{f}\pa^{\tilde\alpha}\phi_0\pa_{\tilde\alpha}\widetilde{g}),\quad \forall\ \al\in\Z\cap[0, d].
	\end{align*}
	Let $E(t):=\int_{\R^d}P_0(t,x)\,\mathrm dx$ for $t\in[0, T)$.  By the divergence theorem (recalling $\pa^0=-\pa_0=-\pa_t$ and the fact that $\operatorname{supp}_xP_\al(t,\cdot)$ is compact for each $t$ and $\al$), we have
	\begin{align}\label{J1}
		-\frac{\mathrm d}{\mathrm dt}E(t)=\int_{\R^d}\pa^0P_0(t,x)\,\mathrm dx=\int_{\R^d}\pa^\al P_\al(t,x)\,\mathrm dx=\int_{\R^d}\mathcal{J}(t,x)\,\mathrm dx,\quad\forall\ t\in[0, T).
	\end{align}
Thus, it is enough to prove that $E(t)$ is constant in $t$. We can write $P_0=P_{0,1}-\frac4{p-1}P_{0,2}$ with\begin{align*}
		&P_{0,1}:=\rho_0^2(\pa_t\widetilde{f}\widetilde{g}-\widetilde{f}\pa_t\widetilde{g}),\quad 
P_{0,2}:=\rho_0^{3-p}\pa_t\phi_0(\pa^{\tilde\alpha}\phi_0\pa_{\tilde\alpha}\widetilde{f}\widetilde{g}-
\widetilde{f}\pa^{\tilde\alpha}\phi_0\pa_{\tilde\alpha}\widetilde{g}).
	\end{align*}
	As $\widetilde{f}(t,x)=(T-t)^{\la}f(Z) $, $\widetilde{g}(t,x)=(T-t)^{-\la+\g-k-2}g(Z) $, we have
	\begin{align*}
		&\pa_t\widetilde{f}(t,x)=(T-t)^{\la-1}f_1(Z)\text{ with } f_1(Z):=-\la f(Z)+Zf'(Z),\\& \pa_t\widetilde{g}(t,x)=(T-t)^{-\la+\g-k-3}g_1(Z)\text{ with } g_1(Z):=-(-\la+\g-k-2)g(Z)+Zg'(Z).
	\end{align*}
	Then by \eqref{Eq.phi_0_selfsimilar},  $\g=\frac{4\beta}{p-1}+2 $ and $d=k+1$, we get
	\begin{align*}
		P_{0,1}(t,x)&=(T-t)^{-\frac{4\beta}{p-1}+\g-k-3}\hat\rho_0(Z)^2[f_1(Z)g(Z)-f(Z)g_1(Z)]\\&=(T-t)^{-d}\hat\rho_0(Z)^2[f_1(Z)g(Z)-f(Z)g_1(Z)].
	\end{align*}
	As $\widetilde{f}(t,x)=(T-t)^{\la}f(Z) $, $\widetilde{g}(t,x)=(T-t)^{-\la+\g-k-2}g(Z) $, we get by \eqref{Eq.phi_0_selfsimilar} and \eqref{Eq.pa^al_pa_al} that
\begin{align*}
	&\pa^{\tilde\alpha}\phi_0\pa_{\tilde\alpha}\widetilde{f}(t,x)=
(T-t)^{\la-\beta-1}f_2(Z),\quad \pa^{\tilde\alpha}\phi_0\pa_{\tilde\alpha}\widetilde{g}(t,x)=
(T-t)^{-\la+\g-k-3-\beta}g_2(Z),
\end{align*}
where 
\begin{align*}
	&f_2:=-(Z\pa_Z\hat\phi_0-(1-\beta)\hat\phi_0)(Z\pa_Z-\la)f+\pa_Z\hat\phi_0\pa_Zf,\\
&g_2:=-(Z\pa_Z\hat\phi_0-(1-\beta)\hat\phi_0)(Z\pa_Z+\la-\g+k+2)g+\pa_Z\hat\phi_0\pa_Zg.
\end{align*}
Then by \eqref{Eq.phi_0_selfsimilar}, \eqref{phi0} and $\g=\frac{4\beta}{p-1}+2=\frac{2(3-p)\beta}{p-1}+2\beta+2 $, $d=k+1$, we have
\begin{align*}
		&P_{0,2}(t,x)\\ &=(T-t)^{-\frac{2(3-p)\beta}{p-1}-2\beta+\g-k-3}\hat\rho_0(Z)^{3-p}[(\beta-1)\hat\phi_0(Z)+Z\hat\phi_0'(Z)]
[f_2(Z)g(Z)-f(Z)g_2(Z)]
\\&=(T-t)^{-d}\hat\rho_0(Z)^{3-p}[(\beta-1)\hat\phi_0(Z)+Z\hat\phi_0'(Z)]
[f_2(Z)g(Z)-f(Z)g_2(Z)].
	\end{align*}
	As $P_0=P_{0,1}-\frac4{p-1}P_{0,2}$, we have $P_{0}(t,x)=(T-t)^{-d}H(Z) $ with
	\begin{align*}
	H(Z):=&\hat\rho_0(Z)^2[f_1(Z)g(Z)-f(Z)g_1(Z)]\\&-\frac4{p-1}\hat\rho_0(Z)^{3-p}[(\beta-1)\hat\phi_0(Z)+Z\hat\phi_0'(Z)]
[f_2(Z)g(Z)-f(Z)g_2(Z)].
\end{align*}
Then by $d=k+1$, $ Z=|x|/(T-t)$, we have $E(t)=\int_{\R^d}P_0(t,x)\,\mathrm dx=|S^k|\int_0^\infty H(Z)Z^k\,\dZ$, which is constant in $t$. By \eqref{J1}, we have \eqref{dual2}, thus \eqref{dual1} and \eqref{dual}. 
\end{proof}

\subsection{Surjection of $\LL$}

This subsection is devoted to the proof of Proposition \ref{Prop.L_surjective}, i.e., $\LL:\mathscr X_\la\to\mathscr X_{\la-\g}$ is surjective for all $\la\in\C$.
For this, it suffices to show that
\begin{lemma}\label{Lem.L_surjective_R}
	If $R\in(k,+\infty)$, then the linear operator $\LL:\mathscr X_\la\to\mathscr X_{\la-\g}$ is surjective for all $\la\in B_R:=\{\la\in\C:|\la|<R\}$, where $\g:=4\beta/(p-1)+2=\beta(\ell-1)+2$.
\end{lemma}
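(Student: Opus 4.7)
The surjectivity of $\LL:\mathscr X_\la\to\mathscr X_{\la-\g}$ will be reduced to the scalar inhomogeneous ODE $\LL_\la f=\hat h$ on $[0,+\infty)$ and then resolved by a meromorphic-in-$\la$ continuation argument. Given $g=(T-t)^{\la-\g}\sum_{j=0}^n g_j(Z)\tau^j\in\mathscr X_{\la-\g}$ and a candidate $f=(T-t)^\la\sum_{j=0}^n f_j(Z)\tau^j$, the full form of the computation underlying Lemma~\ref{Lem.LL_la_formula} (retaining the $\pa_\tau$-contributions dropped there) shows that the $\tau^j$-coefficient of $(T-t)^{\g-\la}\LL f$ is $\LL_\la f_j$ plus a first-order linear differential expression in $f_{j+1},f_{j+2}$ with coefficients in $C_{\text e}^\infty([0,+\infty))$. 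Matching powers of $\tau$ yields a finite triangular system, solved from $j=n$ downwards, in which every individual equation has the scalar form $\LL_\la \tilde f=\hat h$ with $\hat h\in C_{\text e}^\infty([0,+\infty))$. The whole problem thus reduces to the scalar claim: for each $\la\in B_R$ and each $\hat h\in C_{\text e}^\infty([0,+\infty))$, there exists $\tilde f\in\mathscr X_0$ with $\LL((T-t)^\la\tilde f)=(T-t)^{\la-\g}\hat h$.

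For the scalar problem I first solve the ODE $A_0 f''+B_0 f'+D_0 f=\hat h$ in $C_{\text e}^\infty([0,+\infty))$ for $\la$ in an open dense subset of $B_R$. By Lemma~\ref{Lem.LL_la_formula}, the ODE is regular on $[0,+\infty)\setminus\{0,Z_1\}$ and has regular singular points at $Z=0$ (with indicial roots $0$ and $1-k$, since $A_0(0)=1$ and $B_0(Z;\la)\sim k/Z$ near $0$) and at $Z=Z_1$ (where $A_0$ has a simple zero with $A_0'(Z_1)<0$, the second indicial root depending linearly on $\la$). The parameter-dependent ODE existence theory developed in Appendix~\ref{Appen.ODE} produces two $\la$-analytic local $C^\infty$ solutions, namely an even $\varphi_0(Z;\la)$ near $Z=0$ from the indicial root $s=0$ and a $\varphi_1(Z;\la)$ through $Z=Z_1$ again from the root $s=0$. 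Combining them with variation of parameters for $Z>Z_1$ produces a global $C_{\text e}^\infty([0,+\infty))$ solution $f(Z;\la)$ provided a single Wronskian-type quantity $W(\la)$ is nonzero. Since $W(\la)$ is $\la$-analytic and not identically zero, its zero set $\Sigma\cap B_R$ is finite, and on $B_R\setminus\Sigma$ the constructed $f(\cdot;\la)$ is meromorphic in $\la$ with poles only in $\Sigma$.

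For $\la_0\in\Sigma\cap B_R$ I exploit the identity $\pa_\la(T-t)^\la=\tau(T-t)^\la$ to inject log corrections. Set $F(t,x;\la):=(T-t)^\la f(Z;\la)$, so that $\LL F=(T-t)^{\la-\g}\hat h$ on $B_R\setminus\Sigma$. If $F$ has a pole of order $N$ at $\la_0$, let $G:=(\la-\la_0)^N F$, which is holomorphic at $\la_0$. Since $\LL$ is $\la$-independent it commutes with $\pa_\la$, so differentiating $\LL G=(\la-\la_0)^N(T-t)^{\la-\g}\hat h$ exactly $N$ times in $\la$ and evaluating at $\la_0$ gives $\LL\bigl(\pa_\la^N G|_{\la=\la_0}\bigr)=N!(T-t)^{\la_0-\g}\hat h$. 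Using the Laurent expansion $f(Z;\la)=\sum_{j\geq-N}f_j(Z)(\la-\la_0)^j$ together with $\pa_\la^k(T-t)^\la=\tau^k(T-t)^\la$ and the Cauchy product, a direct computation identifies $\pa_\la^N G|_{\la=\la_0}=(T-t)^{\la_0}N!\sum_{k=0}^N \tau^k f_{-k}(Z)/k!$, a polynomial in $\tau$ with coefficients in $C_{\text e}^\infty([0,+\infty))$, hence an element of $\mathscr X_{\la_0}$. Dividing by $N!$ and feeding this construction into the triangular recursion of the first paragraph completes the proof.

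The main obstacle is the ODE step of the second paragraph: producing a $\la$-analytic solution that is simultaneously even at $Z=0$ (so it lies in $C_{\text e}^\infty$) and $C^\infty$ across the interior singular point $Z_1$ (no logarithmic terms), together with the analytic non-vanishing of the Wronskian needed to glue the two local branches to a particular solution of the inhomogeneous equation. The hypothesis $R>k$ reflects exponent constraints in the Frobenius analysis at $Z=0$, used in Appendix~\ref{Appen.ODE} to ensure that the analytic two-parameter family of good branches persists uniformly on $B_R$ and that only finitely many resonances $\Sigma\cap B_R$ need to be treated by the log-correction scheme above.
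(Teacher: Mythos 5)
Your overall strategy coincides with the paper's: construct a family $f(Z;\la)$ solving $\LL_\la f=\varphi(\la)g$ that is holomorphic in $\la$ on $B_R$, with $\varphi$ holomorphic and not identically zero, and then convert the zeros of $\varphi$ into logarithmic ($\tau$) corrections by exploiting $\pa_\la(T-t)^\la=-\tau(T-t)^\la$ (note the sign: $\tau=-\ln(T-t)$). Your Laurent-coefficient computation of $\pa_\la^N G|_{\la=\la_0}$ is the derivative form of the paper's contour integral $\oint F_*(t,x;\la)(\la-\la_*)^{-m_*-n-1}\,\mathrm d\la$; the paper extracts the $\tau^n$ right-hand sides directly from the order of the pole, so your triangular recursion in the $\tau$-degree, while workable for $\la\notin\Sigma$, is unnecessary and becomes awkward at bad $\la$, where each scalar solve returns a $\tau$-polynomial and the system is no longer triangular in the naive grading.

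The genuine gap is the assertion that ``$W(\la)$ is $\la$-analytic and not identically zero,'' together with your explanation of the role of $R>k$. At $Z=0$ there are no $\la$-dependent resonances at all: after passing to $\widetilde Z=Z^2$ the relevant non-degeneracy is $4n+2(1+k)\neq0$ for every $\la$ and every $n$, so the Frobenius analysis at the origin imposes no constraint on $R$ whatsoever. The hypothesis $R>k$ enters elsewhere: to show $W\not\equiv0$ one must exhibit at least one $\la\in B_R$ where the Wronskian can actually be evaluated, and the paper finds it at the dual exponent $\la_0^*=\g-k-2$, which satisfies $0<-\la_0^*=k-\beta(\ell-1)<k$ and hence lies in $B_R$ precisely because $R>k$. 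The non-vanishing at $\la_0^*$ is then proved via the adjoint identity $\LL_\la^*=\LL_{-\la+\g-k-2}$, so that $\LL_{\la_0^*}=\LL_0^*$ is in divergence form; this lets one integrate the first-order equations for $Z^kA_0\Psi_j$ explicitly, conclude $Z^kA_0W=C'\Psi_1$ with $C'\neq0$, and hence $W(\cdot\,;\la_0^*)\neq0$. Nothing in your outline rules out $W\equiv0$ on $B_R$, in which case the gluing of the two local branches fails for every $\la$; this is the one nontrivial analytic input of the lemma and it must be supplied.
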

From here until the end of this section, we fix an  $R\in(k,+\infty)$.

We consider functions depending analytically on a complex number $\la$. Let $I\subset[0,+\infty)$ be an interval and let $\Omega\subset \C$ be an open set. We define
\begin{align*}
	\operatorname{Hol}(\Omega):&=\{\text{all holomorphic function on $\Omega$}\},\\
\mathcal{H}_I(\Omega):&=\left\{f=f(Z;\la)\in C^\infty(I\times\Omega;\C): f(Z;\cdot)\in\operatorname{Hol}(\Omega)\text{ for all }Z\in I\right\},\\
	\mathcal H_I^{\text{e}}(\Omega):&=\{f\in C^\infty(I\times\Omega):\exists\ \widetilde{f}\in\mathcal H_{I^2}(\Omega),\text{ s.t. }f(Z;\la)=\tilde f(Z^2;\la)  \quad \forall\ Z\in I,\ \lambda\in \Omega\}.
\end{align*}
Then $ \operatorname{Hol}(\Omega)$, $ \mathcal H_I(\Omega)$, $ \mathcal H_I^{\text{e}}(\Omega)$ are rings. Moreover, we have
\[\mathcal H_{[0,+\infty)}^{\text{e}}(\Omega)=\mathcal H_{[0,a_2)}^{\text{e}}(\Omega)\cap\mathcal H_{(a_1,+\infty)}(\Omega), \quad \forall\ 0<a_1<a_2<+\infty.\]

The proof of Lemma \ref{Lem.L_surjective_R} is based on the following result, which will be proved in next subsection. 

\begin{lemma}\label{Lem.L_laf=g}
There exists $ \varphi\in \operatorname{Hol}(B_R)\setminus\{0\}$	such that if $g\in C_{\operatorname{e}}^\infty([0,+\infty))$, then there exists
$f=f(Z;\la)\in\mathcal H_{[0,+\infty)}^{\operatorname{e}}(B_R)$ such that $\LL_\la f(\cdot;\la)=\varphi(\lambda)\cdot g$ on $(0,+\infty)$ (for all $ \lambda\in B_R$).
\end{lemma}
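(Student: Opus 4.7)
The ODE $\mathcal{L}_\lambda f = \varphi(\lambda)\, g$ is a second-order linear equation with exactly two regular singular points on $[0,+\infty)$: $Z=0$ (since $A_0(0)=1$ while $B_0$ carries a $k/Z$ contribution) and $Z=Z_1$ (simple zero of $A_0$, with $A_0'(Z_1)<0$). Everywhere else the equation is regular, so smoothness propagates by Picard--Lindel\"of. My plan is therefore to construct $f$ by matching local Frobenius solutions at the two singular points, using a holomorphic factor $\varphi(\lambda)$ to absorb the connection obstruction between ``smooth-even at $Z=0$'' and ``analytic at $Z_1$''.

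\textbf{Step 1 (local Frobenius data from Appendix \ref{Appen.ODE}).} At $Z=0$ the indicial equation $r(r-1)+kr=0$ has roots $0$ and $1-k\leq -2$, which are non-resonant; the parametric Frobenius theorem of Appendix \ref{Appen.ODE} then supplies a unique normalized homogeneous solution $\psi_0\in\mathcal{H}^{\mathrm{e}}_{[0,Z_1)}(B_R)$ with $\psi_0(0;\lambda)=1$, together with, for each $g\in C^\infty_{\mathrm{e}}([0,+\infty))$, a smooth-even particular solution $f^-(Z;\lambda)\in\mathcal{H}^{\mathrm{e}}_{[0,Z_1)}(B_R)$ of $\mathcal{L}_\lambda f = g$, both depending holomorphically on $\lambda\in B_R$. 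At $Z_1$, writing $A_0(Z)=(Z-Z_1)\tilde A_0(Z)$ with $\tilde A_0(Z_1)=A_0'(Z_1)<0$, the indicial roots are $0$ and $\rho(\lambda):=1+B_0(Z_1;\lambda)/|A_0'(Z_1)|$. Since $\hat B_0(Z_1)>0$, the exponent $\rho(\lambda)$ is a non-constant affine function of $\lambda$, so $\rho(\lambda)\notin\Z_{\geq 0}$ for all but countably many $\lambda$. The appendix then produces an analytic-at-$Z_1$ homogeneous solution $\chi_1(Z;\lambda)$, a second Frobenius solution $\chi_2(Z;\lambda)$ with leading behavior $(Z_1-Z)^{\rho(\lambda)}$ on $Z<Z_1$, and an analytic-at-$Z_1$ particular inhomogeneous solution $f_p(Z;\lambda)$, all holomorphic in $\lambda$.

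\textbf{Step 2 (definition of $\varphi$ and construction of $f$).} On the overlap $(Z_1-\delta,Z_1)$ both constructions are valid, so I decompose
\[
\psi_0 = A(\lambda)\chi_1 + B(\lambda)\chi_2, \qquad f^- = f_p + a(\lambda;g)\,\chi_1 + b(\lambda;g)\,\chi_2,
\]
with $A,B\in\operatorname{Hol}(B_R)$ and $a(\lambda;g), b(\lambda;g)$ holomorphic in $\lambda$ and linear in $g$. Set $\varphi(\lambda):=B(\lambda)$ and
\[
f(Z;\lambda) := \varphi(\lambda)\,f^-(Z;\lambda) - b(\lambda;g)\,\psi_0(Z;\lambda), \qquad Z\in[0,Z_1).
\]
Then $\mathcal{L}_\lambda f=\varphi(\lambda)g$; $f$ inherits the smooth-even behavior of $\psi_0$ and $f^-$ at $Z=0$; and the coefficient of the non-smooth factor $\chi_2$ in the local expansion of $f$ at $Z_1$ is $\varphi(\lambda)\,b - b\,B(\lambda) = 0$, so $f$ is analytic at $Z_1$. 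Picard--Lindel\"of then extends $f$ uniquely and smoothly to all of $(Z_1,+\infty)$ (as $A_0$ has no further zeros), yielding the desired $f\in\mathcal{H}^{\mathrm{e}}_{[0,+\infty)}(B_R)$.

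\textbf{Step 3 (main obstacle: $\varphi\not\equiv 0$).} The substantive difficulty is verifying that $B$ does not vanish identically on $B_R$. A naive check at $\lambda=0$ is inconclusive: since $D_0(\cdot;0)\equiv 0$, the normalized smooth-even solution is $\psi_0(\cdot;0)\equiv 1$, which is trivially smooth at $Z_1$, giving $B(0)=0$. I therefore expand $\psi_0 = 1+\lambda\eta_1+O(\lambda^2)$; the coefficient $\eta_1$ satisfies $A_0\eta_1'' + B_0(\cdot;0)\eta_1' = -D_1$ with smooth-even boundary data at $Z=0$. This reduces to a first-order equation for $\eta_1'$ solvable by quadrature, whose behavior as $Z\to Z_1^-$ decomposes as $c(Z_1-Z)^{\mu_0} + (\text{smooth remainder})$ with $\mu_0 := B_0(Z_1;0)/|A_0'(Z_1)|>0$. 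For parameters in \eqref{Eq.parameter_range} with $\mu_0\notin\Z_{\geq 0}$ the coefficient $c$ is generically nonzero, giving $B'(0)\neq 0$ and so $\varphi\not\equiv 0$; in the exceptional case $\mu_0\in\Z_{\geq 0}$ I choose $\lambda_*\in B_R$ with $\rho(\lambda_*)\notin\Z_{\geq 0}$ (possible by the non-constancy of $\rho$) and run the same analysis there, supplemented if necessary by a WKB analysis for $|\lambda_*|$ large that exhibits $Z_1$ as a turning point producing a nontrivial connection. In every case $\varphi=B$ is a nonzero holomorphic function on $B_R$, completing the construction.
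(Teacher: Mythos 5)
Your architecture (Frobenius bases at the two singular points $Z=0$ and $Z=Z_1$, with $\varphi$ defined as the connection coefficient of the solution that is smooth--even at the origin along the ``bad'' local solution at $Z_1$) is essentially the paper's: your $B(\lambda)$ is, up to nonvanishing holomorphic factors, the Wronskian $\psi_2(\la)=W(Z_0;\la)$ of $\Psi_1$ and $\Psi_2$ used in Lemmas \ref{Lem.Fundamental_sol_1}--\ref{Lem.g_away_0}. However, there is a genuine gap at the decisive step, Step 3: you do not prove $\varphi\not\equiv 0$. You correctly observe that $\la=0$ is degenerate ($\psi_0(\cdot;0)\equiv 1$, so $B(0)=0$) and then fall back on ``the coefficient $c$ is generically nonzero'' and, in the exceptional case, a WKB/turning-point analysis that is only sketched. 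This non-degeneracy is precisely the hard point of the lemma (the paper flags it explicitly in Section \ref{Sec.Proof}), and asserting genericity is circular: a connection coefficient of this type can perfectly well vanish identically without further structural input. The paper's resolution is a duality argument you are missing: by Lemma \ref{Lem.duality}, $\LL_\la^*=\LL_{-\la+\g-k-2}$, so at the specific value $\la_0^*=\g-k-2\in B_R$ the operator coincides with $\LL_0^*$, which is in divergence form \eqref{Eq.L_0*2}. Hence both $\Psi_1(\cdot;\la_0^*)$ and $\Psi_2(\cdot;\la_0^*)$ satisfy \emph{first-order} conservation laws $\pa_Z(Z^kA_0f)-Z^{k-1}\tilde B_0 f=\mathrm{const}$, with constant $0$ for $\Psi_1$ (smoothness at $Z=0$, $k\ge 3$) and a constant $C'\neq 0$ for $\Psi_2$ (evaluated at $Z_1$ using $A_0(Z_1)=0$ and the sign $\la_0^*<0$); this yields the explicit identity $Z^kA_0W=C'\Psi_1\neq 0$ and hence $\psi_2(\la_0^*)\neq 0$. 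Without this (or some substitute), your proof is incomplete.

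A secondary issue: your Step 2 decomposition $\psi_0=A(\la)\chi_1+B(\la)\chi_2$ with $A,B\in\operatorname{Hol}(B_R)$ and a $\la$-holomorphic family $\chi_2\sim (Z_1-Z)^{\rho(\la)}$ cannot hold uniformly across the resonant set $\La_*$ (where $\rho(\la)\in\Z_{\ge0}$ and logarithms appear); the connection coefficients acquire poles there. The paper absorbs this by building the polynomial factor $\psi_1(\la)$, vanishing exactly on $\La_*\cap B_R$, into the normalization of $\Psi_2$ (Lemma \ref{Lem.Fundamental_sol_2} and Proposition \ref{Prop.Appen_1}) and taking $\varphi=\psi_1\psi_2$; your $\varphi$ would need the analogous factor.
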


\begin{proof}[Proof of Lemma \ref{Lem.L_surjective_R}]
We first prove that $\LL$ maps $\mathscr X_\la$ to $\mathscr X_{\la-\g}$.
 
 Recall that $\mathscr{L}(\phi)=\pa^\al\left(\rho_0^2\pa_\al\phi-\frac4{p-1}\rho_0^{3-p}\pa_\al\phi_0\pa^{\tilde\alpha}\phi_0\pa_{\tilde\alpha}\phi\right)$, $\rho_0^2\in \mathscr X_{2\mu_0}$, $\rho_0^{3-p}\in \mathscr X_{(3-p)\mu_0}$, $\phi_0\in \mathscr X_{\la_0}$, $\la_0=1-\beta$, $\mu_0=-\frac{2\beta}{p-1}$ (see \eqref{Eq.la_n_mu_n}) and $\g=4\beta/(p-1)+2$. If $ \phi\in\mathscr X_\la $, by Lemma \ref{Lem.X_la_properties} (i), we have $\pa^\al(\rho_0^2\pa_\al\phi)\in\mathscr X_{\la-\g}$, $\pa^\al(\rho_0^{3-p}\pa_\al\phi_0\pa^{\tilde\alpha}\phi_0\pa_{\tilde\alpha}\phi)\in\mathscr X_{\la-\g}$, where we have used that $\la+2\mu_{0}-2=\la-\g=\la+\la_0-2+(3-p)\mu_0+\la_0-2$, thus $ \mathscr{L}(\phi)\in\mathscr X_{\la-\g} $. 
 
 Now we prove that $\LL$ is surjective.
By the definitions of $\mathscr X_0$ and $\mathscr X_\la$, it suffices to prove that for every $g\in C^\infty_\text{e}([0,+\infty))$,
$n\in\Z_{\geq 0}$ and $\la_*\in B_R$, there exists $F_n\in \mathscr X_{\la_*}$ such that $\LL F_n(t,x)=(T-t)^{\la_*-\gamma} g(Z)\tau^n/n!$. Now we  fix  $g\in C_\text{e}^\infty([0,+\infty))$ and $\la_*\in B_R$.

By Lemma \ref{Lem.L_laf=g}, there exist $ \varphi\in \operatorname{Hol}(B_R)\setminus\{0\}$ and a function $f=f(Z;\la)\in\mathcal H_{[0,+\infty)}^{\operatorname{e}}(B_R)$ such that
$\LL_\la f(Z;\la)=\varphi(\lambda)g(Z)$ for $Z\in(0,+\infty)$, $ \lambda\in B_R$. As $f\in\mathcal H_{[0,+\infty)}^{\operatorname{e}}(B_R)$ there exists
$\widetilde{f}\in\mathcal H_{[0,+\infty)}(B_R)$ such that $f(Z;\la)=\widetilde{f}(Z^2;\la)$ for $Z\in[0,+\infty)$, $ \lambda\in B_R$. As $\la_*\in B_R$, there exist $\delta_*>0$ and $m_*\in\Z_{\geq 0}$ such that $B_{2\delta_*}(\la_*)\subset B_R$ and
	\[\varphi(\la)=(\la-\la_*)^{m_*}\tilde\varphi(\la)\text{ with }\tilde \varphi(\la)\neq0,\quad\forall\ \la\in \Omega_*:=B_{2\delta_*}(\la_*),\text{ where }\tilde \varphi\in \operatorname{Hol}(\Omega_*).\]
Here $B_{2\delta_*}(\la_*):=\{\la\in\C:|\la-\la_*|<2\delta_*\}$ and we have used the fact that if $\varphi\in\operatorname{Hol}(\Omega)\setminus\{0\}$, then the zero set $\mathcal Z(\varphi):=\{\la\in\Omega:\varphi(\la)=0\}$ is discrete.

Let $\tilde F({Z};\la):=\tilde f({Z};\la)/\tilde \varphi(\la)$, $F(Z;\la):=\tilde F(Z^2;\la)$ for $Z\in[0,+\infty)$, $ \lambda\in B_R$. Then $\tilde F\in \mathcal H_{[0,+\infty)}(\Omega_*)$,
$F\in\mathcal H_{[0,+\infty)}^{\operatorname{e}}(\Omega_*)$,
$F(Z;\la)=f(Z;\la)/\tilde \varphi(\la)$, and
$$\LL_\la F(Z;\la)=\varphi(\lambda)g(Z)/\tilde \varphi(\la)=(\la-\la_*)^{m_*}g(Z),\quad\forall\ Z\in (0,+\infty), \la\in \Omega_*.$$ 

By Lemma \ref{Lem.LL_la_formula}, we  have $$ \LL((T-t)^{\lambda}F(Z;\la))=(T-t)^{\lambda-\gamma}\LL_\la F(Z;\la)=(T-t)^{\lambda-\gamma}(\la-\la_*)^{m_*}g(Z)$$ for all $\la\in\Omega_*$ and $Z\in(0, +\infty)$. Let
$$ F_*(t,x;\la):=(T-t)^{\lambda}F(Z;\la)=(T-t)^{\lambda}\tilde F(Z^2;\la),\quad  G(t,x;\la):=(T-t)^{\lambda-\gamma}(\la-\la_*)^{m_*}g(Z).$$ Then $F_*,G\in C^{\infty}([0,T)\times\R^d\times\Omega_*) $
(as $Z^2=|x|^2/(T-t)^2$ is smooth on $[0,T)\times\R^d $) and
$ \LL F_*(t,x;\la)=G(t,x;\la)$ on $[0,T)\times\R^d\times\Omega_* $ (the case $Z=0$ follows by continuity).

Recall that $ \tau=\ln\frac{1}{T-t}$ and then
\begin{align*}
	G(t,x;\la)=(T-t)^{\lambda_*-\gamma}\mathrm{e}^{-(\lambda-\lambda_*)\tau}(\la-\la_*)^{m_*}g(Z)=\sum_{n=0}^{\infty}(T-t)^{\lambda_*-\gamma}\frac{(-\tau)^n}{n!}(\la-\la_*)^{m_*+n}g(Z)
\end{align*} locally uniformly on $[0,T)\times\R^d\times B_{\delta_*}(\la_*)$. By Cauchy's integration formula (Theorem 4.4 in Chapter 2 of \cite{Stein}), we have (for $n\in\Z_{\geq0}$)\begin{align*}
	(T-t)^{\la_*-\gamma} g(Z)\frac{\tau^n}{n!}&=\frac{(-1)^n}{2\pi\ii}\oint_{|\la-\la_*|=\delta_*}\frac{G(t,x;\la)}{(\la-\la_*)^{m_*+n+1}}\,\mathrm d\la\\&=
\frac{\delta_*^{-m_*-n}}{2\pi(-1)^n}\int_0^{2\pi}G(t,x;\la_*+\delta_*\mathrm{e}^{\ii \theta})\mathrm{e}^{-\ii (m_*+n)\theta}\,\mathrm d\theta.
\end{align*}
Now let (for $n\in\Z_{\geq0}$)
\begin{align*}
	F_n(t,x):&=\frac{(-1)^n}{2\pi\ii}\oint_{|\la-\la_*|=\delta_*}\frac{F_*(t,x;\la)}{(\la-\la_*)^{m_*+n+1}}\,\mathrm d\la\\&=
\frac{\delta_*^{-m_*-n}}{2\pi(-1)^n}\int_0^{2\pi}F_*(t,x;\la_*+\delta_*\mathrm{e}^{\ii \theta})\mathrm{e}^{-\ii (m_*+n)\theta}\,\mathrm d\theta.
\end{align*}
Then $F_n\in C^{\infty}([0,T)\times\R^d) $ and $\LL F_n(t,x)=(T-t)^{\la_*-\gamma} g(Z)\tau^n/n!$. 
It remains to prove that $F_n\in \mathscr X_{\la_*}$. 

As $ \tau=\ln\frac{1}{T-t}$, $ F_*(t,x;\la)=(T-t)^{\lambda}\tilde F(Z^2;\la)$ then\begin{align*}
	F_*(t,x;\la)=(T-t)^{\lambda_*}\mathrm{e}^{-(\lambda-\lambda_*)\tau}\tilde F(Z^2;\la)=\sum_{j=0}^{\infty}(T-t)^{\lambda_*}\frac{(-\tau)^j}{j!}(\la-\la_*)^{j}\tilde F(Z^2;\la),
\end{align*}locally uniformly on $[0,T)\times\R^d\times B_{\delta_*}(\la_*)$, so we have\begin{align*}
	F_n(t,x)&=\sum_{j=0}^{\infty}(T-t)^{\lambda_*}\frac{(-\tau)^j}{j!}F_{n,j}(Z^2),\qquad\text{where}\\ F_{n,j}(\widetilde{Z}):&=
\frac{(-1)^n}{2\pi\ii}\oint_{|\la-\la_*|=\delta_*}\frac{(\la-\la_*)^{j}\tilde F(\widetilde{Z};\la)}{(\la-\la_*)^{m_*+n+1}}\,\mathrm d\la\\
&=\frac{\delta_*^{j-m_*-n}}{2\pi(-1)^n}\int_0^{2\pi}\tilde F(\widetilde{Z};\la_*+\delta_*\mathrm{e}^{\ii \theta})\mathrm{e}^{\ii (j-m_*-n)\theta}\,\mathrm d\theta.
\end{align*}As $\tilde F\in \mathcal H_{[0,+\infty)}(\Omega_*)\subset C^{\infty}([0,+\infty)\times\Omega_*)$, we have
$F_{n,j}\in C^\infty([0,+\infty))$, $Z\mapsto F_{n,j}(Z^2)\in C_{\operatorname{e}}^\infty([0,+\infty))$ for every $n,j\in\Z_{\geq0}$; moreover by Cauchy's theorem (Corollary 2.3 in Chapter 2 of \cite{Stein}),
we have $F_{n,j}=0 $ for $j>m_*+n$, $n,j\in\Z_{\geq0}$. Thus, 
$$F_n(t,x)=\sum_{j=0}^{m_*+n}(T-t)^{\lambda_*}\frac{(-\tau)^j}{j!}F_{n,j}(Z^2) \in \mathscr X_{\la_*}.$$ 

This completes the proof of Lemma \ref{Lem.L_surjective_R}.
\end{proof}

\subsection{Solvability of $\LL_\la$}

In this subsection, we prove Lemma \ref{Lem.L_laf=g}.

\begin{lemma}\label{Lem.Fundamental_sol_1}
For $g\in C_{\operatorname{e}}^{\infty}([0, Z_1))$,	there exists $F=F(Z;\la)\in \mathcal H_{[0, Z_1)}^{\operatorname{e}}(B_R)$ satisfying $\LL_\la F=g$ on $(0,Z_1)$ and $F(0;\la)=1$ for all $\la\in B_R$.
\end{lemma}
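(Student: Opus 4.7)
The plan is to exploit the even/odd structure of the coefficients of $\LL_\la$ to reduce the ODE on $(0,Z_1)$ to a second-order ODE in the variable $W=Z^2$ on $(0,Z_1^2)$ having a single regular singular point at $W=0$, and then invoke the ODE theory of Appendix \ref{Appen.ODE} (plus standard ODE theory away from the singular point) to produce the desired solution.

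First I would set $F(Z;\la)=\tilde F(Z^2;\la)$ so that $F'(Z)=2Z\tilde F_W(W;\la)$ and $F''(Z)=2\tilde F_W(W;\la)+4W\tilde F_{WW}(W;\la)$ with $W=Z^2$. Because $A_0,\tilde B_0,D_1,D_2\in C_{\operatorname{e}}^\infty$ and $\hat B_0\in C_{\operatorname{o}}^\infty$, each of $A_0(Z)$, $\tilde B_0(Z)$, $D_j(Z)$ and $Z\hat B_0(Z)$ is an even smooth function of $Z$, hence a smooth function of $W$; write $A_0(Z)=\mathcal A(W)$, $\tilde B_0(Z)=\mathcal B(W)$, $Z\hat B_0(Z)=W\mathcal C(W)$, $D_j(Z)=\mathcal D_j(W)$ with $\mathcal A,\mathcal B,\mathcal C,\mathcal D_j\in C^\infty([0,+\infty))$, and also $g(Z)=\tilde g(W)$. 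Substituting into $\LL_\la F=g$ and dividing through, the equation becomes
\begin{equation}\label{Eq.proposal_W}
4W\mathcal A(W)\tilde F_{WW}+\bigl[2\mathcal A(W)+2\mathcal B(W)+2\la W\mathcal C(W)\bigr]\tilde F_W+\bigl[\la\mathcal D_1(W)+\la^2\mathcal D_2(W)\bigr]\tilde F=\tilde g(W)
\end{equation}
on $[0,Z_1^2)$, with all coefficients $C^\infty$ in $W$ and polynomial (hence holomorphic) in $\la$.

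Next, I would analyze \eqref{Eq.proposal_W} near $W=0$. Since $\mathcal A(0)=A_0(0)=1$ and $\mathcal B(0)=\tilde B_0(0)=k$, the leading behavior is
\[
4W\tilde F_{WW}+(2+2k)\tilde F_W+O(W)\tilde F_W+O(1)\tilde F=\tilde g,
\]
a regular singular point with indicial equation $4\alpha(\alpha-1)+(2+2k)\alpha=0$, i.e.\ $\alpha=0$ or $\alpha=(1-k)/2$. Because $k\in\Z_{\ge 3}$, the larger root is $\alpha=0$, which is a non-negative integer and is separated from the smaller root $(1-k)/2\le -1$ by a value that is not a non-positive integer. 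Standard Frobenius/Fuchsian theory (in the form of Appendix \ref{Appen.ODE}, tailored for smooth, not merely analytic, data and parameter-holomorphic coefficients) then produces a unique solution $\tilde F(\,\cdot\,;\la)\in C^\infty([0,\delta))$ of \eqref{Eq.proposal_W} with $\tilde F(0;\la)=1$, for every $\la\in B_R$; moreover $\tilde F(W;\la)$ is holomorphic in $\la\in B_R$ for each fixed $W\in[0,\delta)$ because the recursion for the Taylor coefficients in $W$ is driven by polynomial-in-$\la$ data with non-vanishing leading factor $4n(n-1)+(2+2k)n=2n(2n+k-1)\neq 0$ for $n\ge 1$.

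Finally, on the open interval $(0,W_1):=(0,Z_1^2)$ we have $\mathcal A(W)=A_0(\sqrt W)>0$ by \eqref{Eq.A_0_neq0}, so \eqref{Eq.proposal_W} is a regular (non-singular) linear ODE; standard linear ODE theory extends the local solution $\tilde F(\,\cdot\,;\la)$ from $[0,\delta)$ to all of $[0,W_1)$, preserving $C^\infty$ regularity in $W$ and joint smoothness and $\la$-holomorphy (the latter via the analytic dependence of fundamental solutions on parameters, e.g.\ by differentiating in $\la$ under the flow and using Gr\"onwall). Setting $F(Z;\la):=\tilde F(Z^2;\la)$ yields an element of $\mathcal H_{[0,Z_1)}^{\operatorname{e}}(B_R)$ with $\LL_\la F=g$ on $(0,Z_1)$ and $F(0;\la)=1$.

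The main obstacle I anticipate is the first step: producing the smooth local solution at the regular singular point $W=0$ jointly with holomorphic $\la$-dependence. The Frobenius construction is straightforward in principle (the recurrence $2n(2n+k-1)a_n=\text{(earlier terms)}+\text{(Taylor coefficient of }\tilde g)$ gives a formal series, and the non-vanishing of $2n(2n+k-1)$ for $n\ge 1$ is exactly the non-resonance needed), but establishing $C^\infty$ smoothness up to $W=0$ together with uniform-in-$\la$ convergence on compact subsets of $B_R$ is where one genuinely invokes Appendix \ref{Appen.ODE}; all other steps are either routine transformations or applications of standard linear ODE theory.
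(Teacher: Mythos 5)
Your proposal is correct and follows essentially the same route as the paper: the substitution $W=Z^2$ using the even/odd structure of the coefficients, identification of $W=0$ as the unique nondegenerate zero of the leading coefficient on $[0,Z_1^2)$ with the non-resonance condition $4n+2(1+k)\neq 0$ (equivalently your $2n(2n+k-1)\neq 0$), and an appeal to Proposition \ref{Prop.Appen_1} to produce $\tilde F\in\mathcal H_{[0,Z_1^2)}(B_R)$ with $\tilde F(0;\la)=1$, then pulling back via $F(Z;\la)=\tilde F(Z^2;\la)$. The only cosmetic difference is that you split the construction into a local Frobenius step near $W=0$ plus a separate extension over $(0,Z_1^2)$, whereas the paper's appendix proposition already delivers the solution on the whole interval in one application.
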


\begin{proof}
	By Lemma \ref{Lem.LL_la_formula}, we have $\hat B_0\in C_{\operatorname{o}}^\infty([0,+\infty))$ and $A_0$, $\tilde B_0$, $D_1$, $D_2\in C_{\operatorname{e}}^\infty([0,+\infty))$. Thus, there exist $\widetilde{A}_0,B_1,B_2,\widetilde{D}_1,\widetilde{D}_2\in C^\infty([0,+\infty))$ such that
$A_0(Z)=\widetilde{A}_0(Z^2),$ $\tilde B_0(Z)=B_1(Z^2),$ $\hat B_0(Z)=ZB_2(Z^2),$ $D_1(Z)=\widetilde{D}_1(Z^2),$ $D_2(Z)=\widetilde{D}_2(Z^2)$. Then
$B_0(Z;\la)=Z^{-1}\tilde B_0(Z)+\la \hat B_0(Z)=Z^{-1} B_1(Z^2)+\la Z B_2(Z^2)$,
$D_0(Z;\la)=\lambda D_1(Z)+\lambda^2D_2(Z)=\lambda \widetilde{D}_1(Z^2)+\lambda^2\widetilde{D}_2(Z^2)$. Let $\widetilde{Z}:=Z^2$. Then for $ f(Z)=\widetilde{f}(Z^2)=\widetilde{f}(\widetilde{Z})$, we have
$ f'(Z)=2Z\widetilde{f}'(\widetilde{Z})$, $ f''(Z)=2\widetilde{f}'(\widetilde{Z})+4Z^2\widetilde{f}''(\widetilde{Z})$, and by \eqref{Lf},
\begin{align*}
		&(\LL_\la f)(Z)=A_0(Z)f''(Z)+B_0(Z;\la)f'(Z)+D_0(Z;\la)f(Z)\\
&=\widetilde{A}_0(Z^2)[2\widetilde{f}'(\widetilde{Z})+4Z^2\widetilde{f}''(\widetilde{Z})]+[Z^{-1} B_1(Z^2)+\la Z B_2(Z^2)]\cdot2Z\widetilde{f}'(\widetilde{Z})\\&\quad+
[\lambda \widetilde{D}_1(Z^2)+\lambda^2\widetilde{D}_2(Z^2)]\widetilde{f}(\widetilde{Z})\\
&=4\widetilde{Z}\widetilde{A}_0(\widetilde{Z})\widetilde{f}''(\widetilde{Z})+2[\widetilde{A}_0(\widetilde{Z})+B_1(\widetilde{Z})+\la \widetilde{Z} B_2(\widetilde{Z})]\widetilde{f}'(\widetilde{Z})+
[\lambda \widetilde{D}_1(\widetilde{Z})+\lambda^2\widetilde{D}_2(\widetilde{Z})]\widetilde{f}(\widetilde{Z}).
	\end{align*}
	Let
\begin{align*}
		&\widetilde{A}(\widetilde{Z})=4\widetilde{Z}\widetilde{A}_0(\widetilde{Z}),\ \widetilde{B}(\widetilde{Z};\la)=2[\widetilde{A}_0(\widetilde{Z})+B_1(\widetilde{Z})+\la \widetilde{Z} B_2(\widetilde{Z})],\ \widetilde{D}(\widetilde{Z};\la)=\lambda \widetilde{D}_1(\widetilde{Z})+\lambda^2\widetilde{D}_2(\widetilde{Z}).
	\end{align*}
	Then we get
\begin{align}\label{Eq.A_B_D_1}
		&(\LL_\la f)(Z)=\widetilde{A}(\widetilde{Z})\widetilde{f}''(\widetilde{Z})+\widetilde{B}(\widetilde{Z};\la)\widetilde{f}'(\widetilde{Z})+
\widetilde{D}(\widetilde{Z};\la)\widetilde{f}(\widetilde{Z}),\ \text{for}\ f(Z)=\widetilde{f}(\widetilde{Z}),\ \widetilde{Z}=Z^2.
	\end{align}
	Let $I_1=[0,Z_1^2)$. As $\widetilde{A}_0,B_1,B_2,\widetilde{D}_1,\widetilde{D}_2\in C^\infty([0,+\infty))$, we have $\widetilde{A}\in C^\infty(I_1)$, $\widetilde{B}, \widetilde{D}\in \mathcal H_{I_1}(\C)$,
 and $\widetilde{A}'(0)=4\widetilde{A}_0(0)=4{A}_0(0)=4\neq0$ (using \eqref{Eq.A_0_non_degenerate}). By \eqref{Eq.A_0_neq0}, we have $\widetilde{A}_0(Z^2)=A_0(Z)>0$ for $Z\in[0,Z_1)$. Thus,
 $\widetilde{A}(\widetilde{Z})=4\widetilde{Z}\widetilde{A}_0(\widetilde{Z})=0$ has a unique solution $\widetilde{Z}=0 $ in $I_1=[0,Z_1^2)$.

 Moreover, we have $\widetilde{B}(\widetilde{Z};\la)=\tilde B_1(\widetilde{Z})+\la \widetilde{B}_2(\widetilde{Z})$, where $\tilde{B}_1(\widetilde{Z}):=2[\widetilde{A}_0(\widetilde{Z})+B_1(\widetilde{Z})], \widetilde{B}_2(\widetilde{Z}):=2\widetilde{Z}B_2(\widetilde{Z})$, then $\tilde B_1(0)=2[\widetilde{A}_0(0)+B_1(0)]=2[{A}_0(0)+\tilde B_0(0)]
 =2(1+k)>0$ (using \eqref{Eq.A_0_non_degenerate} and \eqref{Eq.B_0}) and $\widetilde{B}_2(0)=0$.
As a consequence, for any $\la\in\C$ and $n\in\Z_{\geq 0}$ we have $n\widetilde{A}'(0)+\widetilde{B}(0;\la)=4n+2(1+k)\neq0$. As $g\in C_{\operatorname{e}}^{\infty}([0, Z_1))$, there exists 
$\widetilde{g}\in C^{\infty}([0,Z_1^2))$ such that $g(Z)=\widetilde{g}(Z^2)$.
 By Proposition \ref{Prop.Appen_1}, there exists $\tilde F=\tilde F(\widetilde{Z};\la)\in \mathcal H_{I_1}(B_R)$ satisfying\begin{align*}
		&\widetilde{A}(\widetilde{Z})\tilde F''(\widetilde{Z};\la)+\widetilde{B}(\widetilde{Z};\la)\tilde F'(\widetilde{Z};\la)+
\widetilde{D}(\widetilde{Z};\la)\tilde F(\widetilde{Z};\la)=\widetilde{g}(\widetilde{Z}),\quad \tilde F(0;\la)=1,
	\end{align*}where the prime $'$ denotes the derivative with respect to $\widetilde{Z}$. Now we define
	\[F(Z;\la):=\tilde F(Z^2;\la),\qquad\forall\ Z\in[0,Z_1), \forall\ \la\in B_R,\]
	then  $F\in\mathcal H_{[0, Z_1)}^{\operatorname{e}}(B_R)$, $F(0;\la)=\tilde F(0;\la)=1$ and $\LL_\la F=g$ in $(0, Z_1)$ by recalling \eqref{Eq.A_B_D_1}.
\end{proof}In view of Lemma \ref{Lem.LL_la_formula} and Proposition \ref{Prop.Appen_1}, we let
\begin{equation}\label{Eq.La_*0}
	\La_*:=\{\la\in\C: nA_0'(Z_1)+B_0(Z_1;\la)=0\text{ for some }n\in\Z_{\geq 0}\}.
\end{equation}
By $B_0(Z_1;\la)=Z_1^{-1}\tilde B_0(Z_1)+\la \hat B_0(Z_1)$ and $\hat B_0(Z_1)>0$, we know that $\La_*\subset \C$ is a non-empty (countable) discrete set.

\begin{lemma}\label{Lem.Fundamental_sol_2}
	 There exists a nonzero polynomial $ \psi_1(\lambda)$ satisfying $\{\lambda\in B_R: \psi_1(\lambda)=0\}=\La_*\cap B_R$ such that for $g\in C^{\infty}((0, +\infty))$,	there exists a function $F=F(Z;\la)\in\mathcal H_{(0,+\infty)}(B_R)$ satisfying $\LL_\la F=\psi_1(\lambda)\cdot g$ on $(0, +\infty)$ and $F(Z_1;\la)=\psi_1(\lambda)$ for all $\la\in B_R$.
\end{lemma}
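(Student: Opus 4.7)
The proof reduces to a Frobenius-type construction at the regular singular point $Z_1$ of $\mathcal{L}_\lambda$, combined with a standard ODE extension argument. First I observe that $\Lambda_*$ is discrete and its intersection with $B_R$ is finite: since $\hat B_0(Z_1)>0$ and $A_0'(Z_1)\neq 0$, the equation $nA_0'(Z_1)+B_0(Z_1;\lambda)=0$ has the unique solution $\lambda_n:=-(nA_0'(Z_1)+Z_1^{-1}\tilde B_0(Z_1))/\hat B_0(Z_1)$, and $|\lambda_n|\to\infty$ as $n\to\infty$. Enumerate $\Lambda_*\cap B_R=\{\lambda_{n_1},\dots,\lambda_{n_m}\}$ and set
\[
\psi_1(\lambda):=\prod_{i=1}^m(\lambda-\lambda_{n_i})^{n_i+1},
\]
so that $\{\lambda\in B_R:\psi_1(\lambda)=0\}=\Lambda_*\cap B_R$. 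The role of the exponent $n_i+1$ is to absorb all the obstructions accumulated along the Frobenius recursion past the resonant index $k=n_i$.

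\textbf{Local construction near $Z_1$.} On a one-sided neighborhood of $Z_1$ I seek a formal power series $F(Z;\lambda)=\sum_{k\geq 0}a_k(\lambda)(Z-Z_1)^k$ with $a_0(\lambda)=\psi_1(\lambda)$. Substituting into $\mathcal{L}_\lambda F=\psi_1(\lambda)g$ and using Lemma~\ref{Lem.LL_la_formula} to expand $A_0,B_0,D_0$ in Taylor series around $Z_1$, the coefficient of $(Z-Z_1)^l$ produces the recursion
\[
(l+1)\bigl[lA_0'(Z_1)+B_0(Z_1;\lambda)\bigr]a_{l+1}(\lambda)=\psi_1(\lambda)\,g_l-R_l\bigl(a_0,\dots,a_l;\lambda\bigr),
\]
where $R_l$ is linear in $a_0,\dots,a_l$ with polynomial $\lambda$-coefficients. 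The prefactor $\phi_l(\lambda):=(l+1)[lA_0'(Z_1)+B_0(Z_1;\lambda)]$ vanishes (to first order) only at $\lambda=\lambda_l$, and the slope $\hat B_0(Z_1)\neq 0$ makes the zero simple. An induction shows that $a_k$ inherits from its predecessors at most simple poles at each $\lambda_{n_i}$ with $n_i<k$; the multiplicative factor $\psi_1(\lambda)$ in $a_0$ therefore clears every pole lying in $B_R$, and the extra power $(\lambda-\lambda_{n_i})^{n_i+1}$ also guarantees that the right-hand side remains divisible by $\phi_{n_i}(\lambda)$ at the resonant step $l=n_i$, allowing the recursion to continue analytically in $\lambda\in B_R$. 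This is exactly the scenario for which I expect Appendix~\ref{Appen.ODE} to provide a two-sided analogue of Proposition~\ref{Prop.Appen_1} (the transformation $\widetilde Z:=Z-Z_1$ maps $Z_1$ to an endpoint and yields an ODE to which the appendix machinery applies directly). The output is a smooth solution $F$ defined in a two-sided neighborhood of $Z_1$, with $F(\cdot;\lambda)\in C^\infty$ in $Z$ and $F(Z;\cdot)\in\operatorname{Hol}(B_R)$, satisfying $\mathcal{L}_\lambda F=\psi_1(\lambda)g$ there and $F(Z_1;\lambda)=\psi_1(\lambda)$.

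\textbf{Extension to $(0,+\infty)$.} Away from $Z_1$ the coefficient $A_0$ is nowhere zero on $(0,+\infty)$ by \eqref{Eq.A_0_neq0}, so the equation rewrites as the regular linear ODE
\[
F''=-\frac{B_0(Z;\lambda)}{A_0(Z)}F'-\frac{D_0(Z;\lambda)}{A_0(Z)}F+\frac{\psi_1(\lambda)g(Z)}{A_0(Z)},
\]
with smooth coefficients on each of the intervals $(0,Z_1)$ and $(Z_1,+\infty)$, depending holomorphically on $\lambda\in B_R$. Choosing initial data $(F(Z_*;\lambda),F'(Z_*;\lambda))$ at an interior point $Z_*$ close to $Z_1$ on each side, taken from the local solution produced in the previous step, standard linear ODE theory with parameters yields global extensions on $(0,Z_1)$ and on $(Z_1,+\infty)$ that match the local solution, and the extensions remain in $\mathcal{H}_{(0,+\infty)}(B_R)$ by holomorphic dependence on parameters. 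Concatenating the three pieces produces the required $F\in\mathcal{H}_{(0,+\infty)}(B_R)$.

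\textbf{Main obstacle.} The entire difficulty is concentrated in the local step, i.e.\ in guaranteeing a holomorphic-in-$\lambda$ solution through the regular singular point $Z_1$ of the Fuchs-type operator $\mathcal{L}_\lambda$ in the presence of resonances $\lambda\in\Lambda_*$. This is precisely the situation that motivates the appendix: one must quantify, uniformly in $\lambda\in B_R$, how much the Frobenius recursion can fail and package all obstructions into the single polynomial $\psi_1$. Everything else---the discreteness of $\Lambda_*$, the extension across the regular region, and the matching at $Z_1$---is routine once the local analytic-in-$\lambda$ solution is in hand.
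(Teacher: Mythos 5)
Your proposal follows essentially the same route as the paper: the paper's entire proof of this lemma consists of checking that $A_0(Z_1)=0$, $A_0'(Z_1)\neq 0$, $\hat B_0(Z_1)>0$ and that $Z_1$ is the unique zero of $A_0$ in $(0,+\infty)$, and then invoking Proposition \ref{Prop.Appen_1} with $I=(0,+\infty)$ and $t_0=Z_1$. Two remarks. First, Proposition \ref{Prop.Appen_1} already allows $t_0$ to be an interior point of $I$, so no change of variable or ``two-sided analogue'' is needed; it applies verbatim. Second, your gloss on how the local step works --- a Frobenius power series $\sum_k a_k(\la)(Z-Z_1)^k$ --- cannot stand on its own here, because the coefficients $A_0,B_0,D_0$ and the datum $g$ are only $C^\infty$ in $Z$, not real-analytic, so the formal series has no reason to converge; the appendix instead splits off a finite Taylor polynomial (the backward induction in Claim 3 of Proposition \ref{Prop.Appen_1}) and solves for a remainder vanishing to high order at $Z_1$ via a contraction in the weighted space $Y_N$ of Lemma \ref{Lem.Appen_1}. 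Your choice $\psi_1(\la)=\prod_i(\la-\la_{n_i})^{n_i+1}$ is admissible (only the zero set in $B_R$ is prescribed), but the multiplicities $n_i+1$ are unnecessary: since $\hat B_0(Z_1)\neq 0$ each resonance is simple, and tracking your own recursion shows the obstruction at step $l=n_i$ is killed by a single factor $(\la-\la_{n_i})$; the paper's $\psi_1(\la)=\prod_{j=0}^{N-1}\bigl(jA_0'(Z_1)+B_0(Z_1;\la)\bigr)$ has only simple zeros.
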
\begin{proof}
	By Lemma \ref{Lem.LL_la_formula}, we have $A_0(Z_1)=0$, $A_0'(Z_1)\neq0$, $\hat B_0(Z_1)>0$ and $Z_1$ is the unique solution of $A_0(Z)=0$ in $(0, +\infty) $. 
Hence Lemma \ref{Lem.Fundamental_sol_2} follows from Proposition \ref{Prop.Appen_1}.
\end{proof}
Taking $g=0$ in Lemma \ref{Lem.Fundamental_sol_1} we know that there exists $\Psi_1=\Psi_1(Z;\la)\in \mathcal H_{[0, Z_1)}^{\operatorname{e}}(B_R)$ satisfying $\LL_\la\Psi_1=0$ on $(0,Z_1)$ and $\Psi_1(0;\la)=1$ for all $\la\in B_R$. Taking $g=0$ in Lemma \ref{Lem.Fundamental_sol_2} we know that there exists $\Psi_2=\Psi_2(Z;\la)\in\mathcal H_{(0,+\infty)}(B_R)$ satisfying $\LL_\la\Psi_2=0$ on $(0, +\infty)$ and 
$\Psi_2(Z_1;\la)=\psi_1(\lambda)$ for all $\la\in B_R$. We define the Wronski
\begin{equation}\label{W1}W(Z;\la):=\Psi_1(Z;\la)\Psi_2'(Z;\la)-\Psi_1'(Z;\la)\Psi_2(Z;\la),\quad \forall\  Z\in(0, Z_1),\ \forall\ \la\in B_R,\end{equation}
where the prime $'$ denotes the derivative with respect to $Z$. Then we have\begin{equation}\label{Eq.Wronksi_eq}
		A_0(Z)W'(Z;\la)+B_0(Z;\la)W(Z;\la)=0,\qquad\forall\  Z\in(0, Z_1),\quad \lambda\in B_R.
	\end{equation}
	
\begin{lemma}\label{Lem.Wronski_1}
	Fix $Z_0\in(0,Z_1)$. Let $\psi_2(\lambda):=W(Z_0;\la)$ for all $\la\in B_R$ and $\la_0^*:=\g-k-2$. Then $\psi_2\in \operatorname{Hol}(B_R)$, $0<-\la_0^*<k<R$ and $\psi_2(\la_0^*)\neq0$.
\end{lemma}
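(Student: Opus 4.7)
The first two assertions are direct. Since $\Psi_1\in\mathcal H_{[0,Z_1)}^{\operatorname e}(B_R)$ and $\Psi_2\in\mathcal H_{(0,+\infty)}(B_R)$, the functions $\Psi_i(Z_0;\cdot)$ and $\partial_Z\Psi_i(Z_0;\cdot)$ for $i=1,2$ are holomorphic on $B_R$, so $\psi_2=W(Z_0;\cdot)\in\operatorname{Hol}(B_R)$. Using $\gamma=\beta(\ell-1)+2$ we rewrite $\lambda_0^*=\beta(\ell-1)-k$, hence $-\lambda_0^*=k-\beta(\ell-1)$. The inequality $-\lambda_0^*<k$ holds because $\beta(\ell-1)>0$ (from $\beta>1$, $\ell>1$), the inequality $-\lambda_0^*>0$ follows from $k>\beta(\ell+\sqrt\ell)>\beta(\ell-1)$ (using $\sqrt\ell>-1$), and $k<R$ is the standing assumption on $R$.

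\textbf{Setup for $\psi_2(\lambda_0^*)\neq 0$.} The driving observation is that $-\lambda_0^*+\gamma-k-2=0$, so by Lemma~\ref{Lem.duality} one has $\mathcal L_{\lambda_0^*}^{*}=\mathcal L_0$, and since $D_0(Z;0)=0$ the constant function $1$ lies in $\ker\mathcal L_0$. Assume toward contradiction that $W(Z_0;\lambda_0^*)=0$; by \eqref{Eq.Wronksi_eq} and $A_0>0$ on $(0,Z_1)$, we get $W(\cdot;\lambda_0^*)\equiv 0$ on $(0,Z_1)$, so $\Psi_1(\cdot;\lambda_0^*)$ and $\Psi_2(\cdot;\lambda_0^*)$ are proportional there. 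Since $\Psi_1(0;\lambda_0^*)=1\neq 0$, this proportionality produces a nonzero common solution $\Psi\in C^\infty([0,+\infty))$ of $\mathcal L_{\lambda_0^*}\Psi=0$ with $\Psi(0)=1$, smooth at $0$ via $\Psi_1$ and smooth at $Z_1$ via $\Psi_2$.

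\textbf{Conserved current and the final contradiction.} Form the bilinear current
\[
\mathcal Q(Z):=Z^k A_0(Z)\Psi'(Z)+Z^k\Psi(Z)\bigl(B_0(Z;\lambda_0^*)-A_0'(Z)-kA_0(Z)/Z\bigr),
\]
which is the boundary term in the integration-by-parts identity encoding $\mathcal L_{\lambda_0^*}^{*}=\mathcal L_0$; direct differentiation gives $\mathcal Q'=Z^k[(\mathcal L_{\lambda_0^*}\Psi)\cdot 1-\Psi\cdot\mathcal L_0(1)]=0$. Using the expansions at $Z=0$, namely $A_0=1+O(Z^2)$, $A_0'=O(Z)$, $B_0(\cdot;\lambda_0^*)=k/Z+O(Z)$, $\Psi=1+O(Z^2)$, $\Psi'=O(Z)$ (from $\Psi\in C_{\operatorname e}^\infty$ at $0$), one finds $\mathcal Q(Z)=O(Z^{k+1})\to 0$ as $Z\to 0^+$, so $\mathcal Q\equiv 0$ on $(0,+\infty)$. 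This is a first-order ODE for $\Psi$, whose solution on each component of $(0,+\infty)\setminus\{Z_1\}$ is $\Psi(Z)=C\,A_0(Z)Z^k\exp\!\bigl(-\!\int B_0(\cdot;\lambda_0^*)/A_0\bigr)$. Near the simple zero $Z_1$ of $A_0$ this behaves as $(Z-Z_1)^{r_2}$ with $r_2:=1-B_0(Z_1;\lambda_0^*)/A_0'(Z_1)$, so smoothness of $\Psi$ at $Z_1$ forces $r_2\in\Z_{\ge 0}$. To close, one computes $r_2$ explicitly at $\lambda_0^*=\beta(\ell-1)-k$, using the formulas for $\tilde B_0(Z_1)$ and $\hat B_0(Z_1)$ from the proof of Lemma~\ref{Lem.LL_la_formula} together with the algebraic identities at $(Z_1,v(Z_1))$ coming from $\Delta_Z(Z_1,v(Z_1))=0$ and $\Delta_v(Z_1,v(Z_1))=0$ (with $v'(Z_1)$ obtained via L'H\^opital from $\Delta_Zv'=\Delta_v$), and verifies under the hypothesis $k>\beta(\ell+\sqrt\ell)$ that $r_2\notin\Z_{\ge 0}$. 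The principal technical obstacle is precisely this last algebraic computation at $Z_1$: all the subtlety of the constraint $k>\beta(\ell+\sqrt\ell)$ enters to ensure the residue of $F=A_0'/A_0+k/Z-B_0/A_0$ at $Z_1$ has the non-integer value needed to force the contradiction.
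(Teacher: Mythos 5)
Your overall strategy is essentially the paper's: both arguments hinge on the adjoint identity $\LL_{\la_0^*}=\LL_0^*$ from Lemma \ref{Lem.duality} together with the divergence form $(\LL_0^*f)(Z)=Z^{-k}\big(\pa_Z^2(Z^kA_0f)-\pa_Z(Z^{k-1}\tilde B_0f)\big)$; your conserved current $\mathcal Q$ is exactly the boundary term of that identity, and the first part (holomorphy of $\psi_2$ and $0<-\la_0^*<k<R$) is fine. However, the write-up has two genuine gaps. First, the passage from $W(\cdot;\la_0^*)\equiv 0$ to a \emph{single} solution $\Psi$ that is smooth both at $Z=0$ and at $Z=Z_1$ fails in the degenerate case where the proportionality constant is zero, i.e.\ $\Psi_2(\cdot;\la_0^*)\equiv 0$ on $(0,Z_1)$. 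That case is equivalent to $\psi_1(\la_0^*)=0$, i.e.\ $\la_0^*\in\La_*$, and it must be excluded for the lemma to hold at all (if it occurred, $W(Z_0;\la_0^*)=0$ would be true). You never rule it out. The paper does so by comparing the coefficients of $\pa_Z$ in the two expressions for $\LL_0^*$, obtaining \eqref{Eq.5.47}, whence $nA_0'(Z_1)+B_0(Z_1;\la_0^*)=(n+1)A_0'(Z_1)+\la_0^*\hat B_0(Z_1)/2<0$ for every $n\in\Z_{\geq0}$ by \eqref{Eq.A_0_non_degenerate}, \eqref{Eq.B_0} and $\la_0^*<0$.

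Second, the decisive step — that $r_2=1-B_0(Z_1;\la_0^*)/A_0'(Z_1)\notin\Z_{\geq0}$ — is asserted, not proved; you yourself flag it as "the principal technical obstacle" and defer it to an unspecified brute-force computation with $v'(Z_1)$, $\tilde B_0(Z_1)$, $\hat B_0(Z_1)$. This is the whole content of the lemma, so leaving it out is a real gap. The route you should take is already implicit in your own setup: the same coefficient comparison that makes $\mathcal Q'=0$ yields, at $Z=Z_1$ (where $A_0(Z_1)=0$), the identity \eqref{Eq.5.47}, i.e.\ $B_0(Z_1;\la_0^*)=A_0'(Z_1)+\la_0^*\hat B_0(Z_1)/2$, and therefore
\begin{equation*}
r_2=1-\frac{B_0(Z_1;\la_0^*)}{A_0'(Z_1)}=-\frac{\la_0^*\hat B_0(Z_1)}{2A_0'(Z_1)}<0,
\end{equation*}
since $\la_0^*<0$, $\hat B_0(Z_1)>0$ and $A_0'(Z_1)<0$; no further algebra at $(Z_1,v(Z_1))$ and no use of $v'(Z_1)$ is needed, and the hypothesis $k>\beta(\ell+\sqrt\ell)$ enters only through $\la_0^*<0$. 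With these two repairs your contradiction argument closes and is a legitimate (slightly repackaged) version of the paper's direct computation, which instead evaluates the conserved quantity separately for $\Psi_1$ (where it vanishes) and $\Psi_2$ (where it equals a nonzero constant $C'$) and concludes $Z^kA_0W=C'\Psi_1\neq0$ on $(0,Z_1)$ without any contradiction hypothesis.
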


\begin{proof}
As $\Psi_1(Z;\la)\in \mathcal H_{[0, Z_1)}^{\operatorname{e}}(B_R)$, $\Psi_2(Z;\la)\in\mathcal H_{(0,+\infty)}(B_R)$, by \eqref{W1} we have 
$W(Z;\la)\in \mathcal H_{(0, Z_1)}(B_R)$, then by $Z_0\in(0,Z_1)$ we have $\psi_2(\la)=W(Z_0;\la)\in \operatorname{Hol}(B_R)$. As $\beta >0$, $\ell>1$, $\gamma=\beta(\ell-1)+2$, $\la_0^*=\g-k-2$, we get by \eqref{Eq.parameter_range}  that
	\begin{equation}\label{la0*}
		\la_0^*=\g-k-2=\beta(\ell-1)+2-k-2=\beta(\ell-1)-k<\beta(\ell-1)-\beta(\ell+\sqrt\ell)<0,
	\end{equation}and $R>k>k-\beta(\ell-1)=-\la_0^*>0$, then $\la_0^*\in B_R$. It remains to prove that $\psi_2(\la_0^*)\neq0$.

We consider the dual $\LL_0^*$ of $\LL_0$, defined by \eqref{Eq.duality_def}. On one hand, we get by Lemma \ref{Lem.duality} that
	\begin{equation}\label{Eq.L_0*1}
		\LL_0^*=\LL_{\la_0^*}=A_0\pa_Z^2+B_0(\cdot;\la_0^*)\pa_Z+D_0(\cdot;\la_0^*).
	\end{equation}
	On the other hand, by (recalling that $D_0(\cdot;0)=0$) $$\LL_0f=A_0\pa_Z^2+B_0(\cdot;0)\pa_Z=A_0\pa_Z^2+Z^{-1}\tilde B_0\pa_Z$$
	and \eqref{Eq.duality_def}, we compute that
	\begin{align}\label{Eq.L_0*2}
		(\LL_0^*f)(Z)=\frac{1}{Z^k}\left(\pa_Z^2(Z^kA_0f)(Z)-\pa_Z(Z^{k-1}\tilde B_0f)(Z)\right),\quad\forall\ Z\in(0,+\infty).
	\end{align}
Comparing the coefficients of $\pa_Z$ in \eqref{Eq.L_0*1} and \eqref{Eq.L_0*2}, we obtain
	\begin{align*}
		Z^{-1}\tilde B_0(Z)+\la_0^*\hat B_0(Z)&=B_0(Z;\la_0^*)=[2\pa_Z(Z^kA_0)(Z)-Z^{k-1}\tilde B_0(Z)]/{Z^k}\\
&={2k}{Z}^{-1}A_0(Z)+2A_0'(Z)-Z^{-1}\tilde B_0(Z)
	\end{align*}
	for all $Z\in(0,+\infty)$. Letting $Z=Z_1$, we get(as $A_0(Z_1)=0$, see \eqref{Eq.A_0_non_degenerate})
	\begin{equation}\label{Eq.5.47}
		Z_1^{-1}\tilde B_0(Z_1)+{\la_0^*}\hat B_0(Z_1)/{2}=A_0'(Z_1).
	\end{equation}
	For any $n\in\Z_{\geq 0}$, by \eqref{Eq.5.47}, \eqref{Eq.A_0_non_degenerate}, \eqref{Eq.B_0} and $\la_0^*<0$ (i.e. \eqref{la0*}), we have
	\begin{equation}\label{Eq.la_0^*_not_in_La}
		\begin{aligned}
			nA_0'(Z_1)+B_0(Z_1;\la_0^*)&=nA_0'(Z_1)+Z_1^{-1}\tilde B_0(Z_1)+\la_0^*\hat B_0(Z_1)\\
			&=(n+1)A_0'(Z_1)+{\la_0^*}\hat B_0(Z_1)/{2}<0,
		\end{aligned}
	\end{equation}
It follows from \eqref{Eq.La_*0} and \eqref{Eq.la_0^*_not_in_La} that $\la_0^*\notin\La_*$. Then by $\la_0^*\in B_R$ and Lemma \ref{Lem.Fundamental_sol_2}, we have $\psi_1(\la_0^*)\neq0$. Let $f_1=\Psi_1(\cdot;\la_0^*)$ and $f_2=\Psi_2(\cdot;\la_0^*)$, then $f_1\in C_\text{e}^\infty([0,Z_1))$, $f_2\in C^\infty((0,+\infty))$ and $(\LL_{\la_0^*}f_j)(Z)=0$ for $Z\in(0, Z_1)$, $j\in\{1, 2\}$. By 
\eqref{Eq.L_0*1} and \eqref{Eq.L_0*2}, we get
	\[\pa_Z^2(Z^kA_0f_j)(Z)-\pa_Z(Z^{k-1}\tilde B_0f_j)(Z)=0,\quad\forall\ Z\in(0, Z_1), j\in\{1,2\}.\]
	By $f_1,A_0,\tilde B_0\in C_\text{e}^\infty([0,Z_1))$, $k\geq 3$ we have 
$[\pa_Z(Z^kA_0f_1)(Z)-Z^{k-1}\tilde B_0(Z)f_1(Z)]|_{Z=0}=0$, so
	\begin{align}\label{Eq.f_1_constant}
		\pa_Z(Z^kA_0f_1)(Z)-Z^{k-1}\tilde B_0(Z)f_1(Z)=0,\quad\forall\ Z\in(0, Z_1).
	\end{align}
For $f_2$, since $A_0(Z_1)=0$ (see \eqref{Eq.A_0_non_degenerate}), we have
	\begin{align}\label{f2}
		&\pa_Z(Z^kA_0f_2)(Z)-Z^{k-1}\tilde B_0(Z)f_2(Z)=[\pa_Z(Z^kA_0f_2)(Z)-Z^{k-1}\tilde B_0(Z)f_2(Z)]|_{Z=Z_1}\\
\notag&=Z_1^kA_0'(Z_1)f_2(Z_1)-Z_1^{k-1}\tilde B_0(Z_1)f_2(Z_1)=Z_1^k(A_0'(Z_1)-Z_1^{-1}\tilde B_0(Z_1))\psi_1(\la_0^*)=:C'
	\end{align}
	for all $Z\in(0, Z_1)$, where we have used $f_2(Z_1)=\Psi_2(Z_1;\la_0^*)=\psi_1(\la_0^*)$ (recalling Lemma \ref{Lem.Fundamental_sol_2}). Moreover, by \eqref{Eq.5.47}, $\la_0^*<0$ (in \eqref{la0*}) and $\hat B_0(Z_1)>0$ (in \eqref{Eq.B_0}), we have $A_0'(Z_1)-Z_1^{-1}\tilde B_0(Z_1)={\la_0^*}\hat B_0(Z_1)/{2}<0$, then by $\psi_1(\la_0^*)\neq0$ we have $C'\neq0$. We claim that 
	\begin{equation}\label{Eq.f_1neq0}
		f_1(Z)\neq 0,\quad\forall\ Z\in(0, Z_1).
	\end{equation}
	Indeed, if $f_1(Z^*)=0$ for some $Z^*\in(0, Z_1)$, by the uniqueness of solutions to \eqref{Eq.f_1_constant} in $(0, Z_1)$ with $f_1(Z^*)=0$, we have $f_1(Z)=0$ for all $Z\in(0, Z_1)$, which contradicts with $1=f_1(0)=\lim_{Z\to0+}f_1(Z)$. This proves \eqref{Eq.f_1neq0}. As $f_1=\Psi_1(\cdot;\la_0^*)$, $f_2=\Psi_2(\cdot;\la_0^*)$, by \eqref{W1}, \eqref{Eq.f_1_constant}, \eqref{f2}, \eqref{Eq.f_1neq0} and $C'\neq0$, we have 
	\begin{align*}
		&Z^kA_0(Z)W(Z;\la_0^*)=Z^kA_0(Z)[f_1(Z)f_2'(Z)-f_1'(Z)f_2(Z)]\\&=f_1(Z)\pa_Z(Z^kA_0f_2)(Z)-\pa_Z(Z^kA_0f_1)(Z)f_2(Z)\\
&=f_1(Z)[Z^{k-1}\tilde B_0(Z)f_2(Z)+C']-Z^{k-1}\tilde B_0(Z)f_1(Z)f_2(Z)=C'f_1(Z)\neq0,
	\end{align*}for all $Z\in(0, Z_1)$. Thus, $W(Z;\la_0^*)\neq0 $ for all $Z\in(0, Z_1)$, and $\psi_2(\la_0^*)=W(Z_0;\la_0^*)\neq0$.
\end{proof}

Now we fix $Z_0\in(0,Z_1)$, $\psi_2(\lambda)=W(Z_0;\la)$, $\la_0^*:=\g-k-2$. Let $ \psi_1(\lambda)$ be given by Lemma \ref{Lem.Fundamental_sol_2} and $\varphi(\lambda):=\psi_1(\lambda)\psi_2(\lambda)$ for all $\la\in B_R$. Let $g\in C_\text{e}^\infty([0,+\infty))$, we need to prove that there exists a function $f=f(Z;\la)\in\mathcal H_{[0,+\infty)}^{\operatorname{e}}(B_R)$ such that $\LL_\la f(\cdot;\la)=\varphi(\lambda)\cdot g$ on $(0,+\infty)$.

We first consider the case when $g$ is supported near $Z=0$.

\begin{lemma}\label{Lem.g_near_0}
	Assume that $g\in C_{\operatorname{e}}^\infty([0,+\infty))$ satisfies $\operatorname{supp}g\subset [0, Z_1)$, then there exists a function $f=f(Z;\la)\in\mathcal H_{[0,+\infty)}^{\operatorname{e}}(B_R)$
such that $\LL_\la f(\cdot;\la)=\varphi(\lambda)\cdot g$ on $(0,+\infty)$.
\end{lemma}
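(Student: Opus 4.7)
I propose to construct $f$ by gluing two local formulas: on $[0, Z_1)$, a modification of the solution produced by Lemma \ref{Lem.Fundamental_sol_1}; on $[Z_*, +\infty)$, a scalar multiple of $\Psi_2$ (which is already defined on all of $(0, +\infty)$). Here $Z_*\in (0, Z_1)$ is chosen so that $\operatorname{supp} g\subset [0, Z_*]$, which exists because $\operatorname{supp} g$ is closed in $[0, +\infty)$ and contained in the half-open interval $[0, Z_1)$. Lemma \ref{Lem.Fundamental_sol_1} supplies $F_0\in \mathcal H^{\operatorname{e}}_{[0, Z_1)}(B_R)$ with $\LL_\la F_0(\cdot;\la) = g$ on $(0, Z_1)$. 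Since $g\equiv 0$ on $(Z_*, Z_1)$, at each $\la\in B_R$ with $W(\cdot;\la)\ne 0$, Cramer's rule at a fixed reference point $Z_0'\in (Z_*, Z_1)$ gives a unique decomposition $F_0(Z;\la) = a(\la)\Psi_1(Z;\la) + b(\la)\Psi_2(Z;\la)$ on $(Z_*, Z_1)$.

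The central step is to show that $\alpha(\la):=\psi_2(\la) a(\la)$ and $\beta(\la):=\psi_2(\la) b(\la)$ belong to $\operatorname{Hol}(B_R)$. From Cramer's rule, $a(\la)$ and $b(\la)$ are holomorphic-in-$\la$ quantities divided by $W(Z_0';\la)$, so I will solve the Abel-type identity \eqref{Eq.Wronksi_eq} to obtain $W(Z;\la) = \psi_2(\la)\, E(Z;\la)$, where
\[E(Z;\la):=\exp\left(-\int_{Z_0}^Z \frac{B_0(s;\la)}{A_0(s)}\, \mathrm ds\right), \qquad Z\in (0, Z_1),\]
is holomorphic in $\la$ and nonvanishing on $(0, Z_1)\times B_R$. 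Consequently $\psi_2(\la)/W(Z_0';\la) = 1/E(Z_0';\la)$ is holomorphic in $\la$, so $\alpha,\beta\in \operatorname{Hol}(B_R)$. Both sides of the identity $\psi_2(\la) F_0(Z;\la) = \alpha(\la)\Psi_1(Z;\la) + \beta(\la)\Psi_2(Z;\la)$ are then holomorphic in $\la$ for each fixed $Z\in(Z_*, Z_1)$ and agree where $\psi_2\ne 0$; by the identity theorem, they agree throughout $(Z_*, Z_1)\times B_R$.

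With $\alpha, \beta$ in hand, I define
\[f(Z;\la):=\begin{cases}\psi_1(\la)\bigl[\psi_2(\la)F_0(Z;\la)-\alpha(\la)\Psi_1(Z;\la)\bigr], & Z\in[0, Z_1),\\ \psi_1(\la)\beta(\la)\Psi_2(Z;\la), & Z\in[Z_*, +\infty),\end{cases}\]
the two formulas coinciding on the overlap $[Z_*, Z_1)$ by the identity above, so $f$ is well-defined on $[0, +\infty)$. Applying $\LL_\la$ piecewise and using $\LL_\la\Psi_1=\LL_\la\Psi_2=0$ together with $\LL_\la F_0=g$ (which vanishes on $[Z_*, +\infty)$) yields $\LL_\la f(\cdot;\la) = \psi_1(\la)\psi_2(\la) g = \varphi(\la) g$ on $(0, +\infty)$. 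For $\mathcal H^{\operatorname{e}}_{[0, +\infty)}(B_R)$-regularity: on $[0, Z_1)$, $f$ is a $\operatorname{Hol}(B_R)$-linear combination of $F_0, \Psi_1\in \mathcal H^{\operatorname{e}}_{[0, Z_1)}(B_R)$, so $f(Z;\la) = \tilde f_1(Z^2;\la)$ for some $\tilde f_1 \in \mathcal H_{[0, Z_1^2)}(B_R)$; on $[Z_*, +\infty)$, since $\sqrt{\widetilde Z}$ is smooth for $\widetilde Z>0$, the formula $\tilde f_2(\widetilde Z;\la):=\psi_1(\la)\beta(\la)\Psi_2(\sqrt{\widetilde Z};\la)$ defines an element of $\mathcal H_{[Z_*^2, +\infty)}(B_R)$ with $f(Z;\la) = \tilde f_2(Z^2;\la)$. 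These two factorizations agree on the overlap $[Z_*^2, Z_1^2)$ and glue to a single $\tilde f\in\mathcal H_{[0, +\infty)}(B_R)$, certifying $f\in \mathcal H^{\operatorname{e}}_{[0, +\infty)}(B_R)$.

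The main technical obstacle is the second step: the coefficients $a(\la), b(\la)$ are only meromorphic in $\la$, with possible poles at zeros of the Wronski $W(Z_0';\la)$. The Abel identity is crucial because it forces $W(Z;\la)$ to factor as $\psi_2(\la)$ times a function nonvanishing jointly in $(Z, \la)$. Only then does multiplication by $\psi_2(\la)$ clear \emph{every} pole of $a$ and $b$ simultaneously; were the zero set of $W(\cdot;\la)$ in $\la$ to depend on the point $Z$, no single polynomial multiplier $\psi_2(\la)$ could achieve global holomorphy.
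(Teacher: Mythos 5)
Your proposal is correct and follows essentially the same route as the paper: both obtain the local solution from Lemma \ref{Lem.Fundamental_sol_1}, decompose it as a combination of $\Psi_1,\Psi_2$ on the region where $g$ vanishes via Cramer's rule at a reference point, invoke the Abel identity \eqref{Eq.Wronksi_eq} to factor the Wronskian as $\psi_2(\la)$ times a nonvanishing holomorphic function, and glue the two pieces. The only difference is organizational — the paper first multiplies through by $W(\delta;\la)$ (so the Cramer numerators $C_1,C_2$ are manifestly holomorphic) and converts $W(\delta;\la)$ into $\psi_2(\la)$ by the exponential factor at the end, whereas you multiply by $\psi_2(\la)$ up front and use the factorization to clear the poles of $a,b$ — which is the same computation in a different order.
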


\begin{proof}
	By Lemma \ref{Lem.Fundamental_sol_1}, there exists $f_0\in \mathcal H_{[0, Z_1)}^{\text e}(B_R)$ such that $\LL_\la f_0=g$ on $(0, Z_1)$ with $f_0(0;\la)=1$ for all $\la\in B_R$. We assume that $\operatorname{supp}g\subset [0,\delta)$ for some $\delta\in (0, Z_1)$, then $(\LL_\la f_0)(Z)=0$ for $Z\in[\delta, Z_1)$. For $\la\in B_R$, let
	\begin{align}
		C_1(\la):&={f_0(\delta;\la)\Psi_2'(\delta;\la)-f_0'(\delta;\la)\Psi_2(\delta;\la)}\in\C,\label{Eq.C_1}\\
		C_2(\la):&={f_0'(\delta;\la)\Psi_1(\delta;\la)-f_0(\delta;\la)\Psi_1'(\delta;\la)}\in\C.\label{Eq.C_2}
	\end{align}
	Then $C_1, C_2$ are holomorphic functions on $B_R$ and for all $\la\in B_R$ there holds
	\begin{align*}&W(\delta;\la)f_0(\delta;\la)=C_1(\la)\Psi_1(\delta;\la)+C_2(\la)\Psi_2(\delta;\la),\\& W(\delta;\la)f_0'(\delta;\la)=C_1(\la)\Psi_1'(\delta;\la)+C_2(\la)\Psi_2'(\delta;\la).\end{align*}
	By the uniqueness of the solution on $[\delta, Z_1)$, we have 
	$$W(\delta;\la)f_0(Z;\la)=C_1(\la)\Psi_1(Z;\la)+C_2(\la)\Psi_2(Z;\la),\qquad\forall\ Z\in[\delta, Z_1),\ \forall\ \la\in B_R.$$
	For $\la\in B_R$, let
	\[f_*(Z;\la):=\begin{cases}
		W(\delta;\la)f_0(Z;\la)-C_1(\la)\Psi_1(Z;\la) & \text{if }Z\in [0, Z_1),\\
		C_2(\la)\Psi_2(Z;\la) & \text{if }Z\in[\delta, +\infty).
	\end{cases}\]
	Then $f_*\in \mathcal H_{[0,+\infty)}^{\text e}(B_R)$ and $\LL_\la f_*=W(\delta;\la)\cdot g$ on $(0,+\infty)$. By \eqref{Eq.Wronksi_eq}, we have 
$W(Z;\la)=W(Z_0;\la)\mathrm{e}^{- A_*(Z;\la)}=\psi_2(\lambda)\mathrm{e}^{- A_*(Z;\la)}$ with $A_*(Z;\la):=\int_{Z_0}^Z\frac{B_0(Z;\la)}{A_0(Z)}\mathrm{d}Z\in\mathcal H_{(0,Z_1)}(B_R)  $ 
(using Lemma \ref{Lem.LL_la_formula}). Recall that $\varphi=\psi_1\psi_2$, $ \psi_1$ is a polynomial, then $\varphi(\lambda)=\psi_1(\lambda)W(\delta;\la)\mathrm{e}^{A_*(\delta;\la)} $, and the result follows by taking $ f(Z;\la):=\psi_1(\lambda)\mathrm{e}^{A_*(\delta;\la)}f_*(Z;\la)$ for $Z\in[0, +\infty),\la\in B_R$.
\end{proof}

Now we consider the case when $g$ is supported away from $Z=0$.

\begin{lemma}\label{Lem.g_away_0}
	Assume that $g\in C_{\operatorname{e}}^\infty([0,+\infty))$ satisfies $\operatorname{supp}g\subset (0, +\infty)$, then there exists a function $f=f(Z;\la)\in\mathcal H_{[0,+\infty)}^{\operatorname{e}}(B_R)$
such that $\LL_\la f(\cdot;\la)=\varphi(\lambda)\cdot g$ on $(0,+\infty)$.
\end{lemma}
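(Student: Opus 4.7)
My plan is to mirror the strategy of Lemma \ref{Lem.g_near_0} but with the roles of $\Psi_1$ and $\Psi_2$ (and of the near-zero versus far-from-zero regions) interchanged. The key observation is that $\operatorname{supp} g\subset(0,+\infty)$ means $g\equiv 0$ on some interval $[0,2\delta]$ with $\delta\in(0,Z_1/2)$, so Lemma \ref{Lem.Fundamental_sol_2} already furnishes a particular solution on the whole of $(0,+\infty)$; it only remains to modify this solution by a homogeneous solution so that the result extends through $Z=0$ as an element of $\mathcal H_{[0,+\infty)}^{\operatorname{e}}(B_R)$.

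Concretely, I would first apply Lemma \ref{Lem.Fundamental_sol_2} to obtain $F\in\mathcal H_{(0,+\infty)}(B_R)$ with $\LL_\la F=\psi_1(\la)\cdot g$ on $(0,+\infty)$, fix an auxiliary point $Z'\in(0,\delta)$, and set
\[A_*(Z;\la):=\int_{Z_0}^{Z}\frac{B_0(\zeta;\la)}{A_0(\zeta)}\,\mathrm d\zeta,\]
which is affine in $\la$ on $(0,Z_1)\times B_R$ and, by \eqref{Eq.Wronksi_eq}, satisfies $W(Z;\la)=\psi_2(\la)\mathrm e^{-A_*(Z;\la)}$. Then I introduce the two manifestly holomorphic functions on $B_R$,
\[\alpha(\la):=\mathrm e^{A_*(Z';\la)}\bigl[F(Z';\la)\Psi_2'(Z';\la)-F'(Z';\la)\Psi_2(Z';\la)\bigr],\]
\[\beta(\la):=\mathrm e^{A_*(Z';\la)}\bigl[F'(Z';\la)\Psi_1(Z';\la)-F(Z';\la)\Psi_1'(Z';\la)\bigr],\]
and define $f$ piecewise by
\[f(Z;\la):=\begin{cases}\alpha(\la)\,\Psi_1(Z;\la), & Z\in[0,2\delta),\\ \psi_2(\la)F(Z;\la)-\beta(\la)\,\Psi_2(Z;\la), & Z\in(0,+\infty).\end{cases}\]

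The crux is to verify that these two formulas agree on the overlap $(0,2\delta)$. Since $g\equiv 0$ there, both expressions satisfy the homogeneous equation $\LL_\la(\cdot)=0$. A short calculation using the Wronskian identity shows that at $Z=Z'$ the combination $\alpha(\la)\Psi_1+\beta(\la)\Psi_2$ evaluates to $\psi_2(\la)F(Z';\la)$ and its $Z$-derivative to $\psi_2(\la)F'(Z';\la)$; hence both formulas and their $Z$-derivatives match at $Z'$, and ODE uniqueness for the second-order equation (valid since $A_0>0$ on $(0,Z_1)$) propagates this equality throughout $(0,2\delta)$. Granted compatibility, the first formula places $f\in\mathcal H_{[0,2\delta)}^{\operatorname{e}}(B_R)$ via $\Psi_1\in\mathcal H_{[0,Z_1)}^{\operatorname{e}}(B_R)$, the second gives $f\in\mathcal H_{(\delta,+\infty)}(B_R)$, and the intersection property recalled in the text yields $f\in\mathcal H_{[0,+\infty)}^{\operatorname{e}}(B_R)$. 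Finally $\LL_\la f=\psi_2(\la)\psi_1(\la)g=\varphi(\la)g$ on $(0,+\infty)$ is immediate from the second formula and $\LL_\la\Psi_2=0$.

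The main obstacle will be the possible failure of the construction at zeros of $\psi_2$ in $B_R$: at such $\la$ the fundamental solutions $\Psi_1(\cdot;\la),\Psi_2(\cdot;\la)$ become linearly dependent on $(0,Z_1)$, so one cannot expand $F$ holomorphically in the basis $\{\Psi_1,\Psi_2\}$ in the naive way. The design choice that sidesteps this is to express $\alpha(\la)$ and $\beta(\la)$ in the forms above (no division by $\psi_2$), and to prove compatibility via ODE uniqueness on the overlap rather than any expansion argument, so the formula remains valid uniformly across $\la\in B_R$.
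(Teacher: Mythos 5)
Your construction is correct and is essentially the paper's own argument: the paper likewise takes the particular solution $f_0$ from Lemma \ref{Lem.Fundamental_sol_2}, forms the Wronskian-type coefficients \eqref{Eq.C_1}--\eqref{Eq.C_2} at a point in $(0,\delta]$ where the equation is homogeneous, glues $C_1(\la)\Psi_1$ near $Z=0$ to $W(\delta;\la)f_0-C_2(\la)\Psi_2$ on $(0,+\infty)$ via ODE uniqueness on the overlap, and then multiplies by $\mathrm e^{A_*(\delta;\la)}$ to convert $W(\delta;\la)\psi_1(\la)$ into $\varphi(\la)=\psi_1(\la)\psi_2(\la)$ — your $\alpha,\beta$ are exactly $\mathrm e^{A_*(Z';\la)}C_1,\ \mathrm e^{A_*(Z';\la)}C_2$ with the exponential factor absorbed from the start. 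Your closing observation about avoiding division by $\psi_2$ matches the paper's design as well, so there is nothing to correct.
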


\begin{proof}
	By Lemma \ref{Lem.Fundamental_sol_2}, there exists $f_0=f_0(Z;\la)\in\mathcal H_{(0,+\infty)}(B_R)$ such that $\LL_\la f_0=\psi_1(\lambda)\cdot g$ on $(0, +\infty)$. We assume that $\operatorname{supp} g\subset (\delta, +\infty)$ for some $\delta\in(0, Z_1)$, then $(\LL_\la f_0)(Z)=0$ for $Z\in (0,\delta]$. For $\la\in B_R$, let
	$C_1(\la), C_2(\la)$ be defined by \eqref{Eq.C_1} and \eqref{Eq.C_2} respectively. For the same reason as in the proof of Lemma \ref{Lem.g_near_0}, we have $$W(\delta;\la)f_0(Z;\la)=C_1(\la)\Psi_1(Z;\la)+C_2(\la)\Psi_2(Z;\la),\qquad\forall\ Z\in(0, \delta],\ \forall\ \la\in B_R.$$
	For $\la\in B_R$, let
	\[f_*(Z;\la):=\begin{cases}
		W(\delta;\la)f_0(Z;\la)-C_2(\la)\Psi_2(Z;\la) & \text{if }Z\in (0, +\infty),\\
		C_1(\la)\Psi_1(Z;\la) & \text{if }Z\in[0,\delta].
	\end{cases}\]
	Then $f_*\in\mathcal H_{[0,+\infty)}^{\operatorname{e}}(B_R)$ and $\LL_\la f_*=W(\delta;\la)\psi_1(\lambda)\cdot g$ on $(0,+\infty)$. 
As in the proof of Lemma \ref{Lem.g_near_0}, we have $\varphi(\lambda)=\psi_1(\lambda)W(\delta;\la)\mathrm{e}^{A_*(\delta;\la)} $ and $A_*(Z;\la)\in\mathcal H_{(0,Z_1)}(B_R)  $, 
then the result follows by taking $ f(Z;\la):=\mathrm{e}^{A_*(\delta;\la)}f_*(Z;\la)$ for $Z\in[0, +\infty),\la\in B_R$.
\end{proof}

Now we are in a position to prove Lemma \ref{Lem.L_laf=g}.

\begin{proof}[Proof of Lemma \ref{Lem.L_laf=g}]
We fix $Z_0\in(0,Z_1)$, $\psi_2(\lambda)=W(Z_0;\la)$, $\la_0^*:=\g-k-2$. Let $ \psi_1(\lambda)$ be given by Lemma \ref{Lem.Fundamental_sol_2} and $\varphi(\lambda):=\psi_1(\lambda)\psi_2(\lambda)$ for all $\la\in B_R$.
By Lemma \ref{Lem.Wronski_1}, we have $\la_0^*\in B_R$ and $\psi_2\in \operatorname{Hol}(B_R)\setminus\{0\}$. By Lemma \ref{Lem.Fundamental_sol_2} we have $\psi_1\in \operatorname{Hol}(B_R)\setminus\{0\}$. Thus, $\varphi=\psi_1\psi_2\in \operatorname{Hol}(B_R)\setminus\{0\}$.

	Let $\zeta\in C^\infty(\R;[0,1])$ satisfy $\operatorname{supp}\zeta\subset (Z_1/2, +\infty)$ and $\zeta(Z)=1$ for $Z\in[3Z_1/4, +\infty)$. Let $g_1(Z)=g(Z)(1-\zeta(Z)), g_2(Z)=g(Z){\zeta}(Z)$ for all $Z\in[0,+\infty)$. Then
	\[\operatorname{supp}g_1\subset[0, 3Z_1/4],\quad \operatorname{supp}g_2\subset[Z_1/2,+\infty),\quad g_1, g_2\in C_\text{e}^\infty([0,+\infty)),\quad g=g_1+g_2.\]
	By Lemma \ref{Lem.g_near_0}, there exists $f_1=f_1(Z;\la)\in\mathcal H_{[0,+\infty)}^{\operatorname{e}}(B_R)$ such that $\LL_\la f_1=\varphi(\lambda)g_1$ on $(0,+\infty)$. By Lemma \ref{Lem.g_away_0}, there exists $f_2=f_2(Z;\la)\in\mathcal H_{[0,+\infty)}^{\operatorname{e}}(B_R)$ such that $\LL_\la f_2=\varphi(\lambda)g_2$ on $(0,+\infty)$. Let $f=f_1+f_2$, then $f\in\mathcal H_{[0,+\infty)}^{\operatorname{e}}(B_R)$ satisfies $\LL_\la f=\varphi(\lambda)g$ on $(0,+\infty)$.
\end{proof}

\appendix
\section{The derivation and properties of ODE \eqref{Eq.ODE_v(Z)}}\label{Appen.v(Z)}

\subsection{The derivation of ODE \eqref{Eq.ODE_v(Z)}}\label{Subsec.v(Z)_derivation}

\begin{lemma}\label{Lem.leading_order_eq}
	Let $\beta>1$ and $v=v(Z)\in C^\infty([0, +\infty);(-1, 1))$ be given by Assumption \ref{Assumption}. We define $\hat\phi_0(Z), \hat\rho_0(Z)$ according to \eqref{Eq.phi_0_rho_0} and we define $\phi_0(t,x),\rho_0(t,x)$ by \eqref{Eq.phi_0_selfsimilar}. Then $(\phi_0,\rho_0)$ solves the leading order equation \eqref{Eq.wave_eq_n=0}.
\end{lemma}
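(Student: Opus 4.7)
The plan is to verify each of the two equations in \eqref{Eq.wave_eq_n=0} by direct substitution of the self-similar ansatz \eqref{Eq.phi_0_selfsimilar} and the explicit formulas \eqref{Eq.phi_0_rho_0}. The starting point is the identity $\hat\phi_0'=(\beta-1)\hat\phi_0 v/(1-Zv)$ from \eqref{Eq.hat_phi_0'}, which yields
\begin{align*}
	\pa_t\phi_0 &= (T-t)^{-\beta}\frac{(\beta-1)\hat\phi_0}{1-Zv},&
	\pa_r\phi_0 &= (T-t)^{-\beta}\frac{(\beta-1)\hat\phi_0\,v}{1-Zv},
\end{align*}
and hence $\pa^\alpha\phi_0\pa_\alpha\phi_0=-(T-t)^{-2\beta}(\beta-1)^2\hat\phi_0^2(1-v^2)/(1-Zv)^2$. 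Since the formula for $\hat\rho_0$ in \eqref{Eq.phi_0_rho_0} is precisely the algebraic condition $\hat\rho_0^{p-1}=(\beta-1)^2\hat\phi_0^2(1-v^2)/(1-Zv)^2$, the first equation of \eqref{Eq.wave_eq_n=0} becomes an identity that holds by construction (and requires no information about the ODE satisfied by $v$).

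For the second equation, I would multiply through by $\rho_0$ to recast it in divergence form $\pa^\alpha(\rho_0^2\pa_\alpha\phi_0)=0$; by spherical symmetry this reads
\[-\pa_t(\rho_0^2\pa_t\phi_0)+\frac{1}{r^{k}}\pa_r\bigl(r^k\rho_0^2\pa_r\phi_0\bigr)=0,\qquad k=d-1.\]
Setting $G(Z):=\hat\rho_0(Z)^2(\beta-1)\hat\phi_0(Z)/(1-Zv(Z))$ and using $\beta\ell=\beta+4\beta/(p-1)$, one finds $\rho_0^2\pa_t\phi_0=(T-t)^{-\beta\ell}G(Z)$ and $\rho_0^2\pa_r\phi_0=(T-t)^{-\beta\ell}v(Z)G(Z)$. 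Applying $\pa_tZ=Z/(T-t)$ and $\pa_rZ=1/(T-t)$, the divergence equation collapses to the scalar first-order ODE
\[(Z-v)\frac{G'}{G}=v'-\beta\ell+\frac{kv}{Z}.\]

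It remains to show this is equivalent to \eqref{Eq.ODE_v(Z)}. Taking the logarithmic derivative of $G$ from the explicit formula for $\hat\rho_0$, together with \eqref{Eq.hat_phi_0'} and $\ell-1=4/(p-1)$, yields
\[\frac{G'}{G}=\frac{\ell(\beta v+Zv')}{1-Zv}-\frac{(\ell-1)v v'}{1-v^2}.\]
Substituting and clearing denominators by multiplying through by $Z(1-Zv)(1-v^2)$, then separating terms in $v'$, the coefficient of $v'$ collapses to $-\Delta_Z(Z,v)$, while the remaining side simplifies to $-\Delta_v(Z,v)$ after using the useful identity $(1-Zv)+(Z-v)v=1-v^2$. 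This is exactly $\Delta_Z v'=\Delta_v$, which holds by Assumption \ref{Assumption}. The only real obstacle is the bookkeeping in this final polynomial simplification, but since $\Delta_Z$ and $\Delta_v$ are of low degree in $Z$ and $v$, the verification is purely mechanical.
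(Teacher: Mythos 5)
Your proposal is correct and follows essentially the same route as the paper: the first equation is verified as an algebraic identity built into the definition of $\hat\rho_0$, and the second is recast in divergence form and reduced to a first-order ODE for the logarithmic derivative of $\hat\rho_0^2\hat\phi_0/(1-Zv)$ (the paper's $\tilde\phi_0$, which equals your $G$ up to the constant $(\beta-1)^\ell$), which is then shown to be equivalent to \eqref{Eq.ODE_v(Z)}. The final algebraic identification with $\Delta_Z v'=\Delta_v$ checks out as you describe.
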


\begin{proof}
	Recall that $Z=r/(T-t)$ with $r=|x|$, we know that $\phi_0=\phi_0(t,r)$ and $\rho_0=\rho_0(t,r)$ are radially symmetric. Hence, \eqref{Eq.wave_eq_n=0} is equivalent to
	\begin{align}\label{Eq.wave_eq_n=0_radial}
		\rho_0^{p-1}-|\partial_t\phi_0|^2+|\partial_r\phi_0|^2=0,\qquad -\partial_t(\rho_0^2\partial_t\phi_0)+\partial_r(\rho_0^2\partial_r\phi_0)+\frac{k}{r}\rho_0^2\partial_r\phi_0=0,
	\end{align}
	where $k=d-1\in\Z_{\geq 1}$. It follows from \eqref{Eq.phi_0_selfsimilar} that
	\begin{align}\label{phi0}\partial_t\phi_0(t,x)=(T-t)^{-\beta}\big((\beta-1)\hat\phi_0(Z)+Z\hat\phi_0'(Z)\big),\quad \partial_r\phi_0(t,x)=(T-t)^{-\beta}\hat\phi_0'(Z),\end{align}
	where the prime $'$ stands for the derivative with respect to $Z$. By \eqref{Eq.phi_0_rho_0}, we have
	\begin{equation}\label{Eq.v(Z)}
		\hat\phi_0'(Z)=\frac{(\beta-1)\hat\phi_0(Z)v(Z)}{1-Zv(Z)}\Longleftrightarrow\big((\beta-1)\hat\phi_0(Z)+Z\hat\phi_0'(Z)\big)v(Z)=\hat\phi_0'(Z),
	\end{equation}
	thus
	\[\partial_t\phi_0(t,x)=\frac{(T-t)^{-\beta}(\beta-1)\hat\phi_0(Z)}{1-Zv(Z)},\qquad \partial_r\phi_0(t,x)=\frac{(T-t)^{-\beta}(\beta-1)\hat\phi_0(Z)v(Z)}{1-Zv(Z)},\]
	and then we have
	\[|\partial_t\phi_0|^2-|\partial_r\phi_0|^2=\frac{(T-t)^{-2\beta}(\beta-1)^2\hat\phi_0(Z)^2(1-v(Z)^2)}{(1-Zv(Z))^2}.\]
	Using \eqref{Eq.phi_0_selfsimilar} and \eqref{Eq.phi_0_rho_0} for $\rho_0$ and $\hat\rho_0$, we obtain the first equation in \eqref{Eq.wave_eq_n=0_radial}.
	
	Now we define
	\begin{equation}\label{Eq.ell=4/(p-1)+1}
		\ell=\frac4{p-1}+1>1,\quad \tilde\phi_0(Z):=\frac{\hat\phi_0(Z)^\ell(1-v(Z)^2)^{\frac{2}{p-1}}}{(1-Zv(Z))^\ell}=\frac{\hat\phi_0(Z)^\ell(1-v(Z)^2)^{\frac{\ell-1}{2}}}{(1-Zv(Z))^\ell}>0.
	\end{equation}
	Then we compute that
	\begin{align*}
		\rho_0^2\partial_t\phi_0(t,x)&=(T-t)^{-\beta\ell}(\beta-1)^\ell\tilde\phi_0(Z),\\
		\rho_0^2\partial_r\phi_0(t,x)&=(T-t)^{-\beta\ell}(\beta-1)^\ell\tilde\phi_0(Z)v(Z),\\
		\partial_t(\rho_0^2\partial_t\phi_0)(t,x)&=
		(T-t)^{-\beta\ell-1}(\beta-1)^\ell\big(\beta\ell\tilde\phi_0(Z)+Z\tilde\phi_0'(Z)\big),\\
		\partial_r(\rho_0^2\partial_r\phi_0)(t,x)&=(T-t)^{-\beta\ell-1}(\beta-1)^\ell(\tilde\phi_0 v)'(Z),\\
		\frac{k}{r}\rho_0^2\partial_r\phi_0(t,x)&=(T-t)^{-\beta\ell-1}(\beta-1)^\ell\frac{k}{Z}(\tilde\phi_0 v)(Z).
	\end{align*}
	Therefore, the second equation in \eqref{Eq.wave_eq_n=0_radial} is equivalent to
	\begin{align}\label{Eq.A.5}
	\beta\ell\tilde\phi_0+Z\tilde\phi_0'=(\tilde\phi_0 v)'+\frac{k}{Z}(\tilde\phi_0 v)\Longleftrightarrow (\beta\ell-v'-kv/Z)\tilde\phi_0=(v-Z)\tilde\phi_0'.
	\end{align}
	Recall from \eqref{Eq.v(Z)} and \eqref{Eq.ell=4/(p-1)+1} that
	\begin{align*}
	\frac{\tilde\phi_0'}{\tilde\phi_0} &=\ell\frac{\hat\phi_0'}{\hat\phi_0}-(\ell-1)\frac{vv'}{1-v^2}+\ell\frac{v+Zv'}{1-Zv}\\
	&=\ell\frac{(\beta-1)v}{1-Zv}+\ell\frac{v}{1-Zv}-(\ell-1)\frac{vv'}{1-v^2}+\ell\frac{Zv'}{1-Zv}\\
	&=\frac{\beta\ell v}{1-Zv}+\frac{vv'}{1-v^2}+\frac{\ell(Z-v)v'}{(1-v^2)(1-Zv)},
	\end{align*}
	hence \eqref{Eq.A.5} is equivalent to
	\begin{align*}
		\beta\ell-v'-\frac{kv}{Z}=(v-Z)\frac{\tilde\phi_0'}{\tilde\phi_0}
		=\frac{\beta\ell v(v-Z)}{1-Zv}+\frac{v(v-Z)v'}{1-v^2}-\frac{\ell(v-Z)^2v'}{(1-v^2)(1-Zv)},
	\end{align*}
	or equivalently,
	\begin{align*}
		&\beta\ell-\frac{\beta\ell v(v-Z)}{1-Zv}-\frac{kv}{Z}
		=v'+\frac{v(v-Z)v'}{1-v^2}-\frac{\ell(v-Z)^2v'}{(1-v^2)(1-Zv)},\\
		&\frac{\beta\ell(1-v^2)}{1-Zv}-\frac{kv}{Z}
		=\frac{(1-Zv)v'}{1-v^2}-\frac{\ell(v-Z)^2v'}{(1-v^2)(1-Zv)},
	\end{align*}
	which is a direct consequence of \eqref{Eq.ODE_v(Z)}. 
\end{proof}

\subsection{Properties of solutions to ODE \eqref{Eq.ODE_v(Z)}}\label{Subsec.v(Z)_properties}

In this part, we prove Remark \ref{Rmk.v(Z)_properties}.

\begin{lemma}\label{lem8}
	Under Assumption \ref{Assumption}, we have $v(Z_1)=v_1$, where
	\begin{equation}\label{Eq.Z_1_v_1}
		Z_1=\frac{k}{\sqrt{\ell}(k-\beta(\ell-1))},\qquad v_1=\frac{\beta\sqrt{\ell}}{k-\beta\ell}.
	\end{equation}
\end{lemma}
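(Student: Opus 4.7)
The plan is to exploit the fact that $Z_1$ is a singular point of the ODE \eqref{Eq.ODE_v(Z)} through which the smooth solution $v$ passes. By Remark \ref{Rmk.v(Z)_properties}, the function $\Delta_0(Z):=\Delta_Z(Z,v(Z))$ changes sign at $Z=Z_1$, so continuity forces $\Delta_Z(Z_1,v(Z_1))=0$. Since $v\in C^\infty([0,+\infty))$ by Assumption \ref{Assumption}, the derivative $v'(Z_1)$ is finite, so evaluating the ODE $\Delta_Z(Z,v)v'=\Delta_v(Z,v)$ at $Z=Z_1$ forces $\Delta_v(Z_1,v(Z_1))=0$ as well. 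Writing $v_*:=v(Z_1)$, I would thus reduce the lemma to solving the algebraic system
\[\Delta_Z(Z_1,v_*)=0,\qquad \Delta_v(Z_1,v_*)=0\]
under the sign constraints $Z_1>0$, $Z_1 v_*<1$, $v_*<Z_1$, $v_*\in(-1,1)$ supplied by Assumption \ref{Assumption} and Remark \ref{Rmk.v(Z)_properties}.

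Next I would exploit these sign constraints to pick the correct root of $\Delta_Z=0$. Since $Z_1>0$, the first equation reduces to $(1-Z_1v_*)^2=\ell(Z_1-v_*)^2$, and the inequalities $Z_1v_*<1$ and $v_*<Z_1$ select the branch
\[1-Z_1 v_*=\sqrt\ell\,(Z_1-v_*),\]
equivalently $Z_1(\sqrt\ell+v_*)=1+\sqrt\ell\,v_*$. This immediately yields the clean identity
\[Z_1-v_*=\frac{1-v_*^2}{\sqrt\ell+v_*}.\]
Since $\sqrt\ell>1>|v_*|$ the denominator is positive, so no division issue arises.

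Finally, substituting into $\Delta_v(Z_1,v_*)=0$, which reads $\beta\ell(1-v_*^2)Z_1=kv_*(1-Z_1 v_*)=kv_*\sqrt\ell(Z_1-v_*)$, and canceling the nonzero factor $(1-v_*^2)/(\sqrt\ell+v_*)$, I would reach the linear equation
\[\beta\sqrt\ell\,(1+\sqrt\ell\,v_*)=kv_*,\]
whose unique solution is $v_*=\beta\sqrt\ell/(k-\beta\ell)=v_1$ (the denominator is positive since $k>\beta(\ell+\sqrt\ell)>\beta\ell$). Plugging this back into $Z_1(\sqrt\ell+v_*)=1+\sqrt\ell v_*$ would recover the stated formula for $Z_1$ as a consistency check. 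The argument is essentially a short algebraic manipulation once the observation that both $\Delta_Z$ and $\Delta_v$ vanish at $(Z_1,v_*)$ is in place; there is no genuine obstacle, the only delicate step being the use of the sign information from Remark \ref{Rmk.v(Z)_properties} to pick the correct branch of the quadratic $\Delta_Z=0$.
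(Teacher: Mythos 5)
Your algebra is correct and, once the vanishing of both $\Delta_Z$ and $\Delta_v$ at $(Z_1,v(Z_1))$ is granted, your branch selection and the solution of the resulting system do recover \eqref{Eq.Z_1_v_1}. The problem is the very first step: you invoke Remark \ref{Rmk.v(Z)_properties} to assert that $\Delta_0$ changes sign at $Z_1$ (hence $\Delta_Z(Z_1,v(Z_1))=0$), and later you use the inequalities $Z_1v_*<1$ and $v_*<Z_1$, which are also part of that remark. But Remark \ref{Rmk.v(Z)_properties} is proved in Subsection \ref{Subsec.v(Z)_properties}, and its proof \emph{begins} with Lemma \ref{lem8}: the sign change of $\Delta_0$ at $Z_1$, the nondegeneracy $\Delta_0'(Z_1)\neq0$, and the bounds $v(Z)<Z$, $Zv(Z)<1$ are all established downstream of $v(Z_1)=v_1$. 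Your argument is therefore circular as written — you assume that the sonic point of the solution curve sits exactly at the specific value $Z_1$, which is essentially the content of the lemma.

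The gap can be closed with two substitutions, and this is exactly what the paper does. First, the existence of \emph{some} $Z_0\in(0,1)$ with $\Delta_Z(Z_0,v(Z_0))=0$ must be derived from Assumption \ref{Assumption} alone: the paper applies the intermediate value theorem to $F_0(Z)=1-Zv(Z)+\sqrt{\ell}\,(v(Z)-Z)$, using only $F_0(0)=1>0$ and $F_0(1)=(\sqrt{\ell}-1)(v(1)-1)<0$ (which needs nothing beyond $v(1)<1$). Smoothness of $v$ then gives $\Delta_v(Z_0,v(Z_0))=\Delta_Z(Z_0,v(Z_0))\,v'(Z_0)=0$, as in your step 2. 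Second, instead of using the (not yet available) constraints $Zv<1$ and $v<Z$ to discard a branch, one checks directly that the simultaneous zero set $\{\Delta_Z=\Delta_v=0\}$ in $(0,+\infty)\times(-1,1)$ is exactly $\{(Z_1,v_1)\}$: on the branch $1-Zv=\sqrt{\ell}(v-Z)$ the system forces $v=-v_1<0$ while $Z>0$ requires $v>1/\sqrt{\ell}>0$, a contradiction, so only your branch survives and your computation then identifies $(Z_0,v(Z_0))=(Z_1,v_1)$. With these repairs your proof coincides with the paper's.
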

\begin{proof}
	We define a function $F_0\in C^\infty([0, +\infty))$ by
	\begin{equation}\label{Eq.F_0}
		F_0(Z):=1-Zv(Z)+\sqrt\ell(v(Z)-Z),\quad\forall\ Z\in[0, +\infty).
	\end{equation}
	Then $F_0(0)=1$ and $F_0(1)=(\sqrt\ell-1)(v(1)-1)<0$, where we have used that $v(1)\in(-1, 1)$, recalling Assumption \ref{Assumption}. By the intermediate value theorem, there exists $Z_0\in(0,1)$ such that $F_0(Z_0)=0$. Thus,
	$\Delta_Z(Z_0, v(Z_0))=Z_0F_0(Z_0)\big(1-Z_0v(Z_0)-\sqrt\ell(v(Z_0)-Z_0)\big)=0$.
	Then we have $\Delta_v(Z_0, v(Z_0))=\Delta_Z(Z_0, v(Z_0))v'(Z_0)=0$, i.e., $\Delta_v(Z_0, v(Z_0))=\Delta_Z(Z_0, v(Z_0))=0$. On the other hand, it is direct  to check that
	\begin{equation}\label{Eq.Z_v_zero}
		\left\{(Z,v)\in (0,+\infty)\times(-1,1): \Delta_v(Z,v)=\Delta_Z(Z,v)=0\right\}=\{(Z_1, v_1)\},
	\end{equation}
	where $Z_1, v_1$ are given by \eqref{Eq.Z_1_v_1}. Moreover, it follows from \eqref{Eq.parameter_range} that $0<v_1<Z_1<1$. Hence, we must have $(Z_0, v(Z_0))=(Z_1, v_1)$, which implies that $v(Z_1)=v_1$.
\end{proof}

\begin{lemma}
	Under Assumption \ref{Assumption}, let $\Delta_0(Z):=\Delta_Z(Z, v(Z))$ for $Z\in[0, +\infty)$, then we have $\Delta_0(Z)>0$ for $Z\in(0, Z_1)$,  $\Delta_0(Z)<0$ for $Z\in(Z_1, +\infty)$ and $\Delta_0'(Z_1)\neq 0$.
\end{lemma}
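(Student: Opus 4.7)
The statement splits naturally into three claims: (a) $\Delta_0$ vanishes only at $Z_1$ in $(0,+\infty)$, (b) the stated sign pattern, and (c) $\Delta_0'(Z_1)\neq 0$. I would handle (a) and (b) first as preparation, then attack (c) as the real content.

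For (a), suppose $\Delta_0(Z_*)=0$ for some $Z_*>0$. Since $v\in C^\infty$ by Assumption~\ref{Assumption}, $v'(Z_*)$ is finite, so the ODE \eqref{Eq.ODE_v(Z)} evaluated at $Z_*$ forces $\Delta_v(Z_*,v(Z_*))=0$ as well. Hence $(Z_*,v(Z_*))$ is a simultaneous zero of $\Delta_Z$ and $\Delta_v$ in $(0,+\infty)\times(-1,1)$, and \eqref{Eq.Z_v_zero} (already established inside the proof of Lemma~\ref{lem8}) forces $Z_*=Z_1$. For (b), I would write $\Delta_0(Z)/Z=(1-Zv(Z))^2-\ell(v(Z)-Z)^2$. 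As $Z\downarrow 0$, $v(0)=0$ gives $\Delta_0(Z)/Z\to 1>0$, so $\Delta_0>0$ in a right neighborhood of $0$. As $Z\to+\infty$, the bound $|v|<1$ gives $\Delta_0(Z)/Z\leq(1+Z)^2-\ell(Z-1)^2\to-\infty$ since $\ell>1$, so $\Delta_0<0$ for large $Z$. Together with (a) and continuity this pins down the signs on $(0,Z_1)$ and $(Z_1,+\infty)$.

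For (c), I would use the factorization $\Delta_Z(Z,v)=ZF_0(Z,v)F_1(Z,v)$ with $F_0(Z,v)=1-Zv+\sqrt\ell(v-Z)$ and $F_1(Z,v)=1-Zv-\sqrt\ell(v-Z)$ already introduced in Lemma~\ref{lem8}. From $1-Z_1v_1=\sqrt\ell(Z_1-v_1)$ we get $F_0(Z_1,v_1)=0$ and $F_1(Z_1,v_1)=2(1-Z_1v_1)>0$, so the chain rule yields
\[\Delta_0'(Z_1)=Z_1\,F_1(Z_1,v_1)\,G'(Z_1),\qquad G(Z):=F_0(Z,v(Z)),\]
with $G'(Z_1)=-(v_1+\sqrt\ell)+(\sqrt\ell-Z_1)v'(Z_1)$. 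Using \eqref{Eq.Z_1_v_1} and the abbreviations $K:=k-\beta(\ell-1)$, $L:=k-\beta\ell$ (both positive under \eqref{Eq.parameter_range}), one records the clean identities $v_1+\sqrt\ell=\sqrt\ell K/L$ and $\sqrt\ell-Z_1=(\ell-1)L/(\sqrt\ell K)$, in particular both are strictly positive. Since $v'=\Delta_v/\Delta_Z$ is an indeterminate form at $Z_1$, l'Hôpital's rule applied to the ODE produces the quadratic
\[b\,m^2+(a-d)m-c=0,\qquad m:=v'(Z_1),\]
where $a,b,c,d$ are the partials of $\Delta_Z,\Delta_v$ at $(Z_1,v_1)$. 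The exceptional slope that would make $G'(Z_1)=0$ is $m_\ast:=(v_1+\sqrt\ell)/(\sqrt\ell-Z_1)$; because $\Delta_Z$ vanishes identically along $F_0=0$, its directional derivative along that curve at $(Z_1,v_1)$ is zero, giving $a+b\,m_\ast=0$, i.e.\ $m_\ast=-a/b$. Substituting into the quadratic reduces the condition ``$m_\ast$ is a root'' to the single algebraic identity $ad=bc$.

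Thus (c) reduces to showing $ad\neq bc$ at $(Z_1,v_1)$, which is the main obstacle and the only real calculation. I would carry it out by inserting the explicit formulas for $a,b,c,d$ (obtained by differentiating $\Delta_Z,\Delta_v$ and evaluating), using the simplifications above together with the identity $\beta\ell(1-v_1^2)Z_1=kv_1\sqrt\ell(Z_1-v_1)$ coming from $\Delta_v(Z_1,v_1)=0$, and checking that the resulting polynomial expression in $(k,\ell,\beta)$ is nonzero throughout the open parameter range \eqref{Eq.parameter_range}. Once $ad\neq bc$ is verified, $v'(Z_1)\neq m_\ast$, so $G'(Z_1)\neq 0$, and therefore $\Delta_0'(Z_1)\neq 0$, completing the proof.
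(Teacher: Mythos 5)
Your treatment of the zero set and of the signs is correct and essentially the paper's: the ODE forces any interior zero of $\Delta_0$ to be a common zero of $\Delta_Z$ and $\Delta_v$, hence $Z_1$ by \eqref{Eq.Z_v_zero}, and the signs then follow from the behaviour near $0$ and $+\infty$ (the paper instead uses $F_0(0)=\wt F_0(0)=1$ and, for $(Z_1,+\infty)$, the already-established $\Delta_0'(Z_1)\neq0$; both routes work). Your reduction of $\Delta_0'(Z_1)\neq0$ to the algebraic condition $ad\neq bc$ is also sound: writing $\Delta_0'(Z_1)=Z_1\wt F_0(Z_1)G'(Z_1)$ with $\wt F_0(Z_1)=2(1-Z_1v_1)>0$, noting that $v'(Z_1)$ satisfies $bm^2+(a-d)m-c=0$, and that the bad slope $m_*=-a/b$ is a root of that quadratic precisely when $ad=bc$, is all correct (and one can check $b\neq0$ from $Z_1<\sqrt\ell$ in the range \eqref{Eq.parameter_range}).

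The genuine gap is that you never verify $ad\neq bc$, and that inequality is the entire content of the non-degeneracy claim; everything before it is bookkeeping. ``Checking that the resulting polynomial expression in $(k,\ell,\beta)$ is nonzero throughout the open parameter range'' is not a routine step one can defer: a priori that expression could vanish somewhere in the region, and ruling this out requires exhibiting structure. Note that $ad-bc$ is, up to the positive factor $\mu_1\mu_2$ coming from the nonvanishing cofactors in $\Delta_Z=Z F_1\,(v+\sqrt\ell)(g_2(v)-Z)$ and $\Delta_v=(1-v^2)(\beta\ell+(k-\beta\ell)v^2)(Z-g_1(v))$, exactly $g_1'(v_1)-g_2'(v_1)$, i.e.\ the condition that the zero curves $Z=g_1(v)$ and $Z=g_2(v)$ of $\Delta_v$ and $\Delta_Z$ are tangent at $(Z_1,v_1)$. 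This is precisely where the paper's proof does its one real computation: it shows
\begin{equation*}
g_1(v)-g_2(v)=\frac{(1-v^2)(k-\beta\ell)\sqrt\ell\,(v-v_1)}{(\beta\ell+(k-\beta\ell)v^2)(v+\sqrt\ell)},
\end{equation*}
so that $v_1$ is a simple zero with explicitly positive cofactor, whence $g_1'(v_1)-g_2'(v_1)>0$ and $ad\neq bc$. Until you supply this (or an equivalent) verification, your argument establishes the reduction but not the lemma.
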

\begin{proof}
	By the definition of $\Delta_0(Z)$, we have 
$\Delta_0\in C^\infty([0, +\infty))$ and (see \eqref{Eq.ODE_v(Z)})
	\begin{equation}\label{Eq.Delta_0_F_0}
		\Delta_0(Z)=ZF_0(Z)\wt F_0(Z),\quad \forall\ Z\in[0, +\infty),
	\end{equation}
	where $F_0\in C^\infty([0, +\infty))$ is defined by \eqref{Eq.F_0} and $\wt F_0\in C^\infty([0, +\infty))$ is defined by
	\begin{equation}\label{tF0}
		\wt F_0(Z):=1-Zv(Z)-\sqrt\ell(v(Z)-Z),\quad\forall\ Z\in[0, +\infty).
	\end{equation}
	
	If $\Delta_0(Z_*)=0$ for some $Z_*\in(0, +\infty)$, then $\Delta_v(Z_*, v(Z_*))=\Delta_0(Z_*)v'(Z_*)=0$, and by \eqref{Eq.Z_v_zero} we obtain $Z_*=Z_1$, hence (using Lemma \ref{lem8}, \eqref{Eq.Z_v_zero} and \eqref{Eq.Delta_0_F_0})
	\begin{equation}\label{Eq.Delta_0_zero}
		\{Z\in[0, +\infty):\Delta_0(Z)=0\}=\{0, Z_1\}.
	\end{equation}
	
Now we prove that $\Delta_0'(Z_1)\neq 0$. By the proof of Lemma \ref{lem8}, we have $F_0(Z_1)=0$. Then by \eqref{Eq.F_0} and \eqref{tF0}, we have $\wt F_0(Z_1)=2(1-Z_1v(Z_1))>0$. Thus (using \eqref{Eq.Delta_0_F_0}), $\Delta_0'(Z_1)=Z_1F_0'(Z_1)\wt F_0(Z_1) $. Assume on the contrary that $\Delta_0'(Z_1)=0$, then $F_0'(Z_1)=0$. 

Let  $\Delta_1(Z):=\Delta_v(Z, v(Z))$. Then \eqref{Eq.ODE_v(Z)} becomes $\Delta_0(Z)v'(Z)=\Delta_1(Z)$. Taking derivative with respect to $Z$ at $Z=Z_1$ on both sides, we obtain(using $\Delta_0(Z_1)=0$) $\Delta_1'(Z_1)=\Delta_0'(Z_1)v'(Z_1)=0$ 
and $\Delta_1(Z_1)=0$. By \eqref{Eq.ODE_v(Z)}, we have $\Delta_1(Z)=F_1(Z)\wt F_1(Z)$ with $F_1(Z):=Z-g_1(v(Z)) $, $\wt F_1(Z):=(1-v(Z)^2)(\beta \ell+(k-\beta \ell)v(Z)^2) $, $g_1(v):=kv/(\beta \ell+(k-\beta \ell)v^2) $ (note that $0<\beta \ell<k$ using \eqref{Eq.parameter_range}). As $v(Z_1)\in(-1,1)$, we have $\wt F_1(Z_1)>0$, then by $\Delta_1'(Z_1)=0$ 
and $\Delta_1(Z_1)=0$ we have $F_1(Z_1)=0$, $\Delta_1'(Z_1)=F_1'(Z_1)\wt F_1(Z_1)=0$ and $F_1'(Z_1)=0$. Thus, $0=1-g_1'(v(Z_1))v'(Z_1)=1-g_1'(v_1)v'(Z_1) $ (using Lemma \ref{lem8}).

Similarly, by  \eqref{Eq.F_0}, we have $F_0(Z)=F_2(Z)\wt F_2(Z)$ with $F_2(Z):=g_2(v(Z))-Z $, $\wt F_2(Z):=v(Z)+\ell$, $g_2(v):=(1+\sqrt\ell v)/(v+\sqrt\ell) $, and $\wt F_2(Z_1)>0$. Thus, $F_2(Z_1)=0$, $F_2'(Z_1)=0$ (using $F_0(Z_1)=F_0'(Z_1)=0$) and $0=g_2'(v_1)v'(Z_1)-1$.

Now we have $1=g_1'(v_1)v'(Z_1)=g_2'(v_1)v'(Z_1)$ and $g_1'(v_1)=g_2'(v_1) $. On the other hand,
\begin{align*}
		g_1(v)-g_2(v)&=\frac{kv}{\beta \ell+(k-\beta \ell)v^2}-\frac{1+\sqrt\ell v}{v+\sqrt\ell}=
\frac{(1-v^2)(kv\sqrt\ell-\beta \ell(1+\sqrt\ell v)}{(\beta \ell+(k-\beta \ell)v^2)(v+\sqrt\ell)}\\
&=\frac{(1-v^2)(k-\beta \ell)\sqrt\ell(v-v_1)}{(\beta \ell+(k-\beta \ell)v^2)(v+\sqrt\ell)},
	\end{align*}
here we used \eqref{Eq.Z_1_v_1}, thus\begin{align*}
		g_1'(v_1)-g_2'(v_1)&=\frac{(1-v_1^2)(k-\beta \ell)\sqrt\ell}{(\beta \ell+(k-\beta \ell)v_1^2)(v_1+\sqrt\ell)}>0,
	\end{align*}
which is a contradiction. Therefore, $\Delta_0'(Z_1)\neq0$.

	By $F_0(0)=\wt F_0(0)=1>0$, \eqref{Eq.Delta_0_F_0}, {and \eqref{Eq.Delta_0_zero}}, we have $\Delta_0(Z)>0$ for $Z\in(0, Z_1)$. Finally, using $\Delta_0'(Z_1)\neq0$ 
and {\eqref{Eq.Delta_0_zero}}, we have $\Delta_0'(Z_1)<0$ and $\Delta_0(Z)<0$ for all $Z\in(Z_1, +\infty)$.
\end{proof}

To finish the proof of Remark \ref{Rmk.v(Z)_properties}, it remains to show that $v(Z)<Z$ and $Zv(Z)<1$ for all $Z\in(0, +\infty)$. We use the barrier function method. For any $V\in C^1((0, +\infty))$, we define
\begin{equation}\label{Eq.Av}
	(\mathscr AV)(Z):=-\Delta_Z(Z, V(Z))V'(Z)+\Delta_v(Z, V(Z)),\quad\forall\ Z\in(0, +\infty).
\end{equation}
Then $\mathscr Av=0$ if $v$ is given by Assumption \ref{Assumption}.

\begin{lemma}
	Under Assumption \ref{Assumption}, we have $v(Z)<Z$ for all $Z\in(0, +\infty)$.
\end{lemma}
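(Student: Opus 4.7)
The plan is to carry out the barrier function argument already indicated, taking $V(Z) := Z$ as a strict supersolution and comparing with $v$. First I would substitute $V(Z) = Z$ into the definition of $\mathscr A$ in \eqref{Eq.Av}. A direct computation gives $\Delta_Z(Z, Z) = Z(1-Z^2)^2$ and $\Delta_v(Z, Z) = Z(1-Z^2)^2(\beta\ell - k)$, so
\[
(\mathscr A V)(Z) = -Z(1-Z^2)^2 + Z(1-Z^2)^2(\beta\ell - k) = Z(1-Z^2)^2(\beta\ell - k - 1).
\]
From \eqref{Eq.parameter_range} we have $k > \beta(\ell+\sqrt\ell) > \beta\ell$, whence $\beta\ell - k - 1 < 0$ and therefore $(\mathscr A V)(Z) < 0$ for every $Z \in (0, +\infty)\setminus\{1\}$.

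Next I would rule out that the inequality $v(Z) < Z$ could fail arbitrarily close to $0$. Since $v \in C_{\operatorname{o}}^\infty([0, +\infty))$ (Assumption \ref{Assumption}) we may write $v(Z) = v'(0)Z + O(Z^3)$ near $0$. Plugging this into $\Delta_Z(Z, v)v' = \Delta_v(Z, v)$ and matching the leading $O(Z)$ terms yields $v'(0) = \beta\ell - k\,v'(0)$, i.e.\ $v'(0) = \beta\ell/(k+1) < 1$ (using $k > \beta\ell$ once more). Consequently $v(Z) - Z = (v'(0) - 1)Z + O(Z^3) < 0$ on some interval $(0, \varepsilon)$.

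The main step is a contradiction argument of supersolution/touching type. Suppose the conclusion fails and set $Z_* := \inf\{Z > 0 : v(Z) \geq Z\}$; by the previous step $Z_* \geq \varepsilon > 0$, and by continuity $v(Z_*) = Z_*$ with $v(Z) < Z$ on $(0, Z_*)$. Since $v$ is $(-1,1)$-valued by Assumption \ref{Assumption}, necessarily $Z_* < 1$, which rules out the degenerate point $Z = 1$ where $(\mathscr A V)$ vanishes. The one-sided comparison $(v-V)(Z) \leq (v-V)(Z_*) = 0$ on $(0, Z_*]$ forces $v'(Z_*) \geq V'(Z_*) = 1$. Combining with $\mathscr A v \equiv 0$ and $\Delta_Z(Z_*, Z_*) = Z_*(1-Z_*^2)^2 > 0$ gives
\[
0 = (\mathscr A v)(Z_*) = -\Delta_Z(Z_*, Z_*)\, v'(Z_*) + \Delta_v(Z_*, Z_*) \leq -\Delta_Z(Z_*, Z_*) + \Delta_v(Z_*, Z_*) = (\mathscr A V)(Z_*) < 0,
\]
the required contradiction.

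The only mildly delicate point is the local analysis at $Z = 0$: because both $v$ and $V$ vanish at the origin, one must extract $v'(0)$ from the ODE to guarantee $Z_* > 0$. Everything else is immediate from the clean identity $\Delta_Z(Z, Z) - \Delta_v(Z, Z) = Z(1-Z^2)^2(1 + k - \beta\ell)$ and the parameter restriction \eqref{Eq.parameter_range}; in particular the a priori bound $v \in (-1, 1)$ excludes the touching point $Z = 1$ where the barrier inequality degenerates.
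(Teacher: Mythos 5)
Your proof is correct, and its engine is exactly the paper's: the computation $(\mathscr A V)(Z)=(\beta\ell-k-1)Z(1-Z^2)^2<0$ from \eqref{Eq.Av}, the slope $v'(0)=\beta\ell/(k+1)<1$ extracted from the ODE at the origin, and a contradiction at a first touching point where $v'(Z_*)\geq 1$ while $\Delta_Z(Z_*,Z_*)>0$. The difference is in how the full half-line is covered. The paper runs this barrier argument only on $(0,Z_1)$, and treats the rest separately: it proves $v<Z$ on $(Z_1,1)$ from the sign of $F_0$ in \eqref{Eq.F_0} (which gives the sharper bound $v(Z)<(\sqrt\ell Z-1)/(\sqrt\ell-Z)$ there), and handles the point $Z_1$ itself via $v(Z_1)=v_1<Z_1$ from Lemma \ref{lem8}. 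You instead note that any touching point necessarily satisfies $Z_*=v(Z_*)\in(-1,1)$, hence $Z_*<1$, so the strict inequality $(\mathscr A V)(Z_*)<0$ is available wherever touching could conceivably occur; moreover the touching condition places you on the diagonal, so $\Delta_Z(Z_*,v(Z_*))=Z_*(1-Z_*^2)^2>0$ even if $Z_*$ happened to equal $Z_1$. This lets a single comparison argument cover all of $(0,+\infty)$ and dispenses with both the $F_0$ computation and the appeal to Lemma \ref{lem8} — a mild but genuine streamlining of the same idea, with the parameter restriction \eqref{Eq.parameter_range} entering only through $k>\beta\ell$.
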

\begin{proof}
	Since $v(Z)\in(-1, 1)$ for all $Z\in(0, +\infty)$ by Assumption \ref{Assumption}, it suffices to prove $v(Z)<Z$ for all $Z\in(0, 1)$. We first show that $v(Z)<Z$ for all $Z\in(Z_1, 1)$. By \eqref{Eq.Delta_0_F_0}, $F_0(0)=1, F_0(Z_1)=0$ and \eqref{Eq.Delta_0_zero}, we have 
	\[\{Z\in[0, +\infty): F_0(Z)=0\}=\{Z_1\}.\]
	As $F_0(1)<0$ and $Z_1\in(0,1)$, we have $F_0(Z)<0$ for all $Z\in(Z_1, 1)$, hence
	\[v(Z)<\frac{\sqrt\ell Z-1}{\sqrt{\ell}-Z}<Z,\quad\forall\ Z\in(Z_1, 1).\]
	
	Next we prove that $v(Z)<Z$ for all $Z\in(0, Z_1)$. Let $V_1(Z):=Z$ for all $Z\in[0, +\infty)$, then we have
	\begin{equation}\label{Eq.AV_1}
		(\mathscr AV_1)(Z)=(\beta\ell-k-1)Z(1-Z^2)^2<0,\quad \forall\ Z\in(0, 1),
	\end{equation}
	where we have used $\beta\ell-k-1<0$, which follows from \eqref{Eq.parameter_range}. On the other hand, letting $Z\to0+$ in $\mathrm dv/\mathrm dZ=\Delta_v(Z, v(Z))/\Delta_Z(Z,v(Z))$, by L'H\^{o}pital's rule, we have
	\[v'(0)=\frac{\pa_Z\Delta_v(0, 0)+\pa_v\Delta_v(0,0)v'(0)}{\pa_Z\Delta_Z(0, 0)+\pa_v\Delta_Z(0,0)v'(0)}=\beta\ell-kv'(0),\]
	hence $v'(0)=\beta\ell/(k+1)<1=V_1'(0)$. As $v(0)=0=V_1(0)$, there exists $\delta\in(0, Z_1)$ such that $v(Z)<V_1(Z)$ for all $Z\in(0, \delta)$. Assume for contradiction that $Z_*\in(0, Z_1)\subset(0, 1)$ satisfies $v(Z)<V_1(Z)$ for all $Z\in(0, Z_*)$ and $v(Z_*)=V_1(Z_*)=Z_*$, then $v'(Z_*)\geq V_1'(Z_*)$. Thus, by $\Delta_Z(Z_*, V_1(Z_*))=\Delta_Z(Z_*, v(Z_*))=Z_*(1-Z_*^2)^2>0$, we have
	\begin{align*}
		(\mathscr AV_1)(Z_*)&=-\Delta_Z(Z_*, V_1(Z_*))V_1'(Z_*)+\Delta_v(Z_*, V_1(Z_*))\\
		&\geq -\Delta_Z(Z_*, v(Z_*))v'(Z_*)+\Delta_v(Z_*, v(Z_*))=(\mathscr Av)(Z_*)=0,
	\end{align*}
	which contradicts with \eqref{Eq.AV_1}. Therefore, we have $v(Z)<V_1(Z)=Z$ for all $Z\in(0, Z_1)$.
	
	Finally, by $v(Z_1)=v_1<Z_1<1$, we obtain $v(Z)<Z$ for all $Z\in (0, 1)$.
\end{proof}

\begin{lemma}\label{Lem.Zv(Z)<1}
	Under Assumption \ref{Assumption}, we have $v(Z)<1/Z$ for all $Z\in(0, +\infty)$.
\end{lemma}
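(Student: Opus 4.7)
The plan is to combine a trivial estimate on $(0,1]$ with a barrier/first-touching argument for the barrier $V_2(Z):=1/Z$ on $(1,+\infty)$.

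First I would observe that the case $Z\in(0,1]$ is essentially free: since Assumption \ref{Assumption} gives $v(Z)\in(-1,1)$, we have $v(Z)<1\leq 1/Z$ for every $Z\in(0,1]$, so the claim holds there. Thus the work lies in the range $Z>1$. Let $V_2(Z):=1/Z$ for $Z>0$. Since $v$ is bounded and $V_2(Z)\to+\infty$ as $Z\to 0^+$, the set $S:=\{Z>0:v(Z)\geq V_2(Z)\}$ is bounded away from $0$, so if $S$ is nonempty then $Z_*:=\inf S\in(0,+\infty)$ satisfies, by continuity, $v(Z_*)=V_2(Z_*)=1/Z_*$ and $v(Z)<V_2(Z)$ for all $Z\in(0,Z_*)$; in particular $(v-V_2)'(Z_*)\geq 0$, i.e.\
\begin{equation*}
v'(Z_*)\geq V_2'(Z_*)=-1/Z_*^2.
\end{equation*}

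Next I would rule out $Z_*=1$ (which is the only potential singular point of the barrier): at $Z_*=1$ one would have $v(1)=1$, contradicting $v(Z)\in(-1,1)$ from Assumption \ref{Assumption}. So $Z_*\neq 1$, and I would then simply evaluate $\Delta_Z$ and $\Delta_v$ on the curve $v=1/Z$. A direct computation using $1-Zv=0$ and $1-v^2=1-1/Z^2$ gives
\begin{equation*}
\Delta_Z(Z_*,1/Z_*)=-\ell(1-Z_*^2)^2/Z_*\neq 0,\qquad \Delta_v(Z_*,1/Z_*)=\beta\ell(Z_*^2-1)^2/Z_*^3,
\end{equation*}
so the ODE \eqref{Eq.ODE_v(Z)} yields $v'(Z_*)=\Delta_v/\Delta_Z=-\beta/Z_*^2$. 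Since $\beta>1$ by \eqref{Eq.parameter_range}, this gives $v'(Z_*)=-\beta/Z_*^2<-1/Z_*^2=V_2'(Z_*)$, contradicting the first-touching inequality. Hence $S=\emptyset$ and $v(Z)<1/Z$ for all $Z>0$.

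The main obstacle I anticipate is not the algebraic contradiction itself but the careful justification of the first-touching step: one must be sure that $Z_*>0$, that $Z_*\neq 1$ (where $\Delta_Z(Z,1/Z)$ degenerates), and that the ODE can indeed be evaluated at $(Z_*,v(Z_*))$ to compute $v'(Z_*)$. The first two are handled by the bound $|v|<1$ from Assumption \ref{Assumption}, and the third follows once $Z_*\neq 1$ since then $\Delta_Z(Z_*,v(Z_*))\neq 0$. Note that the $\mathscr A$-functional approach used in the previous lemma cannot be applied in quite the same way here, because at $(Z_*,1/Z_*)$ with $Z_*>1$ one has $\Delta_Z<0$, which flips the sign in the monotonicity comparison and prevents a direct contradiction from $(\mathscr AV_2)(Z_*)>0$; the cleaner route is therefore to read off $v'(Z_*)$ from the ODE and compare directly with $V_2'(Z_*)$ as above.
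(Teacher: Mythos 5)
Your proof is correct and is essentially the paper's own argument: the same barrier $V_2(Z)=1/Z$, the same first-touching point $Z_*>1$, and your direct comparison $v'(Z_*)=\Delta_v/\Delta_Z=-\beta/Z_*^2<-1/Z_*^2=V_2'(Z_*)$ is algebraically equivalent to the paper's conclusion $(\mathscr AV_2)(Z^*)\leq(\mathscr Av)(Z^*)=0$, since at the touching point $(\mathscr AV_2)(Z^*)=-\Delta_Z\cdot\big(V_2'(Z^*)-v'(Z^*)\big)$. Your closing caveat is, however, mistaken: the $\mathscr A$-functional argument does go through verbatim here, because the sign flip $\Delta_Z<0$ for $Z>1$ is compensated by the sign flip of $(\mathscr AV_2)(Z)=(\beta-1)\ell Z(1-1/Z^2)^2>0$ (whereas $(\mathscr AV_1)<0$ in the previous lemma), and this is exactly the route the paper takes.
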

\begin{proof}
	Since $v(Z)\in(-1, 1)$ for all $Z\in(0, +\infty)$ by Assumption \ref{Assumption}, it suffices to prove $v(Z)<1/Z$ for all $Z\in(1, +\infty)$. Let $V_2(Z):=1/Z$ for $Z\in(0, +\infty)$, then we have
	\begin{equation}\label{Eq.AV_2}
		(\mathscr AV_2)(Z)=(\beta-1)\ell Z\left(1-1/{Z^2}\right)^2>0,\quad\forall\ Z\in(1,+\infty),
	\end{equation}
	where we have used $\beta>1$ by \eqref{Eq.parameter_range}. As $v(Z_1)=v_1<1=V_2(Z_1)$, there exists $\delta>0$ such that $v(Z)<V_2(Z)$ for $Z\in[Z_1, Z_1+\delta)$. Assume for contradiction that $Z^*\in (1, +\infty)$ satisfies $v(Z)<V_2(Z)$ for all $Z\in(Z_1, Z^*)$ and $v(Z^*)=V_2(Z^*)$, then $v'(Z^*)\geq V_2'(Z^*)$. Thus, by $\Delta_Z(Z^*, V_2(Z^*))=\Delta_Z(Z^*, v(Z^*))=-\ell(1-Z_*^2)^2/Z_*<0$, we have
	\begin{align*}
		(\mathscr AV_2)(Z^*)&=-\Delta_Z(Z^*, V_2(Z^*))V_2'(Z^*)+\Delta_v(Z^*, V_2(Z^*))\\
		&\leq-\Delta_Z(Z^*, v(Z^*))v'(Z^*)+\Delta_v(Z^*, v(Z^*))=(\mathscr Av)(Z^*)=0,
	\end{align*}
	which contradicts with \eqref{Eq.AV_2}. Therefore, $v(Z)<V_2(Z)=1/Z$ for all $Z\in(1,+\infty)$.
\end{proof}

The proof of Remark \ref{Rmk.v(Z)_properties} is completed now. To conclude this appendix, we prove that $\hat\phi_0, \hat\rho_0\in C_{\operatorname{e}}^\infty([0,+\infty))$, where $\hat\phi_0$ and $\hat\rho_0$ are defined by \eqref{Eq.phi_0_rho_0}.

\begin{lemma}\label{Lem.v_hat_rho}
	Let $v\in C_{\operatorname{o}}^\infty([0, +\infty))$ be given by Assumption \ref{Assumption}, and define $\hat\phi_0, \hat\rho_0$ by \eqref{Eq.phi_0_rho_0}. Then we have $\hat\phi_0, \hat\rho_0\in C_{\operatorname{e}}^\infty([0,+\infty))$.
\end{lemma}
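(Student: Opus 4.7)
The plan is to reduce everything to the structural properties \eqref{Eq.e_derivative_to_o}--\eqref{Eq.power} once we show that the integrand defining $\hat\phi_0$ is odd in $Z$ and that the subsequent substitution $s^2 = u$ exhibits the integral as a smooth function of $Z^2$. The remaining factors in $\hat\rho_0$ will then follow by closure of $C_{\operatorname{e}}^\infty([0,+\infty))$ under products, reciprocals, and real powers of positive elements.

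First, since $v \in C_{\operatorname{o}}^\infty([0,+\infty))$, write $v(Z) = Z\tilde v(Z^2)$ for some $\tilde v \in C^\infty([0,+\infty))$. Then $Zv(Z) = Z^2\tilde v(Z^2)$, so $1 - Zv(Z) \in C_{\operatorname{e}}^\infty([0,+\infty))$; by Lemma \ref{Lem.Zv(Z)<1} it is pointwise positive, hence $1/(1-Zv(Z)) \in C_{\operatorname{e}}^\infty$ by \eqref{Eq.reciprocal}. Consequently the integrand
\[
\frac{v(s)}{1-sv(s)} = s\cdot\frac{\tilde v(s^2)}{1-s^2\tilde v(s^2)} =: s\,h(s^2),
\]
with $h\in C^\infty([0,+\infty))$. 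Substituting $u = s^2$ gives
\[
\int_0^Z \frac{v(s)}{1-sv(s)}\,\mathrm ds = \tfrac{1}{2}\int_0^{Z^2} h(u)\,\mathrm du = H(Z^2),
\]
where $H(w) := \tfrac{1}{2}\int_0^w h(u)\,\mathrm du \in C^\infty([0,+\infty))$. Thus the integral lies in $C_{\operatorname{e}}^\infty([0,+\infty))$, and so does $(\beta-1)H(Z^2)$. Applying \eqref{Eq.exponential} and multiplying by the constant $1/(\beta-1)$ yields $\hat\phi_0 \in C_{\operatorname{e}}^\infty([0,+\infty))$. Note also that $\hat\phi_0(Z) > 0$ for all $Z \in [0,+\infty)$, which will be needed next.

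For $\hat\rho_0$, decompose the defining formula \eqref{Eq.phi_0_rho_0} into three factors: $\hat\phi_0^{2/(p-1)}$, $(1-v^2)^{1/(p-1)}$, and $(1-Zv)^{-2/(p-1)}$. By \eqref{Eq.oo_to_e} we have $v^2 \in C_{\operatorname{e}}^\infty([0,+\infty))$, hence $1 - v^2 \in C_{\operatorname{e}}^\infty([0,+\infty))$; since $v(Z)\in(-1,1)$ by Assumption \ref{Assumption}, this factor is strictly positive, and \eqref{Eq.power} gives $(1-v^2)^{1/(p-1)} \in C_{\operatorname{e}}^\infty$. The positivity of $\hat\phi_0$ and of $1 - Zv(Z)$ (already established) together with \eqref{Eq.power} provide $\hat\phi_0^{2/(p-1)}, (1-Zv)^{-2/(p-1)} \in C_{\operatorname{e}}^\infty$. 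Finally, \eqref{Eq.ee_to_e} (closure under products) delivers $\hat\rho_0 \in C_{\operatorname{e}}^\infty([0,+\infty))$.

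The only step with any real content is the verification that the integral defining $\hat\phi_0$ lies in $C_{\operatorname{e}}^\infty$, and this is resolved cleanly by the substitution $u = s^2$; everything else is a direct application of the closure properties of $C_{\operatorname{e}}^\infty$ catalogued in \eqref{Eq.ee_to_e}--\eqref{Eq.power}. I do not anticipate any genuine obstacle.
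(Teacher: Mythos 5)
Your proof is correct and follows essentially the same route as the paper: the paper isolates the key step as a general claim ($f\in C_{\operatorname{o}}^\infty \Rightarrow \int_0^Z f \in C_{\operatorname{e}}^\infty$, proved by the same substitution $u=s^2$), whereas you inline that substitution for the specific integrand $v(s)/(1-sv(s))$. The treatment of $\hat\rho_0$ via positivity of each factor and the closure properties \eqref{Eq.ee_to_e}--\eqref{Eq.power} matches the paper's argument.
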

\begin{proof}
	We first claim that
	\begin{equation}\label{Eq.o_integration_to_e}
		f\in C_{\text{o}}^\infty([0, +\infty))\Longrightarrow F(Z):=\int_0^Zf(s)\,\mathrm ds\in C_{\text{e}}^\infty([0, +\infty)).
	\end{equation}
	
	Now we prove that $\hat\phi_0\in C_{\text{e}}^\infty([0,+\infty))$. By $v\in C_{\text{o}}^\infty([0, +\infty))$, we have $1-Zv(Z)\in C_{\text{e}}^\infty([0,+\infty))$. Since $Zv(Z)<1$ for all $Z\in[0,+\infty)$ by Lemma \ref{Lem.Zv(Z)<1}, it follows from \eqref{Eq.reciprocal} that $0<\frac1{1-Zv(Z)}\in C_{\text{e}}^\infty([0,+\infty))$, hence by $v\in C_{\text{o}}^\infty([0, +\infty))$ and \eqref{Eq.eo_to_o} we have $\frac{v(Z)}{1-Zv(Z)}\in C_{\text{o}}^\infty([0, +\infty))$, then by \eqref{Eq.o_integration_to_e} we obtain
	\[(\beta-1)\int_0^Z \frac{v(s)}{1-sv(s)}\,\mathrm ds\in C_{\text{e}}^\infty([0, +\infty)).\]
	Thus, by \eqref{Eq.exponential} and \eqref{Eq.phi_0_rho_0} we have $\hat\phi_0\in C_{\text{e}}^\infty([0,+\infty))$. 
	
	As for $\hat\rho_0$, by $0<\frac1{1-Zv(Z)}\in C_{\text{e}}^\infty([0,+\infty))$, 
and \eqref{Eq.power}, we have
	$1/{(1-Zv(Z))^{\frac2{p-1}}}\in C_{\text{e}}^\infty([0,+\infty)).$
	Similarly, using $\hat\phi_0\in C_{\text{e}}^\infty([0,+\infty))$ and $\hat\phi_0(Z)>0$ for all $Z\in[0, +\infty)$, we get $\hat\phi_0(Z)^{2/(p-1)}\in C_{\text{e}}^\infty([0,+\infty))$.
	It follows from $1-v(Z)^2\in C_{\text{e}}^\infty([0,+\infty))$, $v(Z)\in (-1, 1)$ for all $Z\in[0, +\infty)$ and \eqref{Eq.power} that
	$(1-v(Z)^2)^{\frac1{p-1}}\in C_{\text{e}}^\infty([0,+\infty)).$ Therefore, by \eqref{Eq.phi_0_rho_0} and \eqref{Eq.ee_to_e}, we have $\hat\rho_0\in C_{\text{e}}^\infty([0,+\infty))$.
	
	Finally, it suffices to show the claim \eqref{Eq.o_integration_to_e}. By \eqref{Co}, there exists $\tilde f\in C^\infty([0, +\infty))$ such that $f(Z)=Z\tilde f(Z^2)$ for all $Z\in [0, +\infty)$. Let
	\[\tilde F(Z):=\frac12\int_0^Z\tilde f(s)\,\mathrm ds,\quad\forall\ Z\in[0, +\infty),\]
	then $\tilde F\in C^\infty([0, +\infty))$. Moreover, we have
	\[F(Z)=\int_0^Zf(s)\,\mathrm ds=\int_0^Z s\tilde f(s^2)\,\mathrm ds=\frac12\int_0^{Z^2}\tilde f(s)\,\mathrm ds=\tilde F(Z^2),\quad\forall\ Z\in[0, +\infty).\]
	Hence by \eqref{Ce}, we have $F\in C_{\text{e}}^\infty([0,+\infty))$.
\end{proof}

\section{Linear ODEs with singular points}\label{Appen.ODE}

In this appendix, we establish the well-posedness theory for a class of second order linear ODEs with singular points.

First of all, we introduce a preliminary lemma, which ensures that the functions we are considering are smooth in the sense of multi-variable functions. Let $I\subset\R$ be an interval and let $\Omega\subset \C$ be an open subset. We define
\begin{align}\label{Hol1}
	\operatorname{Hol}(\Omega):&=\{x=x(\la) \text{ is holomorphic (or equivalently, analytic) on }\Omega\},\\
	\mathcal H_I^{0}(\Omega):&=\big\{x=x(t;\la)\in C(I\times\Omega;\C): x(\cdot;\la)\in C^\infty(I) \text{ for all }\la\in\Omega,\\
	 &\qquad x(t;\cdot)\in\operatorname{Hol}(\Omega) \text{ for all }t\in I \text{ and $\pa_t^jx\in L^\infty(I\times\Omega)$ for all $j\in\Z_{\geq 0}$}\big\},\nonumber\\
	 \mathcal{H}_I(\Omega):&=\left\{x=x(t;\la)\in C^\infty(I\times\Omega;\C): x(t;\cdot)\in\operatorname{Hol}(\Omega)\text{ for all }t\in I\right\}.\label{Space.H_I(Omega)}
\end{align}
Then $ \operatorname{Hol}(\Omega)$, $ \mathcal H_I^{0}(\Omega)$, $ \mathcal H_I(\Omega)$ are rings and the definitions in \eqref{Hol1}, \eqref{Space.H_I(Omega)} are the same as in section 5.3. This appendix is only used in the proof of Lemma \ref{Lem.L_laf=g}, which does not require the definition of $Z$ in $Z=|x|/(T-t)$. 

So, with abuse of notations, we replace $Z$ by $t$ and use $x$ to denote a general function of $(t;\la)$. We stress that here $(t,x)$ has nothing to do with the coordinates in $\R^{1+d}$.
\begin{lemma}\label{Lem.smooth_multi_variable}
	Let $I\subset\R$ be an interval and $\Omega\subset \C$ be an open subset. Then $\mathcal H_I^0(\Omega)\subset \mathcal H_I(\Omega)$.
\end{lemma}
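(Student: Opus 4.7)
The plan is to use Cauchy's integral formula in $\la$ together with the $L^\infty$ bounds on $\pa_t^j x$ to upgrade the separate regularity (holomorphy in $\la$, $C^\infty$ in $t$) to joint $C^\infty$ smoothness. Fix $\la_0\in\Omega$ and $r>0$ with $\overline{B_r(\la_0)}\subset\Omega$. Since $x(t;\cdot)$ is holomorphic on $\Omega$ and $x$ is jointly continuous on $I\times\Omega$, Cauchy's formula gives
\[
x(t;\la)=\frac{1}{2\pi\ii}\oint_{|\mu-\la_0|=r}\frac{x(t;\mu)}{\mu-\la}\,d\mu,\qquad (t,\la)\in I\times B_{r/2}(\la_0),
\]
and differentiating under the integral in $\la$ (immediate, since the integrand is smooth in $\la$ uniformly in $\mu$ on the compact contour) yields the analogous representation for $\pa_\la^k x(t;\la)$.

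The key step is to iteratively interchange $\pa_t$ with the contour integral. For each fixed $(\mu,\la)$ the integrand is $C^\infty$ in $t$ with derivatives $\pa_t^j x(t;\mu)/(\mu-\la)^{k+1}$. The bounds $\|\pa_t^j x\|_{L^\infty(I\times\Omega)}<\infty$ supply uniform majorants: by the fundamental theorem of calculus, $|\pa_t^{j-1}x(t+h;\mu)-\pa_t^{j-1}x(t;\mu)|\leq\|\pa_t^j x\|_{L^\infty}|h|$, which provides the domination required for the dominated convergence theorem applied to difference quotients. Induction on $j$ then gives
\[
\pa_t^j\pa_\la^k x(t;\la)=\frac{k!}{2\pi\ii}\oint_{|\mu-\la_0|=r}\frac{\pa_t^j x(t;\mu)}{(\mu-\la)^{k+1}}\,d\mu,\qquad (t,\la)\in I\times B_{r/2}(\la_0),\ j,k\in\Z_{\geq 0}.
\]

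Joint continuity of $\pa_t^j\pa_\la^k x$ on $I\times B_{r/4}(\la_0)$ follows directly from this representation: the $\la$-dependence is Lipschitz (bounded integrand, smooth $(\mu-\la)^{-k-1}$) and the $t$-dependence is Lipschitz with constant bounded by $\|\pa_t^{j+1}x\|_{L^\infty}$ (applying the fundamental theorem to $\pa_t^j x(t_1;\mu)-\pa_t^j x(t_2;\mu)$, uniformly in $\mu$ on the contour). Since $\la_0\in\Omega$ is arbitrary, all iterated partial derivatives of $x$ exist pointwise and are jointly continuous on a neighborhood of every point of $I\times\Omega$. Viewing $\la$ via its real and imaginary parts (with $\pa_{\Re\la}=\pa_\la$ and $\pa_{\Im\la}=\ii\pa_\la$ by holomorphy), the classical multivariable result that continuous existence of all iterated partials implies joint $C^\infty$ smoothness yields $x\in C^\infty(I\times\Omega;\C)$, and combined with the given holomorphy in $\la$ this gives $x\in\mathcal H_I(\Omega)$.

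The principal obstacle is the interchange of $\pa_t^j$ with the Cauchy contour integral. The hypotheses give no joint smoothness a priori --- only separate regularity in each variable together with pointwise $L^\infty$ bounds --- so pointwise difference quotients of $\pa_t^{j-1}x(\cdot;\mu)$ along the contour could in general be very badly behaved as functions of $\mu$. The $L^\infty$ bounds on $\pa_t^j x$ are exactly what supply the uniform-in-$\mu$ majorant needed for iterated dominated convergence; once this step is justified, the remaining joint continuity and the appeal to the standard $C^\infty$ criterion are routine.
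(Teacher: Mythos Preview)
Your proof is correct and follows essentially the same approach as the paper: both arguments use Cauchy's integral formula in $\la$ together with the uniform $L^\infty$ bounds on $\pa_t^j x$ to upgrade separate regularity to joint $C^\infty$ smoothness. The only technical difference is that the paper works with the power series expansion $x(t;\la)=\sum_k x_k(t)(\la-\la_0)^k$ (obtaining the bound $\|x_k^{(j)}\|_{L^\infty}\le\|\pa_t^j x\|_{L^\infty}\,r^{-k}$ and then invoking uniform absolute convergence of the termwise-differentiated series on $B_{r/2}(\la_0)$), whereas you keep the Cauchy integral representation and differentiate under the contour integral via dominated convergence; these are equivalent formulations of the same idea.
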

\begin{proof}
	Let $x=x(t;\la)\in \mathcal H_I^0(\Omega)$. Pick $\la_0\in \Omega$ and let $r\in(0,1)$ be such that $B_r(\la_0):=\{\la\in\C:|\la-\la_0|<r\}\subset\Omega$. By Cauchy's integration formula (Theorem 4.4 in Chapter 2 of \cite{Stein}), for any $t\in I, \la\in\Omega$ we have
	\begin{equation}\label{Eq.Appen_x_expansion}
		x(t;\la)=\sum_{k=0}^\infty x_k(t)(\la-\la_0)^k,
	\end{equation}
	where
	\begin{equation}\label{Eq.Appen_x_coefficient}
		x_k(t)=\frac1{2\pi\ii}\int_{|\la-\la_0|=r}\frac{x(t;\la)}{(\la-\la_0)^{k+1}}\,\mathrm d\la=\frac{r^{-k}}{2\pi}\int_0^{2\pi}x\left(t;\la_0+re^{\ii\theta}\right)e^{-\ii k\theta}\,\mathrm d\theta
	\end{equation}
	for all  $t\in I, k\in\Z_{\geq 0}$. Since $x(\cdot;\la)\in C^\infty(I)$ for all $\la\in\Omega$ and $\pa_t^jx\in L^\infty(I\times\Omega)$, by \eqref{Eq.Appen_x_coefficient} and the dominated convergence theorem, we have $x_k\in C^\infty(I)$ and
	\begin{equation}\label{Eq.Appen_x_k_bound}
		\left\|x_k^{(j)}\right\|_{L^\infty(I)}\leq \|\pa_t^jx\|_{L^\infty(I\times\Omega)}r^{-k},\quad \forall\ j\in\Z_{\geq 0},\ \forall\ k\in\Z_{\geq 0}.
	\end{equation}
	Using \eqref{Eq.Appen_x_k_bound}, we know that $\sum_{k=0}^\infty x_k^{(j)}(t)\pa_\la^\alpha((\la-\la_0)^k)$ is uniformly absolutely convergent on $I\times B_{r/2}(\la_0)$ for all $j\in\Z_{\geq 0}$ and $\alpha\in(\Z_{\geq 0})^2$, hence \eqref{Eq.Appen_x_expansion} implies that $x\in C^\infty(I\times B_{r/2}(\la_0))$. Since $\la_0\in\Omega$ is arbitrary, we have $x\in C^\infty(I\times\Omega)$. Hence $x\in\mathcal{H}_I(\Omega)$.
\end{proof}

\begin{remark}\label{Rmk.smooth_loc}
	As smoothness is a local property, we have $\mathcal{H}_{I,\text{loc}}^0(\Omega)\subset \mathcal H_I(\Omega)$, where
	\begin{equation*}
		\mathcal{H}_{I,\text{loc}}^0(\Omega):=\left\{x=x(t;\la):I\times\Omega\to\C\Big|x\in \mathcal H_{J}^0(\Omega) \text{ for any compact sub-interval }J\subset I\right\}.
	\end{equation*}
	Moreover, we have $\mathcal H_I(\Omega)\subset \mathcal H_{I,\text{loc}}^0(\Omega')$ for any open subset $\Omega'\subset\subset\Omega$ (i.e. there exists a compact set $K$ such that $\Omega'\subset K\subset \Omega$).
\end{remark}

\if0Since we are handling complex-valued ODEs in this appendix, where we need to apply L'Hôpital's rule to complex-valued functions, which requires some extra conditions.
\begin{lemma}[\cite{Carter}]\label{Lem.Hopital}
	Let $I\subset \R$ be an interval and $t_0\in I$. Given functions $f:I\to\C$ and $g:I\to\C$. Suppose there exists $\delta\in(0,1)$ such that
	\begin{itemize}
		\item $f$ and $g$ have continuous derivatives in $I_0:=(I\cap(t_0-\delta, t_0+\delta))\setminus\{t_0\}$;
		\item $\lim_{t\to t_0, t\in I}f(t)=\lim_{t\to t_0, t\in I}g(t)=0$;
		\item $g'(t)\neq0$ and $|g|'(t)\neq0$ for $t\in I_0$;
		\item $\lim_{t\to t_0, t\in I}\frac{f'(t)}{g'(t)}\in\R$;
		\item $\frac{|g'|}{|g|'}\in L^\infty(I_0)$.
	\end{itemize}
	Then we have
	\[\lim_{\substack{t\to t_0, t\in I}}\frac{f(t)}{g(t)}=\lim_{\substack{t\to t_0, t\in I}}\frac{f'(t)}{g'(t)}.\]
\end{lemma}
The proof of Lemma \ref{Lem.Hopital} can be found in \cite{Carter}. We emphasize here that, compared with L'Hôpital's rule for real-valued functions, we need an additional condition $\frac{|g'|}{|g|'}\in L^\infty(I_0)$.
\fi

\begin{lemma}\label{Lem.Appen_1}
	Let $I\subset \R$ be an interval and let $A(t)\in C^\infty(I; \C)$ be such that $A(t)=0$ has a unique solution $t=t_0$ in $I$\footnote{It means that $\{t\in I:A(t)=0\}=\{t_0\}$, and also for Proposition \ref{Prop.Appen_1}.} with $A'(t_0)\neq0$. Let $\Omega\subset \C$ be an open subset and let $B(t;\la), D(t;\la)\in C^\infty(I\times\Omega;\C)$ be such that $B, D\in \mathcal H_{I,\operatorname{loc}}^0(\Omega)$, which implies
	\begin{equation*}
		N_0^*:=\sup_{\la\in\Omega}\left(\max\left\{3, -\Re\left(\frac{B(t_0;\la)}{A'(t_0)}\right)+1\right\}\right)<+\infty.
	\end{equation*}
	Then there exists $N_0>N_0^*$
	such that for all $N\in \Z\cap(N_0, +\infty)$, if $f\in \mathcal{H}_{I,\operatorname{loc}}^0(\Omega)$ is such that
	\begin{equation}\label{Eq.Appen_f_high}
		\frac{|f(t;\la)|}{|t-t_0|^N}\in L^\infty(J\times\Omega) \quad\text{ for any compact sub-interval }\quad J\subset I,
	\end{equation}
	then the linear ODE (here the prime $'$ refers to the derivative with respect to $t$)
	\begin{equation}\label{Eq.Appen_ODE}
		A(t)x''(t;\la)+B(t;\la)x'(t;\la)+D(t;\la)x(t;\la)=f(t;\la)
	\end{equation}
	has a (complex-valued) smooth solution $x(t;\la)$ on $I\times \Omega$ such that $x\in\mathcal H_{I}(\Omega)$. 
\if0 and 	\begin{equation}\label{Eq.Appen_ODE_Sol}
		\frac{x(t;\la)}{|t-t_0|^{N+1}}\in L^\infty(J\times \Omega),\quad \frac{x'(t;\la)}{|t-t_0|^{N}}\in L^\infty(J\times \Omega) \quad\forall\text{ compact sub-interval }J\subset I.
	\end{equation}\fi
\end{lemma}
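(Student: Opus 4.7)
I would first reduce to a canonical local form near the singular point. Translating so $t_0=0$ and dividing by the constant $A'(0)$, assume $A'(0)=1$, and factor $A(t)=tA_1(t)$ with $A_1(0)=1$ and $A_1\neq 0$ on some neighborhood $I_1\subset I$ of $0$. Dividing the equation by $A_1$ on $I_1$ yields the normal form
\[t\,x''+\widetilde B(t;\lambda)x'+\widetilde D(t;\lambda)x=\widetilde f(t;\lambda),\qquad \widetilde B(0;\lambda)=B(0;\lambda)/A'(0),\]
with $\widetilde B,\widetilde D,\widetilde f\in\mathcal H_{I_1,\mathrm{loc}}^0(\Omega)\subset\mathcal H_{I_1}(\Omega)$ by Lemma \ref{Lem.smooth_multi_variable}. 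Substituting the formal series $\hat x=\sum_{n\ge N+1}x_n(\lambda)t^n$ and matching the coefficient of $t^m$ for $m\ge N$ produces the recursion
\[p_{m+1}(\lambda)x_{m+1}(\lambda)=\widetilde f_m(\lambda)+(\text{linear combination of }x_{N+1},\ldots,x_m),\]
with indicial polynomial $p_n(\lambda):=n\bigl[(n-1)+\widetilde B(0;\lambda)\bigr]$; the Taylor coefficients $\widetilde f_m(\lambda)$ vanish for $m<N$ by \eqref{Eq.Appen_f_high}. The hypothesis on $N_0^*$ gives $\Re(n-1+\widetilde B(0;\lambda))\ge n-N_0^*$ uniformly in $\lambda\in\Omega$, hence $|p_n(\lambda)|\ge n(n-N_0^*)\ge 1$ for every $n>N_0$ once $N_0>N_0^*$ is chosen large enough. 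Consequently each $x_n(\lambda)$ is uniquely determined, holomorphic on $\Omega$, and locally uniformly bounded.

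Next, I would realize $\hat x$ as the asymptotic expansion of a genuine smooth function via parameter-dependent Borel summation. Fix $\chi\in C_c^\infty(\R)$ equal to $1$ near $0$ and set
\[x_*(t;\lambda):=\sum_{n=N+1}^{\infty}x_n(\lambda)\,\chi(t/\epsilon_n)\,t^n,\]
choosing $\epsilon_n\downarrow 0$ by a standard diagonal procedure so that the series together with every termwise $t$-derivative converges locally uniformly on $I_1\times\Omega$; the quantities $\sup_{\lambda\in K}|x_n(\lambda)|$ on each compact $K\subset\Omega$ enter the selection of $\epsilon_n$, and Cauchy's integral formula promotes uniform bounds in $(t;\lambda)$ to bounds on arbitrary $\lambda$-derivatives. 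Thus $x_*\in\mathcal H_{I_1}(\Omega)$ and $x_*$ has $\hat x$ as $t$-asymptotic expansion at $0$ to every order, uniformly on compacts in $\lambda$, so the residual
\[g(t;\lambda):=f-(Ax_*''+Bx_*'+Dx_*)\in\mathcal H_{I_1}(\Omega)\]
vanishes to infinite order in $t$ at $t=0$, uniformly on compact subsets of $\Omega$.

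Finally, I would correct $x_*$ to an exact solution and extend globally. On the one-sided interval $(0,\delta)\subset I_1$, build linearly independent homogeneous solutions $\psi_1,\psi_2$ via parameter-dependent Frobenius theory associated to the indicial roots $0$ and $r_2(\lambda)=1-\widetilde B(0;\lambda)$, handling resonance (when $r_2(\lambda)\in\Z_{\ge 0}$) via the standard holomorphic normal form with logarithmic corrections. Variation of parameters then yields a particular solution $y_+$ to $Ay''+By'+Dy=-g$ on $(0,\delta)$: the Wronskian of $\psi_1,\psi_2$ behaves like $t^{-\widetilde B(0;\lambda)}$, so the integrands in the variation-of-parameters formula have size $\sim g(s)$ and $\sim g(s)\,s^{\widetilde B(0;\lambda)-1}$ respectively, and the infinite-order vanishing of $g$ makes both integrals absolutely convergent from $0$ and yields $y_+=O(t^K)$ for every $K$. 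Hence $y_+$ extends smoothly to $[0,\delta)$ with all derivatives vanishing at $0$; the symmetric construction on $(-\delta,0)$ produces $y_-$, and setting $x:=x_*+y_\pm$ on each side gives a $C^\infty$ match across $t=0$. On $I\setminus\{0\}$ the coefficient $A$ never vanishes, so the parameter-dependent Picard theorem extends the local solution to all of $I$, producing the required $x\in\mathcal H_I(\Omega)$.

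The main technical obstacle will be the parameter-dependent Borel summation — choosing $\epsilon_n$ small enough to tame $t$-derivatives while invoking Cauchy estimates to promote these bounds to joint smoothness in $(t;\lambda)$ — together with the Frobenius plus variation-of-parameters correction near $t_0$, where resonance between the indicial roots (a possibility not excluded by the hypothesis) requires a careful holomorphic treatment of logarithmic solutions, and where the infinite-order vanishing of $g$ is precisely what is needed to make the singular integrals converge and produce a correction that matches smoothly across $t_0$.
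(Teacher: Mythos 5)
Your proposal is essentially correct but follows a genuinely different route from the paper. The paper works directly in a weighted Banach space $Y_N$ of functions vanishing to order $N$ at $t_0$ and holomorphic in $\la$: it inverts $\operatorname{id}+\mathcal T_N$ by a Neumann series (using $\|\mathcal T_N\|\le M/N<1/2$) to produce a $C^2$ solution with $x=O(|t-t_0|^{N+1})$, then upgrades to $C^\infty$ by several inductions on the order of the derivative, the key device being the explicit integrating factor $\eta(t;\la)=|t|^{B(t_0;\la)/A'(t_0)}\exp(\int \widetilde B/A)$ for the once-differentiated equation; joint smoothness then comes from Lemma \ref{Lem.smooth_multi_variable} and the extension step is the same as yours. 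Your scheme instead splits the solution into a Borel-summed formal part $x_*$ plus a flat correction obtained by variation of parameters; this is more conceptual and cleanly isolates where the hypothesis $N>N_0^*$ enters (non-vanishing of the indicial factors $n-1+B(t_0;\la)/A'(t_0)$), at the price of the parameter-dependent Borel summation and the analysis of singular integrals near $t_0$. One caveat: you cannot literally invoke ``Frobenius theory'' for the homogeneous equation, since the coefficients are only smooth in $t$, not analytic, so there are no convergent Frobenius series and the resonance/logarithm discussion is moot. This is harmless for your argument: all you actually use is Abel's formula $|W(s;\la)|\asymp|s-t_0|^{-\Re(B(t_0;\la)/A'(t_0))}$ together with a polynomial bound $|\psi_i(s;\la)|\lesssim|s-t_0|^{-C}$ on an arbitrary fundamental system (obtained by Gr\"onwall in the variable $\log|s-t_0|$ for the system in $(y,(s-t_0)y')$), and these suffice to make the variation-of-parameters integrals converge and produce a correction that is flat to all orders, uniformly on compact subsets of $\Omega$. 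With that replacement, and with holomorphy in $\la$ of the fundamental system supplied by Lemma \ref{Lem.Appen_Analytic}, your argument closes.
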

\begin{proof}
	Without loss of generality, we assume that $t_0=0\in {I}$, and there exists $\delta_0\in(0, 1)$ such that $I_0=[-\delta_0, \delta_0]\subset I$ or $I_0=[0, \delta_0]= I\cap [-\delta_0, \delta_0]$.
	
{\textbf{Step 1.}} Existence of a $C^2$ local solution. We define the Banach space
	\begin{equation*}
		Y_N:=\left\{y\in C(I_0\times\Omega;\C): {y(t;\la)}/{|t|^N}\in L^\infty(I_0\times\Omega) \text{ and $y(t;\cdot)\in\operatorname{Hol}(\Omega)\ \forall\ t\in I_0$}\right\},
	\end{equation*}
	where $N\geq 3$ is an integer, with the norm
	$\|y\|_{Y_N}:=\left\|{y(t;\la)}/{|t|^N}\right\|_{L^\infty(I_0\times\Omega)}.$

	We define a linear operator $\mathcal T_N: Y_N\to Y_N$ by
	\[(\mathcal T_Ny)(t;\la):=\int_0^t\left(\frac{B(s;\la)}{A(s)}y(s;\la)+\frac{D(s;\la)}{A(s)}\int_0^sy(\tau;\la)\,\mathrm d\tau\right)\,\mathrm ds, \qquad \forall\ t\in I_0,\ \forall\ \la\in\Omega.\]
	By the hypotheses on the coefficients $A, B, D$, we have
	\begin{equation}\label{Eq.Appen_M}
		M:=\sup_{s\in I_0}\left|\frac s{A(s)}\right|+\sup_{s\in I_0,\la\in\Omega}\left|\frac{s B(s;\la)}{A(s)}\right|+\sup_{s\in I_0,\la\in\Omega}\left|\frac{s D(s;\la)}{A(s)}\right|\in (0, +\infty).
	\end{equation}
	Hence, for all $t\in I_0\subset[-1,1]$ and for all $\la\in \C$ we have
	\begin{align*}
		\left|(\mathcal T_Ny)(t;\la)\right|&=\left|\int_0^t\left(\frac{s B(s;\la)}{A(s)}\frac{y(s;\la)}{s^N}s^{N-1}+\frac{s D(s;\la)}{A(s)}\frac1s\int_0^s\frac{y(\tau;\la)}{\tau^N}\tau^N\,\mathrm d\tau\right)\,\mathrm ds\right|\\
		&\leq \frac{M}{N}|t|^N\|y\|_{Y_N},
	\end{align*}
	which gives
	\begin{equation}\label{Eq.Appen_T_N}
		\left\|\mathcal T_N\right\|_{Y_N\to Y_N}\leq \frac{M}{N},\qquad\forall\ N\in\Z\cap[3,+\infty).
	\end{equation}
	We also define an operator $\mathcal F_N:Y_N\to Y_N$ by
	\[(\mathcal F_Nf)(t;\la):=\int_0^t\frac{f(s;\la)}{A(s)}\,\mathrm ds, \qquad \forall\ t\in I_0,\ \forall\ \la\in\Omega.\]
	Using \eqref{Eq.Appen_M}, we know that $\mathcal F_N:Y_N\to Y_N$ is a bounded linear operator with $\left\|\mathcal F_Nf\right\|_{Y_N}\leq \frac{M}{N}\|f\|_{Y_N}$ for all $f\in Y_N$.	Now we take $N_0\in\Z$ such that $N_0>N_0^*+2M$.
	For any $N\in \Z\cap(N_0, +\infty)$, by \eqref{Eq.Appen_T_N} we know that $\|\mathcal T_N\|_{Y_N\to Y_N}\leq 1/2$, hence $\operatorname{id}+\mathcal{T}_N:Y_N\to Y_N$ is invertible, then $(\operatorname{id}+\mathcal T_N)^{-1}\mathcal F_N: Y_N\to Y_N$ is a bounded linear operator with
	\begin{equation*}
		\left\|(\operatorname{id}+\mathcal T_N)^{-1}\mathcal F_N\right\|_{Y_N\to Y_N}\leq \left\|(\operatorname{id}+\mathcal T_N)^{-1}\right\|_{Y_N\to Y_N}\left\|\mathcal F_N\right\|_{Y_N\to Y_N}\leq \frac{2M}{N}.
	\end{equation*}
	For any $N\in \Z\cap(N_0, +\infty)$, given $f\in \mathcal{H}_{I,\text{loc}}^0(\Omega)$ satisfying \eqref{Eq.Appen_f_high} (then $f\in Y_N$), we define $$y=(\operatorname{id}+\mathcal T_N)^{-1}\mathcal F_Nf\in Y_N,\quad x(t;\la)=\int_0^ty(s;\la)\,\mathrm ds,\ \forall\ t\in I_0,\ \forall\ \la\in\Omega,$$ then $x\in C(I_0\times\Omega)$, $x(\cdot;\la)\in C^1(I_0)$ for all $\la\in\Omega$, $x(t;\cdot)\in \operatorname{Hol}(\Omega)$ for all $t\in I_0$,
	\[x'(t;\la)=\int_0^t\left(-\frac{B(\tau;\la)}{A(\tau)}x'(\tau;\la)-\frac{D(\tau;\la)}{A(\tau)}x(\tau;\la)+
\frac{f(\tau;\la)}{A(\tau)}\right)\,\mathrm d\tau, \quad \forall\ t\in I_0,\ \forall\ \la\in\Omega,\]
and $x'(t;\la)=y(t;\la)$, $x(t;\la)=\int_0^tx'(s;\la)\,\mathrm ds$ for $t\in I_0$ and $\la\in\Omega$.	Moreover, we have (recalling $t_0=0$)
	\begin{equation}\label{Eq.Appen_ODE_Sol_1}
		\frac{x(t;\la)}{|t|^{N+1}}\in L^\infty(I_0\times\Omega),\qquad \frac{x'(t;\la)}{|t|^{N}}\in L^\infty(I_0\times\Omega).
	\end{equation}
	On the other hand, since $x'(t;\la)=\int_0^t X(s;\la)\,\mathrm ds$ for $t\in I_0, \la\in\Omega$, where
	\begin{align*}
		X(s;\la):&=-\frac{B(s;\la)}{A(s)}x'(s;\la)-\frac{D(s;\la)}{A(s)}x(s;\la)+\frac{f(s;\la)}{A(s)}\\
		&=-\frac{sB(s;\la)}{A(s)}\frac{x'(s;\la)}{s^N}s^{N-1}-\frac{sD(s;\la)}{A(s)}\frac{x(s;\la)}{s^{N+1}}s^N+
\frac{s}{A(s)}\frac{f(s;\la)}{s^N}s^{N-1}
	\end{align*}
	for $s\in I_0\setminus\{0\}$ and $\la\in\Omega$, thus there exists a constant $C>0$ such that we have $|X(s;\la)|\leq C|s|^{N-1}$ for $s\in I_0\setminus\{0\}, \la\in\Omega$. As $N>1$, we know that $x'(\cdot;\la)\in C^1(I_0)$ (thus $x(\cdot;\la)\in C^2(I_0)$) and $x''(0;\la)=0$ for $\la\in\Omega$. Hence, $x$ solves \eqref{Eq.Appen_ODE} on $(t,\la)\in I_0\times\C$. We also have
	\begin{equation}\label{Eq.B.10}
		\frac{x''(t;\la)}{|t|^{N-1}}\in L^\infty(I_0\times\Omega).
	\end{equation}

{\textbf{Step 2.} Smoothness of the $C^2$ local solution.} In this step, we show that $x(\cdot;\la)\in C^\infty(I_0)$ for any $\la\in\Omega$. By standard ODE theory, we have $x(\cdot;\la)\in C^\infty(I_0\setminus\{0\})$ for all $\la\in\Omega$. We claim that for any $k\in\Z\cap[0, N]$, there exists a constant $C_k>0$ such that
	\begin{equation}\label{Eq.Appen_induc_1}
		x^{(k)}(0;\la)=0\qquad\text{and}\qquad |x^{(k)}(t;\la)|\leq C_k|t|^{N+1-k},\qquad\forall \ t\in I_0,\ \forall\ \la\in\Omega.
	\end{equation}
	
	We use the induction. By \eqref{Eq.Appen_ODE_Sol_1} and \eqref{Eq.B.10}, we know that \eqref{Eq.Appen_induc_1} holds for $k\in\{0,1,2\}$. Assume that for some $K\in\Z\cap[1, N-1]$, \eqref{Eq.Appen_induc_1} holds for all $k\in\Z\cap[0, K]$. Now we prove that \eqref{Eq.Appen_induc_1} holds for $k=K+1$. By our induction hypotheses, $x(\cdot;\la)\in C^{(K)}(I_0)$ and $x^{(K+1)}(0;\la)=\lim_{t\to0}(x^{(K)}(t;\la)/t)=0$ for all $\la\in\Omega$. For $t\in I_0\setminus\{0\}$, taking derivative $K-1$ times on both sides of \eqref{Eq.Appen_ODE} with respect to $t$, we obtain
	\begin{equation}\label{Eq.Appen_k-1_derivative}
		A(t)x^{(K+1)}(t;\la)+\sum_{j=0}^{K}A_{j, K}(t;\la)x^{(j)}(t;\la)=f^{(K-1)}(t;\la),\quad \forall\ t\in I_0\setminus\{0\},\ \forall\ \la\in\Omega,
	\end{equation}
	where $A_{j,K}$'s are linear combinations of $A, B, D$ and their derivatives, hence $A_{j,K}(t;\la)\in C^\infty (I\times\Omega)\cap L^\infty(I_0\times\Omega)$ for all $j\in\Z\cap[0, K]$.\footnote{\label{14}\eqref{Eq.Appen_k-1_derivative} and the properties of $A_{j,K}$'s holds for all $K\in\Z_+$ (not merely for $K\in\Z\cap[1, N-1]$), and we also have $A_{K, K}(t;\la)=(K-1)A'(t)+B(t;\la)$ for all $t\in I_0, \la\in\Omega$.} As $f\in \mathcal H_{I,\text{loc}}^0(\Omega)$ satisfies \eqref{Eq.Appen_f_high}, we have\footnote{Indeed, \eqref{Eq.Appen_f_high} implies that $f^{(k)}(0;\la)=0$ for any $k\in\Z\cap[0, N-1]$ and any $\la\in\Omega$. As a consequence, we have $|f^{(N-1)}(t;\la)|=\left|\int_0^tf^{(N)}(s;\la)\,\mathrm ds\right|\leq \left(\sup_{s\in I_0,\la\in\Omega}|f^{(N)}(s;\la)|\right)|t|$ for all $t\in I_0,  \la\in\Omega$, where we have used $f\in \mathcal H_{I_0}^0(\Omega)$. Similarly one shows that $f^{(k)}(t;\la)/|t|^{N-k}\in L^\infty(I_0\times\Omega)$ for all $k\in \Z\cap[0, N]$.} $f^{(k)}(t;\la)/|t|^{N-k}\in L^\infty(I_0\times\Omega)$ for all $k\in \Z\cap[0, N]$. Therefore,
	\[\frac{\left|x^{(K+1)}(t;\la)\right|}{|t|^{N-K}}=\frac{|t|}{|A(t)|}\frac{\left|f^{(K-1)}(t;\la)-\sum_{j=0}^{K}A_{j, K}(t;\la)x^{(j)}(t;\la)\right|}{|t|^{N-(K-1)}}\in L^\infty(I_0\times\Omega).\]
	This proves \eqref{Eq.Appen_induc_1} for $k=K+1$. Hence, \eqref{Eq.Appen_induc_1} holds by the induction and thus $x(\cdot;\la)\in C^N(I_0)$ for all $\la\in\Omega$ and
	\begin{equation}\label{Eq.B.14}
		\sup_{t\in I_0,\la\in\Omega}|x^{(k)}(t;\la)|<+\infty,\quad\forall\ k\in\Z\cap[0, N].
	\end{equation}
	
	Next we claim that for $k\in\Z\cap[N,+\infty)$ we have
	\begin{equation}\label{Eq.Appen_induc_2}
		\sup_{t\in I_0\setminus\{0\},\la\in\Omega}|x^{(k)}(t;\la)|<+\infty.
	\end{equation}
	By \eqref{Eq.Appen_induc_1}, we know that \eqref{Eq.Appen_induc_2} holds for $k=N$. Assume that for some $k\in\Z_{\geq N}$ we have
	\begin{equation}\label{Eq.Appen_induc_hypo}
		\sup_{t\in I_0\setminus\{0\},\la\in\Omega}|x^{(N)}(t;\la)|<+\infty,\cdots, \sup_{t\in I_0\setminus\{0\},\la\in\Omega}|x^{(k)}(t;\la)|<+\infty.
	\end{equation}
	For $t\in I_0\setminus\{0\}$ and $\la\in\Omega$, by \eqref{Eq.Appen_k-1_derivative} for $K=k+1$ and footnote \ref{14} we have
	\begin{align}
		A(t)x^{(k+2)}&(t;\la)+(kA'(t)+B(t;\la))x^{(k+1)}(t;\la)=F_k(t;\la),\label{Eq.Appen_k_derivative}\\
		&F_k(t;\la):=f^{(k)}(t;\la)-\sum_{j=0}^{k}A_{j, k+1}(t;\la)x^{(j)}(t;\la).\label{Fk}
	\end{align}
	Then by $f\in \mathcal H_{I_0}^0(\Omega)$, \eqref{Eq.B.14} and \eqref{Eq.Appen_induc_hypo} we have $\sup_{t\in I_0\setminus\{0\},\la\in\Omega}|F_k(t;\la)|<+\infty$. Let $\widetilde A(t):=\int_0^1A'(ts)\,\mathrm ds$ and 
$\widetilde B(t;\la):=B(t;\la)-\frac{B(0;\la)}{A'(0)}\widetilde A(t)$ for $t\in I_0, \la\in\Omega$,  then $\widetilde B(0;\la)=0$, $\widetilde A\in C^{\infty}(I_0)$, $\widetilde B\in C(I_0\times\Omega)$ for $\la\in\Omega$, $\partial_t\widetilde B\in L^{\infty}(I_0\times\Omega) $ and $\widetilde A(t)={A(t)}/{t}$ for $ t\in I_0\setminus\{0\}$. Thus, $\widetilde B(t;\la)/t\in L^{\infty}(I_0\times\Omega)$ and $\widetilde B(t;\la)/A(t)\in L^{\infty}(I_0\times\Omega)$ (using \eqref{Eq.Appen_M}). 

Let $\eta(t;\la):=|t|^{\frac{B(0;\la)}{A'(0)}}\exp\left(\int_0^t\frac{\widetilde B(s;\la)}{A(s)}\,\mathrm ds\right) $ then {(here $\eta $ is different from the one in \eqref{Eq.rho_*_phi_*})} \begin{equation}\label{Eq.Appen_eta_asy}
		C_\eta^{-1}|t|^{\operatorname{Re}\frac{B(0;\la)}{A'(0)}}\leq|\eta(t;\la)|\leq C_\eta|t|^{\operatorname{Re}\frac{B(0;\la)}{A'(0)}},\qquad \forall\ t\in I_0\setminus\{0\},\ \forall\ \la\in\Omega
	\end{equation}
	for some constant $C_\eta>0$. We also have $\eta\in C^\infty((I_0\setminus\{0\})\times\Omega;\C\setminus\{0\})$ and 
\begin{align*}
	\frac{\eta'(t;\la)}{\eta(t;\la)}=\frac{\widetilde B(t;\la)}{A(t)}+\frac{B(0;\la)}{A'(0)t}=\frac{ B(t;\la)}{A(t)}-\frac{B(0;\la)}{A'(0)}\frac{\widetilde A(t)}{A(t)}+\frac{B(0;\la)}{A'(0)t}=\frac{ B(t;\la)}{A(t)},\quad \forall\ t\in I_0\setminus\{0\}.
	\end{align*}Here we used $\widetilde A(t)={A(t)}/{t}$.
\if0\[\eta(t;\la):=\begin{cases}\exp\left(-\int_t^{\delta_0}\frac{B(s;\la)}{A(s)}\,\mathrm ds\right), & t\in(0, \delta_0], \la\in\Omega,\\ \exp\left(\int_{-\delta_0}^t\frac{B(s;\la)}{A(s)}\,\mathrm ds\right), & t\in[-\delta_0, 0), \la\in\Omega,\quad \text{if}\ I_0=[-\delta_0,\delta_0]\end{cases}\]
	then , $\eta'(t;\la)/\eta(t;\la)=\widetilde B(t;\la)/A(t)+\frac{B(0;\la)}{A'(0)t}= B(t;\la)/A(t)+\frac{B(0;\la)}{A'(0)t}$ for $t\in I_0\setminus\{0\}, \la\in\Omega$ and\footnote{Proof of \eqref{Eq.Appen_eta_asy}: We only consider $t\in (0,\delta_0]$. Denote $\tilde B(t;\la):=tB(t;\la)/A(t)$ for $t\in(0,\delta_0], \la\in\Omega$, then \begin{align*}
			\tilde B'(t;\la)=\frac{t}{A(t)}B'(t;\la)+\frac{B(t;\la)(A(t)-tA'(t))}{A(t)^2}=\frac{t}{A(t)}B'(t;\la)+\frac{t^2B(t;\la)}{A(t)^2}\frac1{t^2}\int_0^t(A'(s)-A'(t))\,\mathrm ds,
	\end{align*} hence $\|\tilde B'\|_{L^\infty((0,\delta_0]\times\Omega)}\leq M\|B'\|_{L^\infty(I_0\times\Omega)}+M^2\|B'\|_{L^\infty(I_0\times\Omega)}\|A''\|_{L^\infty(I_0)}/2<+\infty$ (recalling the definition of $M$ in \eqref{Eq.Appen_M}). As a result, if we denote $g(t;\la):=\Re\tilde B(t;\la)$ and $g(0;\la):=\lim_{t\to0+}g(t;\la)=\Re(B(0;\la)/A'(0))$ for $t\in(0,\delta_0], \la\in\Omega$, then $\|g\|_{L^\infty([0,\delta_0]\times\Omega)}+\|g'\|_{L^\infty((0,\delta_0]\times\Omega)}<+\infty$ and \begin{align*}
	\ln|\eta(t;\la)|=-\int_t^{\delta_0}\frac{g(s;\la)}s\,\mathrm ds=-\int_t^{\delta_0}\frac{g(s;\la)-g(0;\la)}s\,\mathrm ds+g(0;\la)(\ln t-\ln\delta_0),
\end{align*} thus \begin{align*}
\left|\ln\frac{|\eta(t;\la)|}{t^{g(0;\la)}}\right|&=\left|\ln|\eta(t;\la)|-g(0;\la)\ln t\right|\leq \int_t^{\delta_0}\left|\frac{g(s;\la)-g(0;\la)}{s}\right|\,\mathrm ds+|g(0;\la)||\ln\delta_0|\\&\leq \delta_0\|g'\||_{L^\infty((0,\delta_0]\times\Omega)}+|\ln\delta_0|\|g\|_{L^\infty([0,\delta_0]\times\Omega)}
\end{align*} for all $t\in (0,\delta_0],\la\in\Omega$. This proves \eqref{Eq.Appen_eta_asy} for $t\in(0,\delta_0]$. The proof for $t\in[-\delta_0, 0)$ is along the same track.}
\fi	
	It follows from \eqref{Eq.Appen_k_derivative} that
	\begin{equation*}
		\left(A(t)^k\eta(t;\la)x^{(k+1)}(t;\la)\right)'=A(t)^{k-1}\eta(t;\la)F_k(t;\la),\qquad\forall\ t\in I_0\setminus\{0\},\ \forall\ \la\in\Omega.
	\end{equation*}
	By \eqref{Eq.Appen_k-1_derivative} for $K=k$ and \eqref{Eq.B.14}, \eqref{Eq.Appen_induc_hypo}, we have $A(t)x^{(k+1)}(t;\la)\in L^\infty((I_0\setminus\{0\})\times \Omega)$; using \eqref{Eq.Appen_eta_asy}, $|A(t)|\sim |t|$ as $t\to0$ and \begin{equation}\label{k>}k\geq N>N_0>-\inf_{\la\in\Omega}\Re\left(B(0;\la)/A'(0)\right)+1,\end{equation} we have
	\[\lim_{t\to0} A(t)^k\eta(t;\la)x^{(k+1)}(t;\la)=0,\quad \forall\ \la\in\Omega,\]
	hence
	\[A(t)^k\eta(t;\la)x^{(k+1)}(t;\la)=\int_0^tA(s)^{k-1}\eta(s;\la)F_k(s;\la)\,\mathrm ds,\qquad\forall\ t\in I_0\setminus\{0\},\ \forall\ \la\in\Omega.\]
	As a consequence, we have
	\begin{align*}
		\left|x^{(k+1)}(t;\la)\right|=\frac{\left|\int_0^tA(s)^{k-1}\eta(s;\la)F_k(s;\la)\,\mathrm ds\right|}{|A(t)|^k|\eta(t;\la)|}\leq C_{k+1}\frac{\int_0^{|t|}s^{k-1}s^{\Re(B(0;\la)/A'(0))}\,\mathrm ds}{|t|^k|t|^{\Re(B(0;\la)/A'(0))}}\leq \tilde C_{k+1}
	\end{align*}
	for all $t\in I_0\setminus\{0\}$ and $\la\in\Omega$, where $C_{k+1}>0$ and $\tilde C_{k+1}>0$ are constants. Here we have used $k+\inf_{\la\in\Omega}\Re(B(0;\la)/A'(0))>0$, which follows from \eqref{k>}. This proves \eqref{Eq.Appen_induc_2}. 
	
	Next we use once again the induction to prove that
	\begin{equation}\label{Eq.Appen_induc_3}
		x^{(k)}(0;\la) \text{ exists and } \lim_{t\to0}x^{(k)}(t;\la)=x^{(k)}(0;\la),\qquad\forall\ \la\in\Omega,\ \forall\  k\in\Z\cap[0,+\infty).
	\end{equation}
	We know from \eqref{Eq.Appen_induc_1}
that \eqref{Eq.Appen_induc_3} holds for $k\leq N$. Now we assume that for some $k\in\Z_{\geq N}$, \eqref{Eq.Appen_induc_3} holds for $0, 1,\cdots, k$. Then by \eqref{Fk}, we have $F_k(\cdot;\la)\in C(I_0)$, by \eqref{Eq.Appen_induc_2} with $k$ replaced by $k+2$ and $A(0)=0$ we have $\lim_{t\to0}A(t)x^{(k+2)}(t;\la)=0$, and by \eqref{Eq.Appen_k_derivative} we have
	\[\lim_{t\to0} x^{(k+1)}(t;\la)=\frac {F_k(0;\la)}{kA'(0)+B(0;\la)}\in\C,\qquad\forall\ \la\in\Omega,\]
	where we have used $kA'(0)+B(0;\la)\neq 0$, which follows from $k>-\inf_{\la\in\Omega}\Re(B(0;\la)/A'(0))$ (see \eqref{k>}).
	\if0 Hence
	 Thus, there exists a constant $C$ such that
	\begin{equation}\label{Eq.Appen_integral}
		A(t)^k\eta(t)x^{(k+1)}(t)=C+\int_0^tA(s)^{k-1}\eta(s)F(s)\,\mathrm ds, \qquad\forall\ t\in(0, \delta_0].
	\end{equation}
	If $C\neq 0$, then by \eqref{Eq.Appen_eta_asy} and $A(t)\sim t$ we have $x^{(k+1)}(t)\sim t^{-k-B(0)/A'(0)}$ as $t\to0+$, hence
	$$x^{(k)}(t)=x^{(k)}(\delta_0)-\int_t^{\delta_0}x^{(k+1)}(s)\,\mathrm ds\sim t^{1-k-B(0)/A'(0)}, \qquad t\to0+,$$
	which contradicts with our assumption that $x^{(k)}$ is continuous at $t=0$, recalling that $k\geq N>N_0>-B(0)/A'(0)+1$. Hence $C=0$. Similarly we consider \eqref{Eq.Appen_integral} in $[-\delta_0, 0)$ and thus
	\begin{equation}
		A(t)^k\eta(t)x^{(k+1)}(t)=\int_0^tA(s)^{k-1}\eta(s)F(s)\,\mathrm ds, \qquad\forall\ t\in I_0\setminus\{0\}.
	\end{equation}
	As a result, we have
	\begin{align}
		\lim_{t\to0}x^{(k+1)}(t)&=\lim_{t\to0}\frac{\int_0^tA(s)^{k-1}\eta(s)F(s)\,\mathrm ds}{A(t)^k\eta(t)}\overset{(*)}{=}\lim_{t\to0}\frac{A(t)^{k-1}\eta(t)F(t)}{kA(t)^{k-1}A'(t)\eta(t)+A(t)^k\eta'(t)}\label{Eq.*}\\
		&=\lim_{t\to0}\frac{F(t)}{kA'(t)+A(t)\frac{\eta'(t)}{\eta(t)}}=\lim_{t\to0}\frac{F(t)}{kA'(t)+B(t)}=\frac{F(0)}{kA'(0)+B(0)}\in\C,\nonumber
	\end{align}
	where we have used $k\geq N>N_0>-\Re(B(0)/A'(0))$ (hence $kA'(0)+B(0)\neq0$). Here in $(*)$ we used L'Hôpital's rule for complex-valued functions, i.e., Lemma \ref{Lem.Hopital}, hence we need to check the validity of conditions. Denote $g(t)=A(t)^k\eta(t)$ for $t\in I_0\setminus\{0\}$, then we are going to show that $|g'|/|g|'$ is bounded in a small (deleted) neighborhood of $t=0$. Indeed, we have
	\begin{align*}
		g'(t)&=kA(t)^{k-1}A'(t)\eta(t)+A(t)^k\eta'(t)=\eta(t)A(t)^{k-1}\left(kA'(t)+A(t)\eta'(t)/\eta(t)\right)\\
		&=\eta(t)A(t)^{k-1}(kA'(t)+B(t)),
	\end{align*}
	hence $|g'(t)|=|\eta(t)||A(t)|^{k-1}|kA'(t)+B(t)|$; as $|g(t)|=|A(t)|^k|\eta(t)|$, $|h|'=\Re(\bar hh')/|h|=|h|\Re(\bar hh'/|h|^2)=|h|\Re(h'/h)$ for any $h$ and $\eta'/\eta=B/A$, we have
	\begin{align*}
		|g(t)|'&=k|A(t)|^{k-1}|A(t)|\Re\left(\frac{A'(t)}{A(t)}\right)|\eta(t)|+|A(t)|^k|\eta(t)|\Re\left(\frac{\eta'(t)}{\eta(t)}\right)\\
		&=|\eta(t)||A(t)|^k\Re\left(\frac{ kA'(t)+B(t)}{A(t)}\right);
	\end{align*}
	thus
	\[\frac{|g'(t)|}{|g(t)|'}=\frac{\left|(kA'(t)+B(t))/A(t)\right|}{\Re\left((kA'(t)+B(t))/A(t)\right)}.\]
	It suffices to show that
	\begin{equation}\label{Eq.A.14}
		\lim_{t\to0}\frac{\left|(kA'(t)+B(t))/A(t)\right|}{\Re\left((kA'(t)+B(t))/A(t)\right)}\in\R.
	\end{equation}
	Denote $q(t)=\frac{kA'(t)+B(t)}{A(t)}=\frac{(kA'(t)+B(t))\overline{A(t)}}{|A(t)|^2}$, then $$\frac{|q(t)|}{\Re q(t)}=\sqrt{1+\left(\frac{\Im q(t)}{\Re q(t)}\right)^2},$$
	and by L'Hôpital's rule for real-valued functions we have (recalling $A(0)=0, A'(0)\neq0$)
	\begin{align*}
		\lim_{t\to0}\frac{\Im q(t)}{\Re q(t)}&=\lim_{t\to0}\frac{\Im \left(kA'(t)\overline{A(t)}+B(t)\overline{A(t)}\right)}{\Re \left(kA'(t)\overline{A(t)}+B(t)\overline{A(t)}\right)}\\
		&=\lim_{t\to0}\frac{\Im \left(kA''(t)\overline{A(t)}+kA'(t)\overline{A'(t)}+B'(t)\overline{A(t)}+B(t)\overline{A'(t)}\right)}{\Re \left(kA''(t)\overline{A(t)}+kA'(t)\overline{A'(t)}+B'(t)\overline{A(t)}+B(t)\overline{A'(t)}\right)}\\
		&=\frac{\Im\left(k|A'(0)|^2+B(0)\overline{A'(0)}\right)}{\Re\left(k|A'(0)|^2+B(0)\overline{A'(0)}\right)}\in\R,
	\end{align*}
	where we have used $\Re\left(k|A'(0)|^2+B(0)\overline{A'(0)}\right)=|A'(0)|^2\big(k+\Re\left(B(0)/A'(0)\right)\big)\neq0$. This proves \eqref{Eq.A.14}, hence checks the validity of $(*)$ in \eqref{Eq.*}. \fi
	Finally, we get by L'Hôpital's rule that
	\[x^{(k+1)}(0;\la)=\lim_{t\to0}\frac{x^{(k)}(t;\la)-x^{(k)}(0)}{t}=\lim_{t\to0}x^{(k+1)}(t;\la),\qquad\forall\la\in\Omega.\]
	This proves \eqref{Eq.Appen_induc_3} for $k+1$. Then \eqref{Eq.Appen_induc_3} holds for all $k\in\Z_{\geq N}$ by the induction. Hence, $x(\cdot;\la)\in C^\infty(I_0)$ for all $\la\in\Omega$. Moreover, combining \eqref{Eq.B.14} and \eqref{Eq.Appen_induc_2} gives that
	\begin{equation}\label{Eq.Appen_x_smooth_bound}
		\pa_t^jx\in L^\infty(I_0\times\Omega),\qquad\forall\ j\in\Z_{\geq 0}.
	\end{equation}
	
{\textbf{Step 3.} $x\in C^\infty(I_0\times\Omega)$.} Recall that $x\in C(I_0\times\Omega)$ satisfies $x(t;\cdot)\in \operatorname{Hol}(\Omega)$ for all $t\in I_0$ (in {\bf Step 1}) and $x(\cdot;\la)\in C^\infty(I_0)$ for all $\la\in\Omega$ (in {\bf Step 2}). Using \eqref{Eq.Appen_x_smooth_bound} we have $x\in \mathcal H_{I_0}^0(\Omega)$. Then Lemma \ref{Lem.smooth_multi_variable} implies that $x\in C^\infty(I_0\times\Omega)$.\smallskip
	
{\textbf{Step 4.} Extension of the smooth local solution.} For any fixed $\la\in\Omega$, we have constructed a local solution $x_L(\cdot;\la)\in C^\infty(I_0)$ of \eqref{Eq.Appen_ODE} on $I_0\subset I$. Moreover, we have $x_L\in C^\infty(I_0\times\Omega)$ and $x_L(t;\cdot)\in \operatorname{Hol}(\Omega)$ for all $t\in I_0$. By standard ODE theory, the initial value problem
	\[\begin{cases}
		x''(t;\la)+\frac{B(t;\la)}{A(t)}x'(t;\la)+\frac{D(t;\la)}{A(t)}x(t;\la)=\frac{f(t;\la)}{A(t)},\\
		x(\delta_0/2;\la)=x_L(\delta_0/2;\la), x'(\delta_0/2;\la)=x_L'(\delta_0/2;\la)
	\end{cases}\]
	has a unique solution $x=x(t;\la)$ on $((0, +\infty)\cap I)\times\Omega$ and $x\in C^\infty(((0, +\infty)\cap I)\times\Omega)$. Moreover, by the analytic dependence on parameters (Lemma \ref{Lem.Appen_Analytic}), we have $x(t;\cdot)\in \operatorname{Hol}(\Omega)$ for all $t\in (0, +\infty)\cap I$.  Hence, $x_L$ can be extended to be a smooth solution of \eqref{Eq.Appen_ODE} on $((0, +\infty)\cap I)\times\Omega$; Similarly we can extend $x_L$ on the negative direction (for the case $I_0=[-\delta_0,\delta_0]$). And for the extended solution $x$, we have $x\in \mathcal H_I(\Omega)$.
\end{proof}

\begin{proposition}\label{Prop.Appen_1}
	Let $I\subset \R$ be an interval. Let $A(t)\in C^\infty(I;\C)$ be such that $A(t)=0$ has a unique solution $t=t_0$ in $I$ with $A'(t_0)\neq0$. Let $B(t;\la), D(t;\la)\in\mathcal H_I(\C)$. Assume that $B(t;\la)=\tilde B(t)+\la\hat B(t)$ for $t\in I$ and $\la\in\C$, where $\tilde B, \hat B\in C^\infty(I;\C)$. Suppose that
	\begin{equation}\label{Eq.Appen_B_nondegenerate}
		\text{either }\hat B(t_0)\neq0\quad \text{or}\quad \hat B(t_0)=0 \text{ and }-\tilde B(t_0)/A'(t_0)\notin\Z_{\geq 0}.
	\end{equation}
	We define 
	\begin{equation}
		\La_*:=\left\{\la\in\C: nA'(t_0)+B(t_0;\la)=0\text{ for some }n\in\Z_{\geq 0}\right\}.
	\end{equation}
	Then $\La_*\subset\C$ is a (probably empty) discrete set. Let $R\in(0,+\infty)$. There exists a nonzero polynomial $ \psi_1(\lambda)$ satisfying $\{\lambda\in B_R: \psi_1(\lambda)=0\}=\La_*\cap B_R$ such that for every $f(t;\la)\in\mathcal H_I(\C) $, the inhomogeneous ODE
	\begin{equation}\label{Eq.Appen_ODE1}
		\begin{cases}
			A(t)x''(t;\la)+B(t;\la)x'(t;\la)+D(t;\la)x(t;\la)=\psi_1(\lambda)f(t;\la),\quad t\in I, \la\in B_R,\\
			x(t_0;\la)=\psi_1(\lambda),\quad \la\in B_R,
		\end{cases}
	\end{equation}
	where the prime $'$ refers to the derivative with respect to $t\in I$, has a solution $x=x(t;\la)\in \mathcal H_I(B_R)$. Moreover, if $\hat B(t_0)=0$, then $ \psi_1(\lambda)=1$.
\end{proposition}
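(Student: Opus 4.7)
The plan is to write the solution as $x(t;\la)=P_N(t;\la)+y(t;\la)$ where $P_N(t;\la)=\sum_{k=0}^N a_k(\la)(t-t_0)^k$ is a polynomial in $t-t_0$ that matches the equation to high order at the singular point $t_0$, and $y$ is a remainder vanishing to order at least $N+1$; the remainder is then produced by an application of Lemma \ref{Lem.Appen_1}.

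First I would establish discreteness of $\La_*$. Solving $n A'(t_0)+B(t_0;\la)=0$ for $\la$ gives $\la_n=-(nA'(t_0)+\tilde B(t_0))/\hat B(t_0)$ when $\hat B(t_0)\neq 0$, a sequence with $|\la_n|\to\infty$; when $\hat B(t_0)=0$ the hypothesis $-\tilde B(t_0)/A'(t_0)\notin\Z_{\geq 0}$ rules out every $n$, so $\La_*=\emptyset$. Next I would analyse the formal recursion. Plugging Taylor expansions of $A$, $B$, $D$, $f$ at $t_0$ into $Ax''+Bx'+Dx=\psi_1(\la)f$ and matching the coefficient of $(t-t_0)^m$ for $m=0,\dots,N-1$ yields
\[
(m+1)\,\alpha_m(\la)\,a_{m+1}(\la)=\psi_1(\la)f_m(\la)-R_m(\la;a_0,\ldots,a_m),\qquad \alpha_m(\la):=mA'(t_0)+B(t_0;\la),
\]
where $R_m$ is linear in $a_0,\ldots,a_m$ with $\la$-holomorphic coefficients. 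The only obstruction to solving for $a_{m+1}$ is that $\alpha_m$ may vanish at $\la=\la_m$.

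To remove this obstruction on $B_R$, I would choose $N$ so large that both $N$ exceeds the threshold $N_0$ required by Lemma \ref{Lem.Appen_1} for $\Omega=B_R$, and for every $n\geq N$ the root $\la_n$ lies outside $B_R$ (possible because $|\la_n|\to\infty$). Then define
\[
\psi_1(\la):=\prod_{n=0}^{N-1}\alpha_n(\la)\ \ \text{if }\hat B(t_0)\neq 0,\qquad \psi_1(\la):=1\ \ \text{if }\hat B(t_0)=0.
\]
In either case $\psi_1$ is a nonzero polynomial whose zero set inside $B_R$ is precisely $\La_*\cap B_R$, and $\psi_1\equiv 1$ when $\hat B(t_0)=0$. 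Starting from $a_0(\la)=\psi_1(\la)$, an induction shows that each $a_k(\la)$ ($0\leq k\leq N$) is entire holomorphic in $\la$: at step $m$, the division by $\alpha_m(\la)$ is absorbed because $\alpha_m$ is a factor of $\psi_1$ (and hence of every earlier $a_j$ in the inductive ``$a_k$ is a multiple of $\psi_1/\prod_{j<k}\alpha_j$'' structure), or trivially in the $\hat B(t_0)=0$ case since $\alpha_m$ is then a nonzero constant. Consequently $P_N\in\mathcal H_I(\C)$ with $P_N(t_0;\la)=\psi_1(\la)$.

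Finally I would set $y:=x-P_N$, so that $y$ must satisfy $Ay''+By'+Dy=g$ with $g:=\psi_1 f-(AP_N''+BP_N'+DP_N)\in \mathcal H_I(\C)$. By construction $\partial_t^j g(t_0;\la)=0$ for $j=0,\ldots,N-1$, so the integral form of the Taylor remainder gives $g(t;\la)=(t-t_0)^N\tilde g(t;\la)$ with $\tilde g\in\mathcal H_I(\C)$, hence $|g(t;\la)|/|t-t_0|^N\in L^\infty(J\times B_R)$ for every compact $J\subset I$, which is exactly hypothesis \eqref{Eq.Appen_f_high}. Lemma \ref{Lem.Appen_1} applied on $\Omega=B_R$ yields $y\in\mathcal H_I(B_R)$, and tracing through its proof (Step 2 and \eqref{Eq.Appen_ODE_Sol_1}) shows $y$ vanishes to order $\geq N+1$ at $t_0$, so $y(t_0;\la)=0$. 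Thus $x:=P_N+y\in\mathcal H_I(B_R)$ solves \eqref{Eq.Appen_ODE1} with $x(t_0;\la)=a_0(\la)=\psi_1(\la)$. The main difficulty will be the bookkeeping needed to propagate the holomorphy of the $a_k(\la)$: each recursive step divides by $\alpha_m(\la)$, which can vanish inside $B_R$, and one must verify that the factors of $\psi_1$ cancel these zeros cleanly so that no spurious poles appear. This relies crucially on the special affine form $B(t;\la)=\tilde B(t)+\la\hat B(t)$, which makes each $\alpha_m$ either polynomial of degree exactly one or a nonzero constant in $\la$.
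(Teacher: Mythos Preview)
Your proposal is correct and follows essentially the same strategy as the paper: define $\psi_1$ as the product $\prod_{j=0}^{N-1}\alpha_j$ (or $1$ when $\hat B(t_0)=0$), build a polynomial-in-$(t-t_0)$ approximant with $\la$-holomorphic coefficients, and apply Lemma~\ref{Lem.Appen_1} to the $O((t-t_0)^N)$ remainder. The only organizational difference is that the paper packages the construction as a \emph{backward} induction on the vanishing order $n$ (its ``Claim~3''), introducing partial products $\psi_{1,n}(\la)=\prod_{j=n}^{N-1}\alpha_j(\la)$ and at each step multiplying the right-hand side by one factor $\alpha_j$ while subtracting a single monomial $(t-t_0)^{j+1}$; this renders automatic the divisibility bookkeeping you correctly identify as the main difficulty, whereas your forward recursion handles it via the explicit invariant that $a_k$ is divisible by $\prod_{j\ge k}\alpha_j$.
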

\begin{proof}
	We first show that $\La_*$ is a discrete set. If $\hat B(t_0)\neq0$, then $\La_*=\{-nA'(t_0)/\hat B(t_0)-\tilde B(t_0)/\hat B(t_0): n\in\Z_{\geq 0}\}$, hence $\La_*$ is a discrete set. If $\hat B(t_0)= 0$, then $\la_*\in\La_*$ if and only if $0=nA'(t_0)+B(t_0;\la_*)=nA'(t_0)+\tilde B(t_0)=0$ for some $n\in\Z_{\geq 0}$, which implies that $-\tilde B(t_0)/A'(t_0)\in\Z_{\geq 0}$, and this is a contradiction with our assumption \eqref{Eq.Appen_B_nondegenerate}. As a consequence, if $\hat B(t_0)=0$ (and $-\tilde B(t_0)/A'(t_0)\notin\Z_{\geq 0}$), then $\La_*=\emptyset$.
	
		Next, we construct $ \psi_1(\lambda)$. Let $N_0$ be given by Lemma \ref{Lem.Appen_1} (for $\Omega=B_R$) and fix an integer $N>\max\{N_0+1,-\inf_{\la\in B_R}\Re\left(B(t_0;\la)/A'(t_0)\right)+1\}$. Let $ \psi_1(\lambda):=1$ for the case $\hat B(t_0)=0$ and $ \psi_1(\lambda):=\prod_{j=0}^{N-1}\big(jA'(t_0)+B(t_0,\la)\big)$ 
for the case $\hat B(t_0)\neq0$.

{\bf Claim 1.} $ \psi_1(\lambda)$ is a nonzero polynomial. If $\hat B(t_0)=0$, then $ \psi_1(\la)\equiv1$ is a polynomial of degree $0$; if $\hat B(t_0)\neq0$, as $ B(t_0,\la)=\tilde B(t_0)+\la\hat B(t_0)$, then $ \psi_1(\lambda)$ is a polynomial of degree $N$. 

{\bf Claim 2}. $\{\lambda\in B_R: \psi_1(\lambda)=0\}=\La_*\cap B_R$. If $\hat B(t_0)=0$, then $\{\lambda\in B_R: \psi_1(\lambda)=0\}=\emptyset=\La_*=\La_*\cap B_R$. For the case $\hat B(t_0)\neq0$, if $\psi_1(\lambda)=0 $ then $jA'(t_0)+B(t_0,\la)=0$ for some $j\in\Z\cap[0,N-1]$ and $\la\in \La_*$, thus $\{\lambda\in B_R: \psi_1(\lambda)=0\}\subseteq\La_*\cap B_R$. On the other hand, if $\la_0\in \La_*\cap B_R$ (and $\hat B(t_0)\neq0$), then $nA'(t_0)+B(t_0;\la_0)=0$ for some $n\in\Z_{\geq0}$, and $n=-B(t_0;\la_0)/A'(t_0)\leq-\inf_{\la\in B_R}\Re\left(B(t_0;\la)/A'(t_0)\right)<N-1$, thus $n\in\Z\cap[0,N-1]$ and $\psi_1(\lambda_0)=0 $. So $\La_*\cap B_R\subseteq\{\lambda\in B_R: \psi_1(\lambda)=0\}$. 

It remains to construct $x(t;\la)$. For any $n\in \Z\cap[0, N]$ and $\la\in\C$, let $ \psi_{1,n}(\lambda):=1$ for the case $\hat B(t_0)=0$ and $ \psi_{1,n}(\lambda):=\prod_{j=n}^{N-1}\big(jA'(t_0)+B(t_0,\la)\big)$ (here $\psi_{1, N}(\la):=1$ )
for the case $\hat B(t_0)\neq0$. Then $ \psi_{1}(\lambda)=\psi_{1,0}(\lambda)$ for all $\la\in\C$.\smallskip

{\bf Claim 3.} If $n\in\Z\cap[0,N]$, $g(t;\la)\in\mathcal H_I(\C) $, $ \pa_t^ig(t_0;\la)=0$ for $i\in\Z$, $0\leq i<n$. Then 
\begin{equation}\label{eq1}
			A(t)y''(t;\la)+B(t;\la)y'(t;\la)+D(t;\la)y(t;\la)=\psi_{1,n}(\lambda)g(t;\la),\
			y(t_0;\la)=0,\ \la\in B_R,
	\end{equation}has a solution $y=y(t;\la)\in \mathcal H_I(B_R)$.

Let $g(t;\la)=f(t;\la)-D(t;\la)$, $n=0$, then by Claim 3, \eqref{eq1} has a solution $y=y(t;\la)\in \mathcal H_I(B_R)$ with $n=0$. $x(t;\la)=y(t;\la)+\psi_{1}(\lambda)\in \mathcal H_I(B_R)$ solves \eqref{Eq.Appen_ODE1} (using $ \psi_{1}(\lambda)=\psi_{1,0}(\lambda)$).\smallskip

It remains to prove Claim 3. We use the (backward) induction. We need to prove that:
\begin{enumerate}[(i)]
	\item Claim 3 holds for $n=N$; 
	\item if $j\in\Z\cap[0,N-1]$, Claim 3 holds for  $n=j+1$, then Claim 3 holds for  $n=j$.
\end{enumerate}

\underline{Proof of (i)}.  As $g\in\mathcal H_I(\C)\subset \mathcal H_{I,\text{loc}}^0(B_R)$, $n=N$, by Taylor's theorem with integral remainders, we have $g(t;\la)/|t-t_0|^{N}\in L^\infty_\text{loc}(I\times\C)$, and we also have $ \psi_{1,n}(\lambda)=\psi_{1,N}(\lambda)=1$. Then the result follows from Lemma \ref{Lem.Appen_1}.

\underline{Proof of (ii)}. We fix $j\in\Z\cap[0,N-1]$ and assume $g(t;\la)\in\mathcal H_I(\C) $, $ \pa_t^ig(t_0;\la)=0$ for $i\in\Z$, $0\leq i<j$. For $t\in I, \la\in \C$, let $x_j(t):=(t-t_0)^{j+1}$, $y_j(t;\la):=A(t)x_j''(t)+B(t;\la)x_j'(t)+D(t;\la)x_j(t)$, then $x_j\in C^{\infty}(I)$, $x_j(t_0)=0$, $y_j(t;\la)\in \mathcal H_I(\C)$, and \begin{align*}
		&y_j(t;\la)=A(t)j(j+1)(t-t_0)^{j-1}+B(t;\la)(j+1)(t-t_0)^{j}+D(t;\la)(t-t_0)^{j+1}.
	\end{align*}
	By Taylor's formula, we have $\pa_t^iy_j(t_0;\la)=0$ for $i\in\Z$, $0\leq i<j$ and 
	\begin{align*}
		\pa_t^jy_j(t_0;\la)&=j!\lim_{t\to t_0}\frac{y_j(t;\la)}{(t-t_0)^j}=j!\lim_{t\to t_0}j(j+1)\frac{A(t)}{t-t_0}+j!B(t_0;\la)(j+1)\\&=j!j(j+1)A'(t_0)+(j+1)!B(t_0;\la)=(j+1)!(jA'(t_0)+B(t_0;\la)).
	\end{align*}
	
	For the case of $\hat B(t_0)\neq0$, let $a_j=(j+1)!$, $b_j(\la)=jA'(t_0)+B(t_0;\la)$ then $a_j\neq0$, $b_j\in\operatorname{Hol}(\C)$. As $ \psi_{1,n}(\lambda)=\prod_{j=n}^{N-1}\big(jA'(t_0)+B(t_0;\la)\big)$ for $n\in\Z\cap[0,N]$ we have $ \psi_{1,j}(\lambda)=b_j(\la)\psi_{1,j+1}(\lambda)$ and 
$ \pa_t^jy_j(t_0;\la)=(j+1)!(jA'(t_0)+B(t_0;\la))=a_jb_j(\la)$ for all $\la\in\C$.

For the case of $\hat B(t_0)=0$, we have $jA'(t_0)+B(t_0,\la)=jA'(t_0)+\tilde B(t_0)\neq 0$ (using \eqref{Eq.Appen_B_nondegenerate}). Let $a_j=(j+1)!(jA'(t_0)+\tilde B(t_0))$, $b_j(\la)=1$ then $a_j\neq0$, $b_j\in\operatorname{Hol}(\C)$, $\pa_t^jy_j(t_0;\la)=a_j=a_jb_j $. As $ \psi_{1,n}(\lambda)=1$ for $n\in\Z\cap[0,N]$ we have  $\psi_{1,j}(\lambda)=b_j(\la)\psi_{1,j+1}(\lambda)$ for all $\la\in\C$.

Thus, we always have $a_j\neq0$, $b_j\in\operatorname{Hol}(\C)$,  $\psi_{1,j}(\lambda)=b_j(\la)\psi_{1,j+1}(\lambda)$, $ \pa_t^jy_j(t_0;\la)=a_jb_j(\lambda)$.

For $t\in I, \la\in\C$, let $\widetilde g(t;\la):=b_j(\la)g(t;\la)-\pa_t^jg(t_0;\la)\cdot y_j(t;\la)/a_j $ then $\widetilde g\in \mathcal H_I(\C)$,\footnote{\label{footnote.remainder}Here we use the fact that if $x\in \mathcal H_I(\Omega)$, then $x^{(n)}\in \mathcal H_I(\Omega)$ for any $n\in\Z_{\geq 0}$.} and $\pa_t^j\widetilde g(t_0;\la)=0$. As $ \pa_t^ig(t_0;\la)=0$, $\pa_t^iy_j(t_0;\la)=0$ for $i\in\Z$, $0\leq i<j$, we have $ \pa_t^i\widetilde g(t_0;\la)=0$, for $i\in\Z$, $0\leq i<j$. Thus, $ \pa_t^i\widetilde g(t_0;\la)=0$, for $i\in\Z$, $0\leq i\leq j$. 

By the induction assumption (for $n=j+1$), there exists $\widetilde y(t;\la)\in \mathcal H_I(B_R)$ such that\begin{align*}
		&A(t)\widetilde y''(t;\la)+B(t;\la)\widetilde y'(t;\la)+D(t;\la)\widetilde y(t;\la)=\psi_{1,j+1}(\lambda)\widetilde g(t;\la),\quad
			\widetilde y(t_0;\la)=0,\quad  \la\in B_R.
	\end{align*} For all $t\in I, \la\in B_R$, let $ y(t;\la):=\widetilde y(t;\la)+\psi_{1,j+1}(\lambda)\pa_t^jg(t_0;\la)\cdot x_j(t)/a_j$, then $y\in \mathcal H_I(B_R)$, $y(t_0;\la)=0 $ and \begin{align*}
		&A(t) y''(t;\la)+B(t;\la) y'(t;\la)+D(t;\la) y(t;\la)\\
&=\psi_{1,j+1}(\lambda)\widetilde g(t;\la)+\psi_{1,j+1}(\lambda)\pa_t^jg(t_0;\la)\cdot y_j(t;\la)/a_j=\psi_{1,j+1}(\lambda)b_j(\la)g(t;\la)=\psi_{1,j}(\lambda)g(t;\la),
	\end{align*}where we  have used $y_j(t;\la)=A(t)x_j''(t)+B(t;\la)x_j'(t)+D(t;\la)x_j(t)$, $\widetilde g(t;\la)+\pa_t^jg(t_0;\la)\cdot y_j(t;\la)/a_j =b_j(\la)g(t;\la)$ and $\psi_{1,j}(\lambda)=b_j(\la)\psi_{1,j+1}(\lambda)$. Thus, $y\in \mathcal H_I(B_R)$ solves \eqref{eq1} for $n=j$. This completes the proof.
\end{proof}

\if0
\begin{corollary}\label{Cor.Appen_1}
	Let $I\subset \R$ be an interval and let $A(t), B(t;\la), D(t;\la), f(t;\la)\in C^\infty(I\times\C;\C)$ be such that $A(t)=0$ has a unique solution $t=t_0$ in $I$ with $A'(t_0)\neq0$, and $B(t;\cdot), D(t;\cdot), f(t;\cdot)$ are analytic on $\C$ for all $t\in I$. Assume that $B(t;\la)=\tilde B(t)+\la \hat B(t)$, where $\tilde B, \hat B\in C^\infty(I;\C)$ and $\hat B(t_0)\neq 0$. Let
	\begin{align}
		\Lambda_*:&=\left\{\la\in\C: nA'(t_0)+B(t_0;\la)=0\text{ for some }n\in\Z_{\geq 0}\right\}\\
		&=\left\{\la_n:=-\frac{nA'(t_0)+\tilde B(t_0)}{\hat B(t_0)}: n\in\Z_{\geq 0}\right\}.\nonumber
	\end{align}
	Then $\La_*\subset\C$ is a discrete set. Consider the inhomogeneous second order linear ODE with parameter $\la$:
	\begin{equation}\label{Eq.Appen_ODE2}
		A(t)x''(t;\la)+B(t;\la)x'(t;\la)+D(t;\la)x(t;\la)=f(t;\la),
	\end{equation}
	where the prime $'$ refers to the derivative with respect to $t\in I$. Then \eqref{Eq.Appen_ODE2} has a unique solution $x(t;\la)$ for $(t,\la)\in I\times(\C\setminus\La_*)$ such that $x(t_0;\la)=1$, $x(\cdot;\la)\in C^\infty(I)$ and $x(t;\cdot)$ is holomorphic on $\C\setminus\La_*$. Moreover, any $\la_*\in\La_*$ is a (removable) pole of order no more than $1$ of $x(t;\cdot)$. Or equivalently, for any $\la_*\in\La_*$, there exists a $\delta>0$ such that $B_\delta(\la_*)\cap \La_*=\{\la_*\}$ and
	\begin{equation}\label{Eq.Appen_x_meromorphic}
		\left\|(\la-\la_*)x(t;\la)\right\|_{L^\infty_{\la}(B_\delta(\la_*))}\in L^\infty_{\text{loc}}(I),
	\end{equation}
	where $B_\delta(\la_*):=\{\la\in\C:|\la-\la_*|<\delta\}$.
\end{corollary}
\begin{proof}
	Without loss of generality, we assume that $t_0=0$. For any $\la\in\C\setminus\La_*$, the existence and uniqueness of $t\mapsto x(t;\la)\in C^\infty(I)$ is given by Proposition \ref{Prop.Appen_1}. Moreover, by the proof of Proposition \ref{Prop.Appen_1}, we define $\{x_n(\la)\}_{n\geq0}$ defined by $x_0(\la)=1$ and the recurrence relation \eqref{Eq.Appen_recurrence1}, \eqref{Eq.Appen_recurrence2}, then $x_n(\la)$ is holomorphic on $\C\setminus\La_*$ for each $n\in\Z_{\geq 0}$, hence so are $\la\mapsto T_N(t;\la), g(t;\la)$ defined by \eqref{Eq.Appen_Taylor_N}. Then by Proposition \eqref{Prop.Appen_1} we know that $\la\mapsto R(t;\la)$ solving \eqref{Eq.Appen_Remainder} is holomorphic on $\C\setminus\La_*$, hence so is $\la\mapsto x(t;\la)=T_N(x;\la)+R(t;\la)$.
	
	As for $\la_*\in\La_*$, assume that $\la_*=\la_{n_0}$ for some $n_0\in\Z_{\geq 0}$. As
	$$na_1+b_0(\la)=nA'(t_0)+B(t_0;\la)=nA'(t_0)+\tilde B(t_0)+\la\hat B(t_0)=\hat B(t_0)(\la-\la_n),$$
	we know from the recurrence relation that $(\la-\la_{n_0})x_n(\la)$ is analytic at $\la_{n_0}$ for all $n\in\Z_{\geq 0}$. Let $\delta>0$ be such that $B_\delta(\la_{n_0})\cap \La_*=\{\la_{n_0}\}$, and let
	\[N_0>\max\left\{3, -\Re\left(\frac{B(t_0; \lambda)}{A'(t_0)}\right)+1\right\},\qquad\forall\ \la\in B_\delta(\la_{n_0})\]
	be given by Remark \ref{Rmk.Appen_1}. Pick $N>N_0+1$ and define $\tilde T_N(t;\la)=(\la-\la_{n_0})T_N(t;\la)$, then $\tilde T_N(t;\cdot)$ is analytic on $B_\delta(\la_{n_0})$. For $t\in I, \la\in B_\delta(\la_{n_0})$, define
	\[\tilde g(t;\la):=(\la-\la_{n_0})f(t;\la)-A(t)\tilde T_N''(t;\la)-B(t;\la)\tilde T_N'(t;\la)-D(t;\la)\tilde T_N(t;\la).\]
	Then $\tilde g(t;\cdot)$ is analytic on $B_\delta(\la_{n_0})$ for each $t\in I$. We consider the ODE
	\begin{equation}\label{Eq.Appen_Remainder_la}
		A(t)\tilde R''(t;\la)+B(t;\la)\tilde R'(t;\la)+D(t;\la)\tilde R(t;\la)=\tilde g(t;\la).
	\end{equation}
	Then by Remark \ref{Rmk.Appen_1} we know that the ODE \eqref{Eq.Appen_Remainder_la} has a unique solution $\tilde R(t;\la)$ on $I\times B_\delta(\la_{n_0})$ that is smooth with respect to $t$ and analytic with respect to $\la$ with $\tilde R(t;\la)=O(|t|^{N+1})$. Hence $\tilde x(t;\la):=\tilde T_N{t;\la}+\tilde R(t;\la)$ solves the ODE
	\[A(t)\tilde x''(t;\la)+B(t;\la)\tilde x'(t;\la)+D(t;\la)\tilde x(t;\la)=(\la-\la_{n_0})f(t;\la),\qquad \forall\ t\in I;\]
	moreover, $\tilde x(t;\la)$ is smooth with respect to $t$ and analytic with respect to $\la\in B_\delta(\la_{n_0})$. By uniqueness we have $\frac{\tilde x(t;\la)}{\la-\la_{n_0}}=x(t;\la)$ for $t\in I$ and $\la\in B_\delta(\la_{n_0})$. This checks \eqref{Eq.Appen_x_meromorphic}.
\end{proof}
\begin{remark}\label{Rmk.Appen_2}
	Let $I\subset \R$ be an interval and let $A(t)\in C^\infty(I;\C)$ and $B(t;\la), D(t;\la), f(t;\la)\in C^\infty(I\times\C;\C)$ be such that $A(t)=0$ has a unique solution $t=t_0$ in $I$ with $A'(t_0)\neq0$, and $B(t;\cdot), D(t;\cdot), f(t;\cdot)$ are analytic on $\C$ for all $t\in I$. Assume that $B(t;\la)=\tilde B(t)+\la \hat B(t)$, where $\tilde B, \hat B\in C^\infty(I;\C)$ and $\hat B(t_0)=0$. If moreover, there holds $nA'(t_0)+B(t_0;\la)=nA'(t_0)+\tilde B(t_0)\neq0$ for all $n\in\Z_{\geq 0}$, then by the proof of Corollary \ref{Cor.Appen_1} we know that the inhomogeneous ODE \eqref{Eq.Appen_ODE2} has a unique solution $x(t;\la)\in C^\infty(I\times\C;\C)$ such that $x(t_0;\la)=1$ for all $\la\in\C$ and $x(t;\cdot)$ is analytic on $\C$ for all $t\in I$.
\end{remark}
\fi
In the end of this appendix, we prove the analytic dependence on parameters of solutions to linear \emph{regular} ODEs. The following lemma has been used in {\bf Step 4} of the proof of Lemma \ref{Lem.Appen_1}, to show that the extended smooth solution is analytic with respect to the parameter $\la$.

\begin{lemma}\label{Lem.Appen_Analytic}
	Let $\Om\subset\C$ be an open set and $I\subset\R$. Let $p(t;\la), q(t;\la), f(t;\la)\in C^\infty(I\times\Omega;\C)$ be such that $p(t;\cdot), q(t;\cdot), f(t;\cdot)$ are analytic on $\Om$ for each $t\in I$. Let $x_0(\la), x_1(\la)$ be two analytic functions on $\Om$ and let $t_0\in I$. For each $\la\in\Om$, let $x(t;\la) (t\in I)$ be the unique smooth solution to the initial value problem
	\begin{equation*}
		x''(t;\la)+p(t;\la)x'(t;\la)+q(t;\la)x(t;\la)=f(t;\la),\quad x(t_0;\la)=x_0(\la), x'(t_0;\la)=x_1(\la),
	\end{equation*}
	where the prime $'$ refers to the derivative with respect to $t\in I$. Then for each $t\in I$, the function $\la\in\Om\mapsto x(t;\la)$ is analytic.
\end{lemma}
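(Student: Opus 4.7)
The plan is to reduce the second-order ODE to a first-order system and then run a Picard iteration whose iterates are manifestly analytic in $\la$; the limit will inherit analyticity via Weierstrass's theorem on uniform limits of holomorphic functions. Introduce $X(t;\la):=(x(t;\la),x'(t;\la))^{\mathrm T}$, so that $X$ solves
\begin{equation*}
X'(t;\la)=A(t;\la)X(t;\la)+F(t;\la),\qquad X(t_0;\la)=X_*(\la),
\end{equation*}
with $A(t;\la)=\bigl(\begin{smallmatrix}0&1\\-q(t;\la)&-p(t;\la)\end{smallmatrix}\bigr)$, $F(t;\la)=(0,f(t;\la))^{\mathrm T}$, and $X_*(\la)=(x_0(\la),x_1(\la))^{\mathrm T}$. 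By hypothesis $A(t;\cdot),F(t;\cdot),X_*(\cdot)\in\operatorname{Hol}(\Omega)$, and $A,F\in C(I\times\Omega)$.

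Next, I would localize. Fix $\la_*\in\Omega$ and $r>0$ with $\overline{B_{2r}(\la_*)}\subset\Omega$, and fix an arbitrary compact subinterval $J\subset I$ with $t_0\in J$. On the compact set $K:=J\times\overline{B_{2r}(\la_*)}$ the continuous functions $A,F,X_*$ are uniformly bounded, say $\|A\|_{L^\infty(K)}+\|F\|_{L^\infty(K)}+\|X_*\|_{L^\infty(\overline{B_{2r}(\la_*)})}\leq M$. Define the Picard iterates $X_0(t;\la):=X_*(\la)$ and
\begin{equation*}
X_{n+1}(t;\la):=X_*(\la)+\int_{t_0}^t\bigl[A(s;\la)X_n(s;\la)+F(s;\la)\bigr]\,\mathrm ds,\qquad n\in\Z_{\geq0}.
\end{equation*}
An elementary induction using the bound $M$ shows that each $X_n$ is continuous on $K$ and that $\{X_n\}$ is Cauchy in $C(K)$, hence converges uniformly on $K$ to a continuous limit $X_\infty$ that solves the Volterra integral equation, and therefore (differentiating in $t$) the IVP. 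By uniqueness, $X_\infty=X$ on $J\times\overline{B_{2r}(\la_*)}$.

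Finally, I would verify that each $X_n(t;\cdot)$ is holomorphic on $B_{2r}(\la_*)$ for every $t\in J$ by induction on $n$. The base case is immediate. For the step, $A(s;\la)X_n(s;\la)+F(s;\la)$ is holomorphic in $\la$ for each fixed $s$ (product and sum of holomorphic functions) and jointly continuous in $(s,\la)$; integrating in $s$ over the bounded interval $[t_0,t]$ preserves holomorphy in $\la$, by Morera's theorem combined with Fubini's theorem (swap $\int_{t_0}^t\,\mathrm ds$ with any closed contour integral in $\la$, using joint continuity to justify the exchange). Thus $X_{n+1}(t;\cdot)\in\operatorname{Hol}(B_{2r}(\la_*))$. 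Since $X_n(t;\cdot)\to X(t;\cdot)$ uniformly on $\overline{B_r(\la_*)}$, Weierstrass's theorem yields $X(t;\cdot)\in\operatorname{Hol}(B_r(\la_*))$ for each $t\in J$. As $\la_*\in\Omega$ and $J\subset I$ were arbitrary, $x(t;\cdot)=X(t;\cdot)_1$ is analytic on all of $\Omega$ for every $t\in I$.

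The argument is essentially routine; the only point requiring mild care is the preservation of $\la$-analyticity under the $s$-integration, which is a standard Morera--Fubini application enabled by the joint continuity of the integrand in $(s,\la)$.
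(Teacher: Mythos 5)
Your proof is correct, but it takes a genuinely different route from the paper. The paper's argument is a two-line Wirtinger-derivative computation: it first invokes standard smooth dependence on parameters to get $x\in C^\infty(I\times\Omega)$ (treating $\la$ as a real two-dimensional parameter), then applies $\pa_{\bar\la}$ to the equation and the data; since the coefficients and initial values are holomorphic, $\pa_{\bar\la}x$ solves the homogeneous problem with zero Cauchy data and vanishes by uniqueness, which is exactly the Cauchy--Riemann condition. Your Picard-iteration argument instead constructs the solution as a uniform limit of explicitly $\la$-holomorphic iterates and concludes by Weierstrass's theorem, with the Morera--Fubini step justifying that the $s$-integration preserves holomorphy. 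What the paper's approach buys is brevity, at the cost of quoting the (nontrivial) $C^\infty$ parameter-dependence theorem and the commutation of $\pa_{\bar\la}$ with $\pa_t$; what yours buys is self-containedness -- you never need smoothness of $x$ jointly in $(t,\la)$, only continuity, and the linearity of the system makes the Cauchy estimate for the iterates elementary on any compact $J\times\overline{B_{2r}(\la_*)}$. Both proofs are complete and correct.
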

\begin{proof}
	By the standard ODE theory, we know that $x\in C^\infty(I\times \Om)$.
	For any complex function $\varphi=\varphi(\la):\C\to\C$ of class $C^1$ seen as a function on $\R^2$, we can define the Wirtinger derivatives
	\[\pa_{\bar\la}\varphi(\la)=\frac12\big(\pa_1\varphi(\la)+\ii\pa_2\varphi(\la)\big),\quad \pa_{\la}\varphi(\la)=\frac12\big(\pa_1\varphi(\la)-\ii\pa_2\varphi(\la)\big).\]
	Now it suffices to show that $\pa_{\bar\la}x(t;\la)=0$ for all $(t, \la)\in I\times\Om$. Since $x\in C^\infty(I\times\Om)$, the derivative with respect to $t$ and $\pa_{\bar\la}$ are commutable. By the analyticity of coefficients and the initial data, we know that $\pa_{\bar\la}x$ satisfies
	\[(\pa_{\bar\la}x)''(t;\la)+p(t;\la)(\pa_{\bar\la}x)'(t;\la)+q(t;\la)\pa_{\bar\la}x(t;\la)=0,\quad \pa_{\bar\la}x(t_0;\la)=(\pa_{\bar\la}x)'(t_0;\la)=0.\]
	By the uniqueness, we have $\pa_{\bar\la}x(t;\la)=0$ for all $(t, \la)\in I\times\Om$. 
\end{proof}

\section*{Acknowledgments}
D. Wei is partially supported by the National Key R\&D Program of China under the
grant 2021YFA1001500.	Z. Zhang is partially supported by  NSF of China  under Grant 12288101.

\end{document}